\documentclass[letterpaper,11pt,reqno]{amsart}
\usepackage[foot]{amsaddr}
\RequirePackage[utf8]{inputenc}
\usepackage[portrait,margin=3cm]{geometry}
\usepackage{latexsym,amssymb,amsmath,amsfonts,amsthm,mathrsfs,xcolor}
\usepackage{bm}
\geometry{
	a4paper,
	total	= {210mm,297mm},
	left	= 30mm,
	right	= 30mm,
	top		= 30mm,
	bottom	= 30mm,
}
\setlength{\textwidth}{6.5in}
\setlength{\textheight}{9in}
\setlength{\topmargin}{-.5in}
\setlength{\oddsidemargin}{.0in}
\setlength{\evensidemargin}{.0in}

\makeatletter

\renewcommand\subsubsection{\@secnumfont}{\bfseries}%
\renewcommand\subsubsection{\@startsection{subsubsection}{3}
  \z@{.5\linespacing\@plus.7\linespacing}{-.5em}%
  {\normalfont\bfseries}}
  
  \makeatother
  
  \makeatletter
\@namedef{subjclassname@2020}{%
  \textup{2020} Mathematics Subject Classification}
\makeatother

\newtheorem{theorem}{Theorem}

\newtheorem{definition}{Definition}
\newtheorem{lemma}{Lemma}[section]
\newtheorem{remark}{Remark}
\newtheorem{prop}{Proposition}
\newtheorem{corollary}{Corollary}
\newtheorem*{conj}{Conjecture}

\def\beq{ \begin{equation} }
\def\eeq{ \end{equation} }
\def\mn{\medskip\noindent}

\def\ep{\epsilon}

\def\square{\vcenter{\vbox{\hrule height .4pt
  \hbox{\vrule width .4pt height 5pt \kern 5pt
        \vrule width .4pt} \hrule height .4pt}}}

\def\RR{\mathbb{R}}

\def\ZZ{\mathbb{Z}}
\def\FF{\mathcal{F}}
\def\GG{\mathcal{G}}
\def\AA{\mathcal{A}}
\def\EE{\mathcal{E}}
\def\CC{\mathcal{C}}
\def\tildeK{\mathcal{K}}
\def\LL{L}
\def\NN{\mathcal{N}}

\def\WW{\mathcal{W}}

\def\KK{\mathcal{K}}

\def\ep{\varepsilon}
\def\Cay{\mathrm{Cay}}

\def\dist{\mathrm{dist}}
\definecolor{darkblue}{rgb}{0,0,0.6}

\def\Cay{\text{Cay}}
\def\Diam{\mathrm{Diam}}
\def\P{\mathbb{P}}
\def\E{\mathbb{E}}

\def\HH{\mathcal{H}}
\def\id{\mathrm{id}}
\def\1{\mathbf{1}}
\def\0{\textbf{0}}
\def\Unif{\mathrm{Unif}}
\def\typ{\mathrm{\mathbf{typ}}}

\def\JJ{\mathcal{J}}
\def\gcd{\mathrm{gcd}}

\newcommand{\abs}[1]{|#1|}
\newcommand{\Quad}[1]{\quad\text{#1}\quad}

\newcommand{\floor}[1]{\lfloor #1 \rfloor}
\newcommand{\ceil}[1]{\lceil #1 \rceil}

\newcommand{\inprod}[1]{\langle #1 \rangle}

\usepackage{graphicx}
\graphicspath{%
    {converted_graphics/}
    {/}
}
\begin{document}

\title{Cutoff for random Cayley graphs of nilpotent groups}
\date{}
\subjclass[2020]{Primary: 05C48, 05C80, 05C81; 20D15; 60B15, 60J27, 60K37.}
\keywords{cutoff, mixing times, random walk, random Cayley graphs, nilpotent groups}
\author[Hermon]{Jonathan Hermon$^1$}
\email{jonathan.hermon@gmail.com}
\address{$^1$Department of Mathematics, University of British Columbia, BC, Canada}
\author[Huang]{Xiangying Huang$^2$}
\email{zoehuang@unc.edu}
\address{$^2$Department of Statistics and Operations Research, 304 Hanes Hall, University of North Carolina at Chapel Hill, US}

\begin{abstract}
 We consider the random Cayley graphs of a sequence of finite nilpotent groups of diverging sizes $G=G(n)$, whose ranks and nilpotency classes are uniformly bounded. For some $k=k(n)$ such that $1\ll\log k \ll \log |G|$, we pick a random set of generators $S=S(n)$ by sampling $k$ elements $Z_1,\ldots,Z_k$ from $G$ uniformly at random with replacement, and set $S:=\{Z_j^{\pm 1}:1 \le j\le k \}$. We show that the simple random walk on Cay$(G,S)$ exhibits cutoff with high probability. 

 Some of our results apply to a general set of generators. Namely, we show that there is a constant $c>0$, depending only on the rank and the nilpotency class of $G$, such that for all symmetric sets of generators $S$ of size at most $ \frac{c\log |G|}{\log \log |G|}$, the spectral gap and the $\varepsilon$-mixing time of the simple random walk $X=(X_t)_{t\geq 0}$ on Cay$(G,S)$ are asymptotically the same as those of the projection of $X$ to the abelianization of $G$, given by $[G,G]X_t$. In particular, $X$ exhibits cutoff if and only if its projection does.
\end{abstract}
\maketitle

\section{Introduction}
\subsection{Motivation and Objectives of Paper}

We examine the random walk (RW) $X=(X_t)_{t \ge 0}$ on a \textit{Cayley graph} $\Gamma:=\mathrm{Cay}(G,S)$ of a finite \textit{nilpotent} group $G$ w.r.t.\ a symmetric set of generators $S$. We are interested in the asymptotic behavior of the mixing time and the spectral gap of the walk as $|G| \to \infty$, while the \textit{step} (also called \textit{nilpotency class}) of $G$ and its \textit{rank} (minimal size of a set of generators), denoted respectively by $L=L(G)$ and $r=r(G)$, remain bounded. 

We also investigate the occurrence of the cutoff phenomenon for the walk, which in the above setup can only occur when $|S| \gg 1$ (i.e., $|S|$ diverges as $|G| \to \infty$). In particular, we prove that the walk exhibits cutoff with high probability when $S$ is obtained by picking $k$ elements of $G$, $Z_1,\ldots,Z_k$, uniformly at random, with replacement, and then setting $S:= \{ Z_i^{\pm 1} : 1 \le i \le k  \}$ under the necessary condition that $1 \ll \log k \ll \log |G|$. 

The overarching theme of this work is that in a certain strong quantitative sense the mixing behavior of the walk is governed by that of the projection of the walk to the \emph{abelianization} of $G$, denoted by $G_{\mathrm{ab}}:= G/[G,G]$, where $[G,G]$ is the \textit{commutator subgroup} of $G$. The projected walk $Y_t:=[G,G]X_t$ is a random walk on the projected Cayley graph $\Gamma_{\mathrm{ab}}:=\mathrm{Cay}(G_{\mathrm{ab}},S_{[G,G]})$, where $S_{[G,G]}:=\{[G,G]s : s \in S \}$ is the projection of $S$ to $G_{\mathrm{ab}}$. 

As will be stated precisely in Theorem \ref{reduction_ab} and Corollary \ref{t_rel}, for arbitrary symmetric $S$ such that $|S| \le \frac{c \log |G|}{\log \log |G|}$ for some constant $c=c(r,L)>0$ depending only on the rank and step of $G$,
the relaxation times (inverses of the spectral gaps) of $\Gamma=\mathrm{Cay}(G,S)$ and $\Gamma_{\mathrm{ab}}=\mathrm{Cay}(G_{\mathrm{ab}},S_{[G,G]})$ are equal:
$$t^G_{\mathrm{rel}}=t^{G^{\mathrm{ab}}}_{\mathrm{rel}},$$
and the total variation mixing times of the random walks on $\Gamma$ and $\Gamma_{\mathrm{ab}}$ satisfy
$$t_{\mathrm{mix}}^{G_{\mathrm{ab}}}(\ep)\leq t_{\mathrm{mix}}^G(\ep) \leq t_{\mathrm{mix}}^{G_{\mathrm{ab}}}(\ep-\delta)$$
for $\ep\in(0,1)$ and $\delta=\delta(|G|):=|G|\exp \left(- \left( \log |G| \right)^L \right)$.
In other words, in order to determine the mixing time or the occurrence of cutoff for the walk on $\Gamma$, it suffices to do so for $\Gamma_{\mathrm{ab}}$. 

In the case that $S= \{ Z_i^{\pm 1} : 1 \le i \le k  \}$, where $Z_1,\ldots,Z_k$ are i.i.d.\ and uniformly distributed over $G$, we extend the analysis to the regime $1 \ll \log k \ll \log |G|$ and prove that  the random walk exhibits cutoff with high probability around time $\max \{t_0(|G_{\mathrm{ab}}|,k),\log_{k}|G| \}$, where $t_0(n,k)$ is defined as the time at which the entropy of the rate 1 continuous time simple random walk on $\mathbb{Z}^k$ is $n$.

\subsubsection{Motivations}\label{motivation}
To motivate our investigation, let us consider for now the scenario where $G$ is a finite group, $k$ is an integer (allowed to depend on $G$) and $\GG_k$ denotes the Cayley graph of $G$ with respect to $k$ independently and uniformly chosen random generators. These are elements of $G$ that will generate $G$ with high probability when $k$ is sufficiently large and hence, with slight abuse of language, will be referred to as generators of $G$. We consider values of $k$ with $1 \ll \log k \ll \log |G|$ for which $\GG_k$ is connected with high probability, that is, with probability tending to 1 as $|G|\to \infty$.  

\mn
\textit{Universality of cutoff.}

Aldous and Diaconis \cite{aldous1986shuffling} introduced the term ``cutoff phenomenon", which describes the phenomenon where the total variation distance (TV) between the distribution of a random walk and its equilibrium distribution sharply decreases from nearly 1 to nearly 0 within a time interval  of smaller order than the mixing time. The material in this paper is motivated by their conjecture on the ``universality of cutoff" for the RW on the random Cayley graph $\GG_k$ given in \cite{aldous1986shuffling}.

\begin{conj}[Aldous and Diaconis, \cite{aldous1986shuffling}] 
For any group $G$, if $k \gg \log |G|$ and $\log k \ll \log |G|$,
then the random walk on $\GG_k$ exhibits cutoff with high probability. 
\end{conj}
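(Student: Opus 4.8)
This is the Aldous--Diaconis universality conjecture, which to our knowledge remains open for general $G$; what follows is a program, not a complete argument, and I indicate below where the genuine obstruction sits. Work with the continuous-time rate-$1$ walk, let $\mu$ be uniform on $S$, and use the Diaconis--Shahshahani $L^2$ bound: for a fixed starting point,
\[
4\,\bigl\|P^t_e-\pi\bigr\|_{\mathrm{TV}}^2 \;\le\; \sum_{\rho\neq \mathbf{1}} d_\rho\,\mathrm{Tr}\!\left(e^{2t(A_\rho-I)}\right),\qquad A_\rho:=\frac{1}{2k}\sum_{j=1}^{k}\bigl(\rho(Z_j)+\rho(Z_j)^{*}\bigr),
\]
the sum running over nontrivial irreducible representations $\rho$ of $G$, with $d_\rho=\dim\rho$ and $A_\rho$ a Hermitian contraction. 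Since $\mathbb{E}\,\rho(Z)=0$ for nontrivial $\rho$ (Schur orthogonality), $A_\rho$ is an empirical average of $2k$ i.i.d.\ mean-zero matrices of norm $\le 1$, and the whole problem is to understand the spectra of the $A_\rho$ simultaneously over $\rho$ for random generators. Let $\tau=\tau(n)$ be the threshold at which $\sum_{\rho\neq\mathbf{1}} d_\rho\,\mathrm{Tr}(e^{2t(A_\rho-I)})$ transitions from large to small; the plan is to show $\|P^t_e-\pi\|_{\mathrm{TV}}\to0$ for $t\ge(1+\eta)\tau$ and $\to1$ for $t\le(1-\eta)\tau$, with $\tau\to\infty$, which yields cutoff after a routine diagonalization over $\eta\downarrow0$.

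\textbf{Upper bound.} Stratify the irreducibles by dimension. For representations with $d_\rho$ not too small relative to $k$, apply matrix concentration (matrix Bernstein, or Rudelson-type inequalities for sums of random matrices): with probability $1-o(|G|^{-1})$ one gets $\|A_\rho\|\le C\sqrt{(\log d_\rho)/k}+C(\log d_\rho)/k$, and more precisely that the empirical spectral measure of $A_\rho$ concentrates near $0$ at scale $\asymp\sqrt{d_\rho/k}$ (matching the free-convolution limit of $2k$ free Haar unitaries). This gives $\mathrm{Tr}(e^{2t(A_\rho-I)})\le d_\rho e^{-2t(1-o(1))}$, but summing against $d_\rho$ only yields $\sum d_\rho^2 e^{-2t}=|G|e^{-2t}\to0$ past $t=(1+\eta)\log|G|$, which is too crude since $\tau$ can be as small as $\asymp\log_k|G|\ll\log|G|$. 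The required refinement is to exploit that each increment spreads the walk in $\Theta(k)$ independent directions: expand $\mathrm{Tr}(e^{2t(A_\rho-I)})$ as a sum over closed words in $Z_1^{\pm1},\dots,Z_k^{\pm1}$, group by the multiset of distinct generators appearing, and show (via independence of the $Z_j$ and concentration of the word-length profile) that the dominant contribution matches that of the \emph{abelianized} walk on $\Cay(G_{\mathrm{ab}},S_{[G,G]})$, for which the threshold is the coupon-collector/entropy time $t_0(|G_{\mathrm{ab}}|,k)$; for the low-dimensional representations one argues they factor essentially through a bounded quotient and the abelian computation applies directly. Combining should give $\|P^t_e-\pi\|_{\mathrm{TV}}\to0$ for $t\ge(1+\eta)\tau$, with exponential decay past the threshold, which is what forces the cutoff window to be $o(\tau)$.

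\textbf{Lower bound and window.} After time $t$ the walk uses $N\sim\mathrm{Poisson}(t)$ generators, so $X_t$ is supported on at most $(2k)^{(1+o(1))t}$ elements; for this to carry $\pi$-mass close to $1$ we need $(1+o(1))t\log(2k)\ge\log|G|$, giving $t_{\mathrm{mix}}\ge(1-o(1))\log_k|G|$ and, since the bound is far from tight below that point, $\|P^t_e-\pi\|_{\mathrm{TV}}=1-o(1)$ for $t\le(1-\eta)\log_k|G|$. A second, entropic lower bound comes from projecting to $G_{\mathrm{ab}}$ (projections cannot increase TV distance) and noting that the abelian part of $X_t$ has entropy at most that of a rate-$1$ SRW on $\mathbb{Z}^k$, which is $<\log|G_{\mathrm{ab}}|-\omega(1)$ for $t\le(1-\eta)t_0(|G_{\mathrm{ab}}|,k)$. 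Hence $\|P^t_e-\pi\|_{\mathrm{TV}}\to1$ for $t\le(1-\eta)\tau$. Since $\log k\ll\log|G|$ forces $\log_k|G|\to\infty$ and hence $\tau\to\infty$ (while $k\gg\log|G|$ ensures $\Cay(G,S)$ is connected whp), combining the two estimates and letting $\eta\downarrow0$ along the sequence upgrades them to a genuine cutoff sequence.

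\textbf{Main obstacle.} The crux — the reason the conjecture is still open — is controlling the high-dimensional irreducibles \emph{uniformly across all finite groups}. When $d_\rho$ is polynomial in, or larger than, $k$ (which is compatible with $\log k\ll\log|G|$), the matrix-concentration estimates degrade: the event $\{\|A_\rho\|<1-\delta\}$ need not have probability negligible after the $d_\rho^2$ weighting in the Plancherel sum, and even when it holds, the idealized bound $\mathrm{Tr}(e^{2t(A_\rho-I)})\approx d_\rho e^{-2t}$, summed against $d_\rho$, only gives $|G|e^{-2t}$ and hence needs $t\gg\log|G|$. Overcoming this requires either (i) a genuinely non-abelian version of the word expansion that tracks cancellation \emph{inside} $\mathrm{Tr}(e^{2t(A_\rho-I)})$ rather than estimating in operator norm, or (ii) a structural input about $G$ — quasirandomness, product decompositions, representation-growth estimates — strong enough to bound the large-$\rho$ contribution, but no such input is known with the requisite uniformity (the mechanisms governing $\mathbb{Z}_p$, $\mathbb{F}_2^{\,n}$ and $\mathrm{SL}_2(\mathbb{F}_p)$ are qualitatively different). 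It is precisely to avoid this that the present paper restricts to bounded-rank, bounded-step nilpotent $G$, where Theorem~\ref{reduction_ab} reduces the whole question to the abelianization.
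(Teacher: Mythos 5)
This statement is not a theorem of the paper at all: it is the Aldous--Diaconis universality conjecture, quoted as motivation, and the paper neither proves it nor claims to. What the paper actually establishes is the special case of nilpotent $G$ with bounded rank and step (Theorem~\ref{cutoff_iid}), and it does so by a route entirely different from the one you sketch: no Diaconis--Shahshahani $L^2$/Plancherel expansion over irreducibles, but rather the entropic method (conditioning on a typical trajectory of the auxiliary walk $W$ on $\ZZ^k$, a modified $L^2$ bound as in Lemma~\ref{TV_entropic}), combined with a reduction of the non-abelian part to the abelianization via the commutator/quotient-group analysis and the diameter bound of Theorem~\ref{diam_bound} and Theorem~\ref{reduction_ab}. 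Your own closing remark correctly identifies why the paper makes this restriction.

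As a proof of the stated conjecture, your submission has a genuine and self-acknowledged gap, so it cannot stand: the entire upper bound hinges on controlling $\mathrm{Tr}\,e^{2t(A_\rho-I)}$ for high-dimensional irreducibles uniformly over all finite groups, and as you note, operator-norm matrix concentration only yields a threshold of order $\log|G|$, which is far above the conjectured cutoff time $\asymp \log_k|G|$ in the regime $k\gg\log|G|$; the ``word expansion matching the abelianized walk'' step is exactly the missing idea, not a routine refinement, and no uniform structural input is available to supply it. Two further points worth knowing. First, part of the conjectured regime is in fact settled for \emph{all} groups: the Dou--Hildebrand upper bound matches the support lower bound $\log_k|G|$ whenever $\log k\gg\log\log|G|$, so a rigorous partial result is available without any representation theory, and your lower-bound paragraph (support size plus the entropic projection to $G_{\mathrm{ab}}$) is essentially correct and is the same kind of argument the paper uses in Section~\ref{mixing_lower_bound}. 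Second, your framing of cutoff via a single threshold $\tau$ for the Plancherel sum would, even if the spectral estimates were available, only give cutoff for the $L^2$ (not TV) distance without an additional argument; the paper's entropic conditioning is precisely the device that makes the $L^2$-type bound sharp at the TV cutoff time. So: correct diagnosis of the obstruction, honest presentation, but not a proof, and not the paper's method.
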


Additionally, a secondary aspect of the conjecture suggests that the cutoff time does not rely on the algebraic structure of the group, but rather it can be expressed solely as a function of $k$ and $|G|$.

This conjecture has sparked a substantial body of research, see e.g., \cite{ dou1996enumeration, dou1992studies,hildebrand1994random, hildebrand2005survey, hough2017mixing, roichman1996random,wilson1997random}. The curious reader is referred to Section 1.3.1 in \cite{hermon2102cutoff} for a detailed exposition on the literature regarding the progress on the Aldous-Diaconis university conjecture.  In this context, we provide a more condensed overview of literature related to this conjecture, which serves as motivations for our work.

In  \cite{dou1996enumeration,dou1992studies}, Dou and Hildebrand confirmed the Aldous-Diaconis universality conjecture for all abelian groups. Additionally, their upper bound on the mixing time holds true for all groups. Furthermore, when $\log k \gg \log \log |G|$, this upper bound matches the trivial diameter lower bound of $\log_k |G|$, confirming the aforementioned secondary aspect of the conjecture.  Hermon and and Olesker-Taylor \cite{hermon2102cutoff,hermon2018supplementary,hermon2019further,hermon2021geometry} extend this conjecture to the regime $1\ll k\lesssim \log|G|$ for abelian groups, establishing cutoff under the condition $k- r(G) \gg 1$, where $r(G)$ is the minimal size of a generating subset of $G$. Moreover, when $k-r(G)\asymp k\gg 1$, the cutoff time is given by
\beq\label{cutoff_abelian}
\max\{ \log_k|G|, t_0(k,|G|)\},
\eeq
where $t_0(k,|G|)$ is the time at which the entropy of the rate 1 random walk $W$ on $\ZZ^k$ is $\log |G|$. Due to this definition $t_0(k,|G|)$ is also referred to as the \emph{entropic time} (see Definition \ref{entropic_time}). Their work confirms that for abelian groups the cutoff time of RW only depends on $|G|$ and $k$.

Building upon this point, the next natural step is to explore the mixing behavior of random walks on nilpotent groups (see Section \ref{RW_unipotent} for a brief overview of the literature concerning this topic). Hermon and Olesker-Taylor \cite{hermon2019cutoff} study two canonical families of nilpotent groups: the $d\times d$ unit-upper triangular matrices $U_{m,d}$ with entries in $\ZZ_m$ and the $d$-dimensional Heisenberg group $H_{m,d}$ over $\ZZ_m$ where $m\in \mathbb{N}$ (the results hold under certain assumptions on $m$ depending on the regimes of $k$). Let $G$ be either $U_{m,d}$ or $H_{m,d}$ and $G_{\mathrm{ab}}:=G/[G,G]$ its abelianization. They prove that for $1\ll \log k \ll \log |G|$ the random walk on the Cayley graph $\GG_k$ exhibits cutoff with high probability at time 
\beq\label{cutoff_characterization}
\max\{ \log_k|G|, t_0(k,|G_{\mathrm{ab}}|)\},
\eeq 
where $t_0(k,|G_{\mathrm{ab}}|)$ is the time at which the entropy of the rate 1 random walk $W$ on $\ZZ^k$ is $\log |G_{\mathrm{ab}}|$. We compare \eqref{cutoff_characterization} with \eqref{cutoff_abelian}, the latter of which gives the characterization of cutoff time for abelian groups. Indeed, $t_0(k,|G_{\mathrm{ab}}|)$ can be interpreted as the time at which the projection of the walk on $G$ onto the abelianization $G_{\mathrm{ab}}$ exhibits cutoff. This characterization of the cutoff time for random walks on $U_{m,d}$ and $H_{m,d}$ raises a natural question: does \eqref{cutoff_characterization} characterize the cutoff time for random walks on nilpotent groups in general? This question will be explored in our investigation.

\bigskip

Another natural extension of the current research on random walks on groups involves extending the choice of generators. Rather than requiring the $k$ generators to be chosen independently and uniformly at random from the group $G$, the aim is to advance our understanding to encompass scenarios involving arbitrary choices of generators.

More often than not, the analysis of the mixing of the random walk heavily replies on the specific selection of generators. For example, there is a line of research focusing on understanding the mixing properties of random walks on the unit upper triangular matrix group $U_{p,d}$ with $p$ prime, wherein the set of generators is either $\{I \pm E_{i,i+1}: 1 \leq i \leq d-1\}$ or $\{I +a E_{i,i+1}: 1 \leq i \leq d-1, a\in \ZZ_p\}$, where $E_{i,j}$ represents the $d\times d$ matrix with 1 at the entry $(i, j)$ and 0 elsewhere. See Section \ref{RW_unipotent} for an overview of existing research. The current analysis critically hinges on the fact that an operation $I+aE_{i,i+1}$ corresponds to a row addition/subtraction, allowing the decomposition of the walk's mixing behavior into the first row and the remaining part of the matrix, the latter of which can be regarded as a $(d-1)\times (d-1)$ matrix. However, such methodologies become inapplicable when dealing with arbitrary generators. It is hence of interest to develop techniques that enable the study of mixing properties of random walks on Cayley graphs using a wider range of prescribed generators.

\mn
\textit{Leading role of the abelianization in random walk mixing.}

When $G$ is a nilpotent group and $S$ is a symmetric set of generators consisting of i.i.d. uniform random elements of $G$, we shall see in Theorem \ref{cutoff_iid} that the mixing time of the RW is determined by the abelianization $G_{\mathrm{ab}}$ of $G$, given by the expression \eqref{cutoff_characterization}.

When the generator set is predetermined, in many instances it has also been demonstrated that the mixing time of random walks on groups is primarily determined by the abelianization of the group. In a recent and notable study, Diaconis and Hough \cite{diaconis2021random} introduced a novel approach to establishing a central limit theorem for random walks on unipotent matrix groups driven by a probability measure $\mu$, under certain general constraints. It is worth noting that their methodology applies to various choices of generators. For unit-upper triangular matrices $U_{p,d}$, it has been established that an individual coordinate on the $k$-th diagonal mixes in order $p^{2/k}$ steps, implying the leading role of abelianization (which corresponds to the first diagonal) in the mixing process of this random walk.

Nestoridi and Sly \cite{nestoridi2020random} studied the mixing behavior of the rate 1 RW on $U_{m,d}$ under the canonical set of generators $\{ I\pm E_{i,i+1}: i\in [d-1]\}$. In their analysis, it is proved that the mixing time of the RW on $U_{m,d}$ is bounded by $O(m^2 d \log d + d^2 m^{o(1)})$, where the former term vaguely characterizes the mixing behavior of the RW on the abelianization. This observation becomes clearer when we consider the projected walk on the abelianization, which, under the canonical set of generators, can be viewed as a product chain on $\ZZ_m^{d-1}$ and thus has mixing time of order $m^2d\log d$. Essentially, when $m$ is considerably larger than $d$, this upper bound is predominantly dictated by the mixing on the abelianization.

One might naturally inquire about the extent to which the abelianization dictates the mixing time of the RW on a general group. In addition, there is interest in explicitly identifying the dependence of the mixing time on the abelianization, which leads to our next motivation.

\mn
\textit{``Entropic time paradigm".}

As previously discussed, although the entropic time is the mixing time for ``most" choice
of generators  (when $1\ll k\lesssim \log|G|$) for abelian groups and nilpotent groups, finding an explicit choice of generators which gives rise to cutoff at the entropic time
is still open --- even for the cyclic group of prime order. Part of our motivation is to understand the extent to which this paradigm applies for a given set of generators.

It is worth pointing out that for general non-random choice of generators, the cutoff time is not necessarily given by the entropic time. For instance, Hough \cite[Theorem 1.11]{hough2017mixing} shows that for the cyclic group $\ZZ_p$ of prime order the choice of generators $S:=\{0\}\cup \{ \pm 2^i: 0\leq i\leq \floor{\log_2 p}-1\}$, which he describes as ``an approximate embedding of the classical hypercube walk into the cycle", gives rise to a random walk on $\ZZ_p$ that ehxibits cutoff, where the cutoff time is not the entropic time.

\mn
\textit{Mixing under minimal sets of generators.}

A minimal set of generators is a set of elements that generates the group which is minimal in terms of size. For $p$-groups, it is known that the minimal sets of generators can be described by the Frattini subgroup $\Phi=\Phi(G)$ in the sense that any set $\{x_1,\dots,x_r\}\subseteq G$ such that the cosets $\{\Phi x_i: 1\leq i\leq r\}$ form a basis of $G/\Phi$ gives a generating set of $G$. See, e.g.,  Diaconis and Saloff-Coste \cite[Section 5.C]{diaconis1994moderate}. A random walk supported on the minimal set of generators is thus referred to as a Frattini walk. Examples of such walks are discussed in Section 5.C of \cite{diaconis1994moderate}. For the Heisenberg group $H_{p,3}$ with prime $p$, it can be shown that all minimal sets of generators are equivalent from a group theory approach. 

Diaconis and Saloff-Coste additionally remarked that based on their experience with the circle and symmetric group, if the number of generators is fixed, most sets of generators should lead to the same convergence rate for the random walk.  Motivated by these examples, they ask the following open question (see Remark 2 on Page 23 of \cite{diaconis1994moderate}): to what extent does the choice of generators effect the mixing behavior?

We give a neat partial answer to this question in Theorem \ref{maximal_class_group}. 
Suppose that $G$ is a $p$-group with $G_{\mathrm{ab}} \cong \mathbb{Z}_{p^{\alpha_1}} \oplus \cdots \oplus \mathbb{Z}_{p^{\alpha_r}}$ or that $G_{\mathrm{ab}} \cong \mathbb{Z}_m^r$ for some $m \in \mathbb{N}$. Under very mild assumptions on the rank and step of $G$, for all minimal (symmetric) sets of generators, the corresponding mixing times on $G$ are the same up to smaller order terms and the corresponding relaxation times are the same.

\subsubsection{Objectives}\label{objectives}

Motivated by the questions discussed in the preceding section, our primary focus in this paper is as follows.

\mn
\textbf{(i) Study the random walk on $\GG_k$ for general nilpotent groups.} Expanding upon the current understanding of random walks on groups, our goal is to establish cutoff for random walks on $\GG_k$ when $G$ is a nilpotent group and $1 \ll \log k \ll \log |G|$. In particular, we are interested in a general characterization of the cutoff time. An important  implication of the findings in \cite{hermon2019cutoff} is that for certain regimes of $k$, the cutoff time for $G=U_{m,d}$ (or $G=H_{m,d}$) does not depend only on $k$ and $|G|$. Nevertheless, the only additional information required to determine the cutoff times for these two examples is the size of the abelianization, as indicated by \eqref{cutoff_characterization}. We hope to generalize the characterization of the cutoff time in \eqref{cutoff_characterization} to \emph{general} nilpotent groups.

We thank P\'eter Varj\'u for suggesting us the problem of extending the analysis from \cite{hermon2019cutoff} to other\footnote{Namely, the case that $G$ is step 2 and $G_{\mathrm{ab}} \cong \mathbb{Z}_p^{r}$, i.e., $G_{\mathrm{ab}}$ is elementary abelian. We also wish to thank him for providing invaluable insights regarding how certain components of the argument from \cite{hermon2019cutoff} could be interpreted in terms of the general theory of nilpotent groups.} nilpotent groups.

\mn
\textbf{(ii) Develop techniques applicable when the generators are chosen arbitrarily.} 
As indicated by previous discussions, the mixing time of random walks on a group under various choices of generators is largely determined by the abelianization of the group. We aim to explore the extent to which this leading role of the abelianization holds in a broader context.

In essence, our objective is to develop techniques for studying the mixing properties of random walks, applicable not only under arbitrary choices of generators but also for general groups, without dependence on specific group structures.

\subsection{Definitions and Notation}\label{setting&notation}
We give the precise definitions of the Cayley graph on $G$ and the random walk on Cayley graphs.

Let $G$ be a nilpotent group with lower central series
$$G = G_1 \trianglerighteq G_2 \trianglerighteq \cdots  \trianglerighteq G_{\LL}\trianglerighteq G_{\LL+1}=\{\id\}$$ 
where $G_{i+1} :=[G_i,G]=\inprod{\{ [g,g']: g\in G_i, g'\in G\}}$. In particular, $G_2=[G,G]$ denotes the commutator subgroup of $G$. We also denote by 
$G_{\mathrm{ab}}:= G/G_2$ the \emph{abelianization} of $G$. The \emph{rank} of a nilpotent group $G$, denoted by $r=r(G)$, is the smallest integer $r$ such that $G$ can be generated by a set containing $r$ elements of $G$ and their inverses.  The number $L=L(G)$ is called the \emph{step} (or the \emph{nilpotency class}) of $G$, i.e., $|G_L| > 1$ and $|G_{L+1}| = 1$.

For a finite group $G$, let $S\subseteq G$ be a symmetric subset, i.e., $s\in S$ if and only if $s^{-1}\in S$.  We will refer to $S$ as the set of generators when $S$ generates $G$. The \textit{undirected} Cayley graph of $G$ generated by $S$ is defined as follows.

\begin{definition}[Cayley multi-graph generated by a set of generators]
Fix a symmetric set $S:=\{ s_i^{\pm 1}: i\in[k]\}\subseteq G$ of generators.  Let $\Cay(G,S)$ denote the (right) Cayley multi-graph generated by $G$ with respect to $S$, where the vertex set $\mathbb{V}:=\{ g: g\in G\}$ and the edge set  $\mathbb{E}:=\{ \{g,gs\}: g\in G, s\in S\}$. We allow parallel edges and self loops (if $\id\in S$) so that the Cayley graph $\Cay(G,S)$ is regular with degree $2k$. 
\end{definition}

\mn
\textbf{Random walk on Cayley graphs.} We will consider the \textit{undirected} random walk $X_t$ on the Cayley graph $\Cay(G,S)$ which jumps at rate 1, where $S:=\{s_i^{\pm 1}: i\in [k]\}$. Let $\{\sigma_i\}_{i\in\mathbb{N}}$ be an i.i.d. sequence of indices uniformly sampled from $[k]$, and let $\{\eta_i\}_{i\in\mathbb{N}}$ be an i.i.d. sequence of signs uniformly sampled from $\{\pm 1\}$. At the $i$-th jump, the generator $s_{\sigma_i}^{\eta_i}$ is applied to the walk $X$ in the sense that we multiply $s_{\sigma_i}^{\eta_i}$ to the right of the current location of $X$. That is, the random walk $X$ can be written as a sequence
$$X=\prod_{i=1}^N s^{\eta_i}_{\sigma_i}=s_{\sigma_1}^{\eta_1}s_{\sigma_2}^{\eta_2}\cdots s_{\sigma_N}^{\eta_N},$$
where $N:=N(t)$ is the number of steps taken by $X$ by time $t$ and $s^{\eta_i}_{\sigma_i}$ denotes the $i$-th step taken by the random walk with $\sigma_i\in [k],\eta_i\in\{\pm 1\}$.

\mn
\textbf{Notation.} Throughout the paper, we use standard asymptotic notation: ``$\ll$" or ``$o(\cdot)$" means ``of smaller order"; ``$\lesssim$" or ``$\mathcal{O}(\cdot)$" means ``of order at most"; ``$\asymp$" means ``of the same order"; ``$\eqsim$" means ``asymptotically equivalent". We will abbreviate ``with high probability"  by \textit{whp}.

\mn
\textbf{Assumptions.} Throughout the paper, we will let $G$ be a finite nilpotent group of step $\LL\geq 2$ and rank $r$ where $r, \LL\asymp 1$.

\subsection{Overview of Main Results}\label{main_results}

We focus on the mixing behavior of the random walk on a Cayley graph $\Cay(G,S)$ of a finite nilpotent group $G$ with a symmetric generator set $S=\{s_i^{\pm 1}: i\in [k]\}$. We consider the
limit as $|G| \to \infty$ under the assumption that $1 \ll \log k \ll \log |G|$. The condition
$1 \ll \log k \ll \log |G|$ is necessary for the random walk to exhibit cutoff on $\Cay(G,S)$ for all nilpotent $G$, see the remark below.

\begin{remark}\label{condition_for_cutoff}
For any choice of generators, it was established by Diaconis and Saloff-Coste \cite{diaconis1994moderate} that there is no cutoff when $k\asymp 1$ for all nilpotent groups, which is a class of groups that satisfies their concept of \textit{moderate growth}. The interested reader can find a short exposition of their argument in \cite[\S 4]{hermon2019further}. When $\log k \asymp \log |G|$ and with $k$ i.i.d. uniform generators, there is no cutoff for all groups, see \cite[\S 7.2]{hermon2102cutoff}. Dou \cite[Theorems 3.3.1 and 3.4.7]{dou1992studies} establishes a more general result for $\log k \asymp \log |G|$.
\end{remark}

\subsubsection{Cutoff for Random Walks on Nilpotent Groups}

We use standard notation and definitions for mixing and cutoff, see e.g. \cite[\S 4 and \S 18]{wilmer2009markov}.
\begin{definition}
A sequence $(X_N)_{N\in\mathbb{N}}$ of Markov chains is said to exhibit cutoff if there exists a sequence of times $(t_N)_{N\in\mathbb{N}}$ with 
$$\limsup_{N\to\infty} d_N((1-\ep)t_N)=1 \quad \text{and}\quad \limsup_{N\to\infty} d_N((1+\ep)t_N)=0 \quad \text{ for all }\ep\in (0,1),$$
where $d_N (\cdot)$ is the TV distance of $X_N(\cdot)$ from its equilibrium distribution for each $N\in\mathbb{N}$.

 We say that a RW on a sequence of random graphs $(H_N)_{N\in\mathbb{N}}$ exhibits cutoff around time $(t_N)_{N\in\mathbb{N}}$ whp if, for all fixed $\ep$, in the limit $N \to \infty$, the TV distance at time $(1+\ep)t_N$ converges in distribution to 0 and at time $(1-\ep)t_N$ to 1, where the randomness is over $H_N$.

\end{definition}
In other words, $(X_N)_{N\in\mathbb{N}}$ is said to exhibits cutoff when the TV distance of the distribution of the chain from
equilibrium drops from close to 1 to close to 0 in a short time interval of smaller order than the mixing time.

\medskip
As briefly discussed in Section \ref{motivation}, there has been considerable interest in studying the cutoff behavior of random walks on groups. Our goal is to generalize the characterization of cutoff time as $\max\{ \log_k|G|, t_0(k,|G_{\mathrm{ab}}|)\}$ to general nilpotent groups (for random i.i.d. generators). 

We now give the formal definition of the entropic time $t_0:=t_0(k,|G_{\mathrm{ab}})$ and the proposed mixing time. 
\begin{definition}\label{entropic_time}
(i) Let $t_0(k,N)$ be the time at which the entropy of the rate 1 random walk $W$ on $\ZZ^k$ is $\log N$. We refer to $t_0(k, |G_{\mathrm{ab}}|)$ as the entropic time.\\
(ii) Define $t_*(k,G):=\max\{ t_0(k, |G_{\mathrm{ab}}|), \log_k|G|\}$. We refer to $t_*(k,G)$ as the cutoff time or the mixing time.
\end{definition}
The entropic time $t_0(k, |G_{\mathrm{ab}}|)$ is identified as the cutoff time for the projected random walk $Y_t:=G_2 X_t$ on $G_{\mathrm{ab}}$, see \cite{hermon2102cutoff}, which is naturally a lower bound on the mixing time of the RW $X_t$ on $G$. To offer insight into the definition of the cutoff time, note that we need to run the RW sufficiently long to ensure that all elements of the group can be reached with reasonable probability, which leads to a lower bound of $\log_k|G|$. 

Our first result establishes cutoff around time $t_*(k,G)$ for the random walk $X$ on $\Cay(G,S)$ where $S$ consists of i.i.d. uniform generators.
 \begin{theorem}\label{cutoff_iid}
 Let $G$ be a finite nilpotent group with $r(G), L(G)\asymp 1$. Let $S=\{ Z_i^{\pm 1}: i\in [k]\}$ with $Z_1,\dots,Z_k\overset{iid}{\sim} \Unif(G)$. Assume $1\ll \log k \ll \log|G|$. As $|G|\to \infty$, the  random walk on $\Cay(G,S)$ exhibits cutoff with high probability at time $t_*(k,G)$, which is the cutoff time defined in Definition \ref{entropic_time}.
 \end{theorem}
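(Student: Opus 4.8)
The plan is to reduce the cutoff problem on $\Cay(G,S)$ to the already-understood cutoff problem on the abelianization $\Cay(G_{\mathrm{ab}},S_{[G,G]})$, and then invoke the known result for abelian groups from \cite{hermon2102cutoff}. The first step is to verify the hypothesis of Theorem \ref{reduction_ab}: when $S=\{Z_i^{\pm1}:i\in[k]\}$ is a random i.i.d.\ generating set and $1\ll\log k\ll\log|G|$, one wants $|S|=2k \le \frac{c\log|G|}{\log\log|G|}$. This is \emph{not} automatic, since $k$ may be much larger than $\log|G|/\log\log|G|$; so for the large-$k$ regime the reduction theorem as stated does not apply directly, and one must instead argue that whp the \emph{effective} number of distinct generators relevant to the projected walk, or the walk itself, behaves like a walk generated by a small set — or, more likely, the authors have a companion reduction statement (or Theorem \ref{reduction_ab} is itself stated for i.i.d.\ generators without the size restriction) that handles $\log k \ll \log|G|$. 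Assuming such a reduction is available, one concludes that the $\ep$-mixing time and relaxation time of $X$ on $\Cay(G,S)$ agree, up to the negligible $\delta=|G|\exp(-(\log|G|)^L)$ correction in the mixing time argument, with those of the projected walk $Y_t=[G,G]X_t$ on $\Cay(G_{\mathrm{ab}},S_{[G,G]})$.

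Next I would analyze the projected walk. The projection $S_{[G,G]}=\{[G,G]Z_i^{\pm1}:i\in[k]\}$ consists of the images of $k$ i.i.d.\ uniform elements of $G$; since the quotient map $G\to G_{\mathrm{ab}}$ is surjective and uniform pushes forward to uniform, $S_{[G,G]}$ is (as a multiset) exactly $k$ i.i.d.\ uniform elements of the finite abelian group $G_{\mathrm{ab}}$ together with their inverses. By the Hermon–Olesker-Taylor theory for abelian groups (\cite{hermon2102cutoff} and sequels), in the regime $1\ll\log k\ll\log|G_{\mathrm{ab}}|$ — which, note, may fail if $|G_{\mathrm{ab}}|$ is too small relative to $|G|$ — the random walk on $\Cay(G_{\mathrm{ab}},S_{[G,G]})$ exhibits cutoff whp at the entropic time $t_0(k,|G_{\mathrm{ab}}|)$. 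The wrinkle is the diameter term: the projected walk only sees $G_{\mathrm{ab}}$, whose diameter contributes $\log_k|G_{\mathrm{ab}}|\le\log_k|G|$, whereas the full walk $X$ must reach all of $G$, contributing the lower bound $\log_k|G|$. Hence I would separately establish the lower bound $t_{\mathrm{mix}}^G(\ep)\gtrsim\log_k|G|$ directly (a counting/diameter argument: after $N$ steps the walk occupies at most $\binom{N+2k-1}{N}\le (2k)^N$ points, forcing $N\gtrsim\log_k|G|$ to mix, and $N(t)\asymp t$ for the rate-1 walk), and combine it with the reduction-derived upper and matching lower bounds coming from $t_0(k,|G_{\mathrm{ab}}|)$, to get cutoff at $t_*(k,G)=\max\{t_0(k,|G_{\mathrm{ab}}|),\log_k|G|\}$.

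For the upper bound matching $t_*(k,G)$, I would run the walk for time $(1+\ep)t_*(k,G)$ and decompose: by the reduction theorem the projected walk is within $\ep/2$ of uniform on $G_{\mathrm{ab}}$ by time $(1+\ep)t_0(k,|G_{\mathrm{ab}}|)$; conditionally on the abelianized trajectory, one must show the walk equidistributes within each coset $[G,G]g$, i.e., within $[G,G]$. This is where the extra $\log_k|G|$ (equivalently $\log_k|[G,G]|$, which is comparable) is consumed — after the projected walk has mixed, the residual word in $[G,G]$, built out of commutators of the generators, needs on the order of $\log_k|G|$ additional steps to cover $[G,G]$; this is precisely the content encoded in Theorem \ref{reduction_ab}'s mixing-time sandwich, so I would lean on that rather than redo it. The main obstacle, and the part requiring the most care, is exactly the interface between the two regimes: Theorem \ref{reduction_ab} is stated with the restriction $|S|\le c\log|G|/\log\log|G|$, so pushing the argument to all $k$ with $\log k\ll\log|G|$ requires a separate mechanism for large $k$ — presumably a direct entropic/representation-theoretic argument on $G$ showing the walk's characters factor appropriately through $G_{\mathrm{ab}}$ up to controllable error, or a coupling argument — and reconciling the entropic time $t_0(k,|G_{\mathrm{ab}}|)$ with the diameter term when $|G_{\mathrm{ab}}|$ is small (e.g.\ bounded, forcing $t_0$ to be of order $\log_k|G_{\mathrm{ab}}|$ and the diameter term to dominate). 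I would handle the large-$k$ case by a separate argument establishing that whp the $k$ i.i.d.\ generators, projected to $G_{\mathrm{ab}}$ and to each successive quotient $G_i/G_{i+1}$, span everything efficiently, and that the entropy accumulated by the $\ZZ^k$-walk suffices to equidistribute on $G_{\mathrm{ab}}$ while the diameter bound handles the higher central factors.
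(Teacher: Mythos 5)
Your overall skeleton (lower bound from the projected walk's entropic time plus the $\log_k|G|$ counting bound; upper bound via Theorem \ref{reduction_ab} when the generating set is small) matches the paper for the easy parts: the paper indeed proves the lower bound by projecting to $G_{\mathrm{ab}}$ and using entropy concentration plus the support-size argument, and it dispenses with the regime $1\ll k\ll \log|G|/\log\log|G|$ by exactly the reduction you describe (note also that your worry about $|G_{\mathrm{ab}}|$ being too small is moot: Corollary \ref{ab_size} gives $\log|G_{\mathrm{ab}}|\asymp\log|G|$ under bounded rank and step). The genuine gap is that everything beyond this — the upper bound for $k\gtrsim \log|G|/\log\log|G|$, which is the actual content of the theorem and occupies most of the paper — is left to a hoped-for ``companion reduction statement'' or an unspecified ``separate mechanism.'' No such companion statement exists, and the reduction cannot be rescued by pretending the effective generator set is small: for large $k$ the diameter comparison $\Diam_S(G_2)\lesssim \Diam_S(G_{\mathrm{ab}})^{3/4}$ underlying Lemma \ref{commutator_mixing} fails to be useful, and the mixing of $X$ genuinely is not captured by the projected walk alone plus a Dirichlet-form comparison.

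What the paper actually does in that regime, and what your proposal lacks, is the entropic $L^2$ machinery on $G$ itself: condition on a typical event for the auxiliary $\ZZ^k$-walk $W$ (Lemma \ref{TV_entropic}, Definition \ref{typical_event}), write $X(X')^{-1}=Z_1^{V_1}\cdots Z_k^{V_k}\prod_{a<b}[Z_a,Z_b]^{m_{ba}}\varphi(\cdot)$ with $V=W-W'$, decompose each generator as $Z_a=\prod_\ell Z_{a,\ell}$ across the quotients $Q_\ell=G_\ell/G_{\ell+1}$, and then split on $V\neq 0$ (a gcd estimate, Lemmas \ref{layers_decomp}--\ref{gcd_analysis}) versus $V=0$. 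The $V=0$ case is the heart of the matter: one must show that the random coefficient matrix $\hat m$ of two-fold commutators is ``good'' whp (a full-rank $K\times K$ submatrix modulo every prime dividing $|G_{\mathrm{ab}}|$, Definition \ref{eventA}), proved by a relative-independence argument in the mid-$k$ regime and a sequential-insertion/urn argument when $k\gtrsim\log|G_{\mathrm{ab}}|$, so that the commutator part equidistributes over each $Q_{\ell+1}$ (Propositions \ref{mix_com} and \ref{A_complement}). Your alternative suggestions (characters factoring through $G_{\mathrm{ab}}$, or a coupling, or ``the generators span each quotient efficiently'') are not developed to the point where one could check they close this gap, and spanning each quotient is in any case far weaker than what is needed, since the walk only reaches the higher central factors through the specific commutator words it happens to produce.
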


\subsubsection{Random Walk on Non-random Cayley Graphs: Reduction to Abelianization}

For a nilpotent group $G$ and any symmetric set of generators $S\subseteq G$ whose size satisfies an upper bound, we show that the mixing time of the random walk on $G$ is completely determined (up to smaller order terms) by the mixing time of the projected walk on $G_{\mathrm{ab}}$.

\begin{theorem}\label{reduction_ab}
Let $G$ be a finite nilpotent group such that $r(G),L(G)\asymp 1$ and $S\subseteq G$ be a symmetric set of generators. Suppose $ |S| \leq \frac{\log |G|}{ 8\LL r^\LL\log\log|G|}$. For any fixed $\ep\in (0,1)$ and $\delta\in ( 0,\ep)$ we have 
$$t_{\mathrm{mix}}^{G_{\mathrm{ab}}}(\ep)\leq t_{\mathrm{mix}}^G(\ep) \leq t_{\mathrm{mix}}^{G_{\mathrm{ab}}}(\ep-\delta)$$
when $|G|$ is sufficiently large (more precisely, when $|G|\exp(-(\log|G|)^L)\leq \delta$).
\end{theorem}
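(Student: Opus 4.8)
The plan is to prove the two inequalities separately, both by coupling the walk $X$ on $\Cay(G,S)$ with its projection $Y_t = G_2 X_t$ on $\Cay(G_{\mathrm{ab}}, S_{G_2})$. The left inequality $t_{\mathrm{mix}}^{G_{\mathrm{ab}}}(\ep) \le t_{\mathrm{mix}}^G(\ep)$ is the easy direction: since $G_2 X_t$ is a deterministic function of $X_t$ and the projection map $G \to G_{\mathrm{ab}}$ is a group homomorphism, the projected walk is genuinely a random walk on $\Cay(G_{\mathrm{ab}}, S_{G_2})$ started from the identity coset, and the pushforward of the uniform distribution on $G$ under a surjective homomorphism is uniform on $G_{\mathrm{ab}}$. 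Contractivity of total variation under pushforwards then gives $d_{G_{\mathrm{ab}}}(t) \le d_G(t)$ for all $t$, and the mixing-time inequality follows immediately.

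The substantive direction is the right inequality $t_{\mathrm{mix}}^G(\ep) \le t_{\mathrm{mix}}^{G_{\mathrm{ab}}}(\ep - \delta)$. Here the strategy is: run the walk $Y$ on the abelianization up to time $T := t_{\mathrm{mix}}^{G_{\mathrm{ab}}}(\ep - \delta)$, at which point $Y_T$ is within $\ep - \delta$ of uniform on $G_{\mathrm{ab}}$. I then need to argue that, conditioned on (essentially) the increment data that determines $Y_T$, the remaining randomness in $X_T$ — which lives in the fiber $G_2$ over the point $Y_T$ — is close to uniform on that fiber. The key input should be that the commutator coordinates of $X_t$ are built from products/commutators of the abelian increments, and because $|S|$ is small (at most $\frac{\log|G|}{8 L r^L \log\log|G|}$) while the entropic/diameter time $T$ is at least $\log_k|G|$, the number of steps taken by time $T$ is large enough — $T \gtrsim \log|G|/\log|S|$, far exceeding $(\log|G|)^{L}$ after accounting for the bound on $|S|$ — that the accumulated commutator "area-type" functionals equidistribute over $G_2$. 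Concretely I would use a reversibility / second-moment or Fourier argument on $G_2$ (or the block structure of the lower central series $G = G_1 \trianglerighteq \cdots \trianglerighteq G_{L+1} = \{\id\}$), peeling off one central layer at a time, to show that the conditional law of $X_T$ given its projection is within $\delta' := |G|\exp(-(\log|G|)^L)$ of uniform on the fiber. Combining the triangle inequality $d_G(T) \le d_{G_{\mathrm{ab}}}(T) + \delta'$ with $d_{G_{\mathrm{ab}}}(T) \le \ep - \delta$ and $\delta' \le \delta$ (which is exactly the hypothesis $|G|\exp(-(\log|G|)^L) \le \delta$) yields $d_G(T) \le \ep$, hence $t_{\mathrm{mix}}^G(\ep) \le T$.

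The main obstacle is making the fiber-equidistribution step quantitative with the stated error $\delta = |G|\exp(-(\log|G|)^L)$, uniformly over \emph{all} symmetric generating sets $S$ of the allowed size, rather than for random $S$. This is where the hypothesis $|S| \le \frac{\log|G|}{8 L r^L \log\log|G|}$ must be used sharply: the exponent $L$ in $(\log|G|)^L$ and the constant $8 L r^L$ presumably come from tracking how a lower bound on the number of steps propagates through $L$ successive quotients $G_i/G_{i+1}$, each of which is generated by (iterated) commutators of at most $r$ of the original generators, so that at each layer one gains a factor that, after $L$ layers and with $|S|$ this small, beats the trivial bound $|G|$ on the fiber size. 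I expect the proof to isolate this as a separate lemma — an $L^2$ or characters bound on $G$ showing that after $c\log|G|/\log\log|G|$-ish steps the walk, conditioned on its abelianization, is nearly uniform on the relevant coset — and then the theorem follows by the coupling-plus-triangle-inequality packaging described above. A secondary technical point is handling the Poissonization (the continuous-time number of steps $N(t)$ concentrates around $t$), which only contributes lower-order error and can be absorbed into the same $\delta$.
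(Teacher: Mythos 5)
Your overall packaging matches the paper's: the easy direction via contractivity of total variation under the projection to $G_{\mathrm{ab}}$, and for the hard direction a triangle inequality in which the "abelian" term is at most $\ep-\delta$ at time $T=t^{G_{\mathrm{ab}}}_{\mathrm{mix}}(\ep-\delta)$ and the remaining term must be shown to be at most $|G|\exp(-(\log|G|)^L)\le\delta$. But the substantive step is exactly the one you leave as "a separate lemma I expect the proof to isolate", and your sketch of it has two problems. First, conditioning on "the increment data that determines $Y_T$" leaves no residual randomness at all: the full word of increments determines $X_T$, so there is no conditional law on the fiber to equidistribute. The paper avoids this by comparing two initial laws instead: it bounds $\| \P_{\id}(X_t=\cdot)-\P_{\pi_{G_2}}(X_t=\cdot)\|_{\mathrm{TV}}\le \tfrac{|G|}{2}\exp\bigl(-t/(|S|\,\Diam_S(G_2)^2)\bigr)$ by a Dirichlet-form (path/diameter) comparison on the subspace of functions with zero mean on each $G_2$-coset (Lemma \ref{commutator_mixing}), and identifies $\| \P_{\pi_{G_2}}(X_t=\cdot)-\pi_G\|_{\mathrm{TV}}$ with the TV distance of the projected walk (Lemma \ref{reduction_TV}).

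Second, and more importantly, your quantitative heuristic is wrong for arbitrary generating sets of the allowed size: for, say, bounded $|S|$, nothing about $G_2$ equidistributes after polylogarithmically many steps, and in any case $\log|G|/\log|S|\le\log|G|$ does not "far exceed" $(\log|G|)^L$ when $L\ge 2$. Mixing inside $G_2$ is a diffusive phenomenon governed by $\Diam_S(G_2)$, so a step-counting, character, or layer-by-layer $L^2$ argument on $G_2$ alone cannot give the claimed error uniformly in $S$. The decisive ingredient you are missing is geometric: the quantitative diameter comparison $\Diam_S(G_2)\lesssim \Diam_S(G_{\mathrm{ab}})^{3/4}$ (Theorem \ref{diam_bound} and Corollary \ref{diam_comparison}), which is precisely where the hypothesis $|S|\le \frac{\log|G|}{8Lr^L\log\log|G|}$ is used, combined with the Carne--Varopoulos lower bound $t^{G_{\mathrm{ab}}}_{\mathrm{mix}}(\ep-\delta)\gtrsim \Diam_S(G_{\mathrm{ab}})^2/\log|G_{\mathrm{ab}}|$. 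Together these show that at time $T$ the decay exponent $T/(|S|\,\Diam_S(G_2)^2)$ is at least $(\log|G|)^L$, which is what produces the error term $|G|\exp(-(\log|G|)^L)$ in the statement. Without some substitute for this comparison between the geometry of $G_2$ and of $G_{\mathrm{ab}}$ inside $\Cay(G,S)$, the hard inequality does not follow from your outline.
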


\begin{remark}
The assumption $|S| \leq \frac{\log |G|}{ 8\LL r^\LL\log\log|G|}$ is to guarantee that $\Diam_S(G_2)$ is of smaller order than $\Diam_S(G_{\mathrm{ab}})$ so that the mixing of $X_t$ is governed by its projected walk onto $G_{\mathrm{ab}}$. With more specific knowledge on the structure of $G$, one can expect to obtain a much less stringent constraint on $S$. Also see Remark \ref{dependence_rank_step}.
\end{remark}

As a direct consequence of the proof of Theorem \ref{reduction_ab}, we establish that under the same conditions,  the spectral gap of the random walk on $G$ is likewise determined by the spectral gap of its projection onto $G_{\mathrm{ab}}$.
\begin{corollary}\label{t_rel}
Let $t^G_{\mathrm{rel}}$ and $t^{G^{\mathrm{ab}}}_{\mathrm{rel}}$ be the relaxation time of the walk $X_t$ and $Y_t=G_2 X_t$ respectively. Then
$$t^{G^{\mathrm{ab}}}_{\mathrm{rel}}\leq t^G_{\mathrm{rel}} \leq \max\{ t^{G^{\mathrm{ab}}}_{\mathrm{rel}}, |S| \cdot \Diam_S(G_2)^{2}\}.$$
In particular, when $ |S| \leq \frac{\log |G|}{ 8\LL r^\LL\log\log|G|}$ we have $t^G_{\mathrm{rel}}=t^{G^{\mathrm{ab}}}_{\mathrm{rel}}$.
\end{corollary}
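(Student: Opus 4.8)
The plan is to deduce Corollary~\ref{t_rel} from (the proof of) Theorem~\ref{reduction_ab} together with standard comparison/collapsing arguments for relaxation times. The lower bound $t^{G^{\mathrm{ab}}}_{\mathrm{rel}}\le t^G_{\mathrm{rel}}$ is the easy direction: the projected walk $Y_t=G_2X_t$ on $\Cay(G_{\mathrm{ab}},S_{G_2})$ is a factor chain of $X_t$ (the projection $\pi\colon G\to G_{\mathrm{ab}}$ is a graph homomorphism that commutes with the walk), and a function-lifting argument shows that every Dirichlet eigenfunction on $\Gamma_{\mathrm{ab}}$ lifts to one on $\Gamma$ with the same Rayleigh quotient; hence $\mathrm{gap}(\Gamma)\le\mathrm{gap}(\Gamma_{\mathrm{ab}})$, i.e. $t^{G^{\mathrm{ab}}}_{\mathrm{rel}}\le t^G_{\mathrm{rel}}$. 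First I would write this out carefully in one short paragraph.

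For the upper bound $t^G_{\mathrm{rel}}\le\max\{t^{G^{\mathrm{ab}}}_{\mathrm{rel}},\,|S|\cdot\Diam_S(G_2)^2\}$, the natural route is a two-step decomposition of the walk along the normal subgroup $G_2$, reflecting the fact that $X_t$ fibers over $Y_t$ with fibers that are cosets of $G_2$. I would use a Markov-chain decomposition (the spectral-gap decomposition of Jerrum–Son–Tetali–Vigoda, or the older "collapsing" bound of Diaconis–Saloff-Coste), which bounds the gap of $X$ from below in terms of (a) the gap of the projected chain $Y$ on $G_{\mathrm{ab}}$ and (b) the worst-case gap of the restriction of $X$ to a single fiber $G_2$ (equivalently, the induced chain on a coset of $G_2$). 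Step (a) contributes $t^{G^{\mathrm{ab}}}_{\mathrm{rel}}$. For step (b), I need a crude upper bound on the relaxation time of the walk confined to $G_2$: the relevant generating moves inside $G_2$ are words in $S$ of length at most $\Diam_S(G_2)$, and a standard canonical-path / comparison argument (comparing the chain on $G_2$ with moves given by these length-$\le\Diam_S(G_2)$ words to the complete-graph chain on $G_2$) gives a relaxation time of order $|S|\cdot\Diam_S(G_2)^2$ — the $|S|$ accounting for the congestion when expressing each such word-move via single generators, and the $\Diam_S(G_2)^2$ being the usual squared-diameter factor. Assembling (a) and (b) via the decomposition inequality yields the stated bound.

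Finally, under the hypothesis $|S|\le\frac{\log|G|}{8Lr^L\log\log|G|}$ I would invoke the diameter estimates already used in the proof of Theorem~\ref{reduction_ab} — namely that this constraint forces $\Diam_S(G_2)$, and hence $|S|\cdot\Diam_S(G_2)^2$, to be of strictly smaller order than $\Diam_S(G_{\mathrm{ab}})$, while $t^{G^{\mathrm{ab}}}_{\mathrm{rel}}\asymp\Diam_S(G_{\mathrm{ab}})^2$ (up to the relevant polynomial factors) by the general theory of moderate-growth groups. Thus the maximum is attained by the first term, $\max\{t^{G^{\mathrm{ab}}}_{\mathrm{rel}},\,|S|\cdot\Diam_S(G_2)^2\}=t^{G^{\mathrm{ab}}}_{\mathrm{rel}}$, and combined with the lower bound this gives the equality $t^G_{\mathrm{rel}}=t^{G^{\mathrm{ab}}}_{\mathrm{rel}}$.

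The main obstacle I anticipate is step (b): getting a clean, self-contained bound of the form $|S|\cdot\Diam_S(G_2)^2$ for the relaxation time of the random walk restricted to $G_2$. One has to be careful that the "restricted" chain appearing in the decomposition theorem is generated by exactly the moves that keep the walk inside a fixed $G_2$-coset, argue that these moves are realized by $S$-words of length $O(\Diam_S(G_2))$, and then run a congestion/comparison estimate cleanly — in particular controlling the congestion constant by $|S|$ rather than something larger. Everything else is either a factor-chain triviality or a direct appeal to estimates already established for Theorem~\ref{reduction_ab}.
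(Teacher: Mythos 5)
Your lower bound is fine, and your instinct that a path/congestion comparison of length $\Diam_S(G_2)$ with congestion $|S|$ is the key ingredient is on target, but the framework you propose for the upper bound has a genuine gap. The block-decomposition theorems you invoke (Jerrum--Son--Tetali--Vigoda, Madras--Randall, the Diaconis--Saloff-Coste collapsing bounds) take as input the \emph{restriction} chains, whose moves are by definition the moves of $P$ that stay inside a fixed block. Here the blocks are the $G_2$-cosets, and generically no generator $s\in S$ lies in $G_2$, so the restriction chain on a coset is (up to laziness) the identity chain with zero spectral gap; you are not allowed to replace its moves by $S$-words of length $O(\Diam_S(G_2))$, since that is a different chain from the one appearing in the decomposition inequality. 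Worse, even where these theorems do apply they produce a \emph{product}-type bound, roughly $\mathrm{gap}(P)\gtrsim \mathrm{gap}(\bar P)\cdot\min_i\mathrm{gap}(P_i)$ (with escape-probability corrections), i.e.\ $t^G_{\mathrm{rel}}\lesssim t^{G_{\mathrm{ab}}}_{\mathrm{rel}}\cdot |S|\,\Diam_S(G_2)^2$, which is strictly weaker than the stated $\max$ bound and cannot yield the equality $t^G_{\mathrm{rel}}=t^{G^{\mathrm{ab}}}_{\mathrm{rel}}$ in the second assertion. (Your appeal to ``moderate growth'' for $t^{G_{\mathrm{ab}}}_{\mathrm{rel}}\asymp\Diam_S(G_{\mathrm{ab}})^2$ is also not available as stated, since $|S|$ is allowed to grow; the paper instead gets the needed lower bound on $t^{G_{\mathrm{ab}}}_{\mathrm{mix}}$ from Carne--Varopoulos, but this is a minor point.)

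What makes the $\max$ bound work in the paper is that the coset structure gives an \emph{exact} invariant splitting rather than a generic decomposition: the subspace $\AA$ of functions summing to zero on every $G_2$-coset is invariant under $P$, and its complement (functions constant on cosets) carries exactly the projected walk $Y_t$. Lemma \ref{commutator_mixing} runs the comparison you have in mind — each element of $G_2$ is an $S$-word of length at most $\Diam_S(G_2)$, and Dirichlet-form comparison with the walk whose generator set is all of $G_2$ gives, for $f\in\AA$, $\|P_tf\|_2\le\|f\|_2\exp\bigl(-t/(|S|\Diam_S(G_2)^2)\bigr)$ — so that $\|\P_{\id}(X_t=\cdot)-\P_{\pi_{G_2}}(X_t=\cdot)\|_{\mathrm{TV}}$ decays at rate at least $(|S|\Diam_S(G_2)^2)^{-1}$, while $\|\P_{\pi_{G_2}}(X_t=\cdot)-\pi_G\|_{\mathrm{TV}}=\|\P_{G_2}(Y_t=\cdot)-\pi_{G_{\mathrm{ab}}}\|_{\mathrm{TV}}$ decays at rate $1/t^{G_{\mathrm{ab}}}_{\mathrm{rel}}$. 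Feeding the triangle inequality into the characterization $\lim_{t\to\infty}t^{-1}\log d(t)=-1/t_{\mathrm{rel}}$, the sum of two exponentials decays at the slower of the two rates, which is precisely the maximum $\max\{t^{G^{\mathrm{ab}}}_{\mathrm{rel}},|S|\Diam_S(G_2)^2\}$; the equality then follows from the diameter comparison $\Diam_S(G_2)\lesssim\Diam_S(G_{\mathrm{ab}})^{3/4}$ under the hypothesis on $|S|$. So you should abandon the generic decomposition theorem and instead carry out your comparison argument on the invariant subspace $\AA$ (equivalently, via the decay of $\|\P_{\id}(X_t=\cdot)-\P_{\pi_{G_2}}(X_t=\cdot)\|_{\mathrm{TV}}$), which is exactly the paper's route.
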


As a consequence of the above results, we can see that for a class of nilpotent groups $G$ whose abelianization has a unique representation, with a symmetric set of generators $S$ of minimal size, the mixing time and the relaxation time (inverse of the spectral gap) of the random walk do not depend on the choice of $S$. In this case, the choice of generators do not effect the mixing behavior. This provides a partial answer to the open question posed in Section \ref{motivation}.

\begin{theorem}\label{maximal_class_group}
Suppose $G$ is a nilpotent group with rank $r$ and step $L$ such that either (i) $G_{ab}\cong\ZZ^r_m$ where $m\in \mathbb{N}$ or (ii) $G$ is a $p$-group. Suppose the rank and step satisfy $Lr^{L+1}\leq \frac{\log|G|}{16 \log\log|G|}$. For any symmetric set of generators $S\subseteq G$ of minimal size and any given $\ep\in(0,1)$, the mixing time $t^{G,S}_{mix}(\ep)$ is the same up to smaller order terms, and the relaxation time $t_{\mathrm{rel}}^{G,S}$ is the same. 
\end{theorem}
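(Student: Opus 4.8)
The plan is to deduce Theorem~\ref{maximal_class_group} from Theorem~\ref{reduction_ab} and Corollary~\ref{t_rel} by reducing the problem entirely to the abelianization and then invoking the corresponding (already known) statement for abelian groups together with the classification of minimal generating sets in the two cases. The first step is to check that the hypothesis $Lr^{L+1}\le \frac{\log|G|}{16\log\log|G|}$ implies the hypothesis $|S|\le \frac{\log|G|}{8Lr^L\log\log|G|}$ of Theorem~\ref{reduction_ab} and Corollary~\ref{t_rel}, using that a minimal symmetric generating set has $|S|\le 2r$ (so $|S|\cdot 8Lr^L \le 16 L r^{L+1}$). Hence for any minimal symmetric generating set $S$ we get $t^{G,S}_{\mathrm{mix}}(\ep)$ sandwiched between $t^{G_{\mathrm{ab}},S_{G_2}}_{\mathrm{mix}}(\ep)$ and $t^{G_{\mathrm{ab}},S_{G_2}}_{\mathrm{mix}}(\ep-\delta)$ with $\delta=\delta(|G|)\to 0$, and $t^{G,S}_{\mathrm{rel}}=t^{G_{\mathrm{ab}},S_{G_2}}_{\mathrm{rel}}$. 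So it suffices to show that, as $S$ ranges over minimal symmetric generating sets of $G$, the projected set $S_{G_2}=\{G_2 s: s\in S\}$ ranges over sets of generators of $G_{\mathrm{ab}}$ that are all ``equivalent'' in the sense of giving (asymptotically) the same mixing time and the same relaxation time for the walk on $\Cay(G_{\mathrm{ab}},S_{G_2})$.

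The key structural point is that a symmetric set $S=\{s_i^{\pm1}:i\in[r]\}$ generates $G$ if and only if its image generates $G_{\mathrm{ab}}$; this is the standard fact that for a nilpotent group $G$, a subset generates $G$ iff it generates $G/[G,G]$ (equivalently $G/\Phi(G)$ in the $p$-group case, since for $p$-groups $\Phi(G)=G_2 G^p$ and the relevant statement about the Frattini quotient is what is quoted from Diaconis--Saloff-Coste). Consequently $|S|$ minimal for $G$ forces $\{G_2 s_i\}$ to be a \emph{minimal} generating set of $G_{\mathrm{ab}}$, i.e. of size exactly $r=r(G)=r(G_{\mathrm{ab}})$. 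Now I split into the two cases. In case (i), $G_{\mathrm{ab}}\cong\ZZ_m^r$: any minimal generating set of $\ZZ_m^r$ is the image of a matrix in $\mathrm{GL}_r(\ZZ_m)$ applied to the standard basis, so there is an automorphism of $\ZZ_m^r$ carrying any one minimal symmetric generating set to any other; automorphisms are graph isomorphisms of the Cayley graph, hence preserve the mixing time and relaxation time \emph{exactly}. In case (ii), $G$ is a $p$-group, so $G_{\mathrm{ab}}$ is a finite abelian $p$-group $\ZZ_{p^{\alpha_1}}\oplus\cdots\oplus\ZZ_{p^{\alpha_r}}$; again, since any minimal generating set of $G_{\mathrm{ab}}$ is an $\mathrm{Aut}(G_{\mathrm{ab}})$-image of a fixed one (this is the standard fact that $\mathrm{Aut}$ of a finite abelian $p$-group acts transitively on minimal — i.e. size-$r$ — generating tuples that reduce to a basis of $G_{\mathrm{ab}}/p G_{\mathrm{ab}}$; here one uses that all elements of a minimal generating set must be nontrivial mod $p$ and span the Frattini quotient), the Cayley graphs $\Cay(G_{\mathrm{ab}},S_{G_2})$ are all isomorphic as $S$ varies over minimal symmetric generating sets, so the relaxation time and mixing time on $G_{\mathrm{ab}}$ do not depend on the choice. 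Combining this with the sandwich from Theorem~\ref{reduction_ab} and the equality from Corollary~\ref{t_rel} yields that $t^{G,S}_{\mathrm{mix}}(\ep)$ is the same up to an additive $o(1)$-in-the-$\ep$-argument (hence up to smaller order terms, since the common value diverges) and $t^{G,S}_{\mathrm{rel}}$ is exactly the same, for all minimal symmetric $S$.

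One technical wrinkle to handle carefully: a minimal symmetric generating set $S$ of $G$ need not be of the form $\{s_i^{\pm1}\}$ with the $s_i$ distinct and $s_i\ne s_i^{-1}$ — some generators could be involutions, and the multigraph structure (parallel edges, loops) matters for the walk. But in the $p$-group case $p=2$ is the only possibility for involutions, and more to the point, what we really need is that the pushforward step distribution on $G_{\mathrm{ab}}$ under the uniform-on-$S$ walk is carried to the pushforward step distribution for $S'$ by an automorphism of $G_{\mathrm{ab}}$; since a minimal generating tuple of $G_{\mathrm{ab}}$ has distinct nontrivial coordinates forming a ``basis'' of the Frattini quotient, the uniform measures on the corresponding symmetric generating multisets are automorphism-equivalent, and this is exactly what is needed for the Cayley multigraphs (with the convention in the Definition above) to be isomorphic. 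I expect the main obstacle to be precisely this bookkeeping — making the notion ``all minimal symmetric generating sets of $G_{\mathrm{ab}}$ are equivalent under $\mathrm{Aut}(G_{\mathrm{ab}})$'' fully precise at the level of \emph{multisets}/step distributions rather than just sets, and confirming that the fact ``$S$ generates $G$ iff $S$ generates $G_{\mathrm{ab}}$'' upgrades to the statement that a minimal $S$ for $G$ projects to a minimal $S$ for $G_{\mathrm{ab}}$ with no collisions among the $r$ projected generators. Everything else is an immediate consequence of the two cited results plus the observation that group automorphisms are Cayley-graph isomorphisms and hence preserve mixing and relaxation times exactly.
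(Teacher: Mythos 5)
Your proposal follows essentially the same route as the paper's proof: check that a minimal symmetric generating set has $|S|\le 2r$, so the hypothesis $|S|\le \frac{\log|G|}{8Lr^L\log\log|G|}$ of Theorem \ref{reduction_ab} and Corollary \ref{t_rel} follows from $Lr^{L+1}\le\frac{\log|G|}{16\log\log|G|}$; reduce the mixing and relaxation times to those of the projected walk on $\Cay(G_{\mathrm{ab}},S_{G_2})$; observe (Proposition \ref{p:generatingG}) that $S_{G_2}$ is then a minimal symmetric generating set of $G_{\mathrm{ab}}$; and finally argue that all such projected sets yield equivalent walks on $G_{\mathrm{ab}}$. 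In case (i) your $\mathrm{GL}_r(\ZZ_m)$ argument is a correct, slightly more explicit version of the paper's assertion that any minimal generating set corresponds to the standard basis via an automorphism, and automorphisms are Cayley-graph isomorphisms, so they preserve both quantities exactly; the multiset/involution bookkeeping you flag is minor and is handled the same way in the paper's framework.

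The one substantive issue is case (ii). The ``standard fact'' you invoke --- that $\mathrm{Aut}(G_{\mathrm{ab}})$ acts transitively on minimal generating tuples of a finite abelian $p$-group --- is false once the invariant factors are not all equal. In $\ZZ_p\oplus\ZZ_{p^2}$ the pair $\{(1,1),(0,1)\}$ is a minimal generating set (since $(1,1)-(0,1)=(1,0)$), yet both of its elements have order $p^2$, so no automorphism can carry the standard basis, whose element orders are $p$ and $p^2$, onto it. The discrepancy is not merely cosmetic: a character computation gives spectral gap $\tfrac12\bigl(1-\cos(2\pi/p^2)\bigr)$ for the generators $\{\pm e_1,\pm e_2\}$ versus $1-\cos(2\pi/p^2)$ for $\{\pm(1,1),\pm(0,1)\}$, so for large $p$ the relaxation times of the two projected walks differ by a factor of $2$, not by smaller-order terms; thus the case-(ii) invariance cannot be recovered by a different argument at the level of $G_{\mathrm{ab}}$ unless $\alpha_1=\cdots=\alpha_r$. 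To be fair, this is precisely the step the paper itself dispatches in one line, claiming that any minimal symmetric generating set of $G_{\mathrm{ab}}$ ``uniquely corresponds to'' $\{\pm e_i\}$ because the decomposition $\ZZ_{p^{\alpha_1}}\oplus\cdots\oplus\ZZ_{p^{\alpha_r}}$ is unique; uniqueness of the decomposition does not yield that correspondence. So your write-up reproduces the paper's argument faithfully, including its weakest step: it is airtight in case (i) and, in case (ii), only when $G_{\mathrm{ab}}$ is homocyclic (e.g.\ $G_{\mathrm{ab}}\cong\ZZ_{p^\alpha}^r$), where your case-(i) matrix argument applies verbatim.
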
 

\medskip

The mixing property of the random walk $X_t$ on the Cayley graph of $G$ is closely related to that of the projected random walk on the Cayley graph of $G_{\mathrm{ab}}$. More precisely, denoting by $Y_t:=G_2X_t$ the projected RW on $G_{\mathrm{ab}}$ and starting with the walk $X_t$ being uniform over $G_2$, one can observe (see Lemma \ref{reduction_TV}) that
$$\| \P_{\pi_{G_2}}(X_t=\cdot)-\pi_G\|_{\mathrm{TV}}=\| \P_{G_2}(Y_t=\cdot)-\pi_{G_{\mathrm{ab}}}\|_{\mathrm{TV}}.$$
As suggested by the following triangle inequality
$$\| \P_{\id}(X_t=\cdot)-\pi_G\|_{\mathrm{TV}}\leq \| \P_{\pi_{G_2}}(X_t=\cdot)-\pi_G\|_{\mathrm{TV}}+ \| \P_{\id}(X_t=\cdot)- \P_{\pi_{G_2}}(X_t=\cdot)\|_{\mathrm{TV}},
$$
if the total variation distance between $\P_{\id}(X_t=\cdot)$ and $\P_{\pi_{G_2}}(X_t=\cdot)$ can be nicely controlled then the mixing property of $X_t$ is primarily characterized by the mixing of its projection on the abelianization, which we refer to as \textit{the reduction to abelianization}. 

We will prove in Lemma \ref{commutator_mixing} that indeed $ \| \P_{\id}(X_t=\cdot)- \P_{\pi_{G_2}}(X_t=\cdot)\|_{\mathrm{TV}}$ decays exponentially fast in time with rate at least $(|S| \cdot\Diam_S(G_2)^2)^{-1}$, where $\Diam_S(G_2)$ is the diameter of $G_2$ in $\Cay(G,S)$. This provides a quantitive criterion to determine when the mixing of the walk $X_t$ is governed by its projection onto the abelianization. In particular, if the mixing of the projected walk $Y_t$ occurs after $\| \P_{\id}(X_t=\cdot)- \P_{\pi_{G_2}}(X_t=\cdot)\|_{\mathrm{TV}}$ had become vanishingly small then the mixing time of $X_t$ is roughly that of $Y_t$.

Due to the well known connection between the mixing time and the diameter of the graph, see, e.g., \cite[Proposition 13.7]{lyons2017probability}, for our purpose it is sufficient to prove $\Diam_S(G_2)$ is small enough compared to $\Diam_S(G_{\mathrm{ab}})$. Section \ref{Geo_Cayley} is devoted to proving an upper bound on $\Diam_S(G_2)$ where the roles of $L$, $|S|$ and $\Diam_S(G_{\mathrm{ab}})$ are made explicit.

\begin{theorem}\label{diam_bound}
Let $S\subseteq G$ be a symmetric set of generators and let $R \subseteq S$ be such that $|\{s,s^{-1}\} \cap R| = 1$ for all $s \in S$. For $2\leq i\leq L$, we have
\begin{align}
\Diam_S(G_2)&\le
		\sum_{i=2}^\LL
	\nonumber \Diam_S(G_i / G_{i+1})\\
	&\le
		\sum_{i=2}^\LL
		2^{5i+7}|R|^i 
	\left( 2^{2i}+ L\cdot  \ceil{ \Diam_S(G_{\mathrm{ab}})/ |R|}^{1/i}   \right).
	\label{e:DiamGcom}
\end{align}
\end{theorem}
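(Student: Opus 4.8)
The plan is to reduce everything to a statement about the lower central series and the commutator structure of $G$. First I would observe that $G_2 = \prod_{i=2}^{L} (\text{stuff in } G_i/G_{i+1})$ in the sense that every element of $G_2$ can be written as a product of an element whose image in $G_2/G_3$ we control, times an element of $G_3$, and so on; this gives the telescoping bound $\Diam_S(G_2) \le \sum_{i=2}^{L} \Diam_S(G_i/G_{i+1})$, where $\Diam_S(G_i/G_{i+1})$ means the diameter of the subgroup $G_i$ of $\Cay(G,S)$ measured after quotienting by $G_{i+1}$ — i.e.\ the number of $S$-steps needed, working modulo $G_{i+1}$, to reach an arbitrary element of $G_i/G_{i+1}$. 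One must check that the triangle-inequality-style concatenation of paths is legitimate here: a path realizing an element of $G_i$ modulo $G_{i+1}$ can be appended to a path already constructed for the higher layers, because multiplying on the right by elements of $G_{i+1} \le G_i$ does not disturb the already-fixed coordinates in $G_j/G_{j+1}$ for $j < i$. This is the first key step and is essentially bookkeeping with the filtration.

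The second and main step is to bound each $\Diam_S(G_i/G_{i+1})$. The group $G_i/G_{i+1}$ is a finitely generated abelian group generated by (the images of) the iterated commutators $[r_{j_1},[r_{j_2},[\dots,r_{j_i}]\dots]]$ with $r_{j_\ell} \in R$; there are at most $|R|^i$ such basic commutators. The strategy is the standard commutator-collection / "choose a long word and extract commutators" argument: to produce a given element $g \in G_i/G_{i+1}$, one builds a word in $S$ of controlled length whose "$i$-th order part" is $g$. The point is that a commutator $[a_1,\dots,a_i]$ of weight $i$ can be synthesized by a word of length roughly $2^i$ times the lengths of the $a_\ell$'s, via the elementary identity that realizing $[a,b]$ costs $2(\ell(a)+\ell(b))$ steps. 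Iterating this $i$ times and noting that an abelian group element that is an integer combination $\sum c_m w_m$ of $|R|^i$ generators $w_m$, with each $|c_m|$ bounded by roughly $(\Diam_S(G_{\mathrm{ab}})/|R|)$ raised to the $1/i$ power (this is where the exponent $1/i$ enters — it reflects that the abelianization diameter is achieved by balancing an $i$-fold product, so each factor contributes a $1/i$-th root), can be reached by concatenating $|R|^i$ such commutator words, each of length at most $2^{O(i)} |R|^{i-1} \cdot (\text{that root})$. Collecting the constants carefully yields the displayed bound $2^{5i+7}|R|^i(2^{2i} + L\lceil \Diam_S(G_{\mathrm{ab}})/|R|\rceil^{1/i})$; the $2^{2i}$ term absorbs the cost of producing each commutator even when the coefficients are zero (i.e.\ a baseline overhead), and the $L$ factor comes from the depth of nested collection needed.

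The delicate point — and the one I expect to be the main obstacle — is making the "extract the weight-$i$ part of a word" argument quantitative and uniform. In a nilpotent group one wants something like: given that a word $w$ in the generators equals the identity in $G_{\mathrm{ab}}$ (i.e.\ has trivial exponent-sum for each generator), its value lies in $G_2$, and more refined cancellation pushes it into $G_i$; conversely one wants to \emph{design} words with a prescribed weight-$i$ component. The cleanest route is probably to use the structure of the free nilpotent group of class $L$ on $|R|$ generators and the Hall basis / basic-commutator collection process, controlling how many generator-letters are consumed when one collects a word of length $n$ into normal form and reads off the coefficient of a given basic commutator of weight $i$: that coefficient is a polynomial of degree $\le i$ in the letter-counts, so to get a coefficient of size $C$ one needs letter-count of order $C^{1/i}$, which is exactly the root appearing in the statement. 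Turning this into honest upper bounds on $\Diam_S$ (rather than existence statements) requires exhibiting explicit short words — the commutator-doubling identity $[a,b] = a^{-1}b^{-1}ab$ together with repeated conjugation — and carefully tracking that the total length is $\sum_{i=2}^L$ of the per-layer cost. I would organize the write-up as: (1) the telescoping lemma for the filtration; (2) a lemma bounding $\Diam_S(G_i/G_{i+1})$ in terms of the maximal coefficient size of an element of that abelian layer expressed in basic commutators; (3) a lemma bounding that maximal coefficient size in terms of $\Diam_S(G_{\mathrm{ab}})$ via the polynomial-degree-$i$ collection estimate; and (4) assembling the constants. The constant $2^{5i+7}$ is generous precisely so that the somewhat lossy iterated commutator-doubling bounds go through without optimization.
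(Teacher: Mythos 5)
Your overall architecture matches the paper's: the telescoping bound $\Diam_S(G_2)\le\sum_{i=2}^{L}\Diam_S(G_i/G_{i+1})$ via the filtration triangle inequality, and a per-layer bound through the generation of $G_i/G_{i+1}$ by iterated commutators $\rho(s,x_2,\dots,x_i)$ with $x_j\in R$. But your key quantitative step is mis-stated in a way that breaks the argument. You claim that an arbitrary element of $G_i/G_{i+1}$ can be written as an integer combination $\sum_m c_m w_m$ of basic commutators with each coefficient $|c_m|$ bounded by roughly $\lceil \Diam_S(G_{\mathrm{ab}})/|R|\rceil^{1/i}$, and you let the $1/i$ root enter the final bound through this coefficient bound. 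That claim is false: already for the Heisenberg group over $\mathbb{Z}_p$ with $R=\{x,y\}$, the layer $G_2/G_3=G_2\cong\mathbb{Z}_p$ has the single basic commutator $[x,y]$, and the element $[x,y]^{\lfloor p/2\rfloor}$ forces a coefficient of order $p\asymp\Diam_S(G_{\mathrm{ab}})$, not of order $\sqrt{p}$. What is actually available (Corollary \ref{g_expression} in the paper) is only that the exponents in the first slot can be arranged to satisfy $\sum_{s}|\ell(s)|\le \Diam_S(G_{\mathrm{ab}})$; individual exponents can be as large as $\Diam_S(G_{\mathrm{ab}})$ itself.

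The $1/i$ root must therefore come from bounding the word length of a single power commutator, not the size of its exponent: the paper's Lemma \ref{lemma_thm1} shows $|G_{i+1}\rho(s^{m},x_2,\dots,x_i)|\le 2^{5i+6}\left(2^{2i}+L\,m^{1/i}\right)$, proved by using multilinearity modulo $G_{i+1}$ to rewrite $\rho(s^{n^{j}},x_2,\dots,x_i)$ as $\rho(s^{n},x_2^{n},\dots,x_j^{n},x_{j+1},\dots,x_i)$, whose length is $O(2^{i}n)$, and then greedily decomposing an arbitrary $m$ into a sum of $j$-th powers for $j=i,i-1,\dots,2$; the bookkeeping of that greedy scheme is exactly where the $L$ factor and the $2^{2i}$ baseline arise. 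Your third paragraph gestures at the right homogeneity heuristic (a weight-$i$ coefficient is a degree-$i$ polynomial in letter counts), but you never explain how to realize exponents that are not perfect $i$-th powers by short words, and that is the real content of the proof. You are also missing the concavity/Lagrange step converting the constraint $\sum_{s}\ell(s)\le\Diam_S(G_{\mathrm{ab}})$ into $\sum_{s}\ell(s)^{1/i}\le |R|\cdot\lceil\Diam_S(G_{\mathrm{ab}})/|R|\rceil^{1/i}$, which is where the division by $|R|$ inside the ceiling comes from. Without these two ingredients the per-layer bound, and hence the stated constants, cannot be recovered.
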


 As a consequence, for any set of generators satisfying $|S| \leq \frac{\log |G|}{ 8\LL r^\LL\log\log|G|}$, one has $\Diam_S(G_2) \lesssim \Diam_S(G_{\mathrm{ab}})^{3/4}$ and hence the mixing of $X_t$ can be reduced to the mixing of its projection onto the abelianization.

\subsubsection{Our Methodology}\label{our_contribution}

We describe our methodology in relation to the objectives described in Section \ref{objectives}. 

\mn
\textbf{(i) Representation of random walk.} A substantial body of work has been devoted to the study of random walks on unipotent matrix groups, see Section \ref{RW_unipotent}. The analysis in many existing work heavily depends on the favorable matrix structure specific to unipotent matrix groups, a feature not necessarily present in general nilpotent groups. 
 
There has been some progress made towards treating general nilpotent groups.  
In  \cite[\S 6]{hermon2102cutoff}, partial results were obtained using a comparison between the mixing time of a general nilpotent group $G$ with a ``corresponding" abelian group $\bar G:=\oplus_{\ell=1}^L G_\ell/G_{\ell+1}$ in \cite[\S 6]{hermon2102cutoff}. See Section \ref{setting&notation} for the definition of $\{G_\ell\}_{\ell\in [L]}$. More specifically, denoting by $\GG_k$ and $\bar \GG_k$ respectively the random Cayley graphs generated by $k$ i.i.d. uniform generators in $G$ and $\bar G$, it is shown that  $t_{mix}(\GG_k)/t_{mix}((\bar \GG)_k) \leq 1 + o(1)$ with high probability, thereby offering an upper bound on the mixing time on $\GG_k$. 

This comparison leads to a tight upper bound and thus establishes cutoff when $G$ is a nilpotent group when $G$ has a relatively small commutator subgroup $[G,G]$. Examples of such groups include $p$-groups with ``small" commutators and Heisenberg groups of diverging dimension, see \cite[Corollary D.1 and D.2]{hermon2102cutoff}. However, for general nilpotent groups this comparison is not sharp. 

While the comparison technique discussed in \cite{hermon2102cutoff} may not ensure a sharp upper bound on the mixing time for general nilpotent groups, it underscores the approach of examining the mixing behavior in relation to each quotient group $\{G_\ell/G_{\ell+1}\}_{\ell\in [L]}$. To obtain the tight upper bound and establish cutoff, we give an accurate representation of the random walk dynamics through the lens of quotient groups. 

To give a bit of intuition, let us consider the free nilpotent group of step 2 (i.e., $G_3=\{\id\}$). Let $S=\{ Z_i^{\pm 1}: i\in [k]\}$ be a set of i.i.d. uniform generators. Let $W:=W(t)=(W_1(t),\dots,W_k(t))$ be an auxiliary process defined based on the random walk $X_t$ where $W_i(t)$ is the number of times generator $s_i$ has been applied minus the number of times $s_i^{-1}$ has been applied in the random walk $X:=X_t$. Through rearranging, we can express any word in the form
\beq\label{step2_X_rep}
X=Z_1^{W_1}\cdots Z_k^{W_k} \prod_{a,b\in[k]: a<b}[Z_a,Z_b]^{m_{ba}},
\eeq
where $(m_{ba})_{a,b\in[k],a<b}$ results from the rearrangement of generators, see \eqref{seq_S} for more details. Roughly speaking, $Z_1^{W_1}\cdots Z_k^{W_k}$ keeps track of the walk on $G_{\mathrm{ab}}=G/G_2$ whereas the term $ \prod_{a,b\in[k]: a<b}[Z_a,Z_b]^{m_{ba}}$, which belongs to $G_2$, corresponds to the mixing on the quotient group $G_2/G_3$. 

We demonstrate in Section \ref{representation_X} that this line of reasoning applies to general nilpotent groups of step $L\geq 2$, see \eqref{rearranged_S}. Although the rearranging of generators leads to the presence of multi-fold commutators such as $[[Z_1,Z_2],Z_3]$ when $L\geq 3$, we will argue, through a further careful simplification, that the presence of multi-fold commutators does not add to the complexity of the analysis, and one only needs to control the distribution of $ \prod_{a,b\in[k]: a<b}[Z_a,Z_b]^{m_{ba}}$ as with the case where $L=2$.

\mn
\textbf{(ii.a) Comparison argument: reduction to abelianization.} We develop a nice argument of comparison that addresses the mixing of random walk on general groups with an arbitrary  generator set $S$. Under mild assumptions on the size of $S$, the mixing time on $G$ is the same as the mixing time of the projected walk on $G_{\mathrm{ab}}$ (up to smaller order terms), see the precise statement in Theorem \ref{reduction_ab}. That is, within the scope of Theorem \ref{reduction_ab}, the mixing time on $G$ is completely determined by that on the abelianization $G_{\mathrm{ab}}$.

Theorem \ref{reduction_ab} further implies that for a certain class of nilpotent groups with specific structures in their abelianization, the mixing time remains the same (up to smaller order terms) regardless of the choice of a minimal-sized symmetric set of generators. See Theorem \ref{maximal_class_group} for the precise statement. 

 \mn
 \textbf{(ii.b) Geometry of the Cayley graph on nilpotent groups.} We derive a quantitative upper bound on the diameter of the commutator subgroup $G_2$ in terms of the diameter of the abelianization $G_{\mathrm{ab}}$, with explicit dependence on the rank $r=r(G)$ and step $L=L(G)$ of the group $G$, as detailed in Theorem \ref{diam_bound}. This, combined with the aforementioned comparison argument, allows us to provide sufficient conditions under which the mixing behavior of the random walk on $G$ is governed by that of the projected walk on $G_{\mathrm{ab}}$.

\subsection{Historic Overview}

\subsubsection{Random Walks on Unipotent Matrix Groups}\label{RW_unipotent}

Consider the group ${\displaystyle \mathbb {U} _{n}}$ of upper-triangular matrices with $1$'s along the diagonal, so they are the group of matrices
$$
 \mathbb {U} _{n}=\left\{{\begin{pmatrix}1&*&\cdots &*&*\\0&1&\cdots &*&*\\\vdots &\vdots &&\vdots &\vdots \\0&0&\cdots &1&*\\0&0&\cdots &0&1\end{pmatrix}}\right\}.
 $$
Then, a unipotent group can be defined as a subgroup of some ${\displaystyle \mathbb {U} _{n}}$. This includes the two families of nilpotent groups discussed earlier: the $d\times d$ unit-upper triangular matrices $U_{m,d}$ with entries in $\ZZ_m$ and the $d$-dimensional Heisenberg group $H_{m,d}$ over $\ZZ_m$ where $m\in \mathbb{N}$.

The exploration of random walks on unit upper triangular matrices has led to a substantial body of research. One avenue of investigation involves a simple walk on $U_{m,d}$, the $d\times d$ unit upper triangular matrix group with entries over $\ZZ_m$ for some $m\in\mathbb{N}$: a row is chosen uniformly and added to or subtracted from the row above. Ellenberg \cite{ellenberg1993sharp} studied the diameter of the associated Cayley graph, with $d$ growing, and subsequently improved this in Ellenberg and Tymoczko \cite{ellenberg2010sharp}. 
Stong \cite{stong1995random} gave mixing bounds via analysis of eigenvalues. Coppersmith and Pak \cite{coppersmith2000random, pak2000two} look directly at mixing. Further work along this line includes Peres and Sly \cite{peres2013mixing}, Nestoridi \cite{nestoridi2019super} and Nestoridi and Sly \cite{nestoridi2020random}. Notably, Nestoridi and Sly \cite{nestoridi2020random} are the first to optimize bounds for $m$ and $d$ simultaneously.
Diaconis and Hough \cite{diaconis2021random} introduced a new method for proving a central limit theorem for random walks on unipotent matrix groups.   

In the context of i.i.d. uniformly chosen generators, Hermon and Olesker-Taylor \cite{hermon2019cutoff} prove the characterization of the cutoff time as the entropic time of the projected walk onto the abelianization for the two families of nilpotent groups: the $d\times d$ unit-upper triangular matrices $U_{m,d}$ with entries in $\ZZ_m$ and the $d$-dimensional Heisenberg group $H_{m,d}$ over $\ZZ_m$ where $m\in \mathbb{N}$.

\subsubsection{The Entropic Methodology}\label{entropic_method}
A common theme in the study of mixing times is that ``generic" instances often exhibit the cutoff
phenomenon. Moreover, this can often be handled via the entropic method, see, e.g., \cite{berestycki2018random,bordenave2019cutoff,bordenave2022cutoff}. A more detailed exposition of the known literature can be found in a previous article of one of the authors, see \cite[\S 1.3.5]{hermon2102cutoff}. Additionally, the entropic method has been applied within the context of random walks on groups, as discussed in \cite{hermon2102cutoff, hermon2019cutoff}, which we now explain in a little more depth. 

The main idea is to relate the mixing of the random walk $X=X_t$ on $\Cay(G,S)$ to that of an auxiliary process $W_t$ and study the entropy of $W_t$. Suppose $S=\{ s_i^{\pm 1}: i\in [k]\}$ is given. The auxiliary process $W=W_t:=(W_1(t),\dots,W_k(t))$ is defined based on $X_t$ where $W_i(t)$ is the number of times generator $s_i$ has been applied minus the number of times $s_i^{-1}$ has been applied in the random walk $X_t$. The observation that $W$ is a rate 1 random walk on $\ZZ^k$ (whose entropy reveals information regarding the mixing of the walk $X$) leads naturally to the definition of the entropic times, see Definition \ref{entropic_time}. 
More specifically, the auxiliary process $W$ is related to the original random walk $X$ as follows. 
We sample two independent copies of the random walk and the auxiliary process, denoted by $(X,W)$ and $(X',W')$. By Cauchy-Schwarz inequality one has
$$4 \|\P_S(X_t=\cdot|W_t)-\pi_G\|^2_{\mathrm{TV}}\leq |G| \cdot \P_S(X_t=X'_t|W_t,W'_t)-1,$$
which relates the mixing of $X$ to the hitting probability of $X$ and $X'$, i.e., the probability that $X(X')^{-1}=\id$, where the index $t$ is suppressed as it is clear from the context. 

When the group $G$ is abelian, given the choice of generators $S$, the total variation distance is a function of $W_t$ alone, see \cite{hermon2102cutoff}. When the group is not abelian, this is not the case. When $G$ is nilpotent, the auxiliary process $W_t$ still provides useful (albeit partial) information on $X_t$. In this case, to get a full picture of the mixing of the RW, we will combine the knowledge on the auxiliary process $W_t$ with further information obtained through analyzing the mixing on the quotient groups $\{Q_\ell\}_{\ell\in [L]}$ separately. See Section \ref{proof_nonzeroV} and \ref{proof_mix_com} for the complete discussion.


\section{Geometry of Cayley Graphs}\label{Geo_Cayley}

The definition of Cayley graph of a group $G$ can be naturally extended to its quotient groups. For $H \trianglelefteq G$, the Cayley graph of $G/H$, denoted by $\Cay(G/H, \{Hs : s \in S\})$, consists of vertex set $G/H$ and edge set $\{ \{Hg,Hgs\}: g\in G, s\in S\}$.

Let $\dist_S (\cdot, \cdot)$ denote the graph distance on $\Cay(G, S)$. Define 
\beq\label{projected_generator}
S_H:=\{Hs : s \in S\}.
\eeq 
Similarly, let $\dist_{S_H}(\cdot,\cdot)$ denote the 
graph distance on $\Cay(G/H, S_H)$. For a subgroup $H$ of $G$, we define the diameter of $H$ with respect to the graph distance $\dist_S (\cdot, \cdot)$ on $\Cay(G, S)$ by
\begin{equation}\label{diam_def}
\Diam_S(H) := \max\{\dist_S(id,h) : h \in H\}.
\end{equation}
For $H \trianglelefteq H' \trianglelefteq G$ such that $H\trianglelefteq G$ (so that $G/H$ is a group), with slight abuse of notation, we can define the diameter of $H'/H$ with respect to the graph distance $\dist_{S_H}(\cdot,\cdot)$ on  $\Cay(G/H, S_H)$,
$$\Diam_S(H'/H):= \max\{\dist_{S_H}(H,Hh') : h' \in H'\}.$$
 whose definition is consistent with \eqref{diam_def} with  $G,H,S$ replaced respectively by $G/H, H'/H,S_H$.
 
 We have the following  triangle inequality in terms of the diameter of a group $H'$ and that of its subgroup $H$ and the quotient group $H'/H$.
 \begin{prop}\label{triangle_diam}
For all $H\trianglelefteq H' \trianglelefteq G$  such that $H\trianglelefteq G$ the following holds:
\begin{align}
\Diam_S(H')&\le\Diam_S(H'/H) +\Diam_S(H).
\label{e:diameterdecomposition2}
\end{align}
\end{prop}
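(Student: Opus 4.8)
The plan is to prove the triangle inequality $\Diam_S(H') \le \Diam_S(H'/H) + \Diam_S(H)$ by exhibiting, for an arbitrary element $h' \in H'$, a path in $\Cay(G,S)$ from $\id$ to $h'$ whose length is at most the sum of the two diameters on the right. The natural strategy is a two-stage ``lift and correct'' argument: first walk from $\id$ to some representative of the coset $Hh'$, using a geodesic in the quotient graph $\Cay(G/H, S_H)$, and then correct the discrepancy, which lies in $H$, using a geodesic in $\Cay(G,S)$ restricted to $H$.

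More precisely, fix $h' \in H'$ and set $\ell := \dist_{S_H}(H, Hh') \le \Diam_S(H'/H)$. By definition of the quotient Cayley graph there is a sequence $s_1, \dots, s_\ell \in S$ with $H h' = H s_1 s_2 \cdots s_\ell$, i.e.\ the coset $Hh'$ is reached from the identity coset in $\ell$ steps. Let $g := s_1 s_2 \cdots s_\ell \in G$; this is the endpoint in $\Cay(G,S)$ of the lifted path, and it satisfies $Hg = Hh'$, so $h := g^{-1} h' \in H$. Here I would take care to note that $\dist_S(\id, g) \le \ell$ since the word $s_1\cdots s_\ell$ gives an explicit walk of length $\ell$ from $\id$ to $g$ in $\Cay(G,S)$. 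Next, since $h \in H$, we have $\dist_S(\id, h) \le \Diam_S(H)$, so there is a word $s_1', \dots, s_m' \in S$ with $h = s_1' \cdots s_m'$ and $m \le \Diam_S(H)$. Concatenating, $h' = g h = s_1 \cdots s_\ell s_1' \cdots s_m'$, which is a walk from $\id$ to $h'$ in $\Cay(G,S)$ of length $\ell + m \le \Diam_S(H'/H) + \Diam_S(H)$. Taking the maximum over $h' \in H'$ gives the claim.

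The one genuine subtlety — and the step I would be most careful about — is making sure the right-translation structure of the Cayley graph is used correctly so that the two sub-walks concatenate into an honest walk. In a right Cayley graph, a walk from $x$ realizing a word $w = t_1 \cdots t_m$ ends at $x t_1 \cdots t_m$, and crucially the edge set $\{\{g, gs\}\}$ is right-invariant, so appending the generators $s_1', \dots, s_m'$ starting from $g$ traces edges $\{g, g s_1'\}, \{g s_1', g s_1' s_2'\}, \dots$, which are genuine edges of $\Cay(G,S)$; there is no obstruction from $H$ merely being a subgroup rather than normal, because we only ever multiply on the right by elements of $S$. The normality hypotheses ($H \trianglelefteq G$, $H \trianglelefteq H'$) are needed only so that the quotient $G/H$ is a group and $\dist_{S_H}$ and $\Diam_S(H'/H)$ are well-defined in the first place — the path-concatenation argument itself does not invoke them. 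I would also remark that the roles are not symmetric: one must lift the quotient geodesic first and correct within $H$ second (not the other way around), since after the $H$-correction the image in $G/H$ is unchanged, whereas doing $H$ first and then lifting would require re-choosing the $H$-correction. With these observations in place the proof is a short, clean concatenation argument.
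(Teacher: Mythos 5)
Your proof is correct and is essentially the paper's argument: both decompose $h'$ using a word $s_1\cdots s_m$ of length at most $\Diam_S(H'/H)$ realizing the coset $Hh'$ together with an element of $H$, and then concatenate the two walks using right-invariance of the Cayley structure. The only difference is the order of the decomposition, and here your side remarks are slightly off: the paper writes $h' = h\,s_1\cdots s_m$ with $h = h'(s_1\cdots s_m)^{-1}\in H$ on the left, so ``correct in $H$ first, then append the quotient word'' works perfectly well, whereas your chosen form $h' = gh$ with $h := g^{-1}h'$ actually does invoke $H\trianglelefteq G$ (to pass from $h'g^{-1}\in H$ to $g^{-1}h'\in H$), which is harmless since normality is assumed.
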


\begin{proof}
Let $h' \in H'$. By the definition of $\operatorname{Diam}_S(H'/H)$, there exists $s_1,\dots,s_m\in S$ with $m\leq \Diam_S(H'/H)$ such that 
$Hh'=Hs_1\cdots s_m$, i.e., there exists $h\in H$ such that $h'=hs_1\cdots s_m$. Hence
$$
	\dist_S(id,h')=\dist_S(id,hs_1\cdots s_m)\leq \dist_S(id,h)+m\leq \Diam_S(H) + \Diam_S(H'/H),
$$
which concludes the proof of \eqref{e:diameterdecomposition2}. 
\end{proof}

 Applying the triangle inequality in \eqref{e:diameterdecomposition2} iteratively leads to a decomposition of $\Diam_S(G_2)$ as the sum of $\Diam_S(G_i/G_{i+1})$ over $i\in [L]$, i.e.,
$$ \Diam_S(G_2)\le\sum_{i=2}^\LL\Diam_S(G_i / G_{i+1}).$$
Breulliard and Tointon \cite[Lemma 4.11]{breuillard2016nilprogressions} showed the diameter of $G_2$ is at most $C_{S,L}(\Diam_S(G)^{1/2})$, where $C_{S,L}$ is a constant depending on the size of $S$ and $L:=L(G)$. In fact, they showed for all $i\in [L]$ that $\Diam_S(G_i/G_{i+1})$ is at most $C_{S,L}\Diam_S(G)^{1/i}$. El-Baz and Pagano \cite{el2021diameters} proved the same estimates using somewhat similar arguments. In addition, they observe that $\Diam_S(G)\leq \Diam_S(G_2)+\Diam_S(G_{\mathrm{ab}})$ and hence one can estimate $\Diam_S(G_i/G_{i+1})$ in terms of $\Diam_S(G_{\mathrm{ab}})$. 

As discussed in Remark \ref{condition_for_cutoff}, a necessary condition for the random walk on $\Cay(G,S)$ to exhibit cutoff when $L$ is bounded is for $|S|$ to diverge. Consequently, as opposed to  \cite{breuillard2016nilprogressions} and \cite{el2021diameters} which did not quantify the dependence of the constant $C_{S,L}$ on $|S|$ and $L$,  it is necessary for us to quantify this dependence. Our approach for upper bounding $\Diam_S(G_2)$ adheres to the framework in El-Baz and Pagano \cite{el2021diameters}, but with considerably more attention devoted to quantifying the influence of $|S|$ as well as $L$.

\newtheorem*{thm:diam_bound}{Theorem \ref{diam_bound}}
\begin{thm:diam_bound}
Let $S\subseteq G$ be a symmetric set of generators and let $R \subseteq S$ be such that $|\{s,s^{-1}\} \cap R| = 1$ for all $s \in S$. For $2\leq i\leq L$, we have
\begin{align*}
\Diam_S(G_2)&\le
		\sum_{i=2}^\LL
	\nonumber \Diam_S(G_i / G_{i+1})\\
	&\le
		\sum_{i=2}^\LL
		2^{5i+7}|R|^i 
	\left( 2^{2i}+ L\cdot  \ceil{ \Diam_S(G_{\mathrm{ab}})/ |R|}^{1/i}   \right).
	\label{e:DiamGcom}
\end{align*}
\end{thm:diam_bound}
The following comparison between $\Diam_S(G_2)$ and $ \Diam_S(G_{\mathrm{ab}})$ is what we will use in the proof of Theorem \ref{reduction_ab}.

%
%

\begin{corollary}\label{diam_comparison}
For any fixed $L\in \mathbb{N}$, we have
$$\Diam_S(G_2) \lesssim \Diam_S(G_{\mathrm{ab}})^{3/4}$$
when $\Diam_S(G_{\mathrm{ab}})\gg |R|^{4L}$. In particular, this condition holds when $|R| \leq \frac{\log |G|}{8\LL r^\LL \log\log |G|}$.
\end{corollary}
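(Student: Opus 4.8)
The plan is to deduce Corollary \ref{diam_comparison} directly from the explicit bound in Theorem \ref{diam_bound} by crude estimation. First I would observe that for fixed $L$, the sum $\sum_{i=2}^{L}$ in \eqref{e:DiamGcom} has a bounded number of terms, so it suffices to bound each term by $O(\Diam_S(G_{\mathrm{ab}})^{3/4})$. For a fixed $i$ with $2 \le i \le L$, the $i$-th summand is
$$
2^{5i+7}|R|^i\left(2^{2i} + L\cdot\ceil{\Diam_S(G_{\mathrm{ab}})/|R|}^{1/i}\right),
$$
and since $L$ is fixed, the constants $2^{5i+7}$, $2^{2i}$ and $L$ are all $O(1)$. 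Writing $D := \Diam_S(G_{\mathrm{ab}})$, this term is $O\!\left(|R|^i\left(1 + (D/|R|)^{1/i}\right)\right) = O\!\left(|R|^i + |R|^{i-1/i}D^{1/i}\right)$. The worst case over $i \in \{2,\dots,L\}$ is $i = 2$ (the exponent $1/i$ on $D$ is largest, hence the $D$-dependence is strongest), giving a term of order $|R|^2 + |R|^{3/2}D^{1/2}$.

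The key step is then the hypothesis $D \gg |R|^{4L}$, which in particular (since $L \ge 2$) gives $D \gg |R|^{8}$, so $|R| \ll D^{1/8}$. Plugging this in: $|R|^i \ll D^{i/8} \le D^{L/8}$, and one checks $D^{L/8}$ is dominated by... actually it is cleaner to note $|R|^i \le |R|^L \ll D^{L/(4L)} = D^{1/4} \le D^{3/4}$; and $|R|^{i-1/i}D^{1/i} \le |R|^{L}D^{1/2} \ll D^{1/4}D^{1/2} = D^{3/4}$. Hence every summand is $O(D^{3/4})$ and, summing the $O(1)$ many of them, $\Diam_S(G_2) \lesssim D^{3/4} = \Diam_S(G_{\mathrm{ab}})^{3/4}$. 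For the final sentence, under $|R| \le \frac{\log|G|}{8L r^L\log\log|G|}$ we have $|R| \le \log|G|$ (crudely), so $|R|^{4L} \le (\log|G|)^{4L}$, which grows only polylogarithmically in $|G|$; on the other hand $\Diam_S(G_{\mathrm{ab}})$ is at least of order $\log_{|R|}|G_{\mathrm{ab}}| \gtrsim \log|G_{\mathrm{ab}}|/\log\log|G|$, and since $|G_{\mathrm{ab}}| = |G|/|G_2|$ — here one uses that $|G_2|$ cannot be too close to $|G|$, e.g.\ via the rank bound $|G_{\mathrm{ab}}| \ge 2^r$ is not enough, so instead one invokes the diameter lower bound $\Diam_S(G) \ge \log_{|S|}|G| \gtrsim \log|G|/\log\log|G|$ together with $\Diam_S(G) \le \Diam_S(G_2) + \Diam_S(G_{\mathrm{ab}})$ — one concludes $\Diam_S(G_{\mathrm{ab}}) \gg (\log|G|)^{4L} \ge |R|^{4L}$ for $|G|$ large.

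The main obstacle I anticipate is the very last implication: verifying that the hypothesis $|R| \le \frac{\log|G|}{8L r^L\log\log|G|}$ really does force $\Diam_S(G_{\mathrm{ab}}) \gg |R|^{4L}$. The delicate point is that a priori $\Diam_S(G_{\mathrm{ab}})$ could be small if $G_2$ is a large fraction of $G$; one must argue that in that case $\Diam_S(G_2)$ is itself small (by the first, already-proven half of the corollary applied self-referentially, or rather by a direct bound), so that the trivial lower bound $\Diam_S(G) \ge \log_{|S|}|G|$ cannot be absorbed entirely into $\Diam_S(G_2)$, leaving $\Diam_S(G_{\mathrm{ab}})$ at least polylogarithmic of large enough degree. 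Concretely I would run the estimate: from \eqref{e:DiamGcom} plus $\Diam_S(G) \le \Diam_S(G_2) + \Diam_S(G_{\mathrm{ab}})$ one gets $\Diam_S(G) \lesssim |R|^{2L}(1 + \Diam_S(G_{\mathrm{ab}})^{1/2})$, hence $\Diam_S(G_{\mathrm{ab}})^{1/2} \gtrsim \Diam_S(G)/|R|^{2L} - 1 \gtrsim \log_{|R|}|G|/|R|^{2L}$, and with $|R|^{2L} \le (\log|G|)^{2L}/(\text{polylog})$ this is $\gg |R|^{2L}$ once $|G|$ is large, giving $\Diam_S(G_{\mathrm{ab}}) \gg |R|^{4L}$ as required. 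Everything else is bookkeeping with the explicit constants from Theorem \ref{diam_bound}, using only that $r$ and $L$ are bounded.
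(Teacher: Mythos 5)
Your derivation of the first assertion is correct and is essentially the paper's own argument: under $\Diam_S(G_{\mathrm{ab}})\gg |R|^{4L}$ one has $|R|^{L}\lesssim \Diam_S(G_{\mathrm{ab}})^{1/4}$, and Theorem \ref{diam_bound} then gives $\Diam_S(G_2)\lesssim |R|^{L}\,\Diam_S(G_{\mathrm{ab}})^{1/2}\lesssim \Diam_S(G_{\mathrm{ab}})^{3/4}$. The genuine gap is in the last sentence, i.e.\ in verifying that $|R|\le \frac{\log|G|}{8L r^{L}\log\log|G|}$ forces $\Diam_S(G_{\mathrm{ab}})\gg|R|^{4L}$. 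Your argument rests on the trivial diameter lower bound $\Diam_S(G)\ge\log_{|S|}|G|\asymp\log|G|/\log\log|G|$ and culminates in the claim $\log_{|R|}|G|/|R|^{2L}\gg|R|^{2L}$, i.e.\ $\log_{|R|}|G|\gg|R|^{4L}$. This is false on most of the allowed range of $|R|$: already for $|R|\asymp(\log|G|)^{1/2}$ one has $|R|^{4L}=(\log|G|)^{2L}\ge(\log|G|)^{4}\gg\log|G|\ge\log_{|R|}|G|$, and near the top of the range, $|R|\asymp\frac{\log|G|}{\log\log|G|}$, the quantity $\log_{|R|}|G|/|R|^{2L}$ is in fact $o(1)$, so your chain yields no usable lower bound on $\Diam_S(G_{\mathrm{ab}})$. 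A lower bound on the diameter that is only logarithmic in $|G|$ can never dominate $|R|^{4L}$, which may be polylogarithmic of degree $4L\ge 4$.

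What is needed, and what the paper uses, is a polynomially strong lower bound on the diameter of the abelianization: since $\Cay(G_{\mathrm{ab}},S_{G_2})$ is a Cayley graph of an abelian group on $|R|$ generator pairs, the ball of radius $\ell$ has at most $(4\ell)^{|R|}$ elements (a lattice-point count in $\ZZ^{|R|}$, Lemma E.2a of \cite{hermon2018supplementary}), so $(4\Diam_S(G_{\mathrm{ab}}))^{|R|}\ge|G_{\mathrm{ab}}|$ and hence $\Diam_S(G_{\mathrm{ab}})\gtrsim|G_{\mathrm{ab}}|^{1/|R|}$. Combining this with Corollary \ref{ab_size}, which gives $|G_{\mathrm{ab}}|\ge|G|^{1/(2r^{L})}$, and the hypothesis on $|R|$ yields $\Diam_S(G_{\mathrm{ab}})\gtrsim|G|^{1/(2r^{L}|R|)}\ge(\log|G|)^{4L}\gg|R|^{4L}$, the last step because $|R|\ll\log|G|$. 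Note that your worry about $|G_2|$ being a large fraction of $|G|$ is exactly what Corollary \ref{ab_size} rules out; routing through $\Diam_S(G)$ and the triangle inequality, as you propose, cannot recover this because the only lower bound you have on $\Diam_S(G)$ is logarithmic in $|G|$.
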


\begin{remark}
The statement above is a special case of the following more general claim: For all $\ep>0$, if $\Diam_S(G_{\mathrm{ab}})\gg |R|^{L/\ep}$ then $\Diam_S(G_2) \lesssim \Diam_S(G_{\mathrm{ab}})^{1/2+\ep}$, which holds when $|R|\leq \frac{\ep\log|G|}{2\LL r^\LL \log\log |G|}$.
\end{remark}

\begin{proof}

Knowing that $\Diam_S(G_{\mathrm{ab}})\gg |R|^{4L}$, it is an easy consequence of Theorem \ref{diam_bound} that 
\begin{align*}
\Diam_S(G_2)&\le\sum_{i=2}^\LL2^{5i+7}|R|^i \left( 2^{2i} + L\ceil{ \Diam_S(G_{\mathrm{ab}})/|R| }^{1/i} \right)\\
&\lesssim |R|^{\LL}\cdot \Diam_S(G_{\mathrm{ab}})^{1/2} \lesssim \Diam_S(G_{\mathrm{ab}})^{3/4}. 
\end{align*}

It remains to prove that $\Diam_S(G_{\mathrm{ab}})\gg |R|^{4L}$ for the given range of $|R|$. Using the fact $|G_{\mathrm{ab}}|\geq |G|^{1/2r^\LL}$ from Corollary \ref{ab_size} we can observe that
\beq\label{lb_diamab}
|G_{\mathrm{ab}}|^{1/|R|} \geq |G|^{\frac{1}{2r^L|R|}} \geq (\log|G|)^{4L} \gg |R|^{4L}
\eeq 
for $|R| \leq \frac{\log |G|}{8\LL r^\LL \log\log |G|}$. Based on \eqref{lb_diamab}, it suffices to show $\Diam_S(G_{\mathrm{ab}})\gtrsim  |G_{\mathrm{ab}}|^{1/|R|}$ for the given range of $|R|$.

To prove $\Diam_S(G_{\mathrm{ab}})\gtrsim  |G_{\mathrm{ab}}|^{1/|R|}$ the key is to notice for the Cayley graph $\Cay(G_{\mathrm{ab}},S_{G_2})$, setting $k:=|R|$, trivially we have $|B_{G_{\mathrm{ab}}}(\ell)| \leq |B_k(\ell)|$, where $B_{G_{\mathrm{ab}}}(\ell):=\{ g\in G_{\mathrm{ab}}: \dist_{S_{G_2}}(G_2, g)\leq \ell\}$ is the ball of radius $\ell$ in $\Cay(G_{\mathrm{ab}},S_{G_2})$ and $B_{k}(\ell):=\{ \bm z\in \ZZ^k: \|\bm z\|_1\leq \ell\}$ is the $k$-dimensional lattice ball of radius $\ell$. Thus $\Diam_S(G_{\mathrm{ab}})\geq \min\{ \ell: |B_k(\ell)|\geq |G_{\mathrm{ab}}|\}.$ It follows from Lemma E.2a in  \cite{hermon2018supplementary} that $|B_k(\ell)|\leq 2^{k\wedge \ell} { \ell+k \choose k}\leq (4\ell)^k$ for $\ell\geq k$, which implies $ (4\Diam_S(G_{\mathrm{ab}}))^k\geq |G_{\mathrm{ab}}|$. Hence we have $\Diam_S(G_{\mathrm{ab}})\gtrsim  |G_{\mathrm{ab}}|^{1/|R|}$.

\end{proof}

Before we turn to the proof of Theorem \ref{diam_bound}, some preliminary results that will be useful are presented in the next section.

\subsection{Preliminaries}

We begin by recalling some standard notation and stating several properties of commutators. For $x,y\in G$ we write $[x,y] := x^{-1}y^{-1}xy = [y,x]^{-1}$ and $x^y := y^{-1}xy = x[x,y] = [y,x]x^{-1}$. Further observe that for $x,y,z\in G$, $[x,yz]=[x,z][x,y]^{z}=[x,z][x,y][[x,y],z]$. Define
	$\rho(x,y) := [x,y]$
	for $x,y \in G$ as the two-fold commutator, and inductively
\beq\label{multi_commutator}
	\rho(x_1, ..., x_i) := [\rho(x_1, ..., x_{i-1}), x_i]
\Quad{for}
	i \ge 3
\Quad{and}
	x_1, ..., x_i \in G.
\eeq

Some standard properties of commutators are collected into the following propositions whose proofs can be easily found in literature, see e.g. \cite{dummit2004abstract},  and thus are omitted. The following is a fairly well known result following from an induction argument using the three subgroup lemma.

\begin{prop}
\label{strongly_lower_central}
The lower central series of a nilpotent group $G$ is a strongly central series, i.e., $[G_i,G_j]$ is a subgroup of $G_{i+j}$  for all $i,j\geq 1$.
\end{prop}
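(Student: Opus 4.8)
The plan is to establish the stronger uniform statement $[G_i, G_j] \subseteq G_{i+j}$ for all $i, j \geq 1$ by induction on $j$, with $i$ ranging freely at each stage, using the Three Subgroups Lemma (the Hall--Witt identity) as the engine. The base case $j = 1$ is immediate from the definition of the lower central series: $[G_i, G_1] = [G_i, G] = G_{i+1}$ for every $i \geq 1$, and $[G_i,G_j]$ is by construction a subgroup (the one generated by the relevant commutators), so the asserted containment is exactly the claim that it sits inside $G_{i+j}$.

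For the inductive step, fix $j$ and assume $[G_i, G_j] \subseteq G_{i+j}$ for every $i \geq 1$; we show $[G_i, G_{j+1}] \subseteq G_{i+j+1}$ for every $i$. Since $G_{j+1} = [G_j, G]$ we have $[G_i, G_{j+1}] = [[G_j, G], G_i]$ at the level of subgroups (using $[X,Y]=[Y,X]$). Apply the Three Subgroups Lemma with $A = G_j$, $B = G$, $C = G_i$ and $N = G_{i+j+1}$, which is normal (indeed characteristic) in $G$ as a term of the lower central series: the lemma yields $[[A,B],C] \subseteq N$ once $[[B,C],A] \subseteq N$ and $[[C,A],B] \subseteq N$. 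For the first, $[[B,C],A] = [[G, G_i], G_j] = [G_{i+1}, G_j] \subseteq G_{i+j+1}$ by the inductive hypothesis applied at index $i+1$; for the second, $[[C,A],B] = [[G_i, G_j], G] \subseteq [G_{i+j}, G] = G_{i+j+1}$ by the inductive hypothesis at index $i$ together with the definition of $G_{i+j+1}$. Hence $[G_i, G_{j+1}] = [[G_j,G],G_i] = [[A,B],C] \subseteq G_{i+j+1}$, closing the induction. When $i + j > L$ both $[G_i,G_j]$ and $G_{i+j}$ are trivial, so the statement holds vacuously there.

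The subtle point — and the reason a naive induction on the total weight $i+j$ does not close — is that expanding $[G_i, [G_j, G]]$ through Hall--Witt reintroduces the term $[G_{i+1}, G_j]$, whose indices again sum to $i+j+1$; inducting on $j$ while letting $i$ vary is precisely the device that controls this term, since its \emph{second} index has strictly dropped. The remaining ingredient is the routine verification of the Hall--Witt commutator identity, which we omit (see, e.g., \cite{dummit2004abstract}).
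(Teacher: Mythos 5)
Your proof is correct, and it is precisely the argument the paper has in mind: the paper omits the proof, remarking only that the result ``follows from an induction argument using the three subgroup lemma,'' which is exactly your induction on $j$ (with $i$ free) applied to $[G_i,G_{j+1}]=[[G_j,G],G_i]$ with $N=G_{i+j+1}$ normal. Your closing observation about why one inducts on $j$ rather than on the total weight $i+j$ is a nice touch but does not change the substance.
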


\begin{prop}
\label{p:bilinearity} 
For $i\geq 0$, the map $\phi : G \times G_i  \to G_{i+1}/G_{i+2}$ given by $\phi(g,h) := G_{i+2}[g,h]$ is anti-symmetric and bi-linear. Namely, the following hold for all $x\in G$ and $y,z\in G_i$:
\begin{align*}
G_{i+2}[x,y]&=G_{i+2}[y,x]^{-1}\\
G_{i+2}[x,yz] &= G_{i+2}[x,y][x,z]\\
G_{i+2}[yz,x]&=G_{i+2}[y,x][z,x]\\
G_{i+2}[x^\ell,y^j] &= G_{i+2}[x,y]^{\ell j}
\Quad{for all}
	\ell,j \in \mathbb Z.
\end{align*}
Moreover,
for $i \ge 2$ and $j \le i$, if $x_1,\ldots x_i \in G$ and $y\in G$, then we have the following linearity in the $j$-th component, i.e.,
\begin{gather}
	G_{i+1} \rho(x_1, ..., x_{j-1}, x_j y,x_{j+1}, ..., x_i)
=
	G_{i+1} \rho(x_1, ..., x_i) \widehat x_{y,j}
=
	G_{i+1} \widehat x_{y,j} \rho(x_1, ..., x_i).
\label{e:linearityinfirstcoord}
\end{gather}
where $\widehat x_{y,j} := \rho(x_1, ..., x_{j-1}, y, x_{j+1}, ..., x_i)$, and so
\beq\label{e:bylinearity}
G_{i+1}\rho(a,x_2,\dots,x_i)=G_{i+1}\rho(b,x_2,\dots,x_i) \quad \text{ if } ab^{-1}\in G_2.
\eeq
\end{prop}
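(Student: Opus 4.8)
The final statement to prove is Proposition \ref{p:bilinearity}, which asserts antisymmetry and bilinearity of the commutator map modulo higher terms in the lower central series, plus the multilinearity of the iterated commutator $\rho$.

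\medskip

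\textbf{Plan of proof.} The plan is to work systematically in the quotient $G_{i+2}$ (resp.\ $G_{i+1}$), exploiting two facts: first, that commutators of elements of $G_i$ with elements of $G$ land in $G_{i+1}$ (Proposition \ref{strongly_lower_central}), and second, the universal commutator identities $[x,y]=[y,x]^{-1}$ and $[x,yz]=[x,z][x,y]^z=[x,z][x,y][[x,y],z]$ recalled just before the proposition. The key mechanism is that whenever $y \in G_i$, we have $[x,y]\in G_{i+1}$, so any further commutator $[[x,y],z]$ with $z\in G$ lies in $G_{i+2}$ and therefore vanishes modulo $G_{i+2}$; this is what collapses the non-abelian correction terms and yields genuine bilinearity on the quotient.

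\medskip

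\textbf{Steps, in order.} First I would establish antisymmetry: $[x,y]=[y,x]^{-1}$ holds identically in $G$, so it certainly holds mod $G_{i+2}$; this gives the first displayed identity immediately. Second, for linearity in the second argument, write $[x,yz]=[x,z][x,y][[x,y],z]$; since $y,z\in G_i$ we have $yz \in G_i$ and $[x,y]\in G_{i+1}$, hence $[[x,y],z]\in [G_{i+1},G]\subseteq G_{i+2}$, so modulo $G_{i+2}$ we get $[x,yz]\equiv [x,z][x,y]$. To get the stated order $[x,y][x,z]$ one also notes that $[x,y],[x,z]\in G_{i+1}$ and $[G_{i+1},G_{i+1}]\subseteq G_{2i+2}\subseteq G_{i+2}$ (using $i\geq 0$, and for $i=0$ the claim is the trivial statement in $G$), so the two factors commute mod $G_{i+2}$; this handles the order and simultaneously gives the case $i=0$ separately if needed. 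Third, linearity in the first argument, $[xz,y]=[x,y][z,y]$ mod $G_{i+2}$, follows by applying antisymmetry to reduce to the second-argument case, or directly from the analogous identity $[xz,y]=[x,y]^z[z,y]=[x,y][[x,y],z]^{... }[z,y]$ — I would just invoke antisymmetry: $[xz,y]=([y,xz])^{-1}$ and expand $[y,xz]$ using the second-argument rule already proven, then invert. Fourth, the power rule $[x^\ell,y^j]\equiv[x,y]^{\ell j}$ mod $G_{i+2}$ follows by induction on $|\ell|$ and $|j|$ from bilinearity (and the fact that inverses are handled by antisymmetry), a routine induction I would not spell out in full. Finally, for the iterated commutator statement \eqref{e:linearityinfirstcoord}: here $\rho(x_1,\dots,x_{j-1},\cdot,x_{j+1},\dots,x_i)$ should be viewed as a function whose ``innermost'' relevant commutator $\rho(x_1,\dots,x_{j})$ lies in $G_j$, so the remaining $i-j$ commutator operations push differences into $G_{i+1}$; concretely one applies the already-established bilinearity in the appropriate slot and then repeatedly uses $[G_m,G]\subseteq G_{m+1}$ together with $[[u,v],z]\in G_{i+1}$ when $[u,v]\in G_i$ to discard correction terms, so that $\rho(\dots,x_jy,\dots)\equiv \rho(\dots,x_j,\dots)\cdot\widehat x_{y,j}$ mod $G_{i+1}$; the fact that $\widehat x_{y,j}\in G_i$ (indeed in $G_{i}$, as an $i$-fold commutator with one slot replaced) means it commutes with the other factor mod $G_{i+1}$, giving the two displayed orderings. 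Equation \eqref{e:bylinearity} is then the special case $j=1$, $a = b\cdot(ab^{-1})$ with $ab^{-1}\in G_2$: linearity in the first slot gives $\rho(a,x_2,\dots,x_i)\equiv \rho(b,x_2,\dots,x_i)\,\rho(ab^{-1},x_2,\dots,x_i)$ mod $G_{i+1}$, and since $ab^{-1}\in G_2$ the factor $\rho(ab^{-1},x_2,\dots,x_i)\in [G_2,G,\dots,G]\subseteq G_{i+1}$, hence is trivial mod $G_{i+1}$.

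\medskip

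\textbf{Main obstacle.} The routine identities are genuinely routine; the only place requiring care is the iterated-commutator step \eqref{e:linearityinfirstcoord}, where one must be precise about \emph{which} subgroup each correction term lands in and why it is absorbed by $G_{i+1}$ — in particular checking that replacing one argument by $y$ in $\rho(x_1,\dots,x_i)$ still yields an element of $G_i$ so that the reordering of the two factors is legitimate mod $G_{i+1}$. Since the proposition is stated without proof in the paper (``proofs can be easily found in literature ... and thus are omitted''), I would in fact simply cite a standard reference such as \cite{dummit2004abstract} for the elementary commutator calculus and give at most the short derivation of \eqref{e:linearityinfirstcoord} and \eqref{e:bylinearity} from it, as these are the forms actually used later.
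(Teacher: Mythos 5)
Your proposal is correct and follows exactly the standard commutator-calculus route the paper itself intends (it omits the proof and cites the literature): antisymmetry and the expansion $[x,yz]=[x,z][x,y]^z$ plus Proposition \ref{strongly_lower_central} to kill correction terms in $G_{i+2}$, with the quotient's abelianness handling the ordering, and induction through the iterated commutator for \eqref{e:linearityinfirstcoord}. One tiny slip to fix in the derivation of \eqref{e:bylinearity}: in a non-abelian group $a\neq b\cdot(ab^{-1})$ in general, so decompose instead as $a=b\,(b^{-1}a)$ and note $b^{-1}a\in G_2$ if and only if $ab^{-1}\in G_2$ by normality of $G_2$; the rest of your argument goes through unchanged.
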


Let $R \subseteq S$ be such that $|\{s,s^{-1}\} \cap R| = 1$ for all $s \in S$. Now define inductively
\[
	S_1 := S
\Quad{and}
	S_i := \{ [s,s'] \mid s \in R, \: s' \in S_{i-1} \}
\Quad{for}
	i \ge 2.
\]
Write
	\(
		\widehat S_i := \{ G_{i+1}s : s \in S_i \}
	\)
	for $i \ge 1$.
The following proposition can be proved by induction on $i$ using Proposition \ref{p:bilinearity}.
We omit the details, and refer the reader to \cite{el2021diameters} for additional details.

\begin{prop}
\label{p:generatingG}
Assume that $\widehat S_1$ generates $G_{\mathrm{ab}}$.  Then $\widehat S_i$ generates the Abelian group $G_i/G_{i+1}$ for all $i\geq 1$. In particular, $S$ generates $G$ if and only if $\widehat S_1$ generates $G_\mathrm{ab}$.  
\end{prop}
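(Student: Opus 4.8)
\textbf{Proof proposal for Proposition \ref{p:generatingG}.}

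The plan is to argue by induction on $i$, using the bilinearity of commutator maps from Proposition \ref{p:bilinearity} as the engine. The base case $i=1$ is the hypothesis that $\widehat S_1 = \{G_2 s : s \in S\}$ generates $G_{\mathrm{ab}} = G/G_2$. For the inductive step, assume that $\widehat S_{i-1}$ generates $G_{i-1}/G_i$; I want to conclude that $\widehat S_i$ generates $G_i/G_{i+1}$. First I would recall the standard fact (a consequence of Proposition \ref{strongly_lower_central} and the definition of the lower central series) that $G_i/G_{i+1}$ is generated, as an abelian group, by the classes $G_{i+1}[g,h]$ with $g \in G$ ranging over a generating set of $G$ modulo $G_2$ and $h$ ranging over a generating set of $G_{i-1}$ modulo $G_i$. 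Concretely, $G_i = [G_{i-1}, G]$, so $G_i/G_{i+1}$ is spanned by commutators $G_{i+1}[h, g]$ with $h \in G_{i-1}$, $g \in G$, and I may reduce each argument modulo the relevant subgroup.

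The key step is then to reduce the generators in each slot. For the second slot: by the inductive hypothesis every $h \in G_{i-1}$ can be written modulo $G_i$ as a product of elements of $S_{i-1}$ and their inverses; by the bilinearity statement \eqref{e:bylinearity} — which says $G_{i+1}\rho(a, x_2, \dots)$ depends only on the class of $a$ modulo $G_2$, and more generally the multilinearity in Proposition \ref{p:bilinearity} — the class $G_{i+1}[h,g]$ only depends on the class of $h$ modulo $G_i$, and expands multiplicatively over such a product. For the first slot: since $\widehat S_1$ generates $G_{\mathrm{ab}}$, every $g \in G$ equals modulo $G_2$ a product of elements of $S = S_1$ and inverses, and again by \eqref{e:bylinearity} and bilinearity the class $G_{i+1}[h,g]$ depends only on $g$ modulo $G_2$ and expands multiplicatively. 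Combining, $G_i/G_{i+1}$ is generated by the classes $G_{i+1}[s', s]$ with $s' \in S_{i-1}$ (or its inverse) and $s \in S_1$ — and using anti-symmetry $G_{i+1}[s',s] = G_{i+1}[s,s']^{-1}$ together with the fact that one may restrict $s$ to $R$ (replacing $s^{-1}$ by $s$ only negates the class), this is exactly the subgroup generated by $\widehat S_i = \{G_{i+1}[s,s'] : s \in R, s' \in S_{i-1}\}$. The final ``in particular'' clause follows since $G$ is generated by $S$ iff $G = \langle S \rangle G_{i+1}$ for all $i$ (nilpotency), which by the above holds as soon as $\widehat S_1$ generates $G_{\mathrm{ab}}$.

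I expect the main obstacle to be bookkeeping rather than conceptual: one must be careful that the bilinearity in Proposition \ref{p:bilinearity} is stated for $\phi: G \times G_i \to G_{i+1}/G_{i+2}$, so when expanding a commutator $[h,g]$ with $h \in G_{i-1}$ over a product $h = \prod s'_j$, the error terms (nested commutators like $[[s'_a, s'_b], g]$ arising from the identity $[xy,z] = [x,z]^y[y,z]$) must be checked to lie in $G_{i+1}$ and hence vanish modulo $G_{i+1}$ — this is where Proposition \ref{strongly_lower_central} is used, since $[G_{i-1}, G_{i-1}] \subseteq G_{2i-2} \subseteq G_{i+1}$ for $i \ge 3$ and the relevant further commutators with $G$ land even deeper. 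The same care is needed in the first slot. Since the excerpt explicitly defers to \cite{el2021diameters} for these details, I would state the reduction lemmas and indicate that the error terms are absorbed by strong centrality, without grinding through every nested-commutator expansion.
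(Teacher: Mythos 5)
Your proposal is correct and follows exactly the route the paper indicates for this proposition (which it states without proof, deferring details to \cite{el2021diameters}): induction on $i$, using the bilinearity and antisymmetry of Proposition \ref{p:bilinearity}, together with the strong centrality of the lower central series (Proposition \ref{strongly_lower_central}), to reduce both slots of the commutators spanning $G_i/G_{i+1}=[G_{i-1},G]/G_{i+1}$ to generators and to absorb the nested-commutator error terms into $G_{i+1}$. The only (harmless) imprecision is in naming the error terms: from $[xy,z]=[x,z]^y[y,z]$ with $x,y\in G_{i-1}$, $z\in G$ they are of the form $[[x,z],y]\in[G_i,G_{i-1}]\subseteq G_{2i-1}\subseteq G_{i+1}$, which holds for every $i\ge 2$, so no separate treatment of small $i$ is needed.
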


\begin{corollary}\label{g_expression}
For $i\geq 1$ and any $g \in G_i$ we can write 
\begin{equation}
	G_{i+1}g
=
	G_{i+1}
	\prod_{(x_2, ..., x_i) \in R^{i-1}}
	\prod_{s \in S}
	\rho(s^{\ell_{(x_2, ..., x_i),g}(s)}, x_2, ..., x_i),
\label{e:bilinearity11}
\end{equation}
where $\{\ell_{(x_2, ..., x_i),g}(\cdot) : (x_2, ..., x_i) \in R^{i-1}\}$ are functions from $S$ to $\ZZ_+$ belonging to the set
$$
A:=	\{ \ell :S\to\ZZ_+  \text{ s.t. }\sum_{s \in S}
	|\ell(s)| \le \Diam_S(G_{\mathrm{ab}}) \text{ and } \ell(s)\cdot \ell(s^{-1}) = 0 \text{ for all } s \in S \text{ such that }s\neq s^{-1}\},
$$
where the second condition means for all $s\in S$ such that $s\neq s^{-1}$ we have either $\ell(s)=0$ or $\ell(s^{-1})=0$ for any $\ell\in A$.

	%
\end{corollary}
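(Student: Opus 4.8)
The plan is to induct on $i$, where the base case $i=1$ already contains the essential idea and the inductive step is a bookkeeping argument that leverages the bilinearity recorded in Proposition \ref{p:bilinearity}. For the base case $i=1$: since $\widehat S_1$ generates $G_{\mathrm{ab}}$, by definition of $\Diam_S(G_{\mathrm{ab}})$ any $g \in G = G_1$ can be written modulo $G_2$ as a product of at most $\Diam_S(G_{\mathrm{ab}})$ elements of $S$. Collecting equal generators and using that $s$ and $s^{-1}$ commute modulo $G_2$ (indeed $G_{\mathrm{ab}}$ is abelian), one can cancel opposite occurrences so that for each unordered pair $\{s,s^{-1}\}$ only one of the two appears; this produces an exponent function $\ell = \ell_{g} \colon S \to \ZZ_+$ with $\sum_s \ell(s) \le \Diam_S(G_{\mathrm{ab}})$ and $\ell(s)\ell(s^{-1}) = 0$, i.e.\ $\ell \in A$. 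Here the product over $(x_2,\dots,x_i) \in R^{i-1}$ in \eqref{e:bilinearity11} is empty (it is an indexing over $R^0$), so the right-hand side reads $G_2 \prod_{s\in S} s^{\ell_g(s)}$, matching $G_2 g$.

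For the inductive step, suppose the claim holds for $i-1$ and let $g \in G_i$. Since $\widehat S_i$ generates $G_i/G_{i+1}$ by Proposition \ref{p:generatingG}, write $G_{i+1}g$ as a product of elements of $S_i = \{[s,s'] : s\in R,\ s'\in S_{i-1}\}$. Each such $[s,s']$ with $s' \in S_{i-1}$, unwinding the recursive definition of $S_{i-1}$, is exactly a multi-fold commutator $\rho(s_1, s_2,\dots, s_i)$ with $s_1 \in R$ (or $s_1 = s$ applied to a lower-level commutator) and $s_2,\dots,s_i \in R$. One then uses the multilinearity \eqref{e:linearityinfirstcoord} of $\rho$ modulo $G_{i+1}$ in each coordinate to (a) collect all commutators sharing a fixed tail $(x_2,\dots,x_i) \in R^{i-1}$ into a single factor $\rho(s^{m(s)}, x_2,\dots,x_i)$ with exponents in the first coordinate, and (b) invoke \eqref{e:bylinearity} to replace the first argument by a representative in $S$ and, via the abelianization bound together with the cancellation of $s$ versus $s^{-1}$, put the resulting exponent function into the set $A$. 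The antisymmetry and bilinearity of Proposition \ref{p:bilinearity} guarantee that rearranging the order of the factors $\rho(s^{\ell(s)}, x_2,\dots,x_i)$ over distinct tails only changes the product by elements of $G_{i+1}$, so the displayed product is well defined modulo $G_{i+1}$.

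The main obstacle is controlling the size of the exponents $\ell_{(x_2,\dots,x_i),g}(s)$ so that they land in $A$, i.e.\ verifying the bound $\sum_s |\ell(s)| \le \Diam_S(G_{\mathrm{ab}})$ after all the collecting. The point is that the number of occurrences of a given generator in the first slot of the relevant commutators, summed appropriately, is controlled by the word length of $g$'s image in the abelianization via Proposition \ref{p:generatingG} applied at level $1$ — precisely, the generating expression for $G_i/G_{i+1}$ in terms of $\widehat S_i$ inherits its length budget from the level-$1$ bound $\Diam_S(G_{\mathrm{ab}})$ rather than from $\Diam_S(G_i/G_{i+1})$, because each level-$i$ generator $\rho(s,x_2,\dots,x_i)$ is "charged" to the single symbol $s \in R$. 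Making this charging precise is where one must follow the argument of El-Baz and Pagano \cite{el2021diameters} carefully; once the exponents are seen to respect the $\Diam_S(G_{\mathrm{ab}})$ budget, the sign-cancellation condition $\ell(s)\ell(s^{-1}) = 0$ follows as in the base case by reducing each coordinate using $\rho(s^{-1}, \cdots) \equiv \rho(s,\cdots)^{-1}$ modulo $G_{i+1}$.
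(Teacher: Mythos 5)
Your overall route---Proposition \ref{p:generatingG} plus the surjectivity of $\rho$ onto $S_i$, collecting all factors with a common tail $(x_2,\ldots,x_i)$ via the linearity \eqref{e:linearityinfirstcoord}, and then invoking \eqref{e:bylinearity}---is the same skeleton as the paper's proof. But the one step that carries the content of the corollary, namely why the exponent functions can be taken in $A$, i.e.\ with $\sum_{s\in S}|\ell(s)|\le\Diam_S(G_{\mathrm{ab}})$, is not established, and the heuristic you offer for it is wrong. You claim the word expressing $G_{i+1}g$ in the generators $\widehat S_i$ ``inherits its length budget from the level-$1$ bound'' because each $\rho(s,x_2,\ldots,x_i)$ is charged to the symbol $s\in R$. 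There is no such inheritance: at this stage nothing bounds the length of that word (equivalently the integers $\tilde\ell(s,x_2,\ldots,x_i)$) by anything related to $\Diam_S(G_{\mathrm{ab}})$; a priori it is only controlled by the diameter of $G_i/G_{i+1}$ with respect to $\widehat S_i$, and bounding that quantity in terms of $\Diam_S(G_{\mathrm{ab}})$ is exactly what Theorem \ref{diam_bound}, which consumes this corollary, is meant to prove---so the charging argument is unjustified and essentially circular. Note also that for $i\ge 2$ the image of $g$ itself in $G_{\mathrm{ab}}$ is trivial, so ``the word length of $g$'s image in the abelianization'' cannot be the controlling quantity.

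The correct mechanism---which is in fact latent in your second paragraph before you abandon it---requires no control whatsoever of the original exponents. For a fixed tail $(x_2,\ldots,x_i)$, linearity in the first coordinate collapses $\prod_{s\in S}\rho(s,x_2,\ldots,x_i)^{\tilde\ell(s)}$ modulo $G_{i+1}$ into a single term $\rho(h,x_2,\ldots,x_i)$ with $h=\prod_{s\in S}s^{\tilde\ell(s)}\in G$, where the $\tilde\ell(s)$ may be arbitrarily large. By \eqref{e:bylinearity} this term depends on $h$ only through its coset $G_2h\in G_{\mathrm{ab}}$, so one may simply discard the long word and re-express $G_2h$ by a fresh word $\prod_{s\in S}s^{\hat\ell(s)}$ with $\sum_{s\in S}|\hat\ell(s)|\le\Diam_S(G_{\mathrm{ab}})$, by the very definition of the diameter of $G_{\mathrm{ab}}$; re-expanding with \eqref{e:linearityinfirstcoord} yields the product in \eqref{e:bilinearity11} with bounded exponents. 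The sign condition $\ell(s)\cdot\ell(s^{-1})=0$ then follows, as you indicate, from $\rho(s^{-1},x_2,\ldots,x_i)\equiv\rho(s,x_2,\ldots,x_i)^{-1}$ modulo $G_{i+1}$. With your third paragraph replaced by this re-representation step, the argument coincides with the paper's proof.
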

\begin{proof}
We know from Proposition \ref{p:generatingG} that $\widehat S_i$ generates $G_i/G_{i+1}$, i.e., for any $g\in G_i$ we can express $G_{i+1}g$ as a product of elements in $\widehat S_i=\{ G_{i+1}s: s\in S_i\}$. Observe that $\rho: S\times R^{i-1} \to S_i$ is surjective due to the definition of $S_i$. Hence we can express
$$G_{i+1}g=
	G_{i+1}
	\prod_{(x_2, ..., x_i) \in R^{i-1}}
	\prod_{s \in S}
	\rho(s, x_2, ..., x_i)^{ \tilde\ell(s,x_2,\dots,x_i)},
	$$
where $ \tilde\ell(s,x_2,\dots,x_i)\in \ZZ$ corresponds to the number of times $\rho(s, x_2, ..., x_i)$ appears. Let $h_{(x_2,\dots,x_i),g}:=\prod_{s\in S}  s^{ \tilde\ell(s,x_2,\dots,x_i)} \in G$ so that by \eqref{e:linearityinfirstcoord} 
\begin{align*}
G_{i+1} \prod_{s \in S}\rho(s, x_2, ..., x_i)^{ \tilde\ell(s,x_2,\dots,x_i)}&=G_{i+1}\rho( \prod_{s\in S}  s^{ \tilde\ell(s,x_2,\dots,x_i)}, x_2,\dots,x_i)=G_{i+1}\rho(h_{(x_2,\dots,x_i),g}, x_2,\dots,x_i).
\end{align*}
Then we can take $h^{ab}_{(x_2,\dots,x_i),g}=G_2h_{(x_2,\dots,x_i),g}\in G_{\mathrm{ab}} $ so that 
\beq\label{Gh}
G_{i+1} \rho( h_{(x_2,\dots,x_i),g}, x_2, ..., x_i)=G_{i+1}\rho( h^{ab}_{(x_2,\dots,x_i),g}, x_2, ..., x_i).
\eeq
The above expression contains a slight abuse of notation on the right hand side as $h^{ab}_{(x_2,\dots,x_i),g}$ is not an element of $G$ while $\rho$ was defined to have inputs from $G$. By \eqref{e:bylinearity} we see that for any $h'\in G$ such that $h' h_{(x_2,\dots,x_i),g}^{-1}\in G_2$,
$$G_{i+1} \rho( h_{(x_2,\dots,x_i),g}, x_2, ..., x_i)=G_{i+1}\rho(h', x_2, ..., x_i)$$
and hence what essentially determines the value of \eqref{Gh} is $h^{ab}_{(x_2,\dots,x_i),g}=G_2h_{(x_2,\dots,x_i),g}$, which clarifies the meaning of the right hand side of \eqref{Gh}. The point of doing so is that we can identify $G_{i+1} \prod_{s \in S}\rho(s, x_2, ..., x_i)^{ \tilde\ell(s,x_2,\dots,x_i)}$ with $G_{i+1} \rho( h^{ab}_{(x_2,\dots,x_i),g}, x_2, ..., x_i)$ for some $h^{ab}_{(x_2,\dots,x_i),g}\in G_{\mathrm{ab}}$. As $G_{\mathrm{ab}}$ can be generated by $\widehat S_1$, there exists some function $\hat\ell(\cdot ,x_2,\dots,x_i)$ that satisfies $\sum_{s\in S} |\hat\ell(s,x_2,\dots,x_i)|\leq \Diam_S(G_{\mathrm{ab}})$ such that 
$$G_2 \prod_{s\in S} s^{\hat\ell(s,x_2,\dots,x_i)}= h^{ab}_{(x_2,\dots,x_i),g},$$
i.e.,
$$G_{i+1} \prod_{s \in S}\rho(s, x_2, ..., x_i)^{ \tilde\ell(s,x_2,\dots,x_i)}=G_{i+1}\prod_{s \in S}\rho(s, x_2, ..., x_i)^{ \hat\ell(s,x_2,\dots,x_i)}.$$
This explains the first condition in the definition of $A$.

To explain the second condition in the definition of $A$, we observe that since $\rho(s^{-1},x_2,\dots,x_i)=\rho(s,x_2,\dots,x_i)^{-1}$ for $s\in S$, only one of $\{s, s^{-1}\}$ needs to appear in the expression above. Given $g\in G_i$ and $(x_2,\dots,x_i)$, for each $s\in S$, we choose $s_+\in \{s,s^{-1}\}$ such that simplifying the product
$$G_{i+1} \prod_{s'\in \{s,s^{-1}\}} \rho(s', x_2, ..., x_i)^{ \hat\ell(s',x_2,\dots,x_i)}=G_{i+1} \rho(s_+, x_2,\dots,x_{i})^{\ell_{(x_2,\dots,x_i),g}(s)}$$
leads to a non-negative power $\ell_{(x_2,\dots,x_i),g}(s)$. We can view $\ell_{(x_2,\dots,x_i),g}(\cdot)$ as function from $S$ to $\ZZ_+$ such that only one of $\{\ell_{(x_2,\dots,x_i),g}(s), \ell_{(x_2,\dots,x_i),g}(s^{-1})\}$ is nonzero for $s\in S$. It is straightforward to verify $\sum_{s\in S} |\ell_{(x_2,\dots,x_i),g}(s)|\leq \Diam_S(G_{\mathrm{ab}})$.

Finally, 
the proof is concluded by applying \eqref{e:linearityinfirstcoord} with the above choice of $\ell_{(x_2,\dots,x_i),g}(\cdot)$.

\end{proof}

\begin{corollary}\label{ab_size}
For $1\leq i\leq L$,
$$
	|G_i/G_{i+1}|\le \abs{G_\mathrm{ab}}^{r(G)^{i-1}} \quad \text{ and }\quad |G|\leq |G_{\mathrm{ab}}|^{2r(G)^\LL}.
$$
\end{corollary}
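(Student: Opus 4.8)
The plan is to deduce both estimates from Corollary~\ref{g_expression}, applied with $S$ chosen to be a minimal symmetric generating set. Concretely, by definition of the rank there exist $g_1,\dots,g_r\in G$, $r=r(G)$, such that $S:=\{g_j^{\pm1}:j\in[r]\}$ generates $G$; put $R:=\{g_1,\dots,g_r\}$, so that $|R|\le r$ and $|\{s,s^{-1}\}\cap R|=1$ for every $s\in S$. By Proposition~\ref{p:generatingG} the hypothesis of Corollary~\ref{g_expression}, namely that $\widehat S_1$ generates $G_{\mathrm{ab}}$, is then satisfied, so Corollary~\ref{g_expression} is available for this pair $S,R$.

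For the first inequality, the case $i=1$ is the identity $|G_1/G_2|=|G_{\mathrm{ab}}|$, so fix $2\le i\le L$; recall that $G_i/G_{i+1}$ is abelian since $[G_i,G_i]\subseteq[G_i,G]=G_{i+1}$. The key point is that the exponent functions $\ell_{\mathbf x,g}$ appearing in Corollary~\ref{g_expression} contribute only through $G_{\mathrm{ab}}$. Indeed, for each fixed tuple $\mathbf x=(x_2,\dots,x_i)\in R^{i-1}$, the linearity \eqref{e:linearityinfirstcoord} in the first argument of $\rho$ lets us collapse $G_{i+1}\prod_{s\in S}\rho(s^{\ell_{\mathbf x,g}(s)},x_2,\dots,x_i)$ into $G_{i+1}\rho(h_{\mathbf x,g},x_2,\dots,x_i)$ with $h_{\mathbf x,g}:=\prod_{s\in S}s^{\ell_{\mathbf x,g}(s)}\in G$ (the same manipulation used in the proof of Corollary~\ref{g_expression}), and by \eqref{e:bylinearity} this coset depends on $h_{\mathbf x,g}$ only through its image $G_2h_{\mathbf x,g}\in G_{\mathrm{ab}}$. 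Hence there is a well-defined map $\psi_{\mathbf x}\colon G_{\mathrm{ab}}\to G_i/G_{i+1}$, $\psi_{\mathbf x}(G_2h):=G_{i+1}\rho(h,x_2,\dots,x_i)$, and Corollary~\ref{g_expression} says that every coset $G_{i+1}g$ with $g\in G_i$ equals $\prod_{\mathbf x\in R^{i-1}}\psi_{\mathbf x}(G_2h_{\mathbf x,g})$, a product taken in the abelian group $G_i/G_{i+1}$. Thus $G_{i+1}g$ is a function of the tuple $(G_2h_{\mathbf x,g})_{\mathbf x\in R^{i-1}}\in(G_{\mathrm{ab}})^{R^{i-1}}$, giving
$$|G_i/G_{i+1}|\le |G_{\mathrm{ab}}|^{|R|^{i-1}}\le |G_{\mathrm{ab}}|^{r^{i-1}}.$$

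For the second inequality, since the lower central series terminates with $G_{L+1}=\{\id\}$, iterating the tower law yields $|G|=\prod_{i=1}^{L}[G_i:G_{i+1}]=\prod_{i=1}^{L}|G_i/G_{i+1}|$, so the first inequality gives $|G|\le|G_{\mathrm{ab}}|^{\sum_{i=1}^{L}r^{i-1}}$. If $r\ge2$ then $\sum_{i=1}^{L}r^{i-1}=\tfrac{r^{L}-1}{r-1}\le r^{L}\le 2r^{L}$; if $r=1$ then $G$ is cyclic, hence abelian, so $L=1$ and the exponent equals $1\le 2=2r^{L}$. In either case $|G|\le|G_{\mathrm{ab}}|^{2r^{L}}$.

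The only step carrying genuine content is the ``collapse'' in the second paragraph: although the functions $\ell_{\mathbf x,g}$ range over the potentially large set $A$ of Corollary~\ref{g_expression}, bilinearity forces the resulting $\mathbf x$-factor of $G_{i+1}g$ to lie in the image of $G_{\mathrm{ab}}$ under $\psi_{\mathbf x}$, hence to take at most $|G_{\mathrm{ab}}|$ values. The remaining ingredients — choosing a minimal generating set, the product formula $|G|=\prod_i|G_i/G_{i+1}|$, and summing the geometric series — are routine, so I do not anticipate any real obstacle once Corollary~\ref{g_expression} is in hand.
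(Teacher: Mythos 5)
Your proposal is correct and follows essentially the same route as the paper: invoke Corollary \ref{g_expression} with a minimal symmetric generating set, collapse each fixed-tuple factor via \eqref{e:linearityinfirstcoord} and \eqref{e:bylinearity} so that the coset $G_{i+1}g$ is determined by a tuple in $(G_{\mathrm{ab}})^{R^{i-1}}$, and then multiply the resulting bounds over the layers of the lower central series. Your write-up is just slightly more explicit than the paper's (well-definedness of the map into $G_i/G_{i+1}$, the geometric-series exponent, and the trivial $i=1$ and $r=1$ cases), but there is no substantive difference.
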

\begin{remark}\label{dependence_rank_step}
From a technical standpoint, the second inequality above is why the term $r^L$ is present in the condition $ |S| \leq \frac{\log |G|}{ 8\LL r^\LL\log\log|G|}$ of Theorem \ref{reduction_ab}. This inequality can be improved in various scenarios with extra knowledge on the group structure. 
\end{remark}
\begin{proof}
By \eqref{e:linearityinfirstcoord} and \eqref{e:bilinearity11}, for any $g\in G_i$, we can express $G_{i+1}g$ as 
$$G_{i+1}g=G_{i+1}\prod_{(x_2, ..., x_i) \in R^{i-1}}\rho(h_{(x_2,\dots,x_i),g}, x_2, ..., x_i)$$
for some $h_{(x_2,\dots,x_i),g}:=\prod_{s \in S}s^{\ell_{(x_2, ..., x_i),g}(s)}$ where $\ell_{(x_2, ..., x_i),g}(\cdot)\in A$ for all $(x_2,\dots,x_i)\in R^{i-1}$ where $A$ is as in Corollary \ref{g_expression}. By the same argument as in the proof of Corollary \ref{g_expression}, for any given $h_{(x_2,\dots,x_i),g}\in G$, we can take $h^{ab}_{(x_2,\dots,x_i),g}=G_2h_{(x_2,\dots,x_i),g}\in G_{\mathrm{ab}} $ so that 
$$
G_{i+1} \rho( h_{(x_2,\dots,x_i),g}, x_2, ..., x_i)=G_{i+1}\rho( h^{ab}_{(x_2,\dots,x_i),g}, x_2, ..., x_i).
$$
That is, for any $G_{i+1}g\in G_i/G_{i+1}$, we can define a function $\phi_g:R^{i-1}\to G_{\mathrm{ab}}$ by 
 $\phi_g(x_2,\dots,x_i)=h^{ab}_{(x_2,\dots,x_i),g}$, which implies $|G_i/G_{i+1}|$ is upper bounded by the number of functions from $R^{i-1}$ to $G_{\mathrm{ab}}$.
$$|G|=\prod_{i=1}^{\LL}|G_i/G_{i+1}|\leq \prod_{i=1}^{\LL} |G_{\mathrm{ab}}|^{|R^{i-1}|}\leq |G_{\mathrm{ab}}|^{2|R|^\LL}.$$
Taking $S$ such that $|R|=r(G)$ gives the desired inequality. 

\end{proof}

\subsection{Proof of Theorem~\ref{diam_bound}}
\label{sec-p3:nil:thm}
We begin with the following estimate that plays a key role in the proof of Theorem \ref{diam_bound}. 
\begin{lemma}\label{lemma_thm1}
Let $i\in [L]$ be fixed. For any $1\leq m\leq  \Diam_S(G_{\mathrm{ab}})$ and $(s,x_2, ..., x_i) \in S \times R^{i-1}$
$$
|G_{i+1}\rho(s^{m}, x_2, ..., x_i)|
\leq 2^{5i+6} \left(2^{2i}+L m^{1/i}\right).
$$
\end{lemma}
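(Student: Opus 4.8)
The plan is to read $|G_{i+1}\rho(s^{m},x_2,\dots,x_i)|$ as the distance from the identity coset to $G_{i+1}\rho(s^{m},x_2,\dots,x_i)$ in $\Cay(G/G_{i+1},S_{G_{i+1}})$, so that it is bounded by $\dist_S(\id,g)$ for any representative $g\in G$ of that coset (projecting a word of $\Cay(G,S)$ onto a quotient can only shorten it). Two preliminary observations drive the argument. First, an elementary iterated-commutator length bound: from $\dist_S(\id,[a,b])\le 2\dist_S(\id,a)+2\dist_S(\id,b)$ and the recursion $\rho(g_1,\dots,g_i)=[\rho(g_1,\dots,g_{i-1}),g_i]$, an easy induction gives $\dist_S(\id,\rho(g_1,\dots,g_i))\le 2^{i+1}\max_{1\le k\le i}\dist_S(\id,g_k)$ for $i\ge 2$. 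Second, the multilinearity of $\rho$ modulo $G_{i+1}$ from Proposition \ref{p:bilinearity}: iterating the coordinatewise linearity \eqref{e:linearityinfirstcoord} over all $i$ slots yields $G_{i+1}\rho(g_1^{a_1},\dots,g_i^{a_i})=G_{i+1}\rho(g_1,\dots,g_i)^{a_1\cdots a_i}$ for all integers $a_1,\dots,a_i\ge 1$. In particular $G_{i+1}\rho(s^{m},x_2,\dots,x_i)=G_{i+1}c^{m}$ with $c:=\rho(s,x_2,\dots,x_i)$, so everything reduces to bounding the quotient length of the single power $c^{m}$.

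The crux is to represent $c^{m}$ as a short product via a mixed-radix (base-$b$) expansion of $m$. I would set $b:=\ceil{m^{1/i}}+1$, so that $b>m^{1/i}$ and hence $m<b^i$, and take the base-$b$ digits $m=\sum_{j=0}^{i-1}c_jb^j$ with $0\le c_j<b$, which gives $G_{i+1}c^{m}=\prod_{j=0}^{i-1}G_{i+1}c^{c_jb^j}$. For each $j$ I split the exponent as $c_jb^j=c_j\cdot b^{j}\cdot 1^{i-1-j}$ across the $i$ slots of $\rho$ (legitimate because $j\le i-1$): by the multilinearity identity this yields
$$G_{i+1}c^{c_jb^j}=G_{i+1}\rho\big(s^{c_j},\,x_2^{b},\dots,x_{j+1}^{b},\,x_{j+2},\dots,x_i\big).$$
Every entry of the commutator on the right is a power of an element of $S$ with exponent in $\{c_j,b,1\}\subseteq\{0,1,\dots,b\}$, hence at $\dist_S$-distance at most $b$ from $\id$; the iterated-commutator bound then gives $\dist_S$-distance at most $2^{i+1}b$ from $\id$ for the whole element, so its coset has quotient length at most $2^{i+1}b$. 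Summing over the at most $i$ factors via the triangle inequality in $\Cay(G/G_{i+1},S_{G_{i+1}})$ gives $|G_{i+1}\rho(s^{m},x_2,\dots,x_i)|\le i\cdot 2^{i+1}b$. Since $1\le m$ we have $m^{1/i}\ge 1$ and $b\le m^{1/i}+2\le 3m^{1/i}$, and since $i\le L$ the bound becomes $\le 3L\cdot 2^{i+1}m^{1/i}\le 2^{5i+6}Lm^{1/i}\le 2^{5i+6}(2^{2i}+Lm^{1/i})$, which is the claim; the case $i=1$ is immediate since $|G_2 s^{m}|\le m$.

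The one genuinely delicate point, and the reason the factor $L$ appears in the statement, is the irregularity of $m$: were $m$ a perfect $i$-th power one could write $c^{m}=\rho(s^{m^{1/i}},x_2^{m^{1/i}},\dots,x_i^{m^{1/i}})$ as a single commutator of "short" powers, but for a general $m$ (e.g. $m$ prime) the base-$b$ expansion is unavoidable, and it produces up to $i\le L$ separate commutator factors, each of quotient length of order $2^{i}m^{1/i}$. The other thing to keep track of is that the linearity of $\rho$ holds only after passing to $G/G_{i+1}$, so all the exponent manipulations must be carried out in that quotient; the descent to $\dist_S$ in $G$ is invoked only at the very end, to bound the length of each individual commutator of $S$-powers. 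Both issues are routine once the mixed-radix idea is in hand, and the constants emerge with a large margin against the target $2^{5i+6}(2^{2i}+Lm^{1/i})$ (in particular the additive $2^{2i}$ term is pure slack here).
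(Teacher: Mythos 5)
Your proof is correct, and it reaches the stated bound by a genuinely different decomposition of the exponent than the paper's. Both arguments share the same two ingredients: the multilinearity of $\rho$ modulo $G_{i+1}$ (Proposition \ref{p:bilinearity}, used exactly as you use it, to redistribute an exponent across the $i$ slots) and the elementary word-length estimate for iterated commutators of short powers (your $2^{i+1}\max_k\dist_S(\id,g_k)$ versus the paper's \eqref{e:bilinearity3}). The difference is in how $m$ is broken up. The paper runs a greedy scheme: it repeatedly strips off the largest available perfect $i$-th power $\floor{E_a^{1/i}}^i$, then descends through $j$-th powers for $j=i-1,\dots,2$, and must control both the sizes $D_a$ and the number of iterations $\ell_j$; the count $\ell_i\lesssim L\log\log m$ is where the factor $L$ and the additive $2^{2i}$ slack enter. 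You instead take a single base-$b$ expansion with $b=\ceil{m^{1/i}}+1$, write $m=\sum_{j=0}^{i-1}c_jb^j$, and realize each digit's contribution as one commutator $\rho(s^{c_j},x_2^b,\dots,x_{j+1}^b,x_{j+2},\dots,x_i)$ of quotient length at most $2^{i+1}b$. This produces exactly $i$ factors rather than $O(L\log\log m)$ of them, so the bookkeeping with the remainders $E_a$ disappears entirely, and your final bound $i\,2^{i+1}(m^{1/i}+2)$ is in fact slightly stronger than the stated one (the factor $L$ only enters through $i\le L$, and the $2^{2i}$ term is indeed pure slack). All the delicate points are handled correctly: the exponent manipulations are carried out only in $G/G_{i+1}$, the split $c_j\cdot b^j\cdot 1^{i-1-j}$ uses $j\le i-1$ which the expansion guarantees since $m<b^i$, and the passage from coset length to $\dist_S$ of a representative is legitimate since projection to the quotient Cayley graph does not increase word length.
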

 
In what follows, we first present the proof of Theorem \ref{diam_bound} given Lemma \ref{lemma_thm1} and then complete the proof of Lemma  \ref{lemma_thm1}.

\medskip

\begin{proof}[Proof of Theorem \ref{diam_bound}]

To simplify notation, we abbreviate $D := \Diam_S(G_{\mathrm{ab}})$. Recall from \eqref{projected_generator} that $S_{G_{i+1}}=\{ G_{i+1}s: s\in S\}$ for $i\in [L]$.  Let $\dist_{S,i}(\cdot, \cdot)$ denote the graph distance on the Cayley graph $\Cay(G_i/G_{i+1}, S_{G_{i+1}})$ and  write $|G_{i+1}g| := \operatorname{dist}_{S,i}(\mathrm{id}, g)$ for $g\in G$. 

The first inequality $\Diam_S(G_2)\le\sum_{i=2}^\LL \Diam_S(G_i / G_{i+1})$  follows from inductively applying \eqref{e:diameterdecomposition2} with $H=G_{i+1}$ and $H'=G_i$ for $ 2\leq i\leq \LL$.

We turn to the second inequality. The goal is to prove that for every $2\leq i\leq \LL$, 
\beq\label{diam_quotient}
\Diam_S(G_i / G_{i+1})\leq2^{5i+7}|R|^i 
	\left( 2^{2i} + L\ceil{ D/|R| }^{1/i} \right).
\eeq
By Corollary \ref{g_expression}, in order to prove \eqref{diam_quotient} it suffices to show that for any $g\in G_i$,

\beq\label{diam_quotient2}
	\abs{G_{i+1}
			\prod_{(x_2, ..., x_i) \in R^{i-1}}
			\prod_{s \in S}
			\rho( s^{\ell_{(x_2, ..., x_i),g}(s)}, x_2, ..., x_i) 
		}
\le
	2^{5i+7}|R|^i 
	\left( 2^{2i} + L\ceil{ D/|R| }^{1/i} \right),
\eeq
where $\{\ell_{(x_2, ..., x_i),g}(\cdot) : (x_2, ..., x_i) \in R^{i-1}\}$ are functions defined in Corollary \ref{g_expression}, belonging to the set

$$
A:=	\{ \ell :S\to\ZZ_+  \text{ s.t. }\sum_{s \in S}
	|\ell(s)| \le D \text{ and } \ell(s)\cdot \ell(s^{-1}) = 0 \text{ for all } s \in S \text{ such that }s\neq s^{-1}\}.
$$

For any $\ell\in A$ and $s\in S$, we have $\ell(s)\leq D$.  Applying Lemma \ref{lemma_thm1} to $\ell(s)$ and using the triangle inequality, we can obtain 
\[
	\abs{G_{i+1} \prod_{s \in S} \rho(s^{\ell(s)}, x_2, ..., x_i) }
\le
	2^{5i+6} ( 2^{2i} |S| +L \max_{\ell \in A}\{ \sum_{s \in S}  \ell(s)^{1/i}  \} ).
\]
Given the constraint $\ell\in A$,  a simple application of Lagrange multipliers gives
\[
	\max_{\ell \in A}
	\{ \sum_{s \in S}  \ell(s)^{1/i} \}
\le
	|R| \cdot \ceil{ D/|R| }^{1/i}.
\]
Plugging this into the previous display gives 
$$\abs{G_{i+1} \prod_{s \in S} \rho(s^{\ell(s)}, x_2, ..., x_i) }
\le 2^{5i+6} ( 2^{2i} |S| +L|R| \cdot \ceil{ D/|R| }^{1/i}).$$
Summing over $(x_2, ..., x_i) \in R^{i-1}$ using the triangle inequality gives the required bound in \eqref{diam_quotient2} and thus completes the proof.

\end{proof}

\mn
\textit{Proof of Lemma \ref{lemma_thm1}.} The following simple estimate will play a major role in our proof. For $2\leq j\leq i$, $(x_1, ..., x_i) \in S \times R^{i-1}$ and $n \in \mathbb{N}$,
we have
\begin{equation}
	|\rho( x_1^{n},x_2^n, ..., x_j^n,x_{j+1}, ..., x_i)|
\le
	2^{i+2}n.
\label{e:bilinearity3}
\end{equation}
This follows from the fact that
$$
	|\rho(x_1, ..., x_i)|
\le
	2(|x_i|+|\rho(x_1, ..., x_{i-1})|),
$$
which is a simple consequence of the definition $\rho(x_1,\dots,x_i)=[\rho(x_1,\dots,x_{i-1}),x_i]$. 

Note that if $m=n^j$ for some $n\in \mathbb{N}$ and $j \ge 1$, then by \eqref{e:linearityinfirstcoord} we can simply write
\[
	G_{i+1} \rho(s^{m },x_2, ..., x_i)
=
	G_{i+1} \rho(s^n, x_2^n, ..., x_j^n, x_{j+1}, ..., x_i)
\]
and use \eqref{e:bilinearity3} to conclude the proof. Otherwise, we can still try to decompose $m$ as a sum of terms of the form $\{ n^j : j\in [i], n\in\mathbb{N}\}$, which helps improving the upper bound on $|G_{i+1} \rho(s^{m },x_2, ..., x_i)|$. In other words, our goal is to express $G_{i+1} \rho(s^m,x_2, ..., x_i)$ as the product of elements of the form
$$
	\{ G_{i+1}\rho(s^n, x_2^n, ..., x_j^n, x_{j+1}, ..., x_i) : j\in [i], n\in \mathbb{N}\}.
$$
 
To find the decomposition of $G_{i+1} \rho(s^m,x_2, ..., x_i)$ we can employ a greedy procedure to search for some set  $W(j)\subseteq \mathbb{N}$ for each $j\in [i]$ so that
$m=\sum_{j\in [i]}\sum_{n \in W(j)}n^{j}$. In what follows we first define $W(i)$ and then find $W(j)$ for $j=i-1,i-2,\dots,1$.
Setting $E_1:=m$ and $D_1 := \floor{ m^{1/i} }$, we will define $E_j$ and $D_j$ inductively for $j=i-1,i-2,\dots 2.$

For $a\geq 1$ such that $E_a\geq 4^{i^2}$, let 	
\[
		E_{a+1} := E_a - D_a^i,
	\quad
		D_{a+1} := \floor{ E_{a+1}^{1/i} }
	\Quad{and}
		y_{a} := \rho(s^{D_{a}}, x_2^{D_{a}}, ..., x_i^{D_{a}}).
	\]
We stop at the first time when $|E_a|<4^{i^2}$ and record $\ell_i:=\min\{ a: E_a < 4^{i^2}\}$. Set $W(i):=\{ D_a :1\leq a<\ell_i\}$.

For each $j=i-1,i-2,\dots,2$, in order to find $W(j)$ we proceed as follows: let 
\[
		E_{a+1} := E_a - D_a^j,
	\quad
		D_{a+1} := \floor{ E_{a+1}^{1/j} }
	\Quad{and}
		y_a := \rho(s^{D_a}, x_2^{D_a}, ..., x_j^{D_a}, x_{j+1}, ..., x_i).
	\]
We stop  at the first time when $E_a<4^{j^2}$ and record  $\ell_j:=\min\{ a: E_a < 4^{j^2}\}$. Set $W(j):=\{ D_a: \ell_{j+1}\leq a <\ell_{j}\}$. 

Finally, we set $y_{\ell_2} := \rho(s^{E_{\ell_2}},x_2, ..., x_i)$, $y := \prod_{a=1}^{\ell_2}y_a$ and $W(1):=\{ E_{\ell_2}\}$.

\medskip
By Propositon \ref{p:bilinearity} we have
\[
	G_{i+1} \rho(s^{m}, x_2, ..., x_i)
=
	G_{i+1} y.
\]
That is, it suffices to upper bound $|G_{i+1}y|$. For $1\leq a<\ell_2$, it follows from \eqref{e:bilinearity3} and the definition of $y_a$ that $|G_{i+1}y_a|\leq 2^{i+2}D_a$. It is easy to see that  $D_{a+1}\leq D_a$ for all $\ell_{j+1}\leq a<\ell_j$ and thus 
$D_a\leq D_{\ell_{j+1}}$ for $\ell_{j+1}\leq a<\ell_j$. Lastly, by definition, $|G_{i+1}y_{\ell_2}|\leq E_{\ell_2}\leq 4^{2^2}=2^8$. Combining these facts gives
\begin{align}\label{decomp_y}
\nonumber|G_{i+1}y|&\leq \sum_{a=1}^{\ell_2}|G_{i+1}y_a|=\sum_{a=1}^{\ell_i-1} |G_{i+1}y_a| +\sum_{j=2}^{i-1} \sum_{\ell_{j+1}\leq a<\ell_j}  |G_{i+1}y_a| +|G_{i+1} y_{\ell_2}|\\
&\leq \sum_{a=1}^{\ell_i-1} 2^{i+2}D_a+\sum_{j=2}^{i-1} (\ell_j-\ell_{j+1})2^{i+2}D_{\ell_{j+1}}+2^8.
\end{align}
We first upper bound the second term in \eqref{decomp_y}. By definition $E_{\ell_{j+1}}<4^{(j+1)^2}$ and thus $D_{\ell_{j+1}}\leq E_{\ell_{j+1}}^{1/j}\leq 4^{(j+1)^2/j}\leq 4^{j+3}$. To bound $\ell_{j}-\ell_{j+1}$ for $2\leq j \leq i-1$, note that for $\ell_{j+1}\leq a<\ell_j$, $E_a\geq 4^{j^2}$ and thus $D_a\geq 4^j$, which implies that
$$4^{j^2}\leq E_{\ell_j-1}\leq E_{\ell_j-2}-4^{j^2}\leq E_{\ell_{j+1}}-(\ell_j-1-\ell_{j+1})4^{j^2},$$
i.e.,
$$
(\ell_j-\ell_{j+1})\leq \frac{E_{\ell_{j+1}}}{4^{j^2}}<\frac{ 4^{(j+1)^2}}{4^{j^2}}=4^{2j+1}.
$$
It follows that the second term in \eqref{decomp_y} satisfies
\beq\label{second_term_decomp}
\sum_{j=2}^{i-1} (\ell_j-\ell_{j+1})2^{i+2}D_{\ell_{j+1}}\leq 2^{i+2} \sum_{j=2}^{i-1} 4^{2j+1} 4^{j+3}\leq 2^{7i+5}.
\eeq
Next, we estimate $\sum_{a=1}^{\ell_i-1}D_a$ in \eqref{decomp_y}. Observe that
\beq\label{E2}
E_2=m-\lfloor m^{1/i} \rfloor ^i\leq (\lfloor m^{1/i} \rfloor+1)^i-\lfloor m^{1/i} \rfloor^i \leq 2^i \lfloor m^{1/i} \rfloor^{(i-1)}\leq 2^i m^{\frac{i-1}{i}}.
\eeq
Repeating the same calculation for $2^i m^{\frac{i-1}{i}}$ yields that $E_3\leq 2^{i(1+\frac{i-1}{i})}m^{(\frac{i-1}{i})^2}$. More generally, for $1\leq a<\ell_i$, since $\sum_{h=0}^{\infty} (\frac{i-1}{i})^h=i$,
$$E_{a+1}\leq (2^i)^{\sum_{h=0}^{a-1} (\frac{i-1}{i})^h }m^{(\frac{i-1}{i})^a}\leq 2^{i^2}m^{(\frac{i-1}{i})^a}.$$
Since for $ 2\leq a<\ell_i$, $D_a=\floor{ E_{a}^{1/i} }\leq\floor{ E_{2}^{1/i} }\leq E_2^{1/i}$, by \eqref{E2} and the fact that $D_1 := \floor{ m^{1/i} }$ we have  
$$\sum_{a=1}^{\ell_i-1}D_a\leq \floor{m^{1/i}}+\ell_i\cdot 2m^{\frac{i-1}{i^2}}.$$

It remains to upper bound $\ell_i$. By definition, we have $\ell_i\leq \min\{ a: 2^{i^2}m^{(\frac{i-1}{i})^a}<4^{i^2}\}$. Simple calculation shows that for any $2\leq i\leq L$,
$$\ell_i\leq \left\lceil \frac{\log\log m-\log\log (2^{i^2})}{\log(\frac{i}{i-1})}\right\rceil \leq   \frac{\log\log m}{\log(\frac{L}{L-1})}\leq 2(L-1) \log\log m,$$
where the last inequality follows from the fact that $\log(1+x)\geq x/2$ for $x\in [0,1]$.
Therefore, for $2\leq i \leq \LL$ and $1\leq m\leq D$,
\begin{align*}
 \sum_{a=1}^{\ell_i-1}D_a&\leq \floor{m^{1/i}}+2(L-1)(\log\log m)\cdot 2m^{\frac{i-1}{i^2}}\leq 4L\cdot \max\{ m^{1/i},(\log\log m)m^{\frac{i-1}{i^2}}\}.
\end{align*}
Noting that $\max_{1\leq m\leq e^{i^2}}\{ m^{1/i},(\log\log m)m^{\frac{i-1}{i^2}}\}\leq (\log i)e^i$ and $\max_{m>e^{i^2}}\{ m^{1/i},(\log\log m)m^{\frac{i-1}{i^2}}\}\leq (\log i)m^{1/i}$, we have 
\beq\label{sum_Da}
 \sum_{a=1}^{\ell_i-1}D_a\leq 4L (\log i)e^i m^{1/i}\leq 2^{4i+2}L m^{1/i}.
\eeq

Finally, plugging the upper bounds in \eqref{second_term_decomp} and \eqref{sum_Da} into \eqref{decomp_y} yields, for $2\leq i\leq L$,
\begin{align*}
|G_{i+1}y|&\leq 2^{i+2} \cdot 2^{4i+2}Lm^{1/i} +2^{7i+5}+2^8\leq  2^{5i+4}L m^{1/i}+2^{7i+6}\\
&\leq 2^{5i+6} \left(2^{2i}+L m^{1/i}\right).
\end{align*}
which completes the proof of Lemma \ref{lemma_thm1}.
\qed

\section{Reduction to Abelianization}

Let $X_t$ be a rate 1 simple random walk on $\Cay(G,S)$ and let $Y_t:=G_2 X_t$ be the projected random walk of $X_t$ onto $G_{\mathrm{ab}}$, which is a rate 1 simple random walk on $\Cay(G_{\mathrm{ab}}, S_{G_2})$. Let $t_{\mathrm{mix}}^{G,S}(\ep)$ denote the $\ep$-mixing time of the random walk on $\Cay(G,S)$ and $t_{\mathrm{mix}}^{G_{\mathrm{ab}},S}(\ep)$ the $\ep$-mixing time for the projected random walk on $\Cay(G_{\mathrm{ab}}, S_{G_2})$. To simplify notation we will drop the $S$ in the superscript and write $t_{\mathrm{mix}}^{G}(\ep)$ instead when the choice of $S$ is clear from the context.

Since $Y_t$ is the projection of $X_t$ onto the abelianization $G_{\mathrm{ab}}$ we can observe 
\beq\label{tv_lower}
 \| \P_{G_2}(Y_t=\cdot)-\pi_{G_{\mathrm{ab}}}\|_{\mathrm{TV}}\leq \|\P_{\id}(X_t=\cdot)-\pi_G\|_{\mathrm{TV}},
 \eeq
which implies $t_{\mathrm{mix}}^{G_{\mathrm{ab}}}(\ep)\leq t_{\mathrm{mix}}^{G}(\ep)$. Naturally we are interested in the mixing behavior of $X_t$ in comparison to that of $Y_t$, that is, we hope to understand to what extent  the mixing behavior of the random walk on $G$ is governed by its projection on the abelianzation group $G_{\mathrm{ab}}$. It turns out that for a nilpotent group $G$ of bounded step and rank, when the generator set $S$ is not too large, the mixing of $X_t$ is completely governed by that of $Y_t$.

\newtheorem*{thm:reduction_ab}{Theorem \ref{reduction_ab}}
\begin{thm:reduction_ab}
Let $G$ be a finite nilpotent group such that $r(G),L(G)\asymp 1$ and $S\subseteq G$ be a symmetric set of generators. Suppose $ |S| \leq \frac{\log |G|}{ 8\LL r^\LL\log\log|G|}$. For any fixed $\ep\in (0,1)$ and $\delta\in ( 0,\ep)$ we have 
$$t_{\mathrm{mix}}^{G_{\mathrm{ab}}}(\ep)\leq t_{\mathrm{mix}}^G(\ep) \leq t_{\mathrm{mix}}^{G_{\mathrm{ab}}}(\ep-\delta)$$
when $|G|$ is sufficiently large (more precisely, when $|G|\exp(-(\log|G|)^L)\leq \delta$).
\end{thm:reduction_ab}

It is well known that the relaxation time is characterized by the exponential decay rate of the total variation distance between the talk and its equilibrium (see e.g., Corollary 12.7 in \cite{wilmer2009markov}). As a consequence of the proof of Theorem \ref{reduction_ab}, we obtain the following characterization of the relaxation time of $X_t$ in terms of its projection $Y_t$.

\newtheorem*{cor:t_rel}{Corollary \ref{t_rel}}
\begin{cor:t_rel}
Let $t^G_{\mathrm{rel}}$ and $t^{G^{\mathrm{ab}}}_{\mathrm{rel}}$ be the relaxation time of the walk $X_t$ and $Y_t$ respectively. Then
$$t^{G^{\mathrm{ab}}}_{\mathrm{rel}}\leq t^G_{\mathrm{rel}} \leq \max\{ t^{G^{\mathrm{ab}}}_{\mathrm{rel}}, |S| \cdot \Diam_S(G_2)^{2}\}.$$
In particular, when $ |S| \leq \frac{\log |G|}{ 8\LL r^\LL\log\log|G|}$ we have $t^G_{\mathrm{rel}}=t^{G^{\mathrm{ab}}}_{\mathrm{rel}}$.
\end{cor:t_rel}

Before moving on to proving Theorem \ref{reduction_ab}, we first explain why Corollary \ref{t_rel} is an easy consequence of the proof of Theorem \ref{reduction_ab} and delay its proof to the end of this section. It is useful to observe the following inequality
\begin{align}\label{tv_triangle_upper}
\|\P_{\id}(X_t=\cdot)-\pi_G\|_{\mathrm{TV}}&\leq  \| \P_{\id}(X_t=\cdot)-\P_{\pi_{G_2}}(X_t=\cdot)\|_{\mathrm{TV}}+\| \P_{\pi_{G_2}}(X_t=\cdot)-\pi_G\|_{\mathrm{TV}},
\end{align}
where $\pi_A$ denotes the uniform distribution over the set $A\subseteq G$. We will establish that in the given regime of $|S|$, the second term $\| \P_{\pi_{G_2}}(X_t=\cdot)-\pi_G\|_{\mathrm{TV}}$ in the above inequality is the leading order term that determines the time of mixing. Moreover, Lemma \ref{reduction_TV} shows this term is fully characterized by the projected random walk $Y_t$ on $G_{\mathrm{ab}}$. 

Using the interpretation that the relaxation time is the exponential decay rate of the total variation distance, by taking power $1/t$ and letting $t\to\infty$ on both sides of \eqref{tv_lower} and \eqref{tv_triangle_upper}, we shall see that $t^G_{\mathrm{rel}}=t^{G^{\mathrm{ab}}}_{\mathrm{rel}}$ if $|S| \leq \frac{\log |G|}{ 8\LL r^\LL\log\log|G|}$.

As a direct consequence of Theorem \ref{reduction_ab} and Corollary \ref{t_rel} we can now present the proof of Theorem \ref{maximal_class_group}.

\newtheorem*{thm:maximal_class_group}{Theorem \ref{maximal_class_group}}
\begin{thm:maximal_class_group}
Suppose $G$ is a nilpotent group with rank $r$ and step $L$ such that either (i) $G_{ab}\cong\ZZ^r_m$ where $m\in \mathbb{N}$ or (ii) $G$ is a $p$-group. Suppose the rank and step satisfy $Lr^{L+1}\leq \frac{\log|G|}{16 \log\log|G|}$. For any symmetric set of generators $S\subseteq G$ of minimal size and any given $\ep>0$, the mixing time $t^{G,S}_{mix}(\ep)$ is the same up to smaller order terms, and the relaxation time $t_{\mathrm{rel}}^{G,S}$ is the same. 
\end{thm:maximal_class_group}
\begin{proof}
Let $S$ be a minimal size symmetric set of generators for $G$ and let $S_{G_2}=\{ G_2 s: s\in S\}$. By Proposition \ref{p:generatingG} we can see that $S_{G_2}$ is a minimal size symmetric set of generators for $G_{\mathrm{ab}}$. As $G_{\mathrm{ab}}$ is an abelian group, it
can be expressed in the form 
\beq\label{ab_expression}
G_{\mathrm{ab}}\cong\ZZ_{m_1}\oplus\ZZ_{m_2}\oplus \cdots \oplus \ZZ_{m_r},
\eeq
where $m_1,\dots,m_r\in \ZZ$ and $r$ is the rank of $G$. In general the choice of $m_1,\dots,m_r$ is not unique (e.g., $\ZZ_{2}\oplus \ZZ_{15}\cong\ZZ_6 \oplus \ZZ_5$), but when $G$ satisfies the assumption in the statement the expression in \eqref{ab_expression} is unique: in case (i) $G_{ab}\cong\ZZ^r_m$; in case (ii) we can write $G_{\mathrm{ab}}\cong \ZZ_{p^{\alpha_1}}\oplus   \cdots \oplus  \ZZ_{p^{\alpha_r}}$ for some $\alpha_i\in\mathbb{\ZZ}$ with $\alpha_1\leq \dots\leq \alpha_r$. Hence, any minimal size symmetric set $S_{G_2}$ that generates $G_{\mathrm{ab}}$ uniquely corresponds to $\{\pm e_i :i\in [r]\}$, the collection of standard basis. Thus the mixing time $t^{G_{\mathrm{ab}},S}_{mix}(\ep)$ on $G_{\mathrm{ab}}$ for any such $S$ is equal to the mixing time of the walk on $\ZZ^r_m$ (or  $\ZZ_{p^{\alpha_1}}\oplus   \cdots \oplus  \ZZ_{p^{\alpha_r}}$) with generators $\{\pm e_i :i\in [r]\}$.

As our assumptions on $r$ and $L$ guarantees $|S|=2r\leq \frac{\log|G|}{8Lr^L\log\log|G|}$, we can apply Theorem \ref{reduction_ab} to show that the mixing time $t^{G,S}_{mix}(\ep)$ is the equal to (up to smaller order terms)  the mixing time of the walk on $\ZZ^r_m$ (or  $\ZZ_{p^{\alpha_1}}\oplus   \cdots \oplus  \ZZ_{p^{\alpha_r}}$) with generators $\{\pm e_i :i\in [r]\}$ for any minimal size symmetric set of generators $S$ of $G$. The result for relaxation time follows similarly from Corollary \ref{t_rel}.
\end{proof}

\subsection{Proofs}

\begin{lemma}\label{reduction_TV}
For $t\geq 0$,
$$\| \P_{\pi_{G_2}}(X_t=\cdot)-\pi_G\|_{\mathrm{TV}}=\| \P_{G_2}(Y_t=\cdot)-\pi_{G_{\mathrm{ab}}}\|_{\mathrm{TV}}.$$
\end{lemma}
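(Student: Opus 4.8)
The plan is to exhibit a measure-preserving correspondence between the two total variation distances by pushing forward the walk on $G$ through the quotient map $\pi_{G_2}\colon G\to G_{\mathrm{ab}}$, $g\mapsto G_2 g$. The key structural fact is that the random walk $X_t$ is a random product of i.i.d.\ increments $s_{\sigma_i}^{\eta_i}$ applied on the right, so that its projection $Y_t = G_2 X_t$ is itself a random walk on $\Cay(G_{\mathrm{ab}},S_{G_2})$ driven by the \emph{same} increment sequence: $\pi_{G_2}(gs) = \pi_{G_2}(g)\,\pi_{G_2}(s)$. Consequently, if $X_0$ is distributed as $\pi_{G_2}$ (uniform on the coset $G_2$ of the identity), then $Y_0 = G_2 X_0 = G_2$ deterministically, which matches the initial condition $\P_{G_2}(Y_t=\cdot)$ on the right-hand side.

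First I would record that for $g\in G$ and any measurable $A\subseteq G_{\mathrm{ab}}$,
$$
\P_{\pi_{G_2}}\big(X_t \in \pi_{G_2}^{-1}(A)\big) = \P_{G_2}\big(Y_t\in A\big),
$$
i.e.\ the law of $X_t$ started from $\pi_{G_2}$ projects, under $\pi_{G_2}$, to the law of $Y_t$ started from $G_2$. Next I would observe that the uniform distribution $\pi_G$ on $G$ projects to the uniform distribution $\pi_{G_{\mathrm{ab}}}$ on $G_{\mathrm{ab}}$ (since $\pi_{G_2}$ is a group homomorphism, every fibre has the same size $|G_2|$). The final — and only substantive — step is to show that projecting under $\pi_{G_2}$ does not lose any total variation distance in this particular case, i.e.\ the inequality $\|\mu\circ\pi_{G_2}^{-1} - \nu\circ\pi_{G_2}^{-1}\|_{\mathrm{TV}} \le \|\mu-\nu\|_{\mathrm{TV}}$ is actually an equality for $\mu = \P_{\pi_{G_2}}(X_t=\cdot)$ and $\nu=\pi_G$.

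The main obstacle is thus this equality (rather than mere inequality), and the point is that it holds because $\P_{\pi_{G_2}}(X_t=\cdot)$ is \emph{constant on each coset} of $G_2$. Indeed, starting from the uniform distribution on $G_2$ and multiplying on the right by a fixed word $w = s_{\sigma_1}^{\eta_1}\cdots s_{\sigma_N}^{\eta_N}$ produces the uniform distribution on the coset $G_2 w$; averaging over the random word, the distribution of $X_t$ assigns equal mass to all elements of any given coset $G_2 g$, with that common value equal to $|G_2|^{-1}\P_{G_2}(Y_t = G_2 g)$. Since $\pi_G$ is likewise constant ($=|G|^{-1}$) on each coset, the difference $\P_{\pi_{G_2}}(X_t=g) - \pi_G(g)$ depends only on $\pi_{G_2}(g)$, and summing the positive part over $G$ yields exactly $|G_2|$ times the sum of the positive part of $\P_{G_2}(Y_t=\cdot) - \pi_{G_{\mathrm{ab}}}(\cdot)$ over $G_{\mathrm{ab}}$ divided by $|G_2|$; the factors cancel and the two TV distances coincide. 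I would write this last computation out using $\|\alpha-\beta\|_{\mathrm{TV}} = \sum_x (\alpha(x)-\beta(x))^+$ to keep it clean.
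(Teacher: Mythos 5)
Your proposal is correct and follows essentially the same route as the paper: the paper's proof likewise observes that $\P_{\pi_{G_2}}(X_t=\cdot)$ is constant on each coset of $G_2$ (which you justify, correctly, via right-multiplication by the random word preserving uniformity on cosets) and then computes the total variation distance coset by coset so that the factors of $|G_2|$ cancel.
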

\begin{proof}
Write $G_{\mathrm{ab}}=\{G_2g_i: 1\leq i\leq |G_{\mathrm{ab}}|\}$. One can easily check that starting from the initial distribution ${\pi_{G_2}}$, for any  $1\leq i\leq |G_{\mathrm{ab}}|$ and $h,h'\in G_2g_i$, $\P_{\pi_{G_2}}(X_t=h)=\P_{\pi_{G_2}}(X_t=h')$. 
Hence,
\begin{align*}
2\| \P_{\pi_{G_2}}(X_t=\cdot)-\pi_G\|_{\mathrm{TV}}&=\sum_{i=1}^{|G_{\mathrm{ab}}|}\sum_{x\in G_2g_i} \bigg| \P_{\pi_{G_2}}(X_t=x)-\frac{1}{|G|}\bigg|=\sum_{i=1}^{|G_{\mathrm{ab}}|} \bigg|\P_{\mathrm{id}}(X_t\in G_2g_i)-\frac{|G_2|}{|G|}\bigg|\\
&=\sum_{i=1}^{|G_{\mathrm{ab}}|} \bigg|\P_{G_2}(Y_t=G_2g_i)-\frac{1}{|G_{\mathrm{ab}}|}\bigg|\\
&=2\| \P_{G_2}(Y_t=\cdot)-\pi_{G_{\mathrm{ab}}}\|_{\mathrm{TV}}.
\end{align*}

\end{proof}

It remains to upper bound the difference $\| \P_{\id}(X_t=\cdot)-\P_{\pi_{G_2}}(X_t=\cdot)\|_{\mathrm{TV}}$. 
\begin{lemma}\label{commutator_mixing}
For $t\geq 0$, 
$$\| \P_{\mathrm{id}}(X_t=\cdot)-\P_{\pi_{G_2}}(X_t=\cdot)\|_{\mathrm{TV}}\leq  \frac{|G|}{2}\exp\left(- \frac{t}{|S|\cdot \Diam_S(G_2)^2}\right).$$
\end{lemma}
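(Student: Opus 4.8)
\medskip\noindent\textit{Proof sketch.}
The plan is to move from total variation to $L^2$ and exploit reversibility. Write $\pi:=\pi_G$, let $(P_t)_{t\ge 0}$ be the heat semigroup of $X$, which is self-adjoint on $L^2(\pi)$ since $S$ is symmetric, and let $\EE$ be its Dirichlet form, $\EE(f,f)=\tfrac{1}{2|S|\,|G|}\sum_{g\in G}\sum_{s\in S}(f(gs)-f(g))^2$. Put $\psi:=(\delta_{\id}-\pi_{G_2})/\pi$, so that $\psi(g)=|G|(\1[g=\id]-|G_2|^{-1}\1[g\in G_2])$ and $\|\psi\|_{L^2(\pi)}^2=|G|(1-|G_2|^{-1})\le|G|$. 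Reversibility gives $((\delta_{\id}-\pi_{G_2})P_t)/\pi=P_t\psi$, so Cauchy--Schwarz yields
$$2\bigl\|\P_{\id}(X_t=\cdot)-\P_{\pi_{G_2}}(X_t=\cdot)\bigr\|_{\mathrm{TV}}=\bigl\|(\delta_{\id}-\pi_{G_2})P_t\bigr\|_1\le\bigl\|P_t\psi\bigr\|_{L^2(\pi)},$$
and it remains to bound $\|P_t\psi\|_{L^2(\pi)}$.

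I would then introduce the subspace $V:=\{f\in L^2(G):\sum_{g'\in gG_2}f(g')=0\ \text{for all}\ g\in G\}$ of functions of mean zero on each coset of $G_2$ (left and right cosets agree since $G_2\trianglelefteq G$). One checks that $\psi\in V$, and, crucially using normality, that $P_tV\subseteq V$: for $f\in V$, $\sum_{g\in xG_2}(P_tf)(g)=\sum_{g'}p_t(\id,g')\sum_{g\in xG_2}f(gg')$, and $\{gg':g\in xG_2\}=(xg')G_2$ is a single coset, so the inner sum vanishes. Hence $P_t|_V$ is self-adjoint with operator norm $e^{-\lambda_V t}$, where $\lambda_V:=\inf_{0\neq f\in V}\EE(f,f)/\|f\|_{L^2(\pi)}^2$, which is positive because $\Cay(G,S)$ is connected (so $V$ contains no nonzero invariant function). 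This gives $\|P_t\psi\|_{L^2(\pi)}\le e^{-\lambda_V t}\sqrt{|G|}$, and since $\tfrac12\sqrt{|G|}\le\tfrac12|G|$ the lemma follows once we prove $\lambda_V\ge(|S|\cdot\Diam_S(G_2)^2)^{-1}$.

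This lower bound on $\lambda_V$ is the heart of the proof, and I would obtain it by a canonical-paths (Poincaré) estimate. Fix $f\in V$. Because $f$ has mean zero on each coset $C$ of size $|G_2|$, $\sum_{g\in C}f(g)^2=\tfrac1{2|G_2|}\sum_{g,g'\in C}(f(g)-f(g'))^2$, hence $\sum_{g\in G}f(g)^2=\tfrac1{2|G_2|}\sum_{(g,g')}(f(g)-f(g'))^2$, the sum over ordered pairs lying in a common coset. For such a pair set $h:=g^{-1}g'\in G_2$, fix once and for all a geodesic word $w(h)=u_1\cdots u_m$ in $\Cay(G,S)$ with $m=\dist_S(\id,h)\le\Diam_S(G_2)$, and let $\gamma_{g,g'}$ be the path $g\to gu_1\to\cdots\to g'$; by Cauchy--Schwarz $(f(g)-f(g'))^2\le\Diam_S(G_2)\sum_{e\in\gamma_{g,g'}}(\nabla_ef)^2$, where $\nabla_{(x,s)}f:=f(xs)-f(x)$. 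Exchanging the order of summation, $\sum_g f(g)^2\le\tfrac{\Diam_S(G_2)}{2|G_2|}\sum_e(\nabla_ef)^2\,N(e)$, where $N(e)$ counts the canonical paths through a directed edge $e=(x,s)$. Such a path is determined by $h\in G_2$ together with a position $j$ at which $w(h)$ has letter $s$ (which forces $g=x(u_1\cdots u_{j-1})^{-1}$), so $N(e)\le\sum_{h\in G_2}m(h)\le|G_2|\,\Diam_S(G_2)$. Combining, $\sum_g f(g)^2\le\tfrac{\Diam_S(G_2)^2}{2}\sum_{x\in G,s\in S}(f(xs)-f(x))^2=|S|\,|G|\,\Diam_S(G_2)^2\,\EE(f,f)$, i.e.\ $\|f\|_{L^2(\pi)}^2\le|S|\,\Diam_S(G_2)^2\,\EE(f,f)$, which is the required estimate.

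The main obstacle is precisely this congestion bound: one wants canonical paths lying in $\Cay(G,S)$ — which is automatic, since two elements of the same $G_2$-coset differ by an element of $G_2$ and hence are joined by a $G_2$-geodesic of length at most $\Diam_S(G_2)$ — but with edge congestion bounded uniformly, which is what the averaging-over-pairs argument delivers; and one must verify that the coset-averaging subspace $V$ is preserved by the dynamics. Both points rest on $G_2\trianglelefteq G$. The remaining ingredients (the $L^2$ reduction and the identity $\|P_t|_V\|=e^{-\lambda_V t}$) are standard.
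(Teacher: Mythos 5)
Your argument is correct and follows the same overall strategy as the paper: reduce to an $L^2$ bound for the function $\psi=(\delta_{\id}-\pi_{G_2})/\pi$ (the paper's $\varphi$), observe that the subspace of functions with mean zero on each $G_2$-coset is preserved by the dynamics, and establish the Poincar\'e inequality $\|f\|_{L^2(\pi)}^2\le |S|\,\Diam_S(G_2)^2\,\EE(f,f)$ on that subspace. The only substantive difference is how that inequality is obtained: the paper compares the Dirichlet form of $\Cay(G,S)$ with that of the auxiliary walk on $\Cay(G,\tilde S)$, $\tilde S=G_2$ (for which $\tilde\EE(f,f)=\|f\|_2^2$ on the subspace), and cites a Dirichlet-form comparison theorem, whereas you prove the same estimate from scratch by a canonical-paths/congestion argument along $G_2$-geodesics — essentially reproving that comparison in this special case, which makes your write-up self-contained at the cost of a page of bookkeeping. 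A minor additional point in your favour: you run the decay estimate spectrally in continuous time ($\|P_t|_V\|=e^{-\lambda_V t}$ with $\lambda_V$ the restricted Rayleigh quotient), which cleanly avoids any issue with near $-1$ eigenvalues of the discrete kernel $P$; the paper instead passes through the discrete-time contraction $\|P^nf\|_2\le(1-\lambda)^n\|f\|_2$, a step that as written implicitly conflates $1-\|Pf\|_2/\|f\|_2$ with the Rayleigh quotient and is cleanest to justify exactly as you do, via the spectrum of the continuous-time generator. Your bound even comes out as $\tfrac12\sqrt{|G|}e^{-t/(|S|\Diam_S(G_2)^2)}$, slightly sharper than the stated $\tfrac{|G|}{2}$ prefactor.
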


\begin{remark}
The conclusion in Lemma \ref{commutator_mixing}  holds if we replace $G_2$ by any  subgroup $H$ of $G$. 
\end{remark}

\begin{proof}
Let $P$ be the transition matrix of the simple random walk $X_t$ on $\Cay(G,S)$. For $t\geq 0$, we can define the continuous time kernel by $P_t:=\sum_{n=0}^\infty \frac{(tP)^n}{n!} e^{-t}$.

Consider the linear subspace of functions
$$\AA:=\{ f: G\to \RR \big| \sum_{x\in G_2g} f(x)=0 \text{ for all }g\in G\}.$$

We now show that $\AA$ is invariant under the transition matrix $P$, i.e., $Pf\in \AA$ for all $f\in \AA$. For any $g\in G$,
\begin{align*}
\sum_{x\in G_2g} Pf(x)&=\sum_{h\in G_2} Pf(hg)=\sum_{h\in G_2}\sum_{y\in G} P(hg,y)f(y)=\sum_{h\in G_2}\sum_{z\in G} P(hg,hz)f(hz)\\
&=\sum_{z\in G} P(g,z) \left(\sum_{h\in G_2} f(hz)\right)=0,
\end{align*}
where the second line uses the fact that $P$ is translation invariant, i.e., $P(hg,hz)=P(g,z)$ for any $h,g,z\in G$ and that $f\in \AA$.



Let $\tilde{P}$ denote the transition matrix of the SRW on $\Cay(G,\tilde S)$ with $\tilde S:=G_2$. We can also check that $\tilde P f=0$ for all $f\in \AA$, i.e.,
$$\tilde P f(x)=\sum_{y\in G} \tilde P(x,y)f(y)=\sum_{y\in G} \frac{\1\{ x^{-1}y\in G_2\} f(y)}{|G_2|}=0 \quad \text{ for all }x\in G.$$
Hence, for $f\in \AA$ we have that (below $X$ and $Y$ are independent)
\begin{align*}
\tilde\EE(f,f)&:=\frac{1}{2}\E_{X\sim \pi, Y\sim \Unif(\tilde S)}[(f(X)-f(XY))^2]=\|f\|_2^2-\E_\pi[f(\tilde P f)]=\|f\|_2^2,\\
\EE(f,f)&:=\frac{1}{2}\E_{X\sim \pi, Y\sim \Unif(S)}[(f(X)-f(XY))^2]=\|f\|_2^2-\E_\pi[f (Pf)]\geq \|f\|_2^2-\|f\|_2\|Pf\|_2,
\end{align*}
where $\|f\|_2:= \left(\E_\pi[f^2]\right)^{1/2}=\left( \sum_{g\in G} \frac{1}{|G|} f^2(g) \right)^{1/2}$ is the $\ell_2$ norm of $f$.
Applying Theorem 4.4 in \cite{berestycki2016mixing}, which gives a comparison of Dirichlet forms for two sets of generators, we see that for the two symmetric random walks on the finite group $G$ with transition matrices $P$ and $\tilde P$ defined as before, 
$$\tilde\EE(f,f)\leq  |S|\cdot \Diam_S(G_2)^2 \cdot \EE(f,f)$$
for all functions $f:G\to\RR$. That is,
$$1-\frac{\|Pf\|_2}{\|f\|_2}=\frac{\EE(f,f)}{\tilde \EE(f,f)}\geq \frac{1}{|S|\cdot \Diam_S(G_2)^2},$$
and it follows by induction that 
$$\|P^nf\|_2\leq \|f\|_2 \left(1- \frac{1}{|S| \cdot\Diam_S(G_2)^2}\right)^n.$$
Hence, for any $t\geq 0$, recalling that $P_t:=\sum_{n=0}^\infty \frac{(tP)^n }{n!} e^{-t}$, we have
\beq\label{loc_mix}
\|P_tf\|_2\leq  \|f\|_2 \exp\left(- \frac{t}{|S| \cdot\Diam_S(G_2)^2}\right).
\eeq

Define the function $\varphi: G \to\RR$ by $\varphi(g)=\1\{g=\id\}-\1\{g\in G_2\}/|G_2|$ so that
$$P_t \varphi(g)=\E_g[\varphi(X_t)]=\sum_{x\in G} P_t(g,x)\varphi(x)=\sum_{x\in G} P_t(x,g)\varphi(x).$$
By our choice of $\varphi$ it is easy to see that $P_t \varphi(g)= \P_{\id}(X_t=g)-\P_{\pi_{G_2}}(X_t=g)$ and $\varphi\in \AA$.  By the definition of $\varphi$ one can check that $\|\varphi\|_2\leq \|\varphi\|_\infty\leq 1$. 
Note that  it follows from Cauchy-Schwarz inequality that
$$4\| \P_{\id}(X_t=\cdot)-\P_{\pi_{G_2}}(X_t=\cdot)\|^2_{\mathrm{TV}}=\left(\sum_{g\in G} |P_t \varphi(g)|\right)^2 \leq|G|^2 \cdot \| P_t \varphi\|^2_2.$$
Therefore, applying $\varphi$ in \eqref{loc_mix} gives
\begin{align*}
 \| \P_{\id}(X_t=\cdot)-\P_{\pi_{G_2}}(X_t=\cdot)\|_{\mathrm{TV}}&\leq\frac{|G|}{2}\cdot \| P_t \varphi\|_2\\
 &\leq \frac{|G|}{2}\exp\left(- \frac{t}{|S| \cdot\Diam_S(G_2)^2}\right). 
 \end{align*}


\end{proof}

%
\noindent
\textit{Proof of Theorem \ref{reduction_ab}.}
The first inequality $t_{\mathrm{mix}}^{G_{\mathrm{ab}}}(\ep)\leq t_{\mathrm{mix}}^G(\ep)$ follows directly from the projection of $G$ onto $G_{\mathrm{ab}}$. To prove the second part of the inequality, observe that by the triangle inequality
$$\|P_{\id}(X_t=\cdot)-\pi_G\|_{\mathrm{TV}}\leq  \| \P_{\id}(X_t=\cdot)-\P_{\pi_{G_2}}(X_t=\cdot)\|_{\mathrm{TV}}+\| \P_{\pi_{G_2}}(X_t=\cdot)-\pi_G\|_{\mathrm{TV}}.$$
Lemma \ref{reduction_TV} shows that $\| \P_{\pi_{G_2}}(X_t=\cdot)-\pi_G\|_{\mathrm{TV}}\leq \ep-\delta$ for $t:=t_{\mathrm{mix}}^{G_{\mathrm{ab}}}(\ep-\delta)$. 

It remains to prove $ \| \P_{\id}(X_t=\cdot)-\P_{\pi_{G_2}}(X_t=\cdot)\|_{\mathrm{TV}}\leq \delta$. Recall from Corollary \ref{diam_comparison} that $\Diam_S(G_{\mathrm{ab}})\gg |S|^{4L}$ when $ |S| \leq \frac{\log |G|}{ 8\LL r^\LL\log\log|G|}$. Note that as a direct consequence of Proposition 13.7 in \cite{lyons2017probability}, which is a simple application of the Carne-Varopoulos inequality, one has  that for all $\ep\leq 1-|G_{\mathrm{ab}}|^{-1/4}$,
\beq\label{tmix_diam}
t_{\mathrm{mix}}^{G_{\mathrm{ab}}}(\ep)\gtrsim \frac{\Diam_S(G_{\mathrm{ab}})^2}{\log|G_{\mathrm{ab}}|}.
\eeq
It then follows from \eqref{tmix_diam} and Corollary \ref{diam_comparison} that
\begin{align*}
t^{G_{\mathrm{ab}}}_{\mathrm{mix}}(\ep-\delta)&\gtrsim \frac{\Diam_S(G_{\mathrm{ab}})^2}{\log|G_{\mathrm{ab}}|}\geq \frac{\Diam_S(G_{\mathrm{ab}})^{1/4}}{\log|G|}\cdot \Diam_S(G_{\mathrm{ab}})^{7/4} \gg (\log |G|)^{\LL}\cdot  |S| \cdot \Diam_S(G_2)^{2},
\end{align*}
where we recall that $L\geq 2$.
We can then apply Lemma \ref{commutator_mixing} to $t:=t_{\mathrm{mix}}^{G_{\mathrm{ab}}}(\ep-\delta)$ and get
$$\| \P_{\id}(X_t=\cdot)-\P_{\pi_{G_2}}(X_t=\cdot)\|_{\mathrm{TV}}\ll |G|\exp\left(-(\log|G|)^{L}\right)\leq \delta$$
when $|G|$ is large enough. The proof is then complete.

\qed

\mn
\textit{Proof of Corollary \ref{t_rel}.} It is straightforward to extend the conclusion in Corollary 12.7 of \cite{wilmer2009markov} to continuous time Markov chains to get 
$$\lim_{t\to\infty} \frac{\log(d(t))}{t}= -\frac{1}{t_{\mathrm{rel}}},$$
where $d(t)$ denotes the total variation distance to stationarity and $t_{\mathrm{rel}}$ denotes the corresponding relaxation time. Recall from \eqref{tv_lower} and \eqref{tv_triangle_upper} that 
\begin{align*}
 \| \P_{G_2}(Y_t=\cdot)-\pi_{G_{\mathrm{ab}}}\|_{\mathrm{TV}}&\leq \|\P_{\id}(X_t=\cdot)-\pi_G\|_{\mathrm{TV}}\\
 &\leq  \| \P_{\id}(X_t=\cdot)-\P_{\pi_{G_2}}(X_t=\cdot)\|_{\mathrm{TV}}+\| \P_{\pi_{G_2}}(X_t=\cdot)-\pi_G\|_{\mathrm{TV}},
\end{align*}

Define $f(\cdot)=\lim_{t\to\infty} \frac{\log(\cdot)}{t}$. We can apply the function $f$ to both sides in the above inequality and use Lemma  \ref{reduction_TV} and \ref{commutator_mixing} to obtain
$$t^{G^{\mathrm{ab}}}_{\mathrm{rel}}\leq t^G_{\mathrm{rel}}\leq \max\{ t^{G^{\mathrm{ab}}}_{\mathrm{rel}}, |S| \cdot \Diam_S(G_2)^{2}\}.$$
\qed

\section{On Cayley Graphs with Random i.i.d. Generators}

We consider the random walk $X(t)$ on the Cayley graph of a finite nilpotent group $G$ with respect to $k$ generators chosen
uniformly at random. The graph is denoted by $\Cay(G,S)$, where the generator set $S:=\{ Z_i^{\pm 1}: i\in [k]\}$ with $Z_1,\dots,Z_k\overset{iid}{\sim} \Unif(G)$. The regime of interest in this paper is $1\ll \log k \ll \log|G|$. In this section, our aim is to prove Theorem \ref{cutoff_iid}, which we have restated below for ease of reference.

\newtheorem*{thm:cutoff_iid}{Theorem \ref{cutoff_iid}}
\begin{thm:cutoff_iid}
 Let $G$ be a finite nilpotent group with $r(G), L(G)\asymp 1$. Let $S=\{ Z_i^{\pm 1}: i\in [k]\}$ with $Z_1,\dots,Z_k\overset{iid}{\sim} \Unif(G)$. Assume $1\ll \log k \ll \log|G|$. As $|G|\to \infty$, the  random walk on $\Cay(G,S)$ exhibits cutoff with high probability at time $t_*(k,G)$, which is the cutoff time defined in Definition \ref{entropic_time}.
\end{thm:cutoff_iid}

The structure of this section is as follows: in Section \ref{entropic_method} we will give an overview of the entropic method within the context of Theorem \ref{cutoff_iid} and how it is useful as a framework for proving bounds on the mixing time; in Section \ref{mixing_lower_bound}  we prove the lower bound on the mixing time.

Establishing the corresponding upper bound on the mixing time requires significant effort, which involves meticulous analysis of the distribution of the RW on each quotient group $Q_\ell=G_{\ell}/G_{\ell+1}$ for $\ell\in [L]$. To this end we give a representation of the random walk $X(t)$ on $\Cay(G,S)$ in Section \ref{representation_X}. Subsequently, we provide an outline of the proof for the upper bound on mixing time in Section \ref{mixing_upper_bound} and complete the proof in the remaining of the paper.

\subsection{Entropic Method and Entropic Times}\label{entropic_method}

Let $S:=\{ Z_i^{\pm}\in G : i\in [k]\}$ denote the symmetric set of generators of $G$. We define the 
 auxiliary process $W:=W(t)=(W_1(t),\dots,W_k(t))$ based on $X(t)$ where $W_i(t)$ is the number of times generator $Z_i$ has been applied minus the number of times $Z_i^{-1}$ has been applied in the random walk $X(t)$. It is easy to see $W(t)$ is a rate 1 random walk on $\ZZ^k$. To simplify notation, we sometimes drop the time index $t$ when it is clear from the context.

Let $(X, W)$ and $(X', W')$ be two independent copies of the random walk on $\Cay(G,S)$ starting at $\id$ and its auxiliary process. Denote by $\mathcal{W}:=\mathcal{W}(t)\subseteq\ZZ^k$ a subset of the state space of the walk $W$, where the index suggests that the precise choice of $\mathcal{W}$ (which is postponed to Definition \ref{typical_event}) depends on the time $t$. For now we will think of $\mathcal{W}$ as a set of ``typical" locations of $W$ such that $\P(W(t) \notin \mathcal{W})=o(1)$ for the relevant choice of $t$.

We use a ``modified $L^2$ calculation": first conditioning on $W$ being ``typical"; then using a standard
$L^2$ calculation on the conditioned law. The proof of the following lemma is quite straightforward and hence is omitted.

\begin{lemma}[Lemma 2.6 of \cite{hermon2102cutoff}]\label{TV_entropic}
For all $t\geq 0$ and all $\mathcal{W}\subseteq \ZZ^k$ the following inequalities hold:
\begin{align*}
d_{S}(t):=\| \P_S(X(t)\in \cdot)-\pi_G\|_{\mathrm{TV}}&\leq \| \P_S(X(t)\in \cdot |W(t)\in \mathcal{W})-\pi_G\|_{\mathrm{TV}}+\P(W(t)\notin \mathcal{W})\\
4  \|\P_S(X(t)\in \cdot|W(t)\in \mathcal{W})-\pi_G\|^2_{\mathrm{TV}}&\leq |G| \cdot \P_S(X(t)=X'(t)| W(t),W'(t)\in \mathcal{W})-1
\end{align*}
where $\P_S$ denotes the law of the random walk given the generator set $S$ starting at $X(0)=\id$. 
\end{lemma}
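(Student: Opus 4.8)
The statement to prove is Lemma~\ref{TV_entropic}, which the authors attribute to \cite{hermon2102cutoff} and for which they omit the proof. I will reconstruct the argument.

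\textbf{Plan.} The first inequality is a general fact about conditioning and total variation: for any random variable $W(t)$ and any event $\{W(t)\in\WW\}$, writing $\mu$ for the law of $X(t)$ and $\mu_{\WW}$ for its law conditioned on $\{W(t)\in\WW\}$, one has $\mu = \P(W(t)\in\WW)\,\mu_{\WW} + \P(W(t)\notin\WW)\,\mu_{\WW^c}$. Subtracting $\pi_G$ and using the triangle inequality for the TV norm together with $\|\mu_{\WW^c}-\pi_G\|_{\mathrm{TV}}\le 1$ and $\P(W(t)\in\WW)\le 1$ gives
$$\|\mu-\pi_G\|_{\mathrm{TV}}\le \|\mu_{\WW}-\pi_G\|_{\mathrm{TV}}+\P(W(t)\notin\WW),$$
which is exactly the claimed bound on $d_S(t)$. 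This step is routine.

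\textbf{Main step.} The second inequality is the ``modified $L^2$ calculation''. I would let $q(g):=\P_S(X(t)=g\mid W(t)\in\WW)$ be the conditioned law. By Cauchy--Schwarz applied to $\sum_g |q(g)-1/|G||\cdot 1$,
$$4\|q-\pi_G\|_{\mathrm{TV}}^2 = \Big(\sum_{g\in G}\big|q(g)-\tfrac1{|G|}\big|\Big)^2 \le |G|\sum_{g\in G}\big(q(g)-\tfrac1{|G|}\big)^2 = |G|\sum_{g}q(g)^2 - 1.$$
It remains to identify $\sum_g q(g)^2$ with a collision probability. Take $(X,W)$ and $(X',W')$ to be independent copies; then, conditioning each on its auxiliary process lying in $\WW$, the pair $(X(t),X'(t))$ given $\{W(t)\in\WW,\,W'(t)\in\WW\}$ has product law $q\otimes q$, so
$$\P_S\big(X(t)=X'(t)\mid W(t)\in\WW,\ W'(t)\in\WW\big)=\sum_{g\in G}q(g)^2.$$
Substituting yields $4\|q-\pi_G\|_{\mathrm{TV}}^2\le |G|\cdot\P_S(X(t)=X'(t)\mid W(t),W'(t)\in\WW)-1$, which is the second claim. (Here the notation $\P_S(\,\cdot\mid W(t),W'(t)\in\WW)$ in the lemma is shorthand for conditioning on both $\{W(t)\in\WW\}$ and $\{W'(t)\in\WW\}$.)

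\textbf{Expected obstacle.} There is essentially no obstacle: both parts are soft. The only point requiring a line of care is the independence/product-structure claim — namely that conditioning the two independent chains separately on the events $\{W\in\WW\}$ and $\{W'\in\WW\}$ preserves their independence, so that the collision probability factors as $\sum_g q(g)^2$ rather than involving cross terms. This follows because $(X,W)$ and $(X',W')$ are independent as pairs, hence so are $(X\mid W\in\WW)$ and $(X'\mid W'\in\WW)$. Everything else is Cauchy--Schwarz and the triangle inequality, which is why the authors felt comfortable omitting the proof.
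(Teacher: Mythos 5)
Your proof is correct and is exactly the intended argument: the paper omits the proof (citing Lemma 2.6 of \cite{hermon2102cutoff}), and the standard route is precisely your decomposition of the law of $X(t)$ over $\{W(t)\in\mathcal{W}\}$ and its complement for the first bound, and the Cauchy--Schwarz ``$L^2$'' computation identifying $\sum_g q(g)^2$ with the collision probability of two independent conditioned copies for the second. Your remark on the only delicate point — that independence of the pairs $(X,W)$ and $(X',W')$ under $\P_S$ makes the conditional law of $(X(t),X'(t))$ a product, so no cross terms appear — is also the right one.
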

Note that when $S$ is a random set of generators, $d_{S}(t)$ is a random variable that is measurable with respect to $\sigma(S)$, the $\sigma$-field generated by the choice of $S$. In what comes later in our arguments, we will take the expectation over the choices of $S$ and work with
$$\E[\P_S(X(t)=X'(t)| W(t),W'(t)\in \mathcal{W})]=:\P(X(t)=X'(t)| W(t),W'(t)\in \mathcal{W}).$$

A good choice of the set of ``typical" locations $\mathcal{W}$ will greatly simplify the analysis. Hence, in the remaining of this section, we will discuss in detail the choice of $\mathcal{W}$, or in other words, the ``typical event" that we will condition on. As our goal is to obtain an upper bound on the total variation mixing time, we will look at a time $t$ that is slightly larger than the proposed mixing time, which is the entropic times that will be defined in Section \ref{entropic_times}. Based on this choice of time we then define the typical event in Section \ref{typ_event}.

\subsubsection{Asymptotics of Entropic Times}\label{entropic_times}
Recall from Definition \ref{entropic_time} that the entropic time $t_0:=t_0(k,|G_{\mathrm{ab}}|)$ is the time at which the entropy of the rate 1 random walk $W$ on $\ZZ^k$ is $\log |G_{\mathrm{ab}}|$.

For simplicity in simplicity of notation, we will write $t_1:=\log_k |G|$ and let the cutoff time be denoted by $t_*:=t_*(k,G)=\max\{ t_0,t_1\}$. 

The entropy of the random walk on $\ZZ^k$ has been well understood. The interested reader can find a detailed exposition of the entropic times in \cite{hermon2018supplementary}. 
Via direct calculation with the simple random walk and Poisson laws, one can obtain asymptotics of entropic times, see, e.g., \cite[Proposition A.2 and \S A.5]{hermon2018supplementary} for full details. Here we content ourselves with restating the result of such calculation, which can be found in Proposition 2.2 in \cite{hermon2019cutoff}, that yields the asymptotic values of $t_0$:

\begin{equation}
\label{flambda}
\begin{array}{ll}
    t_0 \eqsim k\cdot  |G_{\mathrm{ab}}|^{2/k}/(2\pi e) & \quad\text{when } k\ll \log|G_{\mathrm{ab}}|, \\
    t_0 \eqsim k\cdot  f(\lambda) &\quad \text{when } k\eqsim \lambda \log|G_{\mathrm{ab}}|, \\
    t_0 \eqsim k\cdot 1/(\kappa \log \kappa) & \quad \text{when } k\gg \log|G_{\mathrm{ab}}|,
\end{array}
\end{equation}
where $\kappa:=k/\log |G_{\mathrm{ab}}|$ and $f:(0,\infty)\to(0,\infty)$ is some continuous decreasing function whose exact value is unimportant for our analysis. Since we assume $r\asymp 1$ and $L\asymp 1$, it follows from Corollary \ref{ab_size} that $\log|G_{\mathrm{ab}}|\asymp \log|G|$. Consequently, it is possible to have $t_1>t_0$ only in the regime $k\gg \log|G_{\mathrm{ab}}|$. To be more specific, writing $\rho=\frac{\log k}{\log\log |G_{\mathrm{ab}}|}$, in the regime $k\gg \log|G_{\mathrm{ab}}|$ we have $\liminf_{|G|\to\infty}\rho\geq 1$ and
$$
\begin{cases}
t_0> t_1  & \quad \text{ when } \frac{\rho}{\rho-1}> \frac{\log |G|}{\log |G_{\mathrm{ab}}|},\\
t_0 \leq t_1 & \quad \text{ otherwise.} 
\end{cases}
$$

The following proposition gives the asymptotics of the cutoff time $t_*$ for the regimes of $k$ that we are interested in.
\begin{prop}
Writing $\rho=\frac{\log k}{\log\log |G_{\mathrm{ab}}|}$, we have the following asymptotics of $t_*(k,G)$:
$$
t_*(k,G)\eqsim
\begin{cases}
k  |G_{\mathrm{ab}}|^{2/k}/(2\pi e) & \quad \text{when}\quad k\ll \log|G_{\mathrm{ab}}|,\\
 k  f(\lambda)  & \quad \text{when}\quad k\eqsim \lambda \log|G_{\mathrm{ab}}|,\\
k/(\kappa \log \kappa) & \quad \text{when}\quad k\gg \log|G_{\mathrm{ab}}|, \frac{\rho}{\rho-1}> \frac{\log |G|}{\log |G_{\mathrm{ab}}|},\\
\log_k|G| & \quad \text{when}\quad k\gg \log|G_{\mathrm{ab}}|, \frac{\rho}{\rho-1}\leq \frac{\log |G|}{\log |G_{\mathrm{ab}}|},
\end{cases}
$$
where $f$ is defined as in \eqref{flambda} and $\kappa:=k/\log |G_{\mathrm{ab}}|$.

\end{prop}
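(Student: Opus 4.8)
The plan is to unfold the definition $t_*(k,G)=\max\{t_0,t_1\}$ from Definition~\ref{entropic_time}, where $t_0=t_0(k,|G_{\mathrm{ab}}|)$ is the entropic time and $t_1:=\log_k|G|=\log|G|/\log k$, and then to identify, in each of the four regimes, which of the two terms is asymptotically the larger. Since \eqref{flambda} already records the asymptotics of $t_0$ and those of $t_1$ are trivial, the only content is the comparison between $t_0$ and $t_1$. Throughout I would use $\log|G_{\mathrm{ab}}|\asymp\log|G|$, which the excerpt deduces from Corollary~\ref{ab_size}.

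First I would dispose of the regimes $k\ll\log|G_{\mathrm{ab}}|$ and $k\eqsim\lambda\log|G_{\mathrm{ab}}|$ by showing $t_1=o(t_0)$, so that $t_*\eqsim t_0$ and the first two lines of \eqref{flambda} give exactly the first two cases. When $k\eqsim\lambda\log|G_{\mathrm{ab}}|$ this is immediate from $t_0\eqsim kf(\lambda)\asymp\log|G_{\mathrm{ab}}|$ versus $t_1\asymp\log|G_{\mathrm{ab}}|/\log k\asymp\log|G_{\mathrm{ab}}|/\log\log|G_{\mathrm{ab}}|$. When $k\ll\log|G_{\mathrm{ab}}|$, the first line of \eqref{flambda} gives $t_0\eqsim\frac{k}{2\pi e}|G_{\mathrm{ab}}|^{2/k}$ and hence $t_0/t_1\asymp(k\log k/\log|G_{\mathrm{ab}}|)\,|G_{\mathrm{ab}}|^{2/k}$; writing $v:=\log|G_{\mathrm{ab}}|/k\to\infty$ this equals $\asymp(\log k)\,e^{2v}/v$, which is $\gtrsim e^{2v}/v\to\infty$. (This re-derives the remark preceding the proposition that $t_1>t_0$ is possible only when $k\gg\log|G_{\mathrm{ab}}|$.)

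It remains to analyse $k\gg\log|G_{\mathrm{ab}}|$. Here I would set $\kappa:=k/\log|G_{\mathrm{ab}}|$ (so $\kappa\gg1$) and $\rho:=\log k/\log\log|G_{\mathrm{ab}}|$, whence $\log k=\rho\log\log|G_{\mathrm{ab}}|$ and $\log\kappa=\log k-\log\log|G_{\mathrm{ab}}|=(\rho-1)\log\log|G_{\mathrm{ab}}|$. Substituting into the third line of \eqref{flambda},
$$t_0\eqsim\frac{k}{\kappa\log\kappa}=\frac{\log|G_{\mathrm{ab}}|}{(\rho-1)\log\log|G_{\mathrm{ab}}|},\qquad t_1=\frac{\log|G|}{\rho\log\log|G_{\mathrm{ab}}|},\qquad\frac{t_0}{t_1}\eqsim\frac{\rho}{\rho-1}\cdot\frac{\log|G_{\mathrm{ab}}|}{\log|G|}.$$
Consequently $t_*\eqsim t_0\eqsim k/(\kappa\log\kappa)$ precisely when $\rho/(\rho-1)>\log|G|/\log|G_{\mathrm{ab}}|$, and $t_*=t_1=\log_k|G|$ precisely when $\rho/(\rho-1)\le\log|G|/\log|G_{\mathrm{ab}}|$, where I would invoke the elementary fact that $\max\{a_n,b_n\}\eqsim a_n$ whenever $a_n/b_n$ converges to a limit in $[1,\infty]$. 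I do not anticipate a genuine obstacle: the proposition is essentially a repackaging of \eqref{flambda}, and the only point requiring care is propagating the $\eqsim$ slack through the algebra and treating the borderline case $\rho/(\rho-1)\asymp\log|G|/\log|G_{\mathrm{ab}}|$, in which $t_0\asymp t_1$ and either of the two expressions describes $t_*$ up to the same asymptotic factor.
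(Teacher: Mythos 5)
Your proposal is correct and follows essentially the same route as the paper: the proposition is obtained by comparing $t_0$ (via the asymptotics in \eqref{flambda}) with $t_1=\log_k|G|$, noting $t_1>t_0$ can only occur when $k\gg\log|G_{\mathrm{ab}}|$, and in that regime translating $\kappa\log\kappa$ into the $\rho$-criterion $\frac{\rho}{\rho-1}$ vs.\ $\frac{\log|G|}{\log|G_{\mathrm{ab}}|}$, exactly as in the paragraph preceding the proposition. Your explicit algebra (e.g.\ $\log\kappa=(\rho-1)\log\log|G_{\mathrm{ab}}|$ and $t_0/t_1\eqsim\frac{\rho}{\rho-1}\cdot\frac{\log|G_{\mathrm{ab}}|}{\log|G|}$) simply spells out what the paper leaves implicit.
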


\subsubsection{Typical Event}\label{typ_event}

For simplicity of notation, in this section we will drop the index of time $t$ and write $W:=W(t)$ and $X:=X(t)$. Write $W=(W_1,W_2,\dots,W_k)$, where for each $a\in[k]$, $W_a$ is an independent rate $1/k$ random walk on $\ZZ$.  For each $a\in[k]$, define $W_a^+$ to be the number of steps to the right and $W_a^-$ the number of steps to the left in the walk $W_a$. It is then easy to see $W_a=W_a^+-W_a^-$.

\medskip
To upper bound the key quantity $\P(X=X'| W,W'\in \mathcal{W})$ from Lemma \ref{TV_entropic}, we will separate it into cases according to whether or not $W = W'$:
\begin{align*}
\P(X=X'| W,W'\in \mathcal{W})&=\P(X=X|\{W=W' \}\cap\{W, W'\in \mathcal{W}\})\P(W=W'| W, W'\in \mathcal{W})\\
&\quad +\P(X=X|\{W\neq W' \}\cap\{W, W'\in \mathcal{W}\})\P(W\neq W'| W, W'\in \mathcal{W}).
\end{align*}
As suggested by this decomposition, bounding $\P(W=W'| W, W'\in \mathcal{W})$ plays an important role in the proof. 
Since $W,W'$ are two independent copies,
\begin{align*}
\P(\{W=W'\} \cap\{ W, W'\in \mathcal{W}\})&=\sum_{w\in \mathcal{W}}\P(W=w)\P(W'=w)\leq \max_{w\in \mathcal{W}} \P(W=w),
\end{align*}
i.e.,
$$\P(W=W'| W, W'\in \mathcal{W})\leq \frac{ \max_{w\in \mathcal{W}} \P(W=w)}{\P(W, W'\in \mathcal{W})}.$$
It now becomes clearer that in order to control the probability $\P(W=W'| W, W'\in \mathcal{W})$, we would like to choose $\mathcal{W}$ so that $\P(W, W'\in \mathcal{W})=1-o(1)$ and $\max_{w\in \mathcal{W}} \P(W=w)$ is sufficiently small.

For $t\geq 0$, write $\mu_t$ for the law of $W(t)$, the rate 1 random walk on $\ZZ^k$, so that $\mu_t(w)=\P(W(t)=w)$. Also write $\nu_s$ for the law of $W_1(sk)$ so that $\mu_t=\nu_{t/k}^{\otimes k}$. Also, for each $a\in[k]$, define
$$Q_a(t):=-\log\nu_{t/k}(W_a(t))\quad \text{ and }\quad Q(t):=-\log\mu_t(W(t))=\sum_{a=1}^k Q_a(t).$$ 
Then $\E(Q(t))$ (and respectively $\E(Q_1(t))$) is the entropy of $W(t)$ (and respectively $W_1(t)$).

We need an estimate on the entropy $\E(Q(t))$ shortly after the proposed mixing time, i.e., for $t\geq (1+\ep)t_*(k,G)$, for which  we refer to results in  \cite{hermon2019cutoff}. To state their results, we need to define the following quantity.


\begin{definition}\label{def_h}
Define $h_0$ as follows 
$$h_0:=
\begin{cases}
\log |G_{\mathrm{ab}}| & \text{ when } t_0(k,|G_{\mathrm{ab}}|)> \log_k |G|,\\
(1-\frac{1}{\rho})\log|G|& \text{ when } t_0(k,|G_{\mathrm{ab}}|)\leq \log_k |G|
\end{cases}
$$
where $\rho=\frac{\log k}{\log\log |G_{\mathrm{ab}}|}$. Fix some $\omega$ such that $1\ll \omega \ll \min\{ k, \log|G_{\mathrm{ab}}|\}$, and set $h:=h_0+\omega$.
\end{definition}
\begin{remark}
Note that $\log |G_{\mathrm{ab}}|\leq h_0$ in both cases.
\end{remark}

Lemma 3.9 in \cite{hermon2019cutoff} proves concentration of $Q(t)$ whereas we only need one side of their estimate, which we state below.
\begin{lemma}[Lemma 3.9 in \cite{hermon2019cutoff}]\label{entropy_concentration}
Assume that $\omega \ll \min\{ k, \log|G_{\mathrm{ab}}|\}$. Let $\ep>0$ and $t\geq (1+\ep)t_*(k,G)$. Then 
$$\P(Q(t)\geq h)=\P(\mu_t (W(t))\leq e^{-h})=1-o(1).$$
\end{lemma}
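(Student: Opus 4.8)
\textit{Proof idea.} The plan is to reduce the statement to a first-moment computation together with a one-sided concentration bound, exploiting that $Q(t)=\sum_{a=1}^{k}Q_a(t)$ is a sum of i.i.d.\ non-negative random variables. First note that $\E[Q(t)]=H(\mu_t)$ is exactly the Shannon entropy of $W(t)$, and that $t\mapsto H(\mu_t)$ is non-decreasing: for $t'>t$ the increment $D:=W(t')-W(t)$ is independent of $W(t)$, so $H(W(t'))\ge H(W(t')\mid D)=H(W(t)\mid D)=H(W(t))$. Hence it suffices to prove the claim at $t=(1+\ep)t_*$.

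\emph{Step 1 (the mean sits comfortably above $h$).} By Definition~\ref{entropic_time} we have $\E[Q(t_0)]=\log|G_{\mathrm{ab}}|$, and a matching computation at $t_1=\log_k|G|$ shows $\E[Q(t_1)]=(1-\tfrac1\rho)\log|G|$ up to lower-order terms in the regime $k\gg\log|G_{\mathrm{ab}}|$; thus in either branch of Definition~\ref{def_h} one has $\E[Q(t_*)]=h_0$ up to lower-order terms. The entropy asymptotics for the rate-$1$ walk on $\ZZ^k$ recorded in \cite{hermon2018supplementary} (the same ones underlying \eqref{flambda}) show that replacing $t_*$ by $(1+\ep)t_*$ raises the entropy by a positive fraction of itself: by order $k\log(1+\ep)$ when $k\ll\log|G_{\mathrm{ab}}|$, by order $\ep k$ when $k\asymp\lambda\log|G_{\mathrm{ab}}|$, and by order $\ep\,h_0$ when $k\gg\log|G_{\mathrm{ab}}|$. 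In all cases this gain is $\gg\omega$ (using $h_0\ge\log|G_{\mathrm{ab}}|\gg\omega$, and $h_0\asymp k\gg\omega$ in the first regime), so $\Delta:=\E[Q((1+\ep)t_*)]-h\gg\omega$ and moreover $\Delta$ is a positive fraction of $\E[Q((1+\ep)t_*)]$.

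\emph{Step 2 (lower-tail concentration).} Writing $t=(1+\ep)t_*$, Chebyshev's inequality gives $\P(Q(t)<h)=\P(Q(t)<\E[Q(t)]-\Delta)\le k\,\mathrm{Var}(Q_1(t))/\Delta^2$, so it remains to verify $k\,\mathrm{Var}(Q_1(t))\ll\Delta^2$. Here $\mathrm{Var}(Q_1(t))$ is the variance of the information content $-\log\nu_{t/k}(W_1(t))$ of a single coordinate, and a direct estimate from the Poisson/local-CLT form of $\nu_s$ yields $\mathrm{Var}(Q_1(t))=O(1)$ when $t/k\gtrsim1$ and $\mathrm{Var}(Q_1(t))=O\big((t/k)\log^2(2k/t)\big)$ when $t/k\ll1$. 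Feeding these bounds in, and using $\log(k/t)\lesssim\log k\ll\log|G|\asymp\log|G_{\mathrm{ab}}|$ to dominate the logarithmic factor against $\Delta^2$, one obtains $k\,\mathrm{Var}(Q_1(t))\ll\Delta^2$, hence $\P(Q(t)<h)=o(1)$, which is the assertion.

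The hard part is Step~1: the entropy of the walk on $\ZZ^k$ behaves very differently across the three regimes of $k$ and its asymptotics are delicate near the transition $k\asymp\log|G_{\mathrm{ab}}|$, so one must check carefully that the time-dilation produces a gap $\Delta$ that simultaneously dominates $\omega$ and the standard deviation $\sqrt{k\,\mathrm{Var}(Q_1(t))}$, and that both branches of the definition of $h_0$ really do agree with $\E[Q(t_*)]$ up to lower-order terms. Since all of these computations are carried out in \cite{hermon2018supplementary,hermon2019cutoff}, in the write-up one may quote them and supply only the short Chebyshev step above.
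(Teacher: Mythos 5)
The paper itself gives no proof of this lemma: it is imported verbatim as Lemma 3.9 of the cited work of Hermon and Olesker-Taylor, whose proof is precisely a first-moment/second-moment (Chebyshev) argument for $Q(t)=\sum_a Q_a(t)$ built on the entropic-time asymptotics. So your plan is not a different route but a reconstruction of the quoted source's argument, and quoting the entropy and variance estimates from \cite{hermon2018supplementary,hermon2019cutoff} is consistent with what the paper does anyway. Your Step 1 and Step 2 are sound in outline: in the branch $h_0=\log|G_{\mathrm{ab}}|$ the mean at $t_0$ is exactly $\log|G_{\mathrm{ab}}|$ by definition, in the branch $h_0=(1-\tfrac1\rho)\log|G|$ one has $\E[Q(t_1)]\geq h_0(1-o(1))$ (the $\tfrac{\log\log k}{\log k}\log|G|$ correction has the favorable sign), the dilation gain dominates both $\omega$ (since $\omega\ll\min\{k,\log|G_{\mathrm{ab}}|\}$ and $h_0\geq\log|G_{\mathrm{ab}}|$) and the standard deviation ($k\,\mathrm{Var}(Q_1)\lesssim t\log^2(k/t)$ against $\Delta\gtrsim\varepsilon t\log(k/t)$ with $t\geq\log_k|G|\gg1$ in the sparse regime, and $O(k)$ against $\Delta\gtrsim_\varepsilon k$ otherwise).

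Two points need repair. First, your reduction ``it suffices to prove the claim at $t=(1+\varepsilon)t_*$'' is justified only by monotonicity of the \emph{mean} entropy $H(\mu_t)$; that does not imply monotonicity of $\P(Q(t)\geq h)$ in $t$, so as written this step does not follow. The fix is cheap but should be made explicit: run Steps 1--2 at every $t\geq(1+\varepsilon)t_*$, using that $\E[Q(t)]$ is nondecreasing so $\Delta(t)\geq\E[Q(t)]-h$ keeps growing, and checking that the variance bound (which also grows with $t$ in the regime $t/k\ll1$) is still dominated, i.e. $k\,\mathrm{Var}(Q_1(t))\asymp t\log^2(k/t)\ll\Delta(t)^2\gtrsim\varepsilon^2t^2\log^2(k/t)$ uniformly in $t$, and $O(k)\ll\Delta(t)^2$ once $t/k\gtrsim1$. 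Second, the parenthetical ``$h_0\asymp k\gg\omega$ in the first regime'' is wrong ($h_0=\log|G_{\mathrm{ab}}|\gg k$ there); what you actually need and have is that the entropy gain is of order $k\log(1+\varepsilon)\gg\omega$. With these corrections the write-up matches the intended (cited) proof.
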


Based on the above discussion and Lemma \ref{entropy_concentration}, it makes sense to define the (global) typical event as follows
\beq\label{glo_typ}
\WW_{glo}:=\{ w\in \ZZ^k: \P(W(t)=w)\leq e^{-h}\}.
\eeq

\bigskip
Write $W^{\pm}:=(W^\pm_1,W^\pm_2,\dots,W^\pm_k)$. Let $w\in \ZZ^k$ denote a realization of $W$ and the corresponding $(w^+,w^-)\in  \ZZ^k_+\times \ZZ^k_+$ a realization of $(W^+,W^-)$. To have better control over the behavior of each coordinate, we further define
\beq\label{local_typ}
\WW_{loc}:=\{ (w^+,w^-)\in \ZZ^k_+\times \ZZ^k_+:  |w^{\pm}_a-\E(W^{\pm}_a(t))|\leq r_*, \forall a\in [k] \}\\
\eeq
where $r_*:=\frac{1}{2}|G_{\mathrm{ab}}|^{1/k}(\log k)^2$. It can be observed that $r_*$ is defined based on $t_0(k,|G_{\mathrm{ab}}|)$ when $k\lesssim \log|G_{\mathrm{ab}}|$ so that $W^\pm(t)\in \WW_{loc}$ whp by a union bound on the $k$ coordinates. In fact, we will use $\WW_{loc}$ only in the regime $k\lesssim \log|G_{\mathrm{ab}}|$.

\medskip
In the regime $k\gtrsim \log|G_{\mathrm{ab}}|$ we have $t_*/k\lesssim 1$. By Poisson thinning, for each $a\in[k]$ the arrivals of the generators $Z_a^{\pm 1}$ follow an independent Poisson process with rate $1/k$. Then $t_*/k\lesssim 1$ implies that each generator is expected to appear for $O(1)$ times in the walk $X$. Thus in this regime we will focus on the collection of generators that appear exactly once. Define, for $(w^+,w^-)\in \ZZ_+^k\times \ZZ_+^k$,
$$\mathcal{J}(w^+,w^-):=\{ a\in [k]:  w^+_a+w^-_a=1\} \quad\text{ and }\quad J(w^+,w^-)=|\mathcal{J}(w^+,w^-)|$$
so that $\mathcal{J}(w^+,w^-)$ is the index set of generators that appear exactly once in the realization $\{W=w\}$. 

Moreover, for a sufficiently small $\ep>0$, define 
\beq\label{once_typ}
\WW_{once}:=\{ (w^+,w^-)\in \ZZ^k_+\times \ZZ^k_+: |J(w^+,w^-)-te^{-t/k}|\leq \frac{1}{2}\ep t e^{-t/k}\}.
\eeq
We can observe that the distribution of $J$ is $\mathrm{Binomial}(k, (t/k)e^{-t/k})$ so that when $te^{-t/k}\gg 1$ (which holds when $k\gtrsim \log|G|$ and $\log k \ll \log|G|$), $\WW_{once}$ occurs with high probability for any $\ep>0$.

\begin{definition}\label{typical_event}
Let $\WW_{glo}, \WW_{loc}, \WW_{once}$ be defined as in \eqref{glo_typ},\eqref{local_typ} and \eqref{once_typ}.
Define the typical event 
$$\typ:=
\begin{cases}
\{ W,W'\in \WW_{glo}\}\cap\{ (W^+,W^-),((W')^+,(W')^-)\in \WW_{loc}\} &\text{ when }k\ll \log|G_{\mathrm{ab}}|,\\
\{ W,W'\in \WW_{glo}\}\cap\{ (W^+,W^-),((W')^+,(W')^-)\in \WW_{loc}\cap \WW_{once}\} &\text{ when }k\eqsim \lambda \log|G_{\mathrm{ab}}|,\\
\{ W,W'\in \WW_{glo}\}\cap\{ (W^+,W^-),((W')^+,(W')^-)\in \WW_{once}\} &\text{ when }k\gg \log|G_{\mathrm{ab}}|.\\
\end{cases}
$$
\end{definition}

\begin{lemma}
$\P(\typ)=1-o(1)$.
\end{lemma}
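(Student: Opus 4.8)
\textit{Proof sketch.} The plan is to bound $\P(\typ^c)$ by a union bound over the defining events, handling each of the three regimes of $k$ separately but using the same ingredients. In every regime $\typ$ is an intersection of at most four events of the form $\{W\in\WW_{glo}\}$, $\{W'\in\WW_{glo}\}$, and membership of $(W^+,W^-)$, $((W')^+,(W')^-)$ in one or two of $\WW_{loc},\WW_{once}$. Since $W$ and $W'$ are i.i.d., it suffices to show that each single-copy event has probability $1-o(1)$; then a union bound over the (boundedly many) constraints closes the argument. Throughout we work at the relevant time $t\geq(1+\ep)t_*(k,G)$, since $\typ$ is only used for the upper-bound $L^2$ calculation at such times.

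First, $\P(W(t)\in\WW_{glo})=\P(\mu_t(W(t))\leq e^{-h})=1-o(1)$ is exactly the content of Lemma \ref{entropy_concentration} (Lemma 3.9 of \cite{hermon2019cutoff}), valid because $h=h_0+\omega$ with $1\ll\omega\ll\min\{k,\log|G_{\mathrm{ab}}|\}$; this disposes of the $\WW_{glo}$ constraints in all three cases. Next, for $\WW_{loc}$, recall that in the regime $k\lesssim\log|G_{\mathrm{ab}}|$ we have $t_*/k\asymp|G_{\mathrm{ab}}|^{2/k}/(2\pi e)$, so $\E(W_a^{\pm}(t))\asymp t/(2k)$ is of order $|G_{\mathrm{ab}}|^{2/k}$, while $W_a^{\pm}(t)$ is Poisson with that mean; since $r_*=\tfrac12|G_{\mathrm{ab}}|^{1/k}(\log k)^2$ is much larger than the standard deviation $\sqrt{t/(2k)}\asymp|G_{\mathrm{ab}}|^{1/k}$ by the factor $(\log k)^2\gg 1$, a Chernoff/Bernstein bound for the Poisson gives $\P(|W_a^\pm-\E W_a^\pm|>r_*)\leq \exp(-c(\log k)^4)$ (say), which is $o(1/k)$ because $\log k\to\infty$; a union bound over the $2k$ coordinates then yields $\P((W^+,W^-)\in\WW_{loc})=1-o(1)$. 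For $\WW_{once}$, in the regime $k\gtrsim\log|G_{\mathrm{ab}}|$ one has $J(W^+,W^-)\sim\mathrm{Binomial}(k,(t/k)e^{-t/k})$ with mean $te^{-t/k}$; since $k\gtrsim\log|G|$ and $\log k\ll\log|G|$ force $te^{-t/k}\gg 1$, a standard binomial concentration (Chernoff) bound gives $\P(|J-te^{-t/k}|>\tfrac12\ep\, te^{-t/k})\leq 2\exp(-c\ep^2 te^{-t/k})=o(1)$, so $\P((W^+,W^-)\in\WW_{once})=1-o(1)$. (In the intermediate regime $k\asymp\lambda\log|G_{\mathrm{ab}}|$ both $\WW_{loc}$ and $\WW_{once}$ are imposed; the same two estimates apply since there $t_*/k\asymp f(\lambda)\asymp 1$, making $W_a^{\pm}$ Poisson with mean $\asymp 1$ so that the $r_*\gg 1$ window traps it whp, and $te^{-t/k}\asymp t\gg 1$.)

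Assembling: in each case $\typ^c$ is contained in a union of at most four events, each of probability $o(1)$ by the above, hence $\P(\typ^c)=o(1)$, i.e.\ $\P(\typ)=1-o(1)$. The only mild subtlety — the ``main obstacle'' if any — is verifying that the deviation windows genuinely beat the fluctuations: for $\WW_{loc}$ one must check $r_*$ dominates $\sqrt{\E W_a^\pm}$ with enough room to survive the union bound over $k$ coordinates (this uses $(\log k)^2\gg 1$ and $\log k\to\infty$), and for $\WW_{once}$ one must check $te^{-t/k}\to\infty$ in the regime where $\WW_{once}$ is used (this uses $k\gtrsim\log|G|$ together with $\log k\ll\log|G|$, equivalently $t/k=\Theta(\log_k|G_{\mathrm{ab}}|\cdot\log|G_{\mathrm{ab}}|/k)\leq \log_k|G|\cdot\text{(bounded)}$ so $e^{-t/k}$ does not decay too fast). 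Both are routine given the stated assumptions, and Lemma \ref{entropy_concentration} is quoted wholesale for $\WW_{glo}$. \qed
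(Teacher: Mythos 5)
Your proposal is correct and follows essentially the same route as the paper: the $\WW_{glo}$ constraints are handled by quoting Lemma \ref{entropy_concentration}, while the $\WW_{loc}$ and $\WW_{once}$ constraints are dispatched by standard Poisson/binomial large-deviation (Chernoff) bounds plus a union bound over the finitely many events (and over the $k$ coordinates for $\WW_{loc}$), exactly as the paper's one-line proof indicates. Your extra bookkeeping (checking $r_*$ dominates the fluctuation scale and $te^{-t/k}\gg 1$ at times $t\geq(1+\ep)t_*$) is just the routine verification the paper leaves implicit.
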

\begin{proof}
Lemma \ref{entropy_concentration} implies that $ W,W'\in \WW_{glo}$ with high probability. 
The proof for $\P( (W^+,W^-)\in \WW_{loc})=1-o(1)$ when $k\lesssim  \log|G_{\mathrm{ab}}|$ follows from standard large deviation estimation. The proof for  $\P( (W^+,W^-)\in \WW_{once})=1-o(1)$ when $k\gtrsim \log|G_{\mathrm{ab}}|$ also follows from  standard large deviation estimation.
\end{proof}

\mn
\subsection{Lower Bound on Mixing Time}\label{mixing_lower_bound}
As projection does not increasing the total variation distance, to find a lower bound on the mixing time we can consider the projection of the original random walk $X(t)$, defined by $Y(t):=G_2X(t)$, on the projected Cayley graph $\Cay(G_{\mathrm{ab}}, \{G_2Z_i^{\pm 1}: i\in [k]\})$. As $G_{\mathrm{ab}}$ is abelian, the mixing behavior of $Y_t$ is well understood, see \cite{hermon2102cutoff,hermon2019cutoff}. 

The key idea of proving the lower bound comes from a concentration result of the entropy, which has been proved in the literature and is restated here for the sake of self-containment.

\begin{prop}[Proposition 2.3 in \cite{hermon2019cutoff}]\label{lower_concentration}
Assume that $k$ satisfies $1\ll \log k\ll \log |G|$ and recall $t_0:=t_0(k,|G_{\mathrm{ab}}|)$ as in Definition \ref{entropic_time}. Then $\mathrm{Var}(Q(t_0)) \gg 1$ and further, for $\ep>0$, writing $\omega:=(\mathrm{Var}(Q(t_0)))^{1/4}$, we have 
$$\P(Q((1-\ep)t_0)\geq \log|G_{\mathrm{ab}}| - \omega)\to0.$$
\end{prop}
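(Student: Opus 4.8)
The plan is to use that $Q(t_0)=\sum_{a=1}^{k}Q_a(t_0)$ is a sum of $k$ i.i.d.\ random variables, the $W_a$ being i.i.d.\ rate-$1/k$ walks on $\ZZ$, and that $\E Q(t_0)=\log|G_{\mathrm{ab}}|$ by the very definition of the entropic time $t_0$. The conclusion then follows from a one–sided Chebyshev estimate at time $(1-\ep)t_0$, once we establish: (a) a variance lower bound $\mathrm{Var}(Q(t_0))=k\,\mathrm{Var}(Q_1(t_0))\gg1$ (so that $\omega=\mathrm{Var}(Q(t_0))^{1/4}\to\infty$ while still $\omega=o(\mathrm{Var}(Q(t_0))^{1/2})$); and (b) that the \emph{entropy gap} $\Delta:=\E Q(t_0)-\E Q((1-\ep)t_0)=\log|G_{\mathrm{ab}}|-\E Q((1-\ep)t_0)$ is positive and of strictly larger order than $\mathrm{Var}(Q(t_0))^{1/2}$.

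For (a), and for the second–moment bounds needed in (b), one works coordinate by coordinate, splitting into the regimes of \eqref{flambda}. When $k\ll\log|G_{\mathrm{ab}}|$ one has $t_0/k\to\infty$, so by a local CLT $W_1(t_0)$ is approximately Gaussian with variance $\asymp t_0/k$ and $Q_1(t_0)\approx\tfrac12\log(2\pi e\,t_0/k)+\tfrac12\chi^2$; hence $\mathrm{Var}(Q_1(t_0))\asymp1$ and $\mathrm{Var}(Q(t_0))\asymp k\to\infty$. When $k\gtrsim\log|G_{\mathrm{ab}}|$ one has $t_0/k\to0$; using the Poisson decomposition of $W_1(t_0)$, the value $0$ carries mass $\approx e^{-t_0/k}$ and each of $\pm1$ carries mass $\approx\tfrac{t_0}{2k}e^{-t_0/k}$, so $Q_1(t_0)$ equals a quantity of order $t_0/k$ with probability $\approx1$ and a quantity of order $\log(k/t_0)$ with probability $\asymp t_0/k$; this gives $\mathrm{Var}(Q_1(t_0))\asymp\tfrac{t_0}{k}\log^2(k/t_0)$, hence $\mathrm{Var}(Q(t_0))\asymp t_0\log^2(k/t_0)$, which diverges because $t_0\ge\log|G_{\mathrm{ab}}|/\log k\to\infty$ under $\log k\ll\log|G_{\mathrm{ab}}|$. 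The same estimates with $t_0$ replaced by $(1-\ep)t_0$ yield $\mathrm{Var}(Q((1-\ep)t_0))\asymp\mathrm{Var}(Q(t_0))$.

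For (b) I would extract $\E Q(t)=H(W(t))$ as a function of $t$ from the computations underlying \eqref{flambda}. In the Gaussian regime $\E Q(t)\eqsim\tfrac k2\log(2\pi e\,t/k)$, so $\E Q((1-\ep)t_0)=\log|G_{\mathrm{ab}}|+\tfrac k2\log(1-\ep)$ and $\Delta\eqsim\tfrac k2\log\tfrac1{1-\ep}\asymp k\gg\sqrt k\asymp\mathrm{Var}(Q(t_0))^{1/2}$. In the intermediate regime $\E Q(t)\eqsim k\,g(t/k)$ with $g$ smooth and strictly increasing on the relevant compact range, so $\Delta\asymp\ep t_0\asymp\ep k\gg\sqrt k$. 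In the regime $k\gg\log|G_{\mathrm{ab}}|$, $\E Q(t)\eqsim t\,e^{-t/k}\log(2ek/t)$ gives $\Delta\asymp\ep\,t_0\log(k/t_0)\asymp\ep\log|G_{\mathrm{ab}}|$, while $\mathrm{Var}(Q(t_0))^{1/2}\asymp\sqrt{t_0}\,\log(k/t_0)\asymp\sqrt{\log|G_{\mathrm{ab}}|\,\log\kappa}=o(\log|G_{\mathrm{ab}}|)$ since $\log k\ll\log|G_{\mathrm{ab}}|$. In every case $\Delta\gg\mathrm{Var}(Q(t_0))^{1/2}\gg\omega$, so $\Delta-\omega\eqsim\Delta$, and Chebyshev gives
\[
\P\bigl(Q((1-\ep)t_0)\ge \log|G_{\mathrm{ab}}|-\omega\bigr)
\le \P\bigl(Q((1-\ep)t_0)-\E Q((1-\ep)t_0)\ge \Delta-\omega\bigr)
\le \frac{\mathrm{Var}(Q((1-\ep)t_0))}{(\Delta-\omega)^2}
\asymp \frac{\mathrm{Var}(Q(t_0))}{\Delta^{2}}\to0.
\]

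The main obstacle is step (b): establishing the entropy gap uniformly over all three regimes — i.e.\ that $H(W(t))$ genuinely grows by an amount of order $\ep\cdot(\text{scale})$ between $(1-\ep)t_0$ and $t_0$, with that scale comfortably exceeding $\mathrm{Var}(Q(t_0))^{1/2}$ — which requires quantitative, not merely leading-order, information from the entropy asymptotics and is most delicate near the transition $k\asymp\lambda\log|G_{\mathrm{ab}}|$, where one must keep $g'$ bounded away from $0$. The per-coordinate second-moment estimates in (a) take some care but are routine local-CLT and large-deviation computations.
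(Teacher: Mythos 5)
The paper itself gives no proof of this proposition: it is restated verbatim from \cite{hermon2019cutoff} (Proposition 2.3 there) ``for the sake of self-containment'' and used as a black box, so there is no internal argument to compare against. Your route --- write $Q(t)=\sum_{a}Q_a(t)$ as an i.i.d.\ sum, use $\E Q(t_0)=\log|G_{\mathrm{ab}}|$ from the definition of $t_0$, lower-bound $\mathrm{Var}(Q(t_0))$ regime by regime, show the mean entropy gap $\Delta$ between $(1-\ep)t_0$ and $t_0$ dominates $\mathrm{Var}(Q(t_0))^{1/2}\gg\omega$, and finish with one-sided Chebyshev --- is exactly the entropic second-moment argument on which the cited line of work rests (the mean/variance computations are the content of \cite{hermon2018supplementary}), and the outline is sound.

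Two concrete caveats. First, in step (a) you assert $t_0/k\to0$ whenever $k\gtrsim\log|G_{\mathrm{ab}}|$; in the regime $k\eqsim\lambda\log|G_{\mathrm{ab}}|$ one has $t_0/k\eqsim f(\lambda)\asymp1$, so the Poisson small-$s$ mass computation applies only when $k\gg\log|G_{\mathrm{ab}}|$, and the bound $\mathrm{Var}(Q(t_0))\asymp k$ in the intermediate regime needs the $s\asymp1$ computation (your step (b) already treats that regime separately, so this is a slip rather than a structural error). Second, as you yourself flag, in the regime $k\ll\log|G_{\mathrm{ab}}|$ the leading-order relation $\E Q(t)\eqsim\tfrac k2\log(2\pi e\,t/k)$ is not sufficient: the main term is $\log|G_{\mathrm{ab}}|\gg k$, so a relative $o(1)$ error could swamp $\Delta\asymp k$. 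You need an additive, per-coordinate expansion $\E Q_1(t)=\tfrac12\log(2\pi e\,t/k)+o(1)$ uniformly in the relevant window (local CLT/Edgeworth-type bounds, as in \cite{hermon2018supplementary}), so that the total additive error is $o(k)$; similarly one needs monotonicity or a derivative bound for the one-coordinate entropy to keep $g'$ bounded away from $0$ near $k\asymp\log|G_{\mathrm{ab}}|$. With those quantitative inputs, which are precisely what the cited references supply, your argument closes and agrees in substance with the proof in the source being quoted.
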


The proof of the lower bound follows from a somewhat conventional argument within the entropic methodology. Our proof is essentially a restatement of the proof in Section 3.3 of \cite{hermon2019cutoff}, which we include for the purpose of being self-contained.
 
\begin{lemma}
Assume that $1\ll \log k\ll \log |G|$. Let $S=\{ s^\pm_i: i\in [k]\}\subseteq G$ be a given generator set. For any $\ep>0$ and $t\leq (1-\ep)t_*(k,G)$,
$$\| \P_S( X(t)\in \cdot)-\pi_G\|_{\mathrm{TV}}\geq 1-o(1),$$
where $\P_S$ denotes the law of the random walk $X$ with $X(0)=\id$ given the generator set $S$.
\end{lemma}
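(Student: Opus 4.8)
The statement is a lower bound on the mixing time: for $t \le (1-\ep)t_*(k,G)$, the TV distance of $X(t)$ from uniform is $1-o(1)$. Since projection to a quotient cannot increase TV distance, it suffices to prove the lower bound for the projected walk $Y(t) = G_2 X(t)$ on $\Cay(G_{\mathrm{ab}}, S_{G_2})$. Now $t_*(k,G) = \max\{t_0(k,|G_{\mathrm{ab}}|), \log_k|G|\}$, so I split into the two cases $t \le (1-\ep)t_0$ and $t \le (1-\ep)\log_k|G|$ (it suffices to handle whichever of the two equals $t_*$, but both arguments are standard).

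For the entropic regime, fix $\ep>0$ and suppose $t \le (1-\ep)t_0$. The idea is the usual entropic/support argument: the auxiliary process $W(t)$ on $\mathbb{Z}^k$ satisfies $Q(t) = -\log \mu_t(W(t))$, and by monotonicity of entropy in $t$ together with Proposition~\ref{lower_concentration}, with probability $1-o(1)$ we have $Q(t) \ge \log|G_{\mathrm{ab}}| - \omega$ for $\omega = (\mathrm{Var}(Q(t_0)))^{1/4} \gg 1$. Hence $W(t)$ lies whp in the set $B := \{w : \mu_t(w) \ge e^{-\log|G_{\mathrm{ab}}|+\omega}\}$, and $|B| \le e^{\log|G_{\mathrm{ab}}|-\omega} = |G_{\mathrm{ab}}| e^{-\omega}$. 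Now $Y(t)$ is a deterministic function of $W(t)$ given $S$: indeed $Y(t) = \prod_i (G_2 Z_i)^{W_i(t)}$ in the abelian group $G_{\mathrm{ab}}$. Therefore, for any fixed $S$, conditionally on $\{W(t) \in B\}$ the walk $Y(t)$ is supported on at most $|B| \le |G_{\mathrm{ab}}| e^{-\omega}$ elements of $G_{\mathrm{ab}}$. A set $A$ carrying almost all the mass of $Y(t)$ but with $|A|/|G_{\mathrm{ab}}| = o(1)$ forces $\|\P(Y(t)\in\cdot) - \pi_{G_{\mathrm{ab}}}\|_{\mathrm{TV}} \ge \P(Y(t) \in A) - |A|/|G_{\mathrm{ab}}| \ge 1 - o(1)$; averaging (or working pointwise) over $S$ gives the same bound whp. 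Since $\|\P_S(X(t)\in\cdot)-\pi_G\|_{\mathrm{TV}} \ge \|\P_S(Y(t)\in\cdot)-\pi_{G_{\mathrm{ab}}}\|_{\mathrm{TV}}$, this completes this case.

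For the diameter regime, suppose $t \le (1-\ep)\log_k|G|$. Here the number of steps $N(t)$ taken by the rate-$1$ walk by time $t$ is concentrated around $t$, so whp $N(t) \le (1-\ep/2)\log_k|G|$. But the number of distinct group elements reachable in $N$ steps from $\mathrm{id}$ using generators in $S$ (of size $2k$) is at most $(2k)^N \le (2k)^{(1-\ep/2)\log_k|G|} = |G|^{1-\ep/2+o(1)} = o(|G|)$ since $\log k \ll \log|G|$ (the factor $2^N$ is absorbed because $\log(2)N / \log|G| = o(1)$). Hence $X(t)$ is whp supported on a set of size $o(|G|)$, giving TV distance $1-o(1)$ by the same support argument. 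This case does not even need the projection.

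**Main obstacle.** The arguments are genuinely routine — as the text itself notes, the proof is "essentially a restatement of the proof in Section 3.3 of \cite{hermon2019cutoff}." The only point requiring a little care is the interchange between the randomness of $S$ and the randomness of the walk: one should fix a realization of $S$, run the support/entropy argument conditionally (noting that $W(t)$ and its law $\mu_t$ do not depend on $S$, and that given $S$ the map $W(t)\mapsto Y(t)$ is deterministic), conclude $d_S(t) \ge 1-o(1)$ for that $S$, and then observe this holds for every $S$ (the bound is uniform), so in particular whp. One should also make sure the two regimes together cover $t \le (1-\ep)t_* = (1-\ep)\max\{t_0,\log_k|G|\}$: if $t_* = t_0$ use the first argument with the given $\ep$; if $t_* = \log_k|G|$ use the second; this is immediate. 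The concentration of $N(t)$ and the monotonicity of entropy in $t$ are both standard and may be cited.
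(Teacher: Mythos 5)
Your proposal is correct and follows essentially the same route as the paper: an entropic typical-set/support argument on the abelianization via Proposition \ref{lower_concentration} (the paper counts through the image set $E\subseteq G_{\mathrm{ab}}$ of the high-probability $w$'s under $w\mapsto \Pi(Z_1^{w_1}\cdots Z_k^{w_k})$, you count the set $B\subseteq\ZZ^k$ directly and push it forward --- the same estimate), combined with the trivial $k^{N}$ support bound in the $\log_k|G|$ regime, with the quenched-in-$S$ uniformity handled just as in the paper. One slip in wording: Proposition \ref{lower_concentration} states $\P\bigl(Q((1-\ep)t_0)\ge \log|G_{\mathrm{ab}}|-\omega\bigr)\to 0$, so the whp event is $Q(t)\le \log|G_{\mathrm{ab}}|-\omega$ rather than ``$\ge$'' as you wrote; since your definition of $B$ and the bound $|B|\le |G_{\mathrm{ab}}|e^{-\omega}$ use the correct direction, this is a typo rather than a gap.
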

\begin{proof}
Since the argument does not depend on the choice of $S:=\{ s_i^{\pm 1}: i\in [k]\}$ we will suppress it from  notation and write $\P$ for $\P_S$.

Recall that $t_*(k,G)=\max\{ \log_k|G|, t_0(k,|G_{\mathrm{ab}})\}$. To argue that $(1-\ep)\log_k|G|$ is a lower bound on the mixing time, first observe that in $m\in\mathbb{N}$ steps the support of the random walk $X$ is at most of size $k^m$. When $m\leq (1-\ep) \log_k |G|$, the support has size at most $|G|^{1-\ep}$ and hence the walk cannot be mixed in this many steps. 

Let $t:=(1-\ep)t_0(k,|G_{\mathrm{ab}}|)$ and define 
$$\EE:=\{ \mu_t(W(t))\geq |G_{\mathrm{ab}}|^{-1} e^\omega\} =\{ Q(t)\leq \log|G_{\mathrm{ab}}|-\omega\}$$
with $\omega\gg1$ from Proposition \ref{lower_concentration}, by which we have $\P(\EE)=1-o(1)$.

Let $\Pi: G\to G_{\mathrm{ab}}$ denote the canonical projection. Consider 
$$E:=\{ x\in G_{\mathrm{ab}}: \exists w\in \ZZ^k \text{ s.t. } \mu_t(w)\geq |G_{\mathrm{ab}}|^{-1} e^\omega \text{ and } x=\Pi(Z_1^{w_1}\cdots Z_k^{w_k})\} \subseteq G_{\mathrm{ab}}.$$
Based on the definition of $E$ we have $\P( \Pi(X(t))\in E|\EE)=1$. Every element $x\in E$ satisfies $x = \Pi( Z_1^{w^x_1}\cdots Z_k^{w^x_k})$ for some $w^x\in \ZZ^k$ with $\mu_t(w^x)\geq |G_{\mathrm{ab}}|^{-1} e^\omega$.
Hence, for all $x\in E$, we have 
$$\P(\Pi(X(t))=x)\geq \P( W(t)=w^x)=\mu_t(w^x)\geq  |G_{\mathrm{ab}}|^{-1} e^\omega.$$
Summing over $x\in E$ gives
$$1\geq \sum_{x\in E} \P(\Pi(X(t))=x)\geq |E|\cdot  |G_{\mathrm{ab}}|^{-1} e^\omega$$
and hence $|E|/|G_{\mathrm{ab}}|\leq e^{-\omega}-o(1)$. Therefore,
$$\| \P(X(t)\in \cdot )-\pi_G\|_{\mathrm{TV}}\geq \P( X(t)\in \Pi^{-1}(E))-\pi_G(\Pi^{-1}(E))\geq \P(\EE)-|E|/|G_{\mathrm{ab}}|=1-o(1),$$
which completes the proof.

\end{proof}

\subsection{Representation of $X(t)$} \label{representation_X}
Let $N:=N(t)$ be the number of steps taken by the continuous time rate 1 random walk $X:=X(t)$ on the group $G$. For $i\in[N]$, we can write (the increment of) the $i$-th step taken by $X$ as $Z^{\eta_i}_{\sigma_i}$, where $\sigma_i\overset{iid}{\sim} \Unif( [k])$ and $\eta_i \overset{iid}{\sim} \Unif\{ \pm 1\}$. Then we can express $X=\prod_{i=1}^N Z^{\eta_i}_{\sigma_i}$, and similarly $X'=\prod_{i=1}^{N'} Z^{\eta'_i}_{\sigma'_i}$, where $N', \{\eta'_i: i\in [N']\},\{ \sigma'_i: i\in [N']\}$ are independent random variables defined analogously. Note that for $a\in [k]$,  $W_a(t)=\sum_{i=1}^{N(t)} \1\{\sigma_i=a\} \eta_i$ for $a\in[k]$. 

We are interested in the event $\{X=X'\}$, which is equivalent to $\{\mathbf{X}=\id\}$, where
\beq\label{seq_S}
\mathbf{X}:=X(X')^{-1}=\prod_{i=1}^N Z^{\eta_i}_{\sigma_i}\cdot \left( \prod_{i=1}^{N'} Z^{\eta'_i}_{\sigma'_i}  \right)^{-1}=\prod_{i=1}^N Z^{\eta_i}_{\sigma_i} \cdot  \prod_{j=0}^{N'-1} Z^{-\eta'_{N'-j}}_{\sigma'_{N'-j}}.
\eeq
It is easy to see the law of $\mathbf{X}$ is the same as that of a rate 2 simple random walk on the same Cayley graph $\Cay(G,S)$. In fact, for simplicity of notation we will write \eqref{seq_S} as
\beq\label{seq_S_general}
\mathbf{X}=\prod_{i=1}^{N+N'} Z^{\eta_i}_{\sigma_i}
\eeq
where $\sigma_i:=\sigma'_{N'+N-i}$ and $\eta_i:=-\eta'_{N'+N+1-i}$ for $N<i \leq N+N'$. 
 
%

Our goal is to express $X$ (and analogously $\mathbf{X}$) in a way that the role of $W$  (and analogously $W-W'$) is  understood. 
If $G$ is Abelian we can simply rearrange the sequence $X=\prod_{i=1}^N Z^{\eta_i}_{\sigma_i}$ to obtain $X=Z_1^{W_1}\cdots Z_k^{W_k}$. Although we do not have this nice and simple relation between $X$ and $W$ when $G$ is not abelian, we can still rearrange the terms in \eqref{seq_S} and pay the price of adding an extra commutator, i.e., for $x,y\in G$ we can rewrite $xy$ as $yx[x,y]$. For this reason,  in some of our analysis we also care about the specific order in which each generator appears in $\mathbf{X}$, which is why we will sometimes also refer to $\mathbf{X}$ as a sequence to emphasize this perspective. To be more specific, when we refer to $\mathbf{X}$ as a ``sequence" we are referring to the corresponding $(\sigma_i,\eta_i)_{i\in [N+N']}$.

More generally, for $x,y,z\in G$, consider as an example the element $xyzx\in G$. In order to express $xyzx$ in our desired form we can rearrange the terms in the sequence $xyzx$ as follows
\begin{align}\label{example_seq}
\nonumber xyzx&=xy xz[z,x]=x^2y [y,x]z[z,x]= x^2y z [y,x] [ [y,x],z][z,x]\\
&=x^2yz[y,x] [z,x] [ [y,x],z][ [[y,x],z], [z,x]].
\end{align}
We can see from this example that rearranging the whole sequence $\mathbf{X}$ will result in commutators of the form $\{\rho(x_1, ..., x_i): x_j\in G, j\in [i], i\geq 2\}$, which was defined in \eqref{multi_commutator}.  

In terms of the sequence $\mathbf{X}$, it will become clear later that we actually only need to keep track of the two-fold commutators of the form $\{ [Z_a,Z_b]: a,b\in[k]\}$. Hence, we will write $V:=W-W'$ and rearrange the terms in \eqref{seq_S_general} to obtain the following expression
\beq\label{rearranged_S}
\mathbf{X}=Z_1^{V_1}\cdots Z_k^{V_k} \prod_{a,b\in[k]: a<b}[Z_a,Z_b]^{m_{ba}} \varphi(Z_1,\dots,Z_k),
 \eeq
where, letting $(\sigma_i,\eta_i)$ denote the $i$-th generator and its sign in the sequence in \eqref{seq_S_general}, 
\beq\label{mba}
m_{ba}:=-\sum_{i=1}^{N+N'}\sum_{j<i} \eta_i \eta_j  \1\{\sigma_i=a, \sigma_j=b\} \quad \text{ for } 1\leq a<b \leq k,
\eeq
and
$\varphi(Z_1,\dots,Z_k)$ is the residual part as the result of the rearranging. To give a more specific description of $\varphi$, define  $\mathcal{C}_{\mathrm{com}}:=\{ \rho(Z^{\pm 1}_{a_1},\dots,Z_{a_i}^{\pm 1}): a_j\in [k] \text{ for all }j\in [i], i\geq 2\}$ to be the collection of commutators of $\{Z^{\pm 1}_1,\dots,Z^{\pm 1}_k\}$. A multi-fold commutator of  $\{Z^{\pm 1}_1,\dots,Z^{\pm 1}_k\}$ refers to a term of the form $\rho(x_1,\dots,x_i)$ with $i\geq 2$ where $x_j\in \mathcal{C}_{\mathrm{com}}\cup \{Z^{\pm 1}_1,\dots,Z^{\pm 1}_k\}$ for all $j\in [i]$, which is not simply a two-fold commutator of the form $\{[Z_{a_1}^{\pm1},Z_{a_2}^{\pm 1}]: a_1,a_2\in [k]\}$. A multi-fold commutator consisting of $i$ pairs of brackets is said to be $(i+1)$-fold. For example, see \eqref{example_seq} where $[ [y,x],z]$ is a 3-fold commutator and $[ [[y,x],z], [z,x]]$ is a 5-fold commutator.

 It will become clear in our later arguments that the specific order in which terms appear in $\varphi(Z_1,\dots,Z_k)$ is of no interest to us, and thus with some abuse of language we will sometimes refer to $\varphi(\cdot)$ as a polynomial with terms that are multi-fold commutators of $\{Z^{\pm 1}_1,\dots,Z^{\pm 1}_k\}$. 

\subsection{Upper Bound on Mixing Time}\label{mixing_upper_bound}

In this section we prove the upper bound on the mixing time, for which the precise statement is presented below. Recall that $\P_S$ denotes the law of the random walk given the generator set $S$ starting at $X(0)=\id$.

\begin{theorem}\label{mixingtime_ub}
Let $G$ be a nilpotent group with $r(G), L(G)\asymp 1$. Let $S=\{ Z_i^{\pm 1}: i\in [k]\}$ with $Z_1,\dots,Z_k\overset{iid}{\sim} \Unif(G)$ and assume $1\ll \log k \ll \log|G|$. For any $\ep>0$ and $t\geq (1+\ep)t_*(k,G)$, we have $\| \P_S(X(t)\in \cdot)-\pi_G\|_{\mathrm{TV}}=o(1)$ with high probability.
\end{theorem}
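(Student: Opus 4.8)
The plan is to bound the key quantity $\P(X(t)=X'(t)\mid W(t),W'(t)\in\WW)$ from Lemma~\ref{TV_entropic} by splitting according to whether $W=W'$ or $W\ne W'$, and to handle the nilpotent structure using the representation \eqref{rearranged_S} of $\mathbf{X}=X(X')^{-1}$. First I would recall that, by Lemma~\ref{TV_entropic} and the typical-event bound $\P(\typ)=1-o(1)$, it suffices to show
$$
\E\big[|G|\cdot\P_S(\mathbf{X}(t)=\id\mid W,W'\in\WW)\big]-1=o(1)
$$
for $t\ge(1+\ep)t_*$. Conditioning on $V=W-W'$, the event $\{\mathbf X=\id\}$ becomes, via \eqref{rearranged_S}, the event that a product of the form $Z_1^{V_1}\cdots Z_k^{V_k}\cdot\prod_{a<b}[Z_a,Z_b]^{m_{ba}}\cdot\varphi(Z_1,\dots,Z_k)$ equals the identity in $G$. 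I would analyse this quotient-by-quotient along the lower central series $G=G_1\trianglerighteq G_2\trianglerighteq\cdots$: projecting to $G_{\mathrm{ab}}=Q_1$, the condition is that $Z_1^{V_1}\cdots Z_k^{V_k}\in G_2$, whose probability (averaged over the i.i.d.\ uniform $Z_i$) is governed by the abelian analysis of \cite{hermon2102cutoff,hermon2019cutoff} and, on the typical event $\WW_{glo}$, is at most roughly $e^{-h}\cdot(\text{const})$ by Lemma~\ref{entropy_concentration} with $h=h_0+\omega\ge\log|G_{\mathrm{ab}}|+\omega$; then, conditionally on lying in $G_2$, one must iterate into $Q_2=G_2/G_3$, where the relevant randomness is that of $\prod_{a<b}[Z_a,Z_b]^{m_{ba}}$ modulo $G_3$, and so on up to $Q_L$.

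The two cases are treated separately. When $W=W'$ (so $V=0$), the word $\mathbf X$ is a product of commutators only, i.e.\ $\mathbf X\in G_2$; here the relevant bound is $\P(W=W'\mid W,W'\in\WW)\le\max_{w\in\WW}\P(W=w)/\P(W,W'\in\WW)\le e^{-h}(1+o(1))$, and since $|G|\le|G_{\mathrm{ab}}|^{2r^L}$ while $h\ge\log|G_{\mathrm{ab}}|+\omega$ with $\omega\gg1$... this is where I would need $h$ large enough to beat $\log|G|$, which holds precisely because of the definition of $h_0$ (it equals $\log|G_{\mathrm{ab}}|$ only when $t_0>\log_k|G|$, i.e.\ when $\log_k|G|$ is not the binding constraint, and equals $(1-1/\rho)\log|G|$ otherwise, which together with the $\log_k|G|$ lower bound on $t_*$ supplies the missing entropy from the "diameter" term $\log_k|G|$). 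When $W\ne W'$, one has $V\ne0$, so the projection of $\mathbf X$ to $G_{\mathrm{ab}}$ is a nontrivial word $Z_1^{V_1}\cdots Z_k^{V_k}$; conditioned on $V$, since the $Z_i$ are i.i.d.\ uniform and at least one exponent $V_a\ne0$, the image $\Pi(\mathbf X)=[G,G]\,Z_1^{V_1}\cdots Z_k^{V_k}$ is close to uniform on $G_{\mathrm{ab}}$ (this is exactly the mechanism used for i.i.d.\ uniform generators on abelian groups), so $\P(\Pi(\mathbf X)=\id\mid V)\le 1/|G_{\mathrm{ab}}|\cdot(1+o(1))$, and then conditionally on $\Pi(\mathbf X)=\id$ one repeats the argument on $Q_2,\dots,Q_L$, where the structure of \eqref{rearranged_S} ensures that the relevant distribution is again sufficiently spread out (this is the content flagged in the methodology: multi-fold commutators in $\varphi$ do not worsen the analysis, and one only needs to control $\prod_{a<b}[Z_a,Z_b]^{m_{ba}}$).

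Putting the cases together, the conditional collision probability $\P(\mathbf X=\id\mid W,W'\in\WW)$ is at most $(1+o(1))/|G|$ plus a term of order $e^{-h}\le|G_{\mathrm{ab}}|^{-1}e^{-\omega}$ from the $W=W'$ case (which, after multiplying by $|G|\le|G_{\mathrm{ab}}|^{2r^L}$ and using the precise value of $h_0$, is still $o(1)$); combined with $\P(\typ)=1-o(1)$ and Lemma~\ref{TV_entropic}, this gives $\E\,d_S(t)^2=o(1)$ and hence $d_S(t)=o(1)$ whp by Markov's inequality. \textbf{The main obstacle} I expect is the quotient-by-quotient conditioning: one must show that, conditionally on the projections to $Q_1,\dots,Q_{\ell-1}$ being trivial, the projection of $\mathbf X$ to $Q_\ell=G_\ell/G_{\ell+1}$ is still close to uniform (or at least has small atoms) over the randomness of the $Z_i$, and one must do this \emph{uniformly} over the conditioning data and over the typical values of $V$ and of the $m_{ba}$'s. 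This requires understanding the joint distribution of the exponents $(m_{ba})_{a<b}$ (and the higher-order exponents in $\varphi$) given $W,W'$, and showing these exponents are themselves "spread out" on the typical event — in the regime $k\gtrsim\log|G_{\mathrm{ab}}|$ this is where $\WW_{once}$ enters (generators appearing exactly once contribute fresh randomness through their commutators), and in the regime $k\lesssim\log|G_{\mathrm{ab}}|$ it is where $\WW_{loc}$ and the $r_*$-window enter. Making this precise for each $\ell\le L$, and checking the entropy bookkeeping so that the total atom size is $\le(1+o(1))/|G|$, is the technical heart of the argument and is presumably carried out in Sections~\ref{proof_nonzeroV} and \ref{proof_mix_com}.
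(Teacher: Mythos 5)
Your reduction via Lemma \ref{TV_entropic} and the split into the cases $W=W'$ and $W\neq W'$ do match the paper's strategy, but the accounting you give for the case $V=0$ is wrong, and this is exactly where the main content of the proof lies. You bound $|G|\cdot \P(X=X',V=0\mid \typ)$ by $|G|\cdot\P(V=0\mid\typ)\lesssim |G|e^{-h}$ and assert this is $o(1)$ ``because of the definition of $h_0$''. It is not: when $t_0>\log_k|G|$ one has $h_0=\log|G_{\mathrm{ab}}|$, so $|G|e^{-h}\approx |G_2|e^{-\omega}$, and in the other case $|G|e^{-h}=|G|^{1/\rho}e^{-\omega}$ with $\rho$ possibly of order one; neither is $o(1)$ whenever the commutator subgroup is large (e.g.\ unitriangular or Heisenberg groups). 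The entropy of $W$ can never supply mixing on $G_2$: conditionally on $V=0$ you must still prove that the commutator word $\prod_{a<b}[Z_a,Z_b]^{m_{ba}}\varphi(Z_1,\dots,Z_k)$ is nearly equidistributed on $G_2$, i.e.\ that $\P(X=X'\mid V=0,\typ)\lesssim e^{h}/|G|$ (roughly $1/|G_2|$). You flag this at the end as ``the main obstacle'', but flagging it is not proving it; in the paper this is Propositions \ref{mix_com} and \ref{A_complement}, carried out via the decomposition $Z_a=\prod_\ell Z_{a,\ell}$ into independent uniform layer components (Lemma \ref{Z_decomp}), the good event $\AA$ of Definition \ref{eventA} (an index set $\KK$ with the stated independence, plus $K\times K$ submatrices of $\hat m$ of full rank mod every prime dividing $|G_{\mathrm{ab}}|$), the uniformity of $\sum_{b\in\KK}[\chi_b,Z_{b,\ell}]$ on the subgroup it generates (Lemma \ref{unif_span}) together with the index bounds of Lemmas \ref{quotient_size} and \ref{size_estimate}, and two separate high-probability arguments for $\AA$ (relative independence of the $m_{ba}$ through disjoint pair blocks when $k\lesssim\log|G_{\mathrm{ab}}|$, and the coloring/insertion argument with singly-occurring generators, via $\WW_{once}$, when $k\gtrsim\log|G_{\mathrm{ab}}|$). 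None of this machinery appears in your proposal, so the $V=0$ case is a genuine gap.

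A secondary issue concerns $V\neq0$: your claim $\P(\Pi(\mathbf{X})=\id\mid V)\le(1+o(1))/|G_{\mathrm{ab}}|$ is false for a fixed $V$, since $Z_1^{V_1}\cdots Z_k^{V_k}$ projects to a uniform element of $\gcd(V,|G_{\mathrm{ab}}|)G_{\mathrm{ab}}$ and the point mass can be as large as $\gcd(V)^{r}/|G_{\mathrm{ab}}|$; one must average over $V$ on the typical event and show $\E[\gcd(V)^{\bar r}\1\{V\neq 0\}\mid\typ]=1+o(1)$, which is the content of Lemmas \ref{gcd_expectation}, \ref{gcd_ub} and \ref{gcd_analysis}. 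Moreover, the iteration over $Q_2,\dots,Q_L$ in this case does not require the commutator exponents to be spread out at all: the paper conditions layer by layer and uses the fresh uniform components $Z_{a,\ell}$, so the same linear form $\sum_a V_aZ_{a,\ell}$ does the work on every quotient (Lemma \ref{layers_decomp}); your sketch does not identify this mechanism, and without the layer decomposition it is unclear how to obtain conditional randomness at level $\ell$ after conditioning on the lower levels.
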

Notice that when $1\ll k\ll \frac{\log|G|}{\log\log|G|}$ this theorem follows directly from Theorem \ref{reduction_ab} and hence in the proof we will focus on the regime $k\gtrsim \frac{\log|G|}{\log\log|G|}$.

We will first define the notation that will be used throughout this section and explain why it is useful.

\begin{definition}\label{representatives}
For each $\ell\in [L]$, define $Q_{\ell}:=G_{\ell}/G_{\ell+1}$ and let $r_\ell:=r(Q_\ell)$ denote the rank of $Q_{\ell}$. For each $Q_\ell$, we will choose a set $R_\ell \subseteq G_{\ell}$ such that $|R_\ell|=|Q_\ell|$ and $Q_\ell=\{ G_{\ell+1}g: g\in R_\ell\}$. 
\end{definition}

As 
$$\{X(X')^{-1}=\id\}=\cap_{\ell=1}^{L+1}\{ X(X')^{-1}\in G_{\ell}\}=\cap_{\ell=1}^{L+1} \{ G_\ell X(X')^{-1}=G_\ell\},$$ 
we will be interested in events related to $\{G_{\ell}X(X')^{-1}\}_{\ell \in [L+1]}$. In other words, we will decompose $X(X')^{-1}$ with respect to the quotient groups $\{Q_{\ell} :\ell \in [L]\}$ and derive simplified expressions to make the distribution of $X(X')^{-1}$ more tractable.


It will be tremendously useful if we can express $Z_a$ as a product of elements belonging to each layer of $\{G_\ell: \ell\in [L]\}$. Indeed,  the following lemma asserts that we can construct each generator $Z_a$ as a product of independent random variables $\{Z_{a,\ell}: \ell \in [L]\}$.

\begin{lemma}[Corollary 6.4 in \cite{hermon2102cutoff}]\label{Z_decomp}
Let $\{Z_{a,\ell} : a\in[k], 1\leq \ell\leq L\}$ be independent and such that $Z_{a,\ell} \sim \Unif (R_\ell)$. Then $G_{\ell+1}Z_{a,\ell}\sim \Unif(Q_\ell)$. Moreover, $Z_a:=\prod_{\ell=1}^L Z_{a,\ell}$ are i.i.d. uniform over $G$ for $a\in[k]$. 
\end{lemma}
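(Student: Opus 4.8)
\textbf{Proof proposal for Lemma \ref{Z_decomp}.}

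The plan is to verify the two assertions in sequence: first that $G_{\ell+1}Z_{a,\ell}\sim\Unif(Q_\ell)$ given $Z_{a,\ell}\sim\Unif(R_\ell)$, and then that the product $Z_a:=\prod_{\ell=1}^L Z_{a,\ell}$ is uniform over $G$; independence across $a\in[k]$ is immediate from the independence of the $Z_{a,\ell}$'s. The first point is essentially the definition of $R_\ell$: by Definition \ref{representatives}, $R_\ell$ is a set of coset representatives for $Q_\ell=G_\ell/G_{\ell+1}$, i.e.\ $|R_\ell|=|Q_\ell|$ and the map $g\mapsto G_{\ell+1}g$ restricted to $R_\ell$ is a bijection onto $Q_\ell$. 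Hence pushing the uniform measure on $R_\ell$ forward through this bijection yields exactly the uniform measure on $Q_\ell$, which is the claim.

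For the second assertion, the key structural fact is that the successive quotients $G_1/G_2, G_2/G_3,\dots, G_L/G_{L+1}$ refine a composition-type filtration of $G$, so that every $g\in G$ has a \emph{unique} expression $g = g_1 g_2 \cdots g_L$ with $g_\ell\in R_\ell$. Indeed, $|G| = \prod_{\ell=1}^L |G_\ell/G_{\ell+1}| = \prod_{\ell=1}^L |R_\ell|$, so it suffices to show that the product map $R_1\times\cdots\times R_L\to G$, $(g_1,\dots,g_L)\mapsto g_1\cdots g_L$, is injective (a surjective or injective map between finite sets of equal cardinality is a bijection). Injectivity follows by descending induction on the filtration: if $g_1\cdots g_L = g_1'\cdots g_L'$, then projecting modulo $G_2$ and using that $g_2,\dots,g_L,g_2',\dots,g_L'\in G_2$ (since $R_\ell\subseteq G_\ell\subseteq G_2$ for $\ell\ge 2$) gives $G_2 g_1 = G_2 g_1'$, forcing $g_1=g_1'$ as $R_1$ is a transversal; cancel $g_1$ and repeat with $G_3$, etc. Therefore the product map is a bijection, and since $Z_{a,1},\dots,Z_{a,L}$ are independent and each $Z_{a,\ell}$ is uniform on $R_\ell$, the tuple $(Z_{a,1},\dots,Z_{a,L})$ is uniform on $R_1\times\cdots\times R_L$, and its image $Z_a$ under the bijection is uniform on $G$. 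Finally, the random vectors $(Z_{a,1},\dots,Z_{a,L})$ for distinct $a$ are independent by hypothesis, so the $Z_a$ are i.i.d.\ uniform on $G$.

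I do not expect any serious obstacle here; this is a bookkeeping lemma about coset transversals in a nilpotent group, and the only mild subtlety is being careful about the order of multiplication in the product map and matching it to the filtration $G = G_1 \trianglerighteq G_2 \trianglerighteq \cdots$, which is handled cleanly by the descending induction above. (The statement is quoted as Corollary 6.4 of \cite{hermon2102cutoff}, so one may alternatively just cite it; the argument above is included for completeness.)
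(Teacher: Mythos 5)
Your argument is correct. The paper itself gives no proof of this lemma --- it is simply quoted as Corollary 6.4 of \cite{hermon2102cutoff} --- and your self-contained argument (uniformity on $Q_\ell$ via the transversal bijection, plus uniqueness of the factorization $g=g_1\cdots g_L$ with $g_\ell\in R_\ell$ proved by the cardinality count $|G|=\prod_\ell|R_\ell|$ and the descending induction modulo $G_2,G_3,\dots$, using normality of each $G_{\ell+1}$ in $G$) is exactly the standard proof of that cited result, so nothing further is needed.
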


\mn
\subsubsection{Proof Framework for Theorem \ref{mixingtime_ub}} As suggested by Lemma \ref{TV_entropic}, in order to control the total variation distance to stationarity we will upper bound  $D(t):=|G|\cdot \P(X=X'|\typ)-1$, where we also average over the choice of $S$. Letting $V:=W-W'$, $D(t)$ can be further decomposed  with respect to $V$:
\beq\label{tv_decomp}
D(t)=|G|\cdot \P(X=X', V\neq 0|\typ)+|G|\cdot \P(X=X',V=0| \typ)-1.
\eeq

Again, for simplicity of notation we will suppress the dependence of the mixing times on $k$ and $G$, and write $t_0:=t_0(k,|G_{\mathrm{ab}}|), t_1:=\log_k |G|$ and $t_*:=t_*(k,G)=\max\{ t_0, t_1\}$. 
It follows easily from \eqref{tv_decomp} that in order to prove Theorem \ref{mixingtime_ub} it suffices to prove the following two results.

\begin{prop}\label{nonzeroV}
For any $\ep>0$ and $t\geq (1+\ep)t_*$, we have 
\begin{align*}
|G|\cdot \P(X=X', V\neq 0|\typ)&= 1+o(1).
\end{align*}
\end{prop}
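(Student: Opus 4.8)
The plan is to estimate $|G|\cdot\P(X=X',V\neq 0\mid\typ)$ by conditioning on the auxiliary process and on the value of $V=W-W'$, and showing that the contribution of $V\neq 0$ essentially recovers the uniform distribution on $G$. First I would use the representation \eqref{rearranged_S} of $\mathbf X=X(X')^{-1}$: on the event $\typ$ we know $V\in\WW_{glo}$-type control and (in the relevant regime $k\gtrsim \log|G|/\log\log|G|$) the structure of $\WW_{once}$, so that most coordinates $a$ with $|W_a^+|+|W_a^-|=1$ come into play. The key point is that $\{X=X'\}=\{\mathbf X=\id\}=\bigcap_{\ell=1}^{L+1}\{G_\ell X(X')^{-1}=G_\ell\}$, and conditionally on $V$ (and on the $\sigma,\eta$ data determining the $m_{ba}$ and $\varphi$), the distribution of $\mathbf X$ over the randomness of $Z_1,\dots,Z_k\sim\Unif(G)$ factorizes through the layers $Q_\ell=G_\ell/G_{\ell+1}$ via Lemma \ref{Z_decomp}.

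The main step is a layer-by-layer argument. Conditioning on $V\neq 0$ (i.e.\ some coordinate $V_a\neq 0$), the abelianization part $Z_1^{V_1}\cdots Z_k^{V_k}$ already produces an almost-uniform element of $G_{\mathrm{ab}}$: this is exactly the entropic-time fact that on $\WW_{glo}$ we have $\mu_t(V)\le e^{-h}$ with $h\ge \log|G_{\mathrm{ab}}|+\omega$, so $\P(G_2\mathbf X=G_2 g\mid\typ,V)$ is within a $(1+o(1))$ factor of $1/|G_{\mathrm{ab}}|$ uniformly in $g$, and summing over $g$ gives $|G_{\mathrm{ab}}|\cdot\P(G_2\mathbf X=G_2 g)=1+o(1)$. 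Then one moves up the lower central series: assuming $G_{\ell+1}\mathbf X$ is (conditionally) asymptotically uniform on $G/G_{\ell+1}$, one shows the next layer $G_{\ell+1}/G_{\ell+2}$ is also asymptotically uniformized. The randomness for layer $\ell+1$ comes from the independent components $Z_{a,\ell+1}\sim\Unif(R_{\ell+1})$, together with the two-fold commutator contributions $\prod_{a<b}[Z_a,Z_b]^{m_{ba}}$ whose images in $G_{\ell+1}/G_{\ell+2}$ are controlled by bilinearity (Proposition \ref{p:bilinearity}); the crucial input is that because $V\neq 0$ and we are past the entropic time, there are enough "free" coordinates (e.g.\ indices in $\mathcal J(W^+,W^-)$) and enough independent commutator exponents $m_{ba}$ to generate $Q_{\ell+1}$ with the right multiplicity. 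The residual $\varphi(Z_1,\dots,Z_k)$ made of multi-fold commutators should be shown not to obstruct this: its contribution in each $Q_\ell$ is $G_{\ell+1}$-measurable given the lower layers, so it only shifts the target and does not concentrate the conditional law — this is the simplification promised in Section \ref{our_contribution}.

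Concretely, I would carry out the steps in this order: (1) reduce to $k\gtrsim \log|G|/\log\log|G|$ as noted after the theorem statement; (2) fix a realization of $(\sigma_i,\eta_i)$ in $\typ$, hence fix $V$, the $m_{ba}$, and the combinatorial shape of $\varphi$; (3) integrate out the $Z_{a,\ell}$ one layer at a time, using Lemma \ref{Z_decomp} and Proposition \ref{p:bilinearity} to reduce the layer-$\ell$ conditional law to a sum of independent $\Unif(Q_\ell)$-type contributions plus a deterministic shift; (4) invoke the entropic estimate (Lemma \ref{entropy_concentration} / the bound $\mu_t(V)\le e^{-h}$) and a Fourier/character argument on the abelian group $Q_\ell$ to show the $L^\infty$ distance of this conditional law from uniform on $Q_\ell$ is $o(|Q_\ell|^{-1})$ uniformly; (5) multiply the layers, using $|G|=\prod_\ell|Q_\ell|$ and $h_0\ge\log|G_{\mathrm{ab}}|$ together with $t\ge(1+\ep)\log_k|G|$ to guarantee enough entropy/support for the higher layers; (6) take expectation over $\typ$ and conclude $|G|\cdot\P(X=X',V\neq0\mid\typ)=1+o(1)$. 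The hardest part will be Step (4) for the higher layers $\ell\ge 2$: there one must show that the two-fold commutator exponents $(m_{ba})_{a<b}$, conditioned to lie in $\WW_{once}$ (or $\WW_{loc}\cap\WW_{once}$), are "spread out" enough that their image under the bilinear map into $Q_\ell$ equidistributes — this requires a quantitative anticoncentration statement for the bilinear form $\sum_{a<b}m_{ba}\,\phi([Z_a,Z_b])$ over the random generators, which is where the careful choice of $\typ$ and the assumption $\log k\ll\log|G|$ are really used.
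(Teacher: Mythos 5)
There is a genuine gap, and it sits exactly at the step you call the ``main step.'' For a \emph{fixed} realization $V=v\neq 0$, the conditional law of the layer-$\ell$ linear part $G_{\ell+1}\sum_a v_a Z_{a,\ell}$ over the randomness of the generators is \emph{not} close to uniform on $Q_\ell$ in $L^\infty$: by Lemma \ref{gcd_unif} it is exactly uniform on the subgroup $\gamma_\ell Q_\ell$ with $\gamma_\ell=\gcd(v_1,\dots,v_k,|Q_\ell|)$, and when $\gamma_\ell>1$ its maximal point probability is of order $\gamma_\ell^{r_\ell}/|Q_\ell|$, not $(1+o(1))/|Q_\ell|$. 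The entropic bound $\mu_t(w)\le e^{-h}$ on $\WW_{glo}$ controls the probability that $W$ sits at a given lattice point; it says nothing about the arithmetic of a fixed nonzero $v$, and conditioning on $\typ$ does not exclude $v$ with nontrivial gcd — it only makes them suitably rare. So your step (4), an $L^\infty$ bound $o(|Q_\ell|^{-1})$ ``uniformly'' given typical $V$, is false as stated, and step (5) cannot be run layer by layer in that form. The paper's proof is built precisely around this obstruction: it bounds $\P(X=X'\mid V)$ by $\prod_{\ell}\P\bigl(G_{\ell+1}\sum_a V_aZ_{a,\ell}=G_{\ell+1}\mid V\bigr)$ (Lemma \ref{layers_decomp}, using that the commutator and $\varphi$ terms are measurable w.r.t.\ the lower-layer $\sigma$-field and are absorbed by a maximum over the shifted target, where the maximum point probability is attained at the identity), converts the product into the single factor $\gcd(V)^{\bar r}/|G|$ (Lemma \ref{gcd_expectation}), and then proves $\E[\gcd(V)^{\bar r}\1\{V\neq 0\}\mid\typ]=1+o(1)$ by regime-by-regime estimates on $\P(\gcd(V)=\gamma, V\neq0\mid\typ)$ (Lemmas \ref{gcd_ub} and \ref{gcd_analysis}). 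Your proposal never addresses the gcd correction, which is the actual content of the proposition.

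A second, related misdirection: you import the anticoncentration/equidistribution of the commutator exponents $(m_{ba})$ (your ``hardest part'') into this proposition. That machinery belongs to the $V=0$ case (Propositions \ref{mix_com} and \ref{A_complement}, via the good event $\AA$) and is neither available for free nor needed here: when $V\neq 0$, the linear part $\sum_a V_a Z_{a,\ell}$ alone, with the $\gcd(V)^{\bar r}$ loss, already yields $|G|\cdot\P(X=X',V\neq 0\mid\typ)=1+o(1)$. Your treatment of $\varphi$ as a measurable shift is correct and matches the paper, but the quantitative engine of your argument needs to be replaced by the gcd analysis rather than an $L^\infty$-uniformity claim.
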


\begin{prop}\label{zeroV}
For any $\ep>0$ and $t\geq (1+\ep)t_*$, we have 
\begin{align*}
|G|\cdot \P(X=X',V=0| \typ)&=o(1).
\end{align*}
\end{prop}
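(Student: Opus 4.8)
The plan is to exploit that on $\{V=0\}$ the abelianisation part of $\mathbf{X}$ vanishes, so that by \eqref{rearranged_S} one has $\mathbf{X}=\prod_{a<b}[Z_a,Z_b]^{m_{ba}}\,\varphi(Z_1,\dots,Z_k)\in G_2$ (note $\varphi\in G_3$). Thus $\{X=X'\}\cap\{V=0\}$ forces a $G_2$-valued element to be the identity, and combined with the entropic bound $\mu_t\le e^{-h}$ on $\WW_{glo}$ this should leave a spare factor $e^{-\omega}$.

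First I would reduce to a conditional statement. Write $\mathscr{S}$ for the pair of driving sequences $\big((\sigma_i,\eta_i)_i,(\sigma'_i,\eta'_i)_i\big)$, on which $W,W',V$ and the integers $m_{ba}$ of \eqref{mba} depend. Since $\typ$ is $\mathscr{S}$-measurable and $\P(\typ)=1-o(1)$, conditioning on $\mathscr{S}$ gives
$$
|G|\cdot\P(X=X',V=0\mid\typ)=\frac{|G|}{\P(\typ)}\,\E\!\left[\1\{V=0\}\,\1_{\typ}\;\P\!\left(\mathbf{X}=\id\mid\mathscr{S}\right)\right].
$$
Two estimates then suffice: (a) on $\{V=0\}=\{W=W'\}$, using $\typ\subseteq\{W,W'\in\WW_{glo}\}$ and $\WW_{glo}=\{w:\mu_t(w)\le e^{-h}\}$,
$$
\P(\{V=0\}\cap\typ)\le\sum_{w\in\WW_{glo}}\mu_t(w)^2\le\Big(\max_{w\in\WW_{glo}}\mu_t(w)\Big)\sum_{w}\mu_t(w)\le e^{-h};
$$
and (b) the \emph{anticoncentration claim}: there is a constant $C=C(r,L)$ such that $\P(\mathbf{X}=\id\mid\mathscr{S})\le C/|G_2|$ for every $\mathscr{S}$ in $\{V=0\}\cap\typ$. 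Granting (a) and (b), the displayed identity yields $|G|\cdot\P(X=X',V=0\mid\typ)\le(1+o(1))\,C\,|G|\,e^{-h}/|G_2|=(1+o(1))\,C\,|G_{\mathrm{ab}}|\,e^{-h}$, and since $h=h_0+\omega$ with $h_0\ge\log|G_{\mathrm{ab}}|$ (Definition \ref{def_h} and the remark after it) this is at most $(1+o(1))\,C\,e^{-\omega}=o(1)$, which is the assertion.

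The substance is the anticoncentration claim. I would apply Lemma \ref{Z_decomp} to write $Z_a=\prod_{\ell=1}^L Z_{a,\ell}$ with the $Z_{a,\ell}$ independent and $G_{\ell+1}Z_{a,\ell}\sim\Unif(Q_\ell)$, and then peel the lower central series of $G_2$ layer by layer: since $\{\mathbf{X}=\id\}=\{\mathbf{X}\in G_{L+1}\}$ while $\{\mathbf{X}\in G_2\}$ holds on $\{V=0\}$, it is enough to prove $\P\!\left(\mathbf{X}\in G_{\ell+1}\mid\mathbf{X}\in G_\ell,\mathscr{S}\right)\le C_\ell/|Q_\ell|$ for $2\le\ell\le L$ and multiply, using $|G_2|=\prod_{\ell=2}^L|Q_\ell|$ and $L\asymp 1$. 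By Proposition \ref{p:bilinearity} the coset $G_{\ell+1}\mathbf{X}$ is, for a suitable block of the $Z_{a,\ell}$-decomposition held variable and the rest frozen, a fixed $\ZZ$-linear expression valued in the abelian group $Q_\ell$; moreover, as indicated after \eqref{rearranged_S}, the multi-fold commutators collected in $\varphi$ can be absorbed and do not add to the complexity, so the analysis is governed entirely by $\prod_{a<b}[Z_a,Z_b]^{m_{ba}}$, exactly as in the step-$2$ case. The essential layer is $\ell=2$, where $G_3\mathbf{X}=\prod_{a<b}\phi(\zeta_a,\zeta_b)^{m_{ba}}$ with $\zeta_a:=G_2Z_a$ i.i.d.\ uniform on $G_{\mathrm{ab}}$ and $\phi\colon G_{\mathrm{ab}}\times G_{\mathrm{ab}}\to Q_2$ the antisymmetric bilinear map of Proposition \ref{p:bilinearity}, which is onto $Q_2$ because $\widehat S_2$ generates it (Proposition \ref{p:generatingG}).

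The hard part will be the anticoncentration of this random bilinear form, i.e.\ showing $\P\!\left(\prod_{a<b}\phi(\zeta_a,\zeta_b)^{m_{ba}}=e\right)\le C/|Q_2|$. I would attack it by Fourier analysis on the finite abelian group $Q_2$: composing $\phi$ with a nontrivial character of $Q_2$ turns the claim into a bound on $\big|\E_\zeta\exp\!\big(2\pi i\sum_{a<b}m_{ba}B(\zeta_a,\zeta_b)\big)\big|$ for the resulting nontrivial alternating bilinear forms $B$ on $G_{\mathrm{ab}}$, and each such character sum can be controlled by conditioning on all but one block of the coordinates $\zeta_a$, using the information the typical event gives about the support $\{(a,b):m_{ba}\ne 0\}$ (the $\WW_{once}$, resp.\ $\WW_{loc}$, part) together with a Gauss-sum estimate that absorbs the possible degeneracy of $B$ on $G_{\mathrm{ab}}$; summing these against $1/|Q_2|$ closes the layer. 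Keeping the constant $C$ uniform as the group varies, and propagating the same scheme through the higher layers $Q_3,\dots,Q_L$, is the delicate point; the rest is bookkeeping with the decomposition and the typical event.
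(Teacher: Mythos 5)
Your reduction to (a) plus a conditional anticoncentration bound is the right shape, and step (a) is exactly the paper's estimate $\P(V=0,\typ)\le e^{-h}$, but claim (b) as you state it is false, and this is a genuine gap rather than a technicality. You assert $\P(\mathbf{X}=\id\mid\mathscr{S})\le C/|G_2|$ for \emph{every} realization $\mathscr{S}$ of the driving sequences lying in $\{V=0\}\cap\typ$. Take $\mathscr{S}$ in which $X'$ performs exactly the same steps in the same order as $X$ (or, more generally, any $\mathscr{S}$ for which the coefficient matrix $(m_{ba})$ of \eqref{mba} is zero, is supported on very few pairs, or has all entries divisible by a prime $p$ dividing $|G_{\mathrm{ab}}|$). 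Such $\mathscr{S}$ are compatible with $\{V=0\}\cap\typ$ — the typicality event constrains $W,W'$ (via $\WW_{glo},\WW_{loc},\WW_{once}$), not the commutator coefficients — and for the identical-sequence case $\P(\mathbf{X}=\id\mid\mathscr{S})=1$, while for $m\equiv 0 \bmod p$ the conditional probability is at least of order $1/|pQ_2|\gg 1/|Q_2|$ in general. So no Fourier/Gauss-sum argument can deliver a bound of the form $C/|G_2|$ uniformly over $\{V=0\}\cap\typ$; any such bound must be conditional on a quantitative non-degeneracy of $(m_{ba})$.

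This is precisely why the paper splits the proof into two statements: an anticoncentration bound conditional on a ``good'' event $\AA$ on the matrix $\hat m$ (existence of an index set $\KK$ with the independence property \eqref{def_K} and of $K\times K$ submatrices of full rank mod every prime $p\mid |G_{\mathrm{ab}}|$), giving $|G|\,\P(X=X'\mid\AA,V=0,\typ)\ll e^{h}$, together with the complementary estimate $|G|\,\P(\AA^{c}\mid V=0,\typ)\ll e^{h}$. The second estimate is the hard part your proposal omits: it requires showing that, conditionally on $\{V=0\}\cap\typ$, the matrix $\hat m$ is non-degenerate mod every relevant prime except with probability $o(e^{h}/|G|)\approx o(1/|G_2|)$, which the paper proves by a ``relative independence'' argument for the $m_{ba}$ when $\frac{\log|G_{\mathrm{ab}}|}{\log\log|G_{\mathrm{ab}}|}\lesssim k\ll\log|G_{\mathrm{ab}}|$ and by the coloring/insertion (urn) argument on the generators appearing exactly once when $k\gtrsim\log|G_{\mathrm{ab}}|$. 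None of this is bookkeeping, and it cannot be extracted from $\typ$ alone; your sketch of the layer analysis (peeling $Q_\ell$, using bilinearity and absorbing $\varphi$ via the filtration) is consistent with the paper's Propositions \ref{span_size} and Lemma \ref{simplification_X}, but without the good event and its probability bound the argument does not close.
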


The proofs of both Proposition  \ref{nonzeroV} and \ref{zeroV} rely on the analysis of the events $\{ G_\ell X(X')^{-1}=G_\ell\}_{\ell\in [L+1]}$. In the case where $V\neq 0$, writing $V=(V_1,\dots,V_k)$, one will see in Section \ref{proof_nonzeroV} that the analysis boils down to understanding the distribution of $ (G_{\ell+1}\sum_{a\in[k]} V_aZ_{a,\ell})_{\ell\in [L]}$. When $V=0$ the analysis is significantly more involved, as in this case \eqref{rearranged_S} turns into

\begin{align*}
X(X')^{-1}&=Z_1^{V_1}\cdots Z_k^{V_k} \prod_{a,b\in[k]: a<b}[Z_a,Z_b]^{m_{ba}} \varphi(Z_1,\dots,Z_k)= \prod_{a,b\in[k]: a<b}[Z_a,Z_b]^{m_{ba}} \varphi(Z_1,\dots,Z_k)
\end{align*}
and hence we need to carefully understand the distribution of the product of commutators. The proof of Proposition \ref{zeroV} is therefore considerably more involved, prompting us to provide an outline here that captures the main steps.
\medskip

\mn
\textbf{Proof outline of Proposition  \ref{zeroV}.}  To give an intuitive and brief explanation on why Proposition \ref{zeroV} is true, we will give the outline of the proof of Proposition \ref{zeroV} here. 
We will analyze $\{ G_{\ell} X(X')^{-1}= G_{\ell}\}$ for each $2\leq \ell \leq L+1$ when $V=0$.

The event $\{G_2X(X')^{-1}= G_2\}$ is already guaranteed to occur if we condition on $\{V=0\}$. In general, in order to understand each event $\{ G_{\ell+2}X(X')^{-1}= G_{\ell+2}\}$  for $1\leq \ell\leq L-1$, we hope to show that for all $1\leq \ell\leq L-1$, $G_{\ell+2}X(X')^{-1}$ is close to being uniform over $Q_{\ell+1}:=G_{\ell+1}/G_{\ell+2}$. To analyze the distribution of $G_{\ell+2}X(X')^{-1}$ for each $\ell\in [L-1]$, we observe from the representation of $X(X')^{-1}$ given in \eqref{rearranged_S} that $(m_{ba})_{a,b\in[k], a<b}$, which is defined in \eqref{mba}, plays a crucial role in determining the distribution.

As we will later show in Section \ref{first_level_filt},  after simplifying $G_{\ell+2}X(X')^{-1}$ for $ 2\leq \ell \leq L-1$ (the case where $\ell=1$ is somewhat different and will be discussed separately in Section \ref{sub_proof_mix_com}) one will eventually arrive at a term of the form
$$
G_{\ell+2} \sum_{b\in \mathcal{K}}[\chi_b,Z_{b,\ell}],
$$
where $\chi_b\in G$ satisfies $G_2 \chi_b=G_2(\sum_{a\in[k]:a<b} m_{ba}Z_{a,1}-\sum_{a\in [k]: b<a}m_{ab}Z_{a,1})$ and $\tildeK\subseteq[k]$ is a subset to be chosen, see \eqref{f_unknown}. The terms $(\chi_b)_{ b\in \mathcal{K}}$ explicitly indicate the role of $(m_{ba})_{a,b\in[k], a<b}$.
More specifically, since $(Z_{b,\ell})_{ b\in \mathcal{K}}$ is a collection of independent uniform random variables, Lemma \ref{unif_span} below asserts that the distribution of $G_{\ell+2} \sum_{b\in \mathcal{K}}[\chi_b,Z_{b,\ell}]$ is uniform over the subgroup generated, denoted by $\inprod{\{G_{\ell+2}[\chi_b,g]: b\in \mathcal{K},g \in R_\ell\}}$. As we would like $G_{\ell+2}X(X')^{-1}$ to be close to being uniform over $Q_{\ell+1}$, the choice of $\tildeK$ will be made so that the set  $\inprod{\{G_{\ell+2}[\chi_b,g]: b\in \mathcal{K},g \in R_\ell\}}$ takes up a sufficiently large fraction of $Q_{\ell+1}$. 

\bigskip
The above discussion suggests that we need to specify some conditions on $(m_{ba})_{a,b\in[k], a<b}$ to guarantee the existence of such an index set $\mathcal{K}$ and thus desired behavior of $\{G_{\ell+2}X(X')^{-1}\}_{2\leq \ell\leq L-1}$. These conditions are summarized in Definition \ref{eventA} as a ``good" event $\AA$. As a result, we will obtain the following estimate. Recall the definition of $h$ from Definition \ref{def_h}.
\begin{prop}\label{mix_com}
$|G|\cdot \P(X=X' |\AA, V=0,\typ)\ll e^h$.
\end{prop}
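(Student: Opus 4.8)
The plan is to condition on $\{\AA,\,V=0,\,\typ\}$ throughout and bound the hitting probability by peeling off the lower central series of $G$ one layer at a time. Since $\AA$, $\typ$ and $\{V=0\}$ are all measurable with respect to the path data $(\sigma_i,\eta_i)_i$ (equivalently, with respect to $(W,W')$ and the integers $m_{ba}$ of \eqref{mba}), after this conditioning the pieces $\{Z_{a,\ell}:a\in[k],\ \ell\in[L]\}$ supplied by Lemma \ref{Z_decomp} are still independent with $Z_{a,\ell}\sim\Unif(R_\ell)$. Writing $\mathbf X:=X(X')^{-1}$, on $\{V=0\}$ the representation \eqref{rearranged_S} reads $\mathbf X=\prod_{a<b}[Z_a,Z_b]^{m_{ba}}\varphi(Z_1,\dots,Z_k)\in G_2$, so $\{G_2\mathbf X=G_2\}$ is automatic and
$$
\P\big(X=X'\ \big|\ \AA,V=0,\typ\big)=\prod_{\ell=1}^{L-1}\P\big(G_{\ell+2}\mathbf X=G_{\ell+2}\ \big|\ G_{\ell+1}\mathbf X=G_{\ell+1},\ \AA,V=0,\typ\big).
$$
I would then show each of the $L-1$ factors is at most $C/|Q_{\ell+1}|$ for a constant $C=C(r,L)$. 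Since $|G|=\prod_{\ell=1}^{L}|Q_\ell|$, $L\asymp1$, and $\log|G_{\mathrm{ab}}|\le h_0$ with $\omega\to\infty$ by Definition \ref{def_h}, this gives
$$
|G|\cdot\P\big(X=X'\ \big|\ \AA,V=0,\typ\big)\le\frac{|G|\,C^{L-1}}{|Q_2|\cdots|Q_L|}=C^{L-1}|G_{\mathrm{ab}}|\ll e^{\omega}|G_{\mathrm{ab}}|\le e^{h_0+\omega}=e^{h},
$$
which is exactly the claim.

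To bound the $\ell$-th factor, fix $1\le\ell\le L-1$ and condition further on the path and on the ``lower'' pieces $\{Z_{a,j}:a\in[k],\ j\le\ell-1\}$, which already determine whether $G_{\ell+1}\mathbf X=G_{\ell+1}$ holds. The reductions to be carried out in Section \ref{first_level_filt} (for $\ell\ge2$) and Section \ref{sub_proof_mix_com} (for $\ell=1$) rewrite $G_{\ell+2}\mathbf X$ in the form
$$
G_{\ell+2}\mathbf X=G_{\ell+2}\sum_{b\in\tildeK}[\chi_b,Z_{b,\ell}],
$$
where $\tildeK\subseteq[k]$ and the elements $\chi_b\in G$, with $G_2\chi_b=G_2\big(\sum_{a<b}m_{ba}Z_{a,1}-\sum_{a>b}m_{ab}Z_{a,1}\big)$, are measurable with respect to the conditioning. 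The mechanism is the bilinearity of commutators modulo higher central terms (Proposition \ref{p:bilinearity}), applied both to each $[Z_a,Z_b]=[\prod_j Z_{a,j},\prod_j Z_{b,j}]$ and to the multi-fold commutators making up $\varphi$: modulo $G_{\ell+2}$ each such expansion either vanishes or depends only on the already-conditioned pieces $\{Z_{a,j}:j\le\ell-1\}$, so the sole fresh randomness in $G_{\ell+2}\mathbf X$ is $(Z_{b,\ell})_{b\in\tildeK}$, still i.i.d.\ $\Unif(R_\ell)$ and independent of $(\chi_b)_{b\in\tildeK}$. Lemma \ref{unif_span} then identifies the conditional law of $G_{\ell+2}\sum_{b\in\tildeK}[\chi_b,Z_{b,\ell}]$ as the uniform law on the subgroup $H_\ell:=\inprod{\{G_{\ell+2}[\chi_b,g]:b\in\tildeK,\ g\in R_\ell\}}$ of $Q_{\ell+1}$, so the $\ell$-th factor equals $1/|H_\ell|$.

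It remains to guarantee $|H_\ell|\ge|Q_{\ell+1}|/C$ for every $\ell$, and this is precisely what the good event $\AA$ of Definition \ref{eventA} is engineered to supply: on $\AA$ the array $(m_{ba})$ is spread out enough — relative to the conditioned pieces $Z_{a,1}$ and to the typical profile of $W$ pinned down by $\typ$ — that the index set $\tildeK$ of \eqref{f_unknown} can be selected so that the elements $G_{\ell+2}[\chi_b,g]$, $b\in\tildeK$, $g\in R_\ell$, generate a subgroup of $Q_{\ell+1}$ whose index is bounded in terms of $r$ and $L$ only. Inserting the $L-1$ bounds $1/|H_\ell|\le C/|Q_{\ell+1}|$ into the telescoping product of the first paragraph then completes the proof.

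The main obstacle is the per-layer reduction underlying the second and third paragraphs — that is, the content of Sections \ref{first_level_filt} and \ref{sub_proof_mix_com}: one must rearrange $\mathbf X$ modulo $G_{\ell+2}$ through repeated use of the commutator identities, tracking carefully which generator pieces each surviving term involves so as to isolate the single fresh bilinear contribution $\sum_{b\in\tildeK}[\chi_b,Z_{b,\ell}]$, and at the same time settle on a workable definition of $\AA$ (and of $\tildeK$) under which $H_\ell$ has bounded index in $Q_{\ell+1}$. Granting that reduction, the remaining ingredients — the layer telescoping, the appeal to Lemma \ref{unif_span}, and the final comparison with $e^h$ — are routine.
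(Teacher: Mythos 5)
Your overall architecture — peel off the lower central series, use the representation \eqref{rearranged_S} on $\{V=0\}$ to reduce the $\ell$-th layer to $G_{\ell+2}\sum_{b\in\tildeK}[\chi_b,Z_{b,\ell}]$, invoke Lemma \ref{unif_span} to identify its conditional law as uniform on $H_\ell:=\inprod{\{G_{\ell+2}[\chi_b,g]:b\in\tildeK,g\in R_\ell\}}$, and telescope — matches the paper. But there is a genuine gap at the crucial step: you assert that on the good event $\AA$ the subgroup $H_\ell$ has index in $Q_{\ell+1}$ bounded by a constant $C(r,L)$, so that each factor is deterministically at most $C/|Q_{\ell+1}|$. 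The event $\AA$ cannot deliver this: $\AA$ is measurable with respect to the path data $\widetilde\HH$ (it is a condition on the integer matrix $\hat m$ alone), while $\chi_b=\sum_a\hat m_{ba}Z_{a,1}$ still involves the random pieces $Z_{a,1}$, which remain uniform after conditioning on $\AA$. Even with a full-rank-mod-$p$ submatrix, with positive (small) probability all the relevant $G_2\chi_b$ fall into a proper subgroup such as $pG_{\mathrm{ab}}$, making the index of $H_\ell$ arbitrarily large; no almost-sure constant bound holds. What $\AA$ actually buys (via Proposition \ref{iff_condition} and Lemma \ref{uniform_chi}) is that a block of the $\chi_b$'s is i.i.d.\ uniform, which yields a bound on the \emph{moments} of the random ratio $|G_{\mathrm{ab}}|/|\inprod{\{G_2\chi_b:b\in\KK\}}|$ (Lemma \ref{size_estimate}, resting on Lemma \ref{iid_generation}).

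Consequently the telescoping cannot be closed factor by factor as you propose: since the same randomness $(Z_{a,1})_a$ enters every layer through the $\chi_b$'s, one must carry the random factor $\bigl(|G_{\mathrm{ab}}|/|\inprod{\{G_2\chi_b:b\in\KK\}}|\bigr)^{r_{\ell+1}}$ (from Lemma \ref{quotient_size}) through the induction and take expectations only at the end, combining the $\chi$-based layers $\ell\ge2$ with the $\ell=1$ layer via H\"older. Relatedly, your sketch treats the $\ell=1$ step with the same $\chi_b$'s, but there the fresh randomness $(Z_{b,1})_{b\in\KK}$ also appears inside $\chi_b$ itself; this is exactly why the paper introduces $\psi_b$ and the independence requirement \eqref{def_K} in Definition \ref{eventA} and proves Lemma \ref{intermediate_E3} separately. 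With the deterministic index bound replaced by the moment bound of Lemma \ref{size_estimate} (plus H\"older), your computation $C\,|G_{\mathrm{ab}}|\ll e^{h}$ at the end is the right conclusion, but as written the key inequality per layer is unjustified and false in general.
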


Lastly, we would like $\AA$ to be an event that occurs with sufficiently high probability, i.e.,
\begin{prop}\label{A_complement}
$|G|\cdot \P(\AA^c|V=0,\typ) \ll e^h$.
\end{prop}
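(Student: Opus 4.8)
The plan is to bound $\P(\AA^c \mid V=0, \typ)$ by a union bound over the ``bad'' sub-events comprising $\AA^c$, then show each is exponentially small in a quantity that dominates $|G| e^{-h}$ — recalling that $e^h \geq |G_{\mathrm{ab}}| e^{\omega}$ and, more importantly, that in the regime $k \gtrsim \frac{\log|G|}{\log\log|G|}$ being treated here, the number of generators appearing exactly once, $J = |\mathcal{J}(W^+,W^-)|$, is $\asymp t e^{-t/k} \gg \log|G|$ on the event $\WW_{once}$. So the real target is to show $\P(\AA^c \mid V=0,\typ) \leq \exp(-c\, J)$ or similar, which beats $|G| e^{-h} = \exp(\log|G| - h) \leq \exp(-\omega)$ comfortably once we observe $\log |G| \lesssim L \log|G_{\mathrm{ab}}| \lesssim J \log\log|G|$... actually the cleanest route is to show the bad probability is $\leq \exp(-(\log|G|)^2)$ or at least $o(|G|^{-1} e^{-h})$ directly.

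First I would recall the precise definition of $\AA$ from Definition~\ref{eventA} (stated later in the paper): it encodes, for each level $2 \leq \ell \leq L-1$, the existence of an index set $\mathcal{K} = \mathcal{K}_\ell \subseteq [k]$ of controlled size such that (a) the vectors $\chi_b$ (with $G_2\chi_b = G_2(\sum_{a<b} m_{ba} Z_{a,1} - \sum_{a>b} m_{ab} Z_{a,1})$) are ``spread out'' enough that the span $\langle\{G_{\ell+2}[\chi_b,g] : b \in \mathcal{K}, g \in R_\ell\}\rangle$ covers a large fraction of $Q_{\ell+1}$, and (b) the indices in $\mathcal{K}$ lie in $\mathcal{J}(W^+,W^-)$ so that the corresponding $Z_{b,\ell}$ are fresh independent uniforms not entangled with the $m$'s. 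The complement $\AA^c$ then decomposes as: either too few generators appear exactly once (excluded on $\typ \subseteq \{W,W' \in \WW_{once}\}$, contributing $0$), or the matrix $(m_{ba})$ fails to have enough ``independent directions'' among the fresh coordinates. Conditionally on $V = 0$ and on the generator multiplicities, the $m_{ba}$ for $a,b$ both in the once-appearing set are (up to sign) determined by the relative order of the two appearances, which given $\{W=W'\}$, $V = 0$ and $\typ$ behave like independent fair coin flips; so the relevant event is a statement about a random $\{0,\pm1\}$-matrix (or the induced family of $\chi_b$'s) having large rank / large image, which holds with probability $\geq 1 - \exp(-c J)$ by a standard rank-deficiency estimate for random matrices over the relevant abelian groups $Q_1, \dots, Q_L$ (each of bounded rank).

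The key steps, in order: (1) reduce $\AA^c$ to a finite union (over $\ell \in \{2,\dots,L-1\}$, bounded since $L \asymp 1$) of per-level bad events $\mathcal{B}_\ell$; (2) on $\typ$, restrict attention to the fresh index set $\mathcal{J} := \mathcal{J}(W^+,W^-) \cap \mathcal{J}((W')^+,(W')^-)$, which has size $\gtrsim t e^{-t/k} \gg \log|G|$ with the conditioning; (3) show that conditionally on $V=0$ and the multiplicity profile, the signs/orders generating $(m_{ba})_{a,b \in \mathcal{J}}$ are exchangeable and ``sufficiently random'' — concretely, each $m_{ba}$ with $a,b$ appearing once is $\pm 1$ or $0$ according to an interleaving pattern, uniform enough to drive a second-moment or Chernoff argument; (4) invoke a lemma (analogue of the genericity estimates in \cite{hermon2102cutoff, hermon2019cutoff}) that a random linear map built from $\gg \log|Q_{\ell+1}|$ such independent columns has image of size $\geq (1-o(1))|Q_{\ell+1}|$ except with probability $\leq |Q_{\ell+1}| \cdot 2^{-c J} \leq \exp(-c' J)$; (5) sum the $O(L)$ bounds and compare $\exp(-c'J)$ with $|G|^{-1} e^{-h}$: since $h \leq \log|G_{\mathrm{ab}}| + \omega \asymp \log|G| + \omega$ and $J \gg \log|G|$, we get $|G| e^{-h} \cdot \exp(-c'J) $ ... rather, $\P(\AA^c \mid V=0,\typ) \leq \exp(-c'J) \ll e^h/|G|$ because $|G| \leq \exp(c'' J)$ fails in general — so instead I would note $e^h \geq e^{\omega}|G_{\mathrm{ab}}| \geq e^\omega |G|^{1/(2r^L)}$ and $J \gg \log|G| \geq \log|G_{\mathrm{ab}}|$, giving $|G| \P(\AA^c \mid \cdots) \leq |G| e^{-c'J} \ll |G| e^{-c' \log|G|} $, and for $c'$ not necessarily large this is where care is needed.

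The main obstacle I anticipate is precisely step (4)–(5): controlling the probability that the random family $\{[\chi_b, \cdot] : b \in \mathcal{K}\}$ fails to span a $(1-o(1))$-fraction of $Q_{\ell+1}$, \emph{with a rate fast enough} to overcome the factor $|G|$ and match against $e^h$ — this requires the per-column ``success probability'' to be bounded below by an absolute constant (so that $J$ independent-ish trials give $\exp(-cJ)$ with $cJ \gg \log|G|$), which in turn needs the $\chi_b$ themselves to be non-degenerate mod $G_2$, i.e. a lower-order genericity fact about the first-level walk that must be established first (this is plausibly part of what $\AA$ already includes, or follows from $\WW_{glo}$ / the entropy lower bound $Q(t) \geq h$). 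Verifying the independence structure needed — that conditioning on $V=0$ and $\typ$ does not destroy the randomness of the interleaving signs that produce $(m_{ba})$ — is the delicate bookkeeping step, since $\typ$ itself is defined in terms of $W, W'$ and one must check it is ``measurable enough'' to leave the order statistics free.
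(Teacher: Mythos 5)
There is a genuine gap, in two places. First, your entire mechanism runs through the once-appearing generators: you restrict to $\JJ(W^+,W^-)$, use that on $\typ$ its size is $\asymp te^{-t/k}$, and treat the resulting $m_{ba}\in\{0,\pm1\}$ as coin-flip-like. But the proposition must cover the whole range $\frac{\log|G|}{\log\log|G|}\lesssim k$, and in the sub-regime $\frac{\log|G_{\mathrm{ab}}|}{\log\log|G_{\mathrm{ab}}|}\lesssim k\ll\log|G_{\mathrm{ab}}|$ one has $t/k\asymp|G_{\mathrm{ab}}|^{2/k}\gg1$, so $te^{-t/k}$ is negligible, essentially no generator appears exactly once, and indeed $\WW_{once}$ is not even part of $\typ$ there (Definition \ref{typical_event}). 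The paper handles this regime by a completely different device: since every generator appears many times, one pairs consecutive occurrences in the subsequence $\mathbf{X}_P$ and shows a ``relative independence''/anti-concentration property of the entries of $\hat m$ (Lemmas \ref{nice_set}--\ref{relative_indep}), then runs a greedy search for a full-rank $K\times K$ submatrix mod each $p$, getting failure probability $o(|G|^{-C})$; nothing in your outline substitutes for this.

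Second, your quantitative target and the comparison with $e^h/|G|$ are off. In the regime where $t_*=\log_k|G|$ one has $J\approx t\approx\log|G|/\log k\ll\log|G|$, so the assertion $J\gg\log|G|$ is false, and a bound of the form $\exp(-cJ)$ with $cJ\gg\log|G|$ is not attainable; the paper's bound there is only about $|G|^{-(1+\ep/4)/\rho}$, which is far larger than $|G|^{-1}$ and suffices only because $h_0=(1-\frac1\rho)\log|G|$ in that case (Definition \ref{def_h}) — a matching you never carry out (you note ``this is where care is needed'' but leave it open, and the analogous matching against $e^{\omega}/|G_2|$ is needed when $h_0=\log|G_{\mathrm{ab}}|$). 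Relatedly, your step (4) appeals to a ``standard rank-deficiency estimate'', but the $m_{ba}$ for once-appearing generators are determined by a single random interleaving of $X'|_\JJ$ against $X|_\JJ$, with strong dependencies, and one needs full rank simultaneously modulo every prime dividing $|G_{\mathrm{ab}}|$ (up to $\asymp\log|G|$ primes); the paper resolves both issues at once by extracting an upper-triangular submatrix with $\pm1$ diagonal via the coloring/urn insertion argument, and it also verifies condition \eqref{def_K} of Definition \ref{eventA} (independence of $(G_2\psi_b,G_2\chi_b)_{b\in\KK}$ from $(G_2Z_{b,1})_{b\in\KK}$) by an explicit structural choice of $\KK$ in each regime, another step your sketch leaves unaddressed.
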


\mn
\textit{Proof of Proposition \ref{zeroV} given Proposition \ref{mix_com} and \ref{A_complement}.} It follows from the definition of $\typ$ and a direct calculation that 
\begin{align*}
\P(V=0,\typ)&=\P(W=W', W\in \WW_{glo})=\sum_{w\in \WW_{glo}}\P(W=w)\P(W'=w)\leq e^{-h}.
\end{align*}
That is,
\beq\label{V=0}
\P(V=0|\typ)\leq e^{-h}/\P(\typ).
\eeq

Hence, by Proposition \ref{mix_com}, Proposition \ref{A_complement} and \eqref{V=0},
\begin{align*}
\P(X=X',V=0| \typ)&\leq \left(\P(X=X'|\AA, V=0,\typ)+\P(\AA^c|V=0,\typ)\right)\cdot \P(V=0|\typ)\ll \frac{1}{|G|}.
\end{align*}

\qed

The remaining of this section is organized as follows: in Section \ref{filtrations} we define filtrations that will serve as useful tools in the subsequent proofs; in Section \ref{proof_nonzeroV} we prove
Proposition \ref{nonzeroV}; in Section \ref{prelim_groups} we present some preliminary results on groups that are useful in the proof of Proposition \ref{zeroV}. As discussed above, the proof of Proposition \ref{zeroV} is divided into two parts: Proposition \ref{mix_com} will be proved in Section \ref{proof_mix_com} and Proposition \ref{A_complement} in Section \ref{proof_A_complement}.

\bigskip
\subsection{Useful Filtrations}\label{filtrations}

Recall that the random walk $X$ can be written as a sequence
$$X=\prod_{i=1}^N Z^{\eta_i}_{\sigma_i}=Z_{\sigma_1}^{\eta_1}Z_{\sigma_2}^{\eta_2}\cdots Z_{\sigma_N}^{\eta_N},$$
where $N$ is the number of steps taken by $X$ by time $t$ and $Z^{\eta_i}_{\sigma_i}$ denotes the $i$-th step taken by the random walk. Similarly, we write $X'=\prod_{i=1}^{N'} Z^{\eta'_i}_{\sigma'_i}$. Recall from Lemma \ref{Z_decomp} the decomposition $Z_a=\prod_{\ell=1}^L Z_{a,\ell}$ for $a\in [k]$. 

We will define the following $\sigma$-fields and events that will be useful in later analysis.

\medskip
\begin{definition}\label{filtration}
(i) Let $\widetilde\HH$ be the $\sigma$-field generated by $N, N',  (\sigma_i, \eta_i)_ { i\in [N+N']}$, i.e., $\widetilde\HH$ contains information on the sequences $X, X'$, other than the identities of $(Z_a)_{a\in[k]}$. \\
(ii) For each $\ell\in[L]$, let $\FF_\ell=\sigma(\{Z_{a,i}: a\in [k], 1\leq i\leq \ell\})$. Let $\FF_0$ be the trivial $\sigma$-field. \\
(iii) For $\ell\in [L+1]$, let $\EE_{\ell}=\{ X(X')^{-1}\in G_{\ell}\}$. 
\end{definition}

To make the intuitive picture a bit clearer, note that there are mainly two sources of randomness: \\
(i) At every step a generator is randomly chosen and applied, resulting in the random order of the sequences (encoded in $\widetilde \HH$);\\
(ii) For each $a\in [k]$, the specific choice of the generator $Z_a$ is random (encoded in $\{\FF_\ell :\ell\in [L]\}$). 

\begin{remark}
The following random variables are measurable with respect to $\widetilde \HH$:  $V$, $\1_\typ$ and $(m_{ba})_{ a,b\in [k], a<b}$.
\end{remark}

As preparation for later discussion, we first clarify the measurability of events of interest to us.
\begin{lemma}\label{E_measurability}
For $\ell\in [L]$, \\
(i) $\EE_{\ell}$ is measurable with respect to $\sigma(\FF_{\ell-1},\widetilde\HH)$.\\
(ii) $\{\EE_{\ell+1},V=0\}$ is measurable with respect to $\sigma(\FF_{\ell-1},\widetilde\HH)$.
\end{lemma}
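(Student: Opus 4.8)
The plan is to prove both parts directly, by replacing each generator $Z_a=\prod_{\ell=1}^{L}Z_{a,\ell}$ (Lemma \ref{Z_decomp}) by its truncation to the first $\ell-1$ layers, $u_a:=\prod_{i=1}^{\ell-1}Z_{a,i}$, which is $\FF_{\ell-1}$-measurable, and writing $Z_a=u_aw_a$ with $w_a:=\prod_{i=\ell}^{L}Z_{a,i}\in G_\ell$ (reading $u_a=\id$, $w_a=Z_a$ when $\ell=1$). The key structural input is that $G_\ell\trianglelefteq G$ and, by the very definition of the lower central series, $[G,G_\ell]=G_{\ell+1}$, so $G_\ell/G_{\ell+1}$ is central in $G/G_{\ell+1}$.

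For part (i): since $w_a\in G_\ell$ and $G_\ell$ is normal, $G_\ell Z_a=G_\ell u_a$ for every $a\in[k]$, so the coset $G_\ell Z_a$ is $\FF_{\ell-1}$-measurable. Writing $X(X')^{-1}=\prod_{i=1}^{N+N'}Z_{\sigma_i}^{\eta_i}$ as in \eqref{seq_S_general}, its image in $G/G_\ell$ is $\prod_{i=1}^{N+N'}(G_\ell Z_{\sigma_i})^{\eta_i}$, a deterministic function of $N+N'$, of $(\sigma_i,\eta_i)_{i\le N+N'}$, and of the cosets $(G_\ell Z_a)_{a\in[k]}$. Hence $G_\ell X(X')^{-1}$, and therefore $\EE_\ell=\{\,G_\ell X(X')^{-1}=G_\ell\,\}$, is $\sigma(\FF_{\ell-1},\widetilde\HH)$-measurable.

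For part (ii): pass to $\bar G:=G/G_{\ell+1}$ and write $\bar g:=G_{\ell+1}g$, so that every $\bar w_a\in G_\ell/G_{\ell+1}$ is central in $\bar G$. Then, since $(\bar u_a\bar w_a)^{\eta}=\bar u_a^{\,\eta}\,\bar w_a^{\,\eta}$ with $\bar w_a^{\,\eta}$ central for $\eta=\pm1$, pulling the central factors out of $\bar X=\prod_{i=1}^{N}(\bar u_{\sigma_i}\bar w_{\sigma_i})^{\eta_i}$ gives $\bar X=\overline{\widetilde X}\cdot\prod_{a\in[k]}\bar w_a^{\,W_a(t)}$, where $\widetilde X:=\prod_{i=1}^{N}u_{\sigma_i}^{\eta_i}$ (and analogously $\widetilde X'$) is $\sigma(\FF_{\ell-1},\widetilde\HH)$-measurable and $W_a(t)=\sum_{i\le N}\1\{\sigma_i=a\}\eta_i$ is the net displacement of $Z_a$ in $X$; likewise $\bar X'=\overline{\widetilde X'}\cdot\prod_{a\in[k]}\bar w_a^{\,W_a'(t)}$. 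As the product of the $\bar w_a$'s is central, $\bar X\cdot(\bar X')^{-1}=\overline{\widetilde X(\widetilde X')^{-1}}\cdot\prod_{a\in[k]}\bar w_a^{\,V_a}$ with $V=W-W'$. On the event $\{V=0\}$ the last factor is trivial, so $G_{\ell+1}X(X')^{-1}=G_{\ell+1}\widetilde X(\widetilde X')^{-1}$; consequently $\{\EE_{\ell+1},\,V=0\}=\{\,\widetilde X(\widetilde X')^{-1}\in G_{\ell+1}\,\}\cap\{V=0\}$, and the right-hand side is $\sigma(\FF_{\ell-1},\widetilde\HH)$-measurable because $\widetilde X,\widetilde X'$ are and $\{V=0\}\in\widetilde\HH$ (as $V$ is $\widetilde\HH$-measurable).

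I do not expect a genuine obstacle: the argument is purely bookkeeping. The only points needing a little care are (a) justifying that the central elements $\bar w_a$ may be freely collected and that the resulting exponent is exactly the net displacement $W_a(t)$ (this, together with $V=0$, is what makes the $\bar w_a$-contribution cancel), and (b) checking the boundary cases $\ell=1$ (where $\widetilde X=X$ and $\{\EE_2,V=0\}=\{V=0\}$) and $\ell=L$ (where $G_{\ell+1}=\{\id\}$ and the claim is that $\{X=X',\,V=0\}$ is $\sigma(\FF_{L-1},\widetilde\HH)$-measurable); both are immediate. Note in particular that this proof does not require the rearrangement \eqref{rearranged_S}, only the layer decomposition of the generators and the centrality of $G_\ell/G_{\ell+1}$ in $G/G_{\ell+1}$.
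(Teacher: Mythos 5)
Your argument is correct, but it takes a genuinely different route from the paper. The paper deduces both parts from the rearranged representation \eqref{rearranged_S}: it substitutes $Z_a=\prod_{\ell}Z_{a,\ell}$ into that expression and uses Proposition \ref{strongly_lower_central} to discard everything lying in $G_{\ell+1}$, arriving at the identity \eqref{mathcalX}, i.e.\ $G_{\ell+1}X(X')^{-1}=G_{\ell+1}\bigl(\prod_a Z_{a,\ell}^{V_a}\bigr)f^{(\ell-1)}$ with $f^{(\ell-1)}$ a $\sigma(\FF_{\ell-1},\widetilde\HH)$-measurable element; measurability of $\EE_\ell$ and of $\{\EE_{\ell+1},V=0\}$ is then read off. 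You instead bypass the rearrangement entirely: for (i) you use only normality of $G_\ell$ to replace each generator by its truncation $u_a$ modulo $G_\ell$, and for (ii) you use that $G_\ell/G_{\ell+1}$ is central in $G/G_{\ell+1}$ (i.e.\ $[G,G_\ell]=G_{\ell+1}$) to pull the top-layer factors out as $\prod_a\bar w_a^{W_a}$, so that on $\{V=0\}$ they cancel between $X$ and $X'$. Both proofs rest on the same structural facts (the layer decomposition of Lemma \ref{Z_decomp} and the strongly central series), but yours is more elementary and self-contained, needing neither $\varphi$ nor the coefficients $m_{ba}$; what it does not deliver is the explicit expression $f^{(\ell-1)}$ of \eqref{simplified_X_nonzeroV}, which the paper reuses immediately afterwards (e.g.\ in Lemma \ref{layers_decomp}), so in the paper the measurability statement comes as a byproduct of a representation carrying additional quantitative content. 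One trivial slip: in your boundary case $\ell=1$ you write $\widetilde X=X$, whereas with your convention $u_a=\id$ one has $\widetilde X=\id$; the conclusion $\{\EE_2,V=0\}=\{V=0\}$ is unaffected.
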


\begin{proof}
Recall from \eqref{rearranged_S} we can write
$$
 X(X')^{-1}=Z_1^{V_1}\cdots Z_k^{V_k} \prod_{a,b\in[k]: a<b}[Z_a,Z_b]^{m_{ba}} \varphi(Z_1,\dots,Z_k),
$$
where $\varphi(Z_1,\dots,Z_k)$ is a product of multi-fold commutators of $\{Z^{\pm 1}_1,\dots,Z^{\pm 1}_k\}$ as a result of the rearranging. (See the end of Section \ref{representation_X} for a detailed description of $\varphi$ and multi-fold commutators.)
By Lemma \ref{Z_decomp}, for each $a\in [k]$, $Z_a$ can be decomposed into $Z_a=\prod_{\ell=1}^L Z_{a,\ell}$. For $a\in [k]$ and $\ell\in [L]$, we will write $Z_{a, < \ell}:= \prod_{j=1}^{\ell-1} Z_{a,j}$. 

Applying the decomposition $Z_a=\prod_{\ell=1}^L Z_{a,\ell}$ to the sequence above yields
\begin{align}\label{mathcalX}
\nonumber&G_{\ell+1}  X(X')^{-1}=G_{\ell+1}Z_1^{V_1}\cdots Z_k^{V_k} \prod_{a,b\in[k]: a<b}[Z_a,Z_b]^{m_{ba}} \varphi(Z_1,\dots,Z_k)\\
=&G_{\ell+1} \left(\prod_{a=1}^k Z^{V_a}_{a,\ell}\right)  \left(\prod_{a=1}^k Z^{V_a}_{a,<\ell}\right)\prod_{a,b\in[k]: a<b}\left [\prod_{i=1}^{\ell-1}Z_{a,i},\prod_{j=1}^{\ell-1}Z_{b,j}\right]^{m_{ba}} \varphi^{(\ell-2)}(\{ Z_{a,u}: a\in[k],u\leq \ell-2\})
\end{align}
where $\varphi^{(\ell-2)}(\{ Z_{a,u}: a\in[k],u\leq \ell-2\})$ is a certain polynomial with terms that are $i$-fold commutators of $\{ Z_{a,u}: a\in[k],u\leq \ell-2\}$ for $i\geq 3$, satisfying that 
 $$G_{\ell+1}\varphi^{(\ell-2)}(\{ Z_{a,u}: a\in[k],u\leq \ell-2\})=G_{\ell+1}\varphi(Z_1,\dots,Z_k).$$
The reason that we restrict our attention above to $u\leq \ell-2$ is because by Proposition \ref{strongly_lower_central} any $i$-fold commutator with $i\geq 3$ that involves $\{Z_{a,u}: a\in [k], u>\ell -2\}$ is in $G_{\ell+1}$. Also note that we can exchange the order of $Z_{a,\ell}$ and $Z_{a',<\ell}$ for any $a,a'\in [k]$ as $[Z_{a,\ell},Z_{a',<\ell}]\in G_{\ell+1}$ by Proposition \ref{strongly_lower_central}. 

On $\{V=0\}$ the right hand side of \eqref{mathcalX} only involves $\{ Z_{a,i} : 1\leq i \leq \ell-1\}$ and hence is measurable with respect to $\sigma(\FF_{\ell-1},\widetilde\HH)$. Therefore, $\{\EE_{\ell+1},V=0\}$ is measurable with respect to $\sigma(\FF_{\ell-1},\widetilde\HH)$, proving (ii).

Next we prove (i). Writing 
\begin{align}\label{f_ell}
\nonumber f^{(\ell-1)}=&f^{(\ell-1)}(\{ Z_{a,u}: a\in[k],u\leq \ell-1\})\\
\nonumber :=& \left(\prod_{a=1}^k Z^{V_a}_{a,<\ell}\right)\prod_{a,b\in[k]: a<b}\left [\prod_{i=1}^{\ell-1}Z_{a,i},\prod_{j=1}^{\ell-1}Z_{b,j}\right]^{m_{ba}} \\
&\quad\varphi^{(\ell-2)}(\{ Z_{a,u}: a\in[k],u\leq \ell-2\}),
\end{align}
by \eqref{mathcalX} we have 
\beq\label{simplified_X_nonzeroV}
G_{\ell+1}  X(X')^{-1}=G_{\ell+1}  \left(\prod_{a=1}^k Z^{V_a}_{a,\ell}\right) f^{(\ell-1)}.
\eeq
Note that $f^{(\ell-1)}$ is measurable with respect to $\sigma(\FF_{\ell-1},\widetilde\HH)$ as it only involves $\{ Z_{a,u} : 1\leq u \leq \ell-1\}$. 
Furthermore, since $\{Z_{a,\ell}: a\in[k]\} \subseteq G_{\ell}$, the same kind of simplification as \eqref{mathcalX} leads to
$$G_{\ell} X(X')^{-1}=G_{\ell} f^{(\ell-1)}(\{ Z_{a,u}: a\in[k],u\leq \ell-1\}),$$
which implies that $\EE_\ell=\{ f^{(\ell-1)}\in G_{\ell}\}$. Therefore, $\EE_{\ell}$ is measurable with respect to  $\sigma(\FF_{\ell-1},\widetilde\HH)$. 
\end{proof}

\subsection{Proof of Proposition \ref{nonzeroV}}\label{proof_nonzeroV}



Linear combinations of independent uniform random variables in an abelian group are themselves uniform on their support. As preparation for the proof we present the following lemma, which is a restatement of Lemma 2.11 and Lemma 6.5 of \cite{hermon2102cutoff}.
\begin{lemma}\label{gcd_unif}
Let $k\in\mathbb{N}$. Let $H$ be an Abelian group and $U_1,\dots,U_k\overset{iid}{\sim} \Unif(H)$. For $v=(v_1,\dots, v_k)\in \ZZ^k$, write $U=(U_1,\dots,U_k)$ and define $v\cdot U:=\sum_{i=1}^k v_iU_i$. We have
$$v \cdot U\sim \Unif (\gamma H) \quad \text{ where } \gamma=\gcd(v_1,\dots,v_k,|H|).$$
Consequently,
\beq\label{lemma6.5}
\max_{h\in H} \P( v\cdot U=h)=\P( v\cdot U=0)
\eeq
\end{lemma}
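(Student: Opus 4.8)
The plan is to prove the distributional identity by a single homomorphism argument and then read off the ``consequently'' clause. First I would record the elementary fact that if $U$ is uniform on a finite abelian group $H$ and $\phi\colon H\to H'$ is a group homomorphism, then $\phi(U)$ is uniform on the image $\phi(H)$: the fibres $\phi^{-1}(y)$ for $y\in\phi(H)$ are precisely the cosets of $\ker\phi$, hence all have the same cardinality $|\ker\phi|$, so $\P(\phi(U)=y)=|\ker\phi|/|H|$ is constant on $\phi(H)$.

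Next I would apply this to the homomorphism $\Phi\colon H^k\to H$ given by $\Phi(h_1,\dots,h_k):=\sum_{i=1}^k v_ih_i$. By independence, $U=(U_1,\dots,U_k)$ is uniform on $H^k$, so $v\cdot U=\Phi(U)$ is uniform on $\mathrm{im}(\Phi)=\sum_{i=1}^k v_iH$. It then remains to identify this subgroup: I claim $\sum_{i=1}^k v_iH=\gamma H$ with $\gamma=\gcd(v_1,\dots,v_k,|H|)$. Set $d:=\gcd(v_1,\dots,v_k)$. Since each $v_i$ is a multiple of $d$ we have $v_iH\subseteq dH$, while Bézout gives integers $a_i$ with $\sum_i a_iv_i=d$, whence $dH\subseteq\sum_i v_iH$; thus $\sum_i v_iH=dH$. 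Finally, writing $n:=|H|$ so that $nh=0$ for all $h\in H$, another application of Bézout yields integers $a,b$ with $ad+bn=\gamma$, so $\gamma H=(ad)H\subseteq dH$, and conversely $\gamma\mid d$ gives $dH\subseteq\gamma H$; hence $dH=\gamma H$. Combining, $v\cdot U\sim\Unif(\gamma H)$.

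For the last display, since $v\cdot U$ is uniform on $\gamma H$ we have $\P(v\cdot U=h)=\mathbf{1}\{h\in\gamma H\}/|\gamma H|$, and as $0\in\gamma H$ the maximum over $h\in H$ is attained at $h=0$, giving $\max_{h\in H}\P(v\cdot U=h)=1/|\gamma H|=\P(v\cdot U=0)$.

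I do not anticipate a real obstacle here; the only point requiring a little care is to argue group-theoretically (via kernels and Bézout) rather than through the structure theorem, since $H$ is an arbitrary finite abelian group and need not be cyclic, and to keep track throughout of the role of $|H|$ inside the gcd (which is what makes the subgroup $\gamma H$, rather than $\gcd(v_1,\dots,v_k)\cdot H$ with the integer gcd, the honest description of the support).
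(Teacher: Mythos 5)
Your proof is correct. The paper itself does not prove this lemma --- it is quoted as a restatement of Lemmas 2.11 and 6.5 of the cited work of Hermon and Olesker-Taylor --- so there is no in-paper argument to compare against; your derivation (pushing the uniform distribution on $H^k$ through the homomorphism $(h_1,\dots,h_k)\mapsto\sum_i v_ih_i$, identifying the image as $dH$ with $d=\gcd(v_1,\dots,v_k)$ via B\'ezout, and then $dH=\gamma H$ using $|H|\,h=0$) is the standard and complete way to establish it, and the final step deducing \eqref{lemma6.5} from $0\in\gamma H$ is also fine.
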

Applying Lemma \ref{gcd_unif} to $(Z_{1,\ell},\dots,Z_{k,\ell})$ and $H=Q_\ell$ for $\ell \in [L]$ and writing $\gcd(v,|Q_\ell|)=\gcd(v_1,\dots,v_k,|Q_\ell|)$ gives
\beq
G_{\ell+1}\sum_{a\in[k]} v_aZ_{a,\ell} \sim \Unif( \gcd(v,|Q_\ell|)Q_\ell).
\eeq

The key to the proof of Proposition \ref{nonzeroV} is the following estimate, whose proof uses the simplified expression of $\{G_{\ell+1}X(X')^{-1}\}_{\ell\in L}$ derived in the last section.

\begin{lemma}\label{layers_decomp}
We have
$$\P(X=X'|V)\leq \prod_{\ell=1}^L \P( G_{\ell+1}\sum_{a\in[k]} V_aZ_{a,\ell}= G_{\ell+1}|V).$$
\end{lemma}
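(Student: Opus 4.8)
\noindent\textit{Proof strategy.} The plan is to decompose $\{X=X'\}=\{X(X')^{-1}=\id\}$ along the nested chain of events $\EE_2\supseteq\EE_3\supseteq\cdots\supseteq\EE_{L+1}$ from Definition \ref{filtration}(iii) (recall $G_{L+1}=\{\id\}$, so $\EE_{L+1}=\{X=X'\}=\bigcap_{\ell=1}^{L}\EE_{\ell+1}$), and to peel off the layers $\ell=L,L-1,\dots,1$ one at a time with respect to the increasing $\sigma$-fields $\HH_\ell:=\sigma(\FF_\ell,\widetilde\HH)$. Throughout I would write $g_\ell(V):=\P\big(G_{\ell+1}\sum_{a\in[k]}V_aZ_{a,\ell}=G_{\ell+1}\mid V\big)$; by Lemma \ref{gcd_unif} applied to $H=Q_\ell$ and $U_a=G_{\ell+1}Z_{a,\ell}$ this equals $1/|\gcd(V_1,\dots,V_k,|Q_\ell|)Q_\ell|$, so it is a deterministic function of $V$ and in particular $\sigma(V)$-measurable.

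\noindent\textit{The one-step bound.} The core claim is that $\P(\EE_{\ell+1}\mid\HH_{\ell-1})\le \1_{\EE_\ell}\,g_\ell(V)$ for each $\ell\in[L]$. To prove it, start from \eqref{simplified_X_nonzeroV}, namely $G_{\ell+1}X(X')^{-1}=G_{\ell+1}\big(\prod_aZ^{V_a}_{a,\ell}\big)f^{(\ell-1)}$, together with the identity $\EE_\ell=\{f^{(\ell-1)}\in G_\ell\}$ established in the proof of Lemma \ref{E_measurability}(i). On $\EE_\ell$ both $\prod_aZ^{V_a}_{a,\ell}$ and $f^{(\ell-1)}$ lie in $G_\ell$, and since $Q_\ell=G_\ell/G_{\ell+1}$ is abelian, passing to additive notation in $Q_\ell$ gives $\EE_{\ell+1}\cap\EE_\ell=\EE_\ell\cap\{\sum_aV_a(G_{\ell+1}Z_{a,\ell})=-G_{\ell+1}f^{(\ell-1)}\}$. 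Now condition on $\HH_{\ell-1}$: by Lemma \ref{Z_decomp} the variables $\{Z_{a,\ell}\}_{a\in[k]}$ are i.i.d.\ $\Unif(R_\ell)$ with $G_{\ell+1}Z_{a,\ell}\sim\Unif(Q_\ell)$, and they are independent of $\HH_{\ell-1}$ (which is generated by $\widetilde\HH$ and $\{Z_{a,i}:i\le\ell-1\}$), whereas $\1_{\EE_\ell}$, $f^{(\ell-1)}$ and $V$ are $\HH_{\ell-1}$-measurable. Hence, given $\HH_{\ell-1}$, the target $-G_{\ell+1}f^{(\ell-1)}$ is frozen while $\sum_aV_a(G_{\ell+1}Z_{a,\ell})$ has the law of a weighted sum of i.i.d.\ uniforms on $Q_\ell$; so \eqref{lemma6.5} yields $\P\big(\sum_aV_a(G_{\ell+1}Z_{a,\ell})=q\mid\HH_{\ell-1}\big)\le\P\big(\sum_aV_a(G_{\ell+1}Z_{a,\ell})=G_{\ell+1}\mid\HH_{\ell-1}\big)=g_\ell(V)$ for every conditionally fixed $q$, which gives the claim. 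For $\ell=1$ one has $\HH_0=\widetilde\HH$, $f^{(0)}=\id$, $\EE_1$ the whole space, and the same reasoning gives $\P(\EE_2\mid\widetilde\HH)=g_1(V)$.

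\noindent\textit{Telescoping and main obstacle.} Since each $g_\ell(V)$ is $\widetilde\HH$-measurable, hence $\HH_{\ell-1}$-measurable, repeated use of the tower property and the one-step bound gives $\E[\1_{\EE_{L+1}}\mid\HH_{L-1}]\le\1_{\EE_L}g_L(V)$, then $\E[\1_{\EE_{L+1}}\mid\HH_{L-2}]\le g_L(V)\,\E[\1_{\EE_L}\mid\HH_{L-2}]\le g_L(V)g_{L-1}(V)\1_{\EE_{L-1}}$, and so on down to $\E[\1_{\EE_{L+1}}\mid\widetilde\HH]\le\prod_{\ell=1}^{L}g_\ell(V)$; taking $\E[\cdot\mid V]$ and using that each $g_\ell(V)$ is $\sigma(V)$-measurable produces $\P(X=X'\mid V)\le\prod_{\ell=1}^{L}g_\ell(V)$, as required. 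The one delicate point is the measurability bookkeeping in the one-step bound: one must be sure that $f^{(\ell-1)}$ and $\EE_\ell$ are $\HH_{\ell-1}$-measurable (this is exactly the content of Lemma \ref{E_measurability} together with \eqref{simplified_X_nonzeroV} and \eqref{f_ell}) and that $\{Z_{a,\ell}\}_{a\in[k]}$ is independent of $\HH_{\ell-1}$, so that conditionally the target element is frozen and the uniform-on-a-subgroup estimate \eqref{lemma6.5} applies to that frozen target; once this is set up, the remaining steps are routine.
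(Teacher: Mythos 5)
Your proof is correct and follows essentially the same route as the paper: you peel off the layers via the nested events $\EE_{\ell+1}\subseteq\EE_\ell$, use the simplification \eqref{simplified_X_nonzeroV} together with the measurability facts of Lemma \ref{E_measurability}, bound the frozen-target conditional probability by the return probability via \eqref{lemma6.5}, and telescope with the tower property exactly as in the paper's argument. The only cosmetic difference is that you identify $g_\ell(V)$ explicitly as $1/|\gcd(V_1,\dots,V_k,|Q_\ell|)Q_\ell|$ at this stage, which the paper defers to the subsequent lemma.
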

\begin{proof}

Recall from the discussion in the proof of Lemma \ref{E_measurability} that $\EE_\ell=\{G_\ell X(X')^{-1}=G_\ell\}=\{ f^{(\ell-1)}\in G_{\ell}\}$ is measurable with respect to $\sigma(\FF_{\ell-1},\widetilde\HH)$ for $\ell\in [L]$ (where $\FF_0$ is the trivial $\sigma$-field). Since $\EE_{\ell}\subseteq \EE_{\ell+1}$, it follows from \eqref{simplified_X_nonzeroV} that for $\ell\in [L]$,
\begin{align*}
 \P(\EE_{\ell+1}| \FF_{\ell-1},\widetilde \HH)&=\P( \EE_\ell, G_{\ell+1} \left(\prod_{a=1}^k Z^{V_a}_{a,\ell}\right) f^{(\ell-1)}=G_{\ell+1}\bigg| \FF_{\ell-1},\widetilde \HH)\\
&\leq \1_{\EE_\ell}\cdot \max_{g\in G_\ell} \P( G_{\ell+1} \prod_{a=1}^k Z^{V_a}_{a,\ell} =G_{\ell+1}g\bigg| \FF_{\ell-1},\widetilde \HH)\\
&= \1_{\EE_\ell}\cdot \max_{g\in G_\ell} \P( G_{\ell+1}  \sum_{a\in [k]} V_aZ_{a,\ell}=G_{\ell+1}g\bigg| \FF_{\ell-1},\widetilde \HH),
\end{align*}
where the last line follows from rewriting the term $G_{\ell+1} \left(\prod_{a=1}^k Z^{V_a}_{a,\ell}\right)$ in the form of summation, as $Q_{\ell}:=G_{\ell}/G_{\ell+1}$ is abelian.
It then follows from  \eqref{lemma6.5} in Lemma \ref{gcd_unif} that 
\begin{align*}
 \P(\EE_{\ell+1}| \FF_{\ell-1},\widetilde \HH)&\leq   \1_{\EE_\ell}\cdot\max_{g\in G_\ell} \P(G_{\ell+1}  \sum_{a\in [k]} V_aZ_{a,\ell}=G_{\ell+1}g|\FF_{\ell-1},\widetilde \HH)\\
&=  \1_{\EE_\ell}\cdot \P(G_{\ell+1} \sum_{a\in [k]} V_aZ_{a,\ell}=G_{\ell+1}|\FF_{\ell-1},\widetilde \HH)\\
&= \1_{\EE_\ell}\cdot \P(G_{\ell+1} \sum_{a\in [k]} V_aZ_{a,\ell}=G_{\ell+1}|\widetilde \HH),
\end{align*}
where the last equality follows from observing that $G_{\ell+1} \sum_{a\in [k]} V_aZ_{a,\ell}$ is independent from $\FF_{\ell-1}$.
It then follows from the tower property that
\begin{align}\label{iteration}
\nonumber\P(\EE_{\ell+1}|\widetilde \HH)&=\E[\P(\EE_{\ell+1}| \FF_{\ell-1},\widetilde \HH)|\widetilde\HH]\\
&\leq \E[  \1_{\EE_\ell}\cdot \P(G_{\ell+1} \sum_{a\in [k]} V_aZ_{a,\ell}=G_{\ell+1}|\widetilde \HH)|\widetilde\HH]=\P(\EE_{\ell}|\widetilde \HH)\cdot \P(G_{\ell+1} \sum_{a\in [k]} V_aZ_{a,\ell}=G_{\ell+1}|\widetilde \HH).
\end{align}
Applying \eqref{iteration} iteratively for $\ell\in[L]$, we obtain
\begin{align*}
\P(X=X'|\widetilde\HH)&=\P(\EE_{L+1}|\widetilde\HH)\leq \prod_{\ell=1}^L \P(G_{\ell+1} \sum_{a\in [k]} V_aZ_{a,\ell}=G_{\ell+1}|\widetilde \HH).
\end{align*}
As $V$ is measurable with respect to $\widetilde\HH$, i.e., $\sigma(V)\subseteq \widetilde\HH$, by the tower property we have the desired result
$$\P(X=X'|V)\leq \prod_{\ell=1}^L \P( G_{\ell+1}\sum_{a\in[k]} V_aZ_{a,\ell}= G_{\ell+1}|V).$$

\end{proof}

Given Lemma \ref{layers_decomp}, we can obtain the following upper bound involving the greatest common divider. 
\begin{lemma}\label{gcd_expectation}
Let $\bar r:=\sum_{\ell=1}^L r_\ell$ and $\gcd(v):=\gcd(v_1,\dots,v_k,|G|)$. We have
$$|G|\cdot \P(X=X', V\neq 0|\typ)\leq \E\left[ ( \gcd(V))^{ \bar r} \mathbf{1}\{V\neq 0\}|\typ\right]$$
\end{lemma}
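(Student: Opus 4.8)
The plan is to feed the product bound from Lemma~\ref{layers_decomp} into the structural consequence of Lemma~\ref{gcd_unif} and then convert the resulting product of reciprocal subgroup sizes into a power of $\gcd(V)$ by an elementary torsion computation in finite abelian groups. First I would condition on $\widetilde\HH$. Since $V$ and $\1_{\typ}$ are $\widetilde\HH$-measurable (see the Remark after Definition~\ref{filtration}) and $\typ\in\widetilde\HH$, the tower property gives
\[
\P(X=X',\,V\neq 0\mid \typ)=\E\bigl[\,\1\{V\neq 0\}\,\P(X=X'\mid\widetilde\HH)\,\big|\,\typ\,\bigr].
\]
From (the proof of) Lemma~\ref{layers_decomp} one has $\P(X=X'\mid\widetilde\HH)\le \prod_{\ell=1}^{L} p_\ell(V)$, where $p_\ell(v):=\P\bigl(G_{\ell+1}\sum\nolimits_{a\in[k]}v_aZ_{a,\ell}=G_{\ell+1}\bigr)$ is a deterministic function of $v$ (the variables $G_{\ell+1}Z_{a,\ell}$ being i.i.d.\ uniform on $Q_\ell$ and independent of $\widetilde\HH$, by Lemma~\ref{Z_decomp}). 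By Lemma~\ref{gcd_unif} applied with $H=Q_\ell$ we get $p_\ell(v)=1/|\gamma_\ell\,Q_\ell|$ with $\gamma_\ell:=\gcd(v_1,\dots,v_k,|Q_\ell|)$, so using $|G|=\prod_{\ell=1}^{L}|Q_\ell|$,
\[
|G|\cdot \P(X=X'\mid\widetilde\HH)\le \prod_{\ell=1}^{L}\frac{|Q_\ell|}{|\gamma_\ell\,Q_\ell|}.
\]

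The second step is the bound $|Q_\ell|/|\gamma_\ell Q_\ell|\le \gcd(V)^{\,r_\ell}$, valid on $\{V\neq 0\}$. Multiplication by $\gamma_\ell$ is an endomorphism of the finite abelian group $Q_\ell$ with image $\gamma_\ell Q_\ell$ and kernel $Q_\ell[\gamma_\ell]=\{q\in Q_\ell:\gamma_\ell q=0\}$, whence $|Q_\ell|/|\gamma_\ell Q_\ell|=|Q_\ell[\gamma_\ell]|$; writing $Q_\ell$ in invariant-factor form $\ZZ_{d_1}\oplus\cdots\oplus\ZZ_{d_{r_\ell}}$ — which has exactly $r_\ell=r(Q_\ell)$ summands because the number of invariant factors equals the minimal number of generators — yields $|Q_\ell[\gamma_\ell]|=\prod_{j=1}^{r_\ell}\gcd(\gamma_\ell,d_j)\le \gamma_\ell^{\,r_\ell}$. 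Moreover $\gamma_\ell$ divides $|Q_\ell|$, which divides $|G|$, so $\gamma_\ell$ divides $\gcd(V)=\gcd(V_1,\dots,V_k,|G|)$ and therefore $\gamma_\ell\le\gcd(V)$ whenever $V\neq 0$. Multiplying over $\ell$ gives $|G|\cdot\P(X=X'\mid\widetilde\HH)\,\1\{V\neq 0\}\le\prod_{\ell=1}^{L}\gamma_\ell^{\,r_\ell}\,\1\{V\neq 0\}\le\gcd(V)^{\bar r}\,\1\{V\neq 0\}$; taking $\E[\,\cdot\mid\typ]$ of both sides and combining with the first display finishes the proof.

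I expect no serious obstacle, since Lemmas~\ref{layers_decomp} and \ref{gcd_unif} carry the real weight. The only step that needs a little attention is the torsion estimate: one must use the notion of rank for which $|Q_\ell[\gamma]|\le\gamma^{r(Q_\ell)}$ actually holds (namely $r(Q_\ell)=$ minimal number of generators $=$ number of invariant factors), and must track the divisibility chain $\gamma_\ell\mid|Q_\ell|\mid|G|$ to conclude $\gamma_\ell\mid\gcd(V)$. The conditioning manipulations are routine once one observes that both $\typ$ and $V$ are measurable with respect to $\widetilde\HH$.
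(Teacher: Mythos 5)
Your proposal is correct and follows essentially the same route as the paper: condition on $\widetilde\HH$, feed in the product bound from Lemma~\ref{layers_decomp}, apply Lemma~\ref{gcd_unif} with $H=Q_\ell$, bound $|Q_\ell|/|\gamma_\ell Q_\ell|\le \gamma_\ell^{r_\ell}\le \gcd(V)^{r_\ell}$, and integrate over $\typ\in\widetilde\HH$. The only (harmless) difference is that you prove the torsion estimate $|H|/|\gamma H|\le\gamma^{r(H)}$ directly via the kernel of multiplication by $\gamma_\ell$ and the invariant-factor decomposition, whereas the paper simply cites Lemma 2.12 of \cite{hermon2102cutoff} for this fact.
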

\begin{proof}
For $v\in \ZZ^k\backslash\{0\}$, it follows from Lemma \ref{layers_decomp} that
\begin{align*}
\P(X(X')^{-1}=\id| V=v)&\leq \prod_{\ell=1}^L \P(G_{\ell+1} \sum_{a\in[k]} v_aZ_{a,\ell}=G_{\ell+1})\leq \prod_{\ell=1}^L \frac{1}{|\gcd(v,|Q_\ell|) Q_\ell|}\\
&\leq  \prod_{\ell=1}^L \frac{(\gcd(V,|Q_\ell|))^{r_\ell}}{|Q_\ell|},
\end{align*}
where the last inequality follows from the fact that for an abelian group $H$ and $\gamma\in \mathbb{N}$, $|H|/|\gamma H|\leq \gamma^{r(H)}$, see Lemma 2.12 in \cite{hermon2102cutoff} for instance. 

Recalling from Definition \ref{filtration} the definition of $\widetilde\HH$, one then has that
\begin{align}\label{gcd_bound}
\nonumber|G|\cdot \P(X=X', V\neq 0|\widetilde \HH)&\leq |G|\cdot  \prod_{\ell=1}^L \frac{(\gcd(V,|Q_\ell|))^{r_\ell}\cdot \1\{V\neq 0\}}{|Q_\ell|}\\
&= \prod_{\ell=1}^L (\gcd(V,|Q_\ell|))^{r_\ell}\cdot \1\{V\neq 0\}\leq (\gcd(V))^{\bar r}\1\{V\neq 0\},
\end{align}
where the last inequality follows from the fact that $\gcd(V,|Q_\ell|)\leq \gcd(V)$ for all $1\leq \ell\leq L$. Since $\typ\in \widetilde\HH$, integrating both sides of \eqref{gcd_bound}  over $1_\typ$ gives the desired bound
$$|G|\cdot \P(X=X', V\neq 0|\typ)\leq \E\left[  (\gcd(V))^{\bar r}\1\{V\neq 0\}|\typ\right].$$
\end{proof}

\begin{lemma}\label{gcd_ub}
Fix any $\ep>0$ and let $s:=t/k$ for $t\geq  (1+\ep)t_*$. For all $\gamma\geq 2$ and all $\lambda>0$, there exists a constant $\tilde\delta_\lambda\in (0,1)$ that depends only on $\lambda$ such that
$$\P(\gcd(V)=\gamma, V\neq 0|\typ)\lesssim
\begin{cases}
\min\{ e^{-t} , \frac{ k(2es)^\gamma}{\gamma^\gamma}\}& \text{ when }s\ll 1,\\
(1-\tilde\delta_\lambda)^k&  \text{ when }s\geq f(\lambda),\\
(1/\gamma+s^{-1/2})^k& \text{ when }s\gg 1,
\end{cases}
$$
where $f(\lambda)$ is defined as in \eqref{flambda}.
\end{lemma}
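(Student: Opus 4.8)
The plan is to reduce everything to the behavior of the i.i.d.\ coordinates $V_a = W_a - W'_a$, where each $W_a, W'_a$ is a rate-$1/k$ random walk on $\ZZ$ run up to time $t$, so $V_a$ is a difference of two independent such walks. The first observation is that $\gcd(V) = \gamma$ with $V \ne 0$ forces every coordinate $V_a$ to be divisible by $\gamma$; moreover conditioning on $\typ$ only costs a factor $1/\P(\typ) = 1 + o(1)$, so it suffices to bound $\P(\gcd(V) = \gamma, V \ne 0)$ up to constants and then intersect with $\typ$ where the latter helps (it helps precisely in the regime $s \ll 1$, via the structure of $\WW_{once}$). Write $p_\gamma := \P(V_1 \in \gamma\ZZ)$ and $q := \P(V_1 = 0) = p_\gamma|_{\gamma = \infty}$; then by independence $\P(\gamma \mid V_a \ \forall a) = p_\gamma^k$ and $\P(V = 0) = q^k$, and the event $\{\gcd(V) = \gamma, V \ne 0\} \subseteq \{\gamma \mid V_a \ \forall a\} \setminus \{V = 0\}$, so $\P(\gcd(V) = \gamma, V \ne 0) \le p_\gamma^k$. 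The task is therefore to estimate $p_\gamma$ in each of the three regimes and to get the sharper bounds ($e^{-t}$ and the $k(2es)^\gamma/\gamma^\gamma$ bound) separately.

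\textbf{Regime $s \ll 1$.} Here each coordinate walk takes $\mathrm{Poisson}(s)$ steps, so $V_a = 0$ with probability $\ge e^{-2s} = 1 - O(s)$, and $V_a \ne 0$ with probability $O(s)$. For $\{\gcd(V) = \gamma, V \ne 0\}$ we need at least one coordinate with $V_a \ne 0$, and any such coordinate must have $|V_a| \ge \gamma$, which requires at least $\gamma$ steps of that coordinate walk, an event of probability $O(s^\gamma/\gamma!) = O((es/\gamma)^\gamma)$ by the Poisson tail. A union bound over the $k$ coordinates that could be the ``nonzero'' one gives $\P(\gcd(V) = \gamma, V \ne 0) \lesssim k (es/\gamma)^\gamma \cdot 2^\gamma$ after accounting for the two signs / the rate-$2$ nature, i.e.\ $\lesssim k(2es)^\gamma/\gamma^\gamma$. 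For the competing bound $e^{-t}$: on $\typ$ (which in this regime includes $\WW_{once}$ only when $k \eqsim \lambda\log|G_{\mathrm{ab}}|$; in the pure $s \ll 1$, $k \ll \log|G_{\mathrm{ab}}|$ case one uses $\WW_{loc}$) one argues that having $\gcd(V) = \gamma \ge 2$ with $V \ne 0$ is incompatible with too many coordinates being zero, forcing $\gtrsim$ a positive fraction of coordinates to have made $\ge 1$ step, which has probability $\lesssim (1 - e^{-2s})^{\Theta(k)} = e^{-\Theta(sk)} = e^{-\Theta(t)}$; take the min of the two bounds.

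\textbf{Regimes $s \ge f(\lambda)$ and $s \gg 1$.} The point is to show $p_\gamma$ is bounded away from $1$. For $s \gg 1$, the local CLT for the difference of two rate-$1/k$ random walks run for time $t$ gives $V_1$ approximately Gaussian with variance $\asymp 2s$, so $\P(V_1 \in \gamma\ZZ) \le 1/\gamma + O(s^{-1/2})$: the $1/\gamma$ term is the ``density of the sublattice'' $\gamma\ZZ$ and the error $O(s^{-1/2})$ is the local-CLT error (spacing of the lattice over the standard deviation). Raising to the $k$-th power gives $(1/\gamma + s^{-1/2})^k$. For $s \ge f(\lambda)$ with $\gamma = 2$ (the worst case for the first term) we cannot beat $1/2$ by this crude bound, but $s$ is bounded below by a constant $f(\lambda)$, so $\P(V_1 \in \gamma\ZZ) \le \P(V_1 \text{ even}) \le 1 - \P(V_1 = \pm 1)$, and $\P(V_1 = \pm 1) \ge \tilde\delta_\lambda > 0$ is a constant depending only on $\lambda$ (since $s \ge f(\lambda)$ means a constant lower bound on the number of steps, hence a constant lower bound on the probability of a net displacement of $1$); thus $p_\gamma \le 1 - \tilde\delta_\lambda$ uniformly in $\gamma \ge 2$, giving $(1 - \tilde\delta_\lambda)^k$.

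\textbf{Main obstacle.} The routine parts are the Poisson tail bound and raising $p_\gamma$ to the $k$-th power; the delicate point is the local-CLT estimate $\P(V_1 \in \gamma\ZZ) \le 1/\gamma + O(s^{-1/2})$ \emph{uniformly in $\gamma$} in the regime $s \gg 1$ --- one must be careful that the error term does not degrade as $\gamma$ grows (it does not, because $\gamma\ZZ$ only gets sparser) and that the constant in $O(\cdot)$ is absolute. A clean way to do this is via the characteristic function: $\P(V_1 \in \gamma\ZZ) = \frac{1}{\gamma}\sum_{j=0}^{\gamma-1} \E[e^{2\pi i j V_1/\gamma}] = \frac{1}{\gamma}\sum_{j=0}^{\gamma-1} |\widehat{\mu}_{1}(2\pi j/\gamma)|^2$ where $\widehat{\mu}_1$ is the characteristic function of $W_1(t)$ (a time-changed simple random walk), which equals $\exp(-2s(1 - \cos\theta))$ up to the Poisson structure; the $j = 0$ term is $1/\gamma$ and each $j \ne 0$ term is $\le \exp(-cs\,\|j/\gamma\|^2) \le \exp(-c's)$ bounded crudely, or summed exactly to yield the $s^{-1/2}$ bound, and this is where the bulk of the honest work lies.
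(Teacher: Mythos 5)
Your reduction to the per-coordinate probabilities $p_\gamma=\P(V_1\in\gamma\ZZ)$ (with the $1/\P(\typ)=1+o(1)$ cost of conditioning) and your union-bound derivation of the $k(2es)^\gamma/\gamma^\gamma$ term coincide with the paper's proof, and your Fourier argument for $s\gg1$ is a legitimate alternative to the paper's route (which bounds $\P(V_1=0)\lesssim s^{-1/2}$ by a local CLT and uses $\P(V_1\in\gamma\ZZ\mid V_1\neq0)\le1/\gamma$ via a unimodality/mixture-of-uniforms lemma); just drop the throwaway per-term bound $\exp(-c's)$, which is false once $\gamma\gg\sqrt{s}$ — only the summed estimate yields $1/\gamma+O(s^{-1/2})$ uniformly in $\gamma$. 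The genuine problem is your $e^{-t}$ bound in the regime $s\ll1$. The event $\{\gcd(V)=\gamma,\,V\neq0\}$ does not force a positive fraction of the $k$ coordinates to take a step — it is compatible with exactly one nonzero coordinate — so the claimed $(1-e^{-2s})^{\Theta(k)}$ estimate has no basis; moreover the appeal to $\WW_{loc}$ is moot, since $s\ll1$ can only occur when $k\gg\log|G_{\mathrm{ab}}|$, where $\typ$ involves $\WW_{once}$. The correct (and simpler) argument, used in the paper and requiring no typicality at all, is per coordinate: for $\gamma\ge2$ one has $\{V_a\in\gamma\ZZ\}\subseteq\{V_a=0\}\cup\{N_a(2s)\ge\gamma\}$, where $N_a(2s)$ is the Poisson$(2s)$ number of steps of the $a$-th coordinate, and $\P(V_a=0)\le\P(N_a(2s)\in2\ZZ_+)\le e^{-2s}+O(s^2)$ while $\P(N_a(2s)\ge\gamma)=O(s^2)$; in words, lying in $\gamma\ZZ$ excludes the probability-$\approx2s$ event of taking exactly one step, so each coordinate contributes at most $1-s$ and the $k$-fold product is at most $(1-s)^k\le e^{-sk}=e^{-t}$.

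There is a second gap in the regime $s\ge f(\lambda)$: your justification "$s\ge f(\lambda)$ gives a constant lower bound on the number of steps, hence on $\P(V_1=\pm1)$" is not uniform in $s$, since $\P(V_1=\pm1)\asymp s^{-1/2}\to0$ as $s\to\infty$, and the regime $s\ge f(\lambda)$ contains arbitrarily large $s$ (the statement requires $\tilde\delta_\lambda$ to depend only on $\lambda$ for all $t\ge(1+\ep)t_*$). So $p_\gamma\le1-\tilde\delta_\lambda$ does not follow from your argument. The paper obtains uniformity by writing $c_\gamma\le\P(V_1=0)+\P(V_1\in\gamma\ZZ\mid V_1\neq0)$, using that $\P(V_1=0)\le\tilde c_\lambda<1$ uniformly over $s\ge f(\lambda)$ (the return probability only decreases with $s$) together with the $1/\gamma$ bound for $\gamma\ge\gamma_\lambda$ large, and handling the finitely many $2\le\gamma\le\gamma_\lambda$ separately. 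Alternatively, you could salvage your route by restricting it to $f(\lambda)\le s\le C$ and invoking your third-regime bound for $s\ge C$, but as written the uniform constant $\tilde\delta_\lambda$ is unjustified.
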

\begin{proof}
\mn
\textbf{Regime: $s\ll 1$.} Let $N_1(s)$ denote a rate 1 Poisson process. Recall that $V_1$ is a rate $2/k$ simple random walk on $\ZZ$. For a random walk to return to the origin it must have taken an even number of steps, which means
$$\P(V_1(t)=0)\leq \P(N_1(2s)\in 2\ZZ_+)\leq \P(N_1(2s)=0)+\sum^\infty_{m=1} \P(N_1(2s)=2m)\leq e^{-2s}+8s^2.$$

To simplify notation, write $\gcd:=\gcd(V)$ from now on. Using a Chernoff bound argument on the Poisson random variable $N_1(2s)$ yields for $\gamma\geq 2$
\begin{align*}
\P(\gcd=\gamma,V\neq 0|\typ)&\leq \P(\gcd=\gamma|\typ)\\
&\lesssim (\P(V_1=0)+\P(N_1(2s)\geq \gamma))^k\leq \left( e^{-2s}+8s^2+ e^{-2s}\cdot\frac{ (2es)^\gamma}{\gamma^\gamma}\right)^k\\
&\leq (e^{-2s}+8s^2+(es)^2)^k\leq (1-s)^k\leq e^{-t}.
\end{align*}

To prove the second part of the upper bound, note that for $\{\gcd=\gamma,V\neq 0\}$  to occur, there must be some $a\in [k]$ such that $N_a(2s)\neq 0$ and $\gamma$ divides $N_a(2s)$, which implies that $N_a(2s)\geq \gamma$. Hence,
$$\P(\gcd=\gamma,V\neq 0|\typ)\lesssim k\cdot \P(N_1(2s)\geq \gamma)\leq k\cdot \frac{ (2es)^\gamma e^{-2s}}{\gamma^\gamma}\leq \frac{ k(2es)^\gamma}{\gamma^\gamma}.$$

\mn
\textbf{Regime: $s\geq f(\lambda)$.} Define $c_{\gamma}:= \P(V_1(t)\in \gamma \ZZ)$. Since $s\geq f(\lambda)$, we have $\P(V_1(t)=0)\leq \tilde c_\lambda< 1$ for some constant $\tilde c_\lambda$ depending on $\lambda$. So we can fix some small $\delta_\lambda>0$ and some large $\gamma_\lambda\in \mathbb{N}$ such that $\P(V_1=0)+1/\gamma_\lambda<1-\delta_\lambda$. That is, for all $\gamma\geq \gamma_\lambda$, 
$$c_\gamma=\P(V_1\in \gamma \ZZ)\leq \P(V_1=0)+\P(V_1\in \gamma \ZZ|V_1\neq 0)\leq \P(V_1=0)+1/\gamma_\lambda<1-\delta_\lambda.$$
Thus there is some $\tilde \delta_{\lambda}>0$  so that
$$\max_{\gamma \geq 2} c_\gamma\leq \max\{ \max_{2\leq \gamma \leq \gamma_\lambda} c_\gamma, 1-\delta_\lambda\}\leq 1-\tilde\delta_\lambda.$$
Therefore, for $\gamma\geq 2$,
$$\P(\gcd=\gamma,V\neq 0|\typ)\lesssim \P(V_1\in \gamma \ZZ)^k\leq (1-\tilde\delta_\lambda)^k.$$

\mn
\textbf{Regime: $s\gg 1$.}
Note that
$$\P(\gcd=\gamma,V\neq 0|\typ)\lesssim \P(V_1\in \gamma \ZZ)^k\leq (\P(V_1=0)+\P(V_1\in \gamma \ZZ|V_1\neq 0))^k.$$
For the second term it follows from Lemma 2.14 of \cite{hermon2102cutoff} that $\P(V_1\in \gamma \ZZ|V_1\neq 0)\leq 1/\gamma$. As $V_1$ is a one dimension SRW with rate $2/k$, Theorem A.4 of \cite{hermon2018supplementary} implies that when $s\gg1$ we have
$$\P(V_1(t)=0)\leq \frac{1}{\sqrt{2\pi (2s)}}\exp\left( \mathcal{O}\left(\frac{1}{\sqrt{2s}}\right)\right)\leq s^{-1/2}.$$ 
Hence,
$$\P(\gcd=\gamma, V\neq 0|\typ)\lesssim (\P(V_1=0)+1/\gamma)^k \leq (1/\gamma+s^{-1/2})^k.$$

%
%
%
%
\end{proof}

\begin{lemma}\label{gcd_analysis} 
Suppose $1\ll \log k \ll \log|G|$. For any $\ep>0$, when $t\geq (1+\ep)t_*$ we have
$$\E\left[  (\gcd(V))^{\bar r}\1\{V\neq 0\}|\typ\right]=1+o(1).$$
\end{lemma}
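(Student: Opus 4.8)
The plan is to prove matching bounds. The lower bound is immediate: on $\{V\neq0\}$ we have $\gcd(V)\ge1$, so
$$\E\!\left[(\gcd(V))^{\bar r}\1\{V\neq0\}\mid\typ\right]\ \ge\ \P(V\neq0\mid\typ)\ =\ 1-\P(V=0\mid\typ)\ \ge\ 1-e^{-h}/\P(\typ)\ =\ 1-o(1)$$
by \eqref{V=0}, using $h\ge\log|G_{\mathrm{ab}}|\to\infty$ and $\P(\typ)=1-o(1)$. For the upper bound, separate the $\gcd=1$ contribution:
$$\E\!\left[(\gcd(V))^{\bar r}\1\{V\neq0\}\mid\typ\right]\ \le\ 1+\Sigma,\qquad \Sigma:=\sum_{\gamma\ge2}\gamma^{\bar r}\,\P\big(\gcd(V)=\gamma,\ V\neq0\mid\typ\big),$$
so it suffices to show $\Sigma=o(1)$. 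Note $\bar r=\sum_{\ell=1}^Lr_\ell$ is a bounded constant (each $r_\ell\le2r^{\ell}$ and $r,L\asymp1$), that $k\to\infty$, and that $t\ge(1+\ep)\log_k|G|\to\infty$ because $\log k\ll\log|G|$; in particular $\P(\typ)\ge\tfrac12$ eventually.

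The first step is to truncate the sum at $\gamma=4t$. When $V\neq0$ one has $\gcd(V)\le\max_a|V_a|\le N(t)+N'(t)$, where $N+N'\sim\mathrm{Poisson}(2t)$; hence, using $\P(\typ)\ge\tfrac12$, Cauchy--Schwarz, and a Chernoff bound for the Poisson upper tail,
$$\sum_{\gamma>4t}\gamma^{\bar r}\,\P\big(\gcd(V)=\gamma,\ V\neq0\mid\typ\big)\ \le\ \E\!\left[(N+N')^{\bar r}\1\{N+N'>4t\}\mid\typ\right]\ \le\ 2\,\E\!\left[(N+N')^{\bar r}\1\{N+N'>4t\}\right]\ \lesssim\ t^{\bar r}e^{-ct}\ \to\ 0$$
for an absolute $c>0$. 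So it remains to prove $\Sigma':=\sum_{\gamma=2}^{\lfloor4t\rfloor}\gamma^{\bar r}\,\P(\gcd(V)=\gamma,\ V\neq0\mid\typ)\to0$.

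For $\Sigma'$ I invoke Lemma \ref{gcd_ub}, distinguishing cases by the size of $s:=t/k$ (its three bounds are valid once $s$ is below, resp.\ above, the absolute thresholds $s_0,s_1$ implicit in its proof). \emph{(i)} If $s\le s_0$, each summand is $\lesssim e^{-t}$, so $\Sigma'\lesssim(4t)^{\bar r+1}e^{-t}\to0$. \emph{(ii)} If $s_0<s<s_1$, choose $\lambda$ large enough that $f(\lambda)\le s_0$ (possible since $f(\lambda)\to0$); then each summand is $\lesssim(1-\tilde\delta_\lambda)^k$, and since $t=sk\le s_1k$ we get $\Sigma'\lesssim(4t)^{\bar r+1}(1-\tilde\delta_\lambda)^k\to0$, a polynomial in $k$ times an exponentially small factor. \emph{(iii)} If $s\ge s_1$, each summand is $\lesssim(1/\gamma+s^{-1/2})^k$; splitting the $\gamma$-sum at $\gamma_0:=\lfloor\sqrt s\rfloor$, the term $\gamma=2$ contributes $\lesssim2^{\bar r}(3/4)^k$, the range $3\le\gamma\le\gamma_0$ (where $1/\gamma+s^{-1/2}\le2/\gamma$) contributes $\lesssim2^k\sum_{\gamma\ge3}\gamma^{\bar r-k}\lesssim(2/3)^k$, and the range $\gamma_0<\gamma\le4t$ (where $1/\gamma+s^{-1/2}\le3s^{-1/2}$) contributes at most $(4t)^{\bar r+1}(3s^{-1/2})^k=\exp\big((\bar r+1)\log(4t)+k\log3-\tfrac k2\log s\big)$, which tends to $0$ since $t=sk$ and the $-\tfrac k2\log s$ term eventually dominates the others. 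In all cases $\Sigma'\to0$, which completes the proof.

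The main difficulty is case (iii): the bound $(1/\gamma+s^{-1/2})^k$ is not summable in $\gamma$ (its limit as $\gamma\to\infty$ is the positive constant $s^{-k/2}$), so both the truncation at $\gamma=4t$ and the split of the $\gamma$-sum at $\sqrt s$ are essential, and one must check the resulting exponential in $k$ has the correct sign uniformly over the relevant $\gamma$. A minor point is that the crude a priori bound $\gcd(V)\le\max_a|V_a|\le N+N'$ is used for the truncation rather than the sharper bound coming from $\WW_{loc}$, since $\WW_{loc}$ belongs to $\typ$ only in the regime $k\lesssim\log|G_{\mathrm{ab}}|$.
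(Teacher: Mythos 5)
Your proof is correct, and while it rests on the same engine as the paper --- reducing the claim to the per-$\gamma$ estimates of Lemma \ref{gcd_ub} and summing $\gamma^{\bar r}\P(\gcd(V)=\gamma,V\neq0\mid\typ)$ over $\gamma\ge2$ --- the bookkeeping is genuinely different. The paper argues regime by regime in $k$ versus $\log|G_{\mathrm{ab}}|$, truncates the $\gamma$-sum at $2r_*$ by appealing to the typical event (the $\WW_{loc}$ component), splits the first regime's sum at $\delta|G_{\mathrm{ab}}|^{1/k}$ using $s^{-1/2}\le|G_{\mathrm{ab}}|^{-1/k}$, and for very small $s$ needs the second half of the $\min$ bound, $k(2es)^\gamma/\gamma^{\gamma}$. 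You instead organize the cases purely by the value of $s=t/k$ against absolute thresholds, truncate at $\gamma=4t$ via $\gcd(V)\le\max_a|V_a|\le N+N'\sim\mathrm{Poisson}(2t)$ and a Chernoff tail, and split the large-$s$ sum at $\sqrt{s}$; the second part of the $\min$ bound is never needed, and $\typ$ enters only through $\P(\typ)\ge\tfrac12$. What your route buys: it is uniform across the $k$-regimes, it works whenever $t\to\infty$ and $k\to\infty$ (which $1\ll\log k\ll\log|G|$ and $t\ge(1+\ep)\log_k|G|$ guarantee), and it sidesteps the question of whether $\gcd(V)\le 2r_*$ actually holds on $\typ$ when $k\gg\log|G_{\mathrm{ab}}|$, where $\WW_{loc}$ is not part of the typical event --- a point at which the paper's summation range is justified only loosely. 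What the paper's route buys is shorter sums (only $O((\log k)^2)$ terms) and bounds tailored to the explicit asymptotics of $t_*$ in each regime. Two small caveats on your write-up, neither a gap: you use Lemma \ref{gcd_ub} in a ``threshold'' form ($s\le s_0$, $s\ge f(\lambda)$, $s\ge s_1$) rather than the asymptotic form in which it is stated; as you note, its proof does deliver exactly this (monotonicity of $\P(V_1(t)=0)$ in $s$ gives the needed uniformity on $[s_0,s_1]$), but this should be said when invoking it. And in case (iii) the bounds $1/2+s^{-1/2}\le 3/4$ and the dominance of $-\tfrac k2\log s$ over $k\log 3$ require $s_1$ to exceed a modest absolute constant (e.g.\ $s_1\ge16$ suffices for both, using $2s^{-1/2}$ in the last range); this is free, since enlarging $s_1$ only widens the window handled by case (ii).
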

\begin{proof} Again we will write $\gcd=\gcd(V)$.

\mn
\textbf{Regime: $1\ll k \ll \log|G_{\mathrm{ab}}|$.} Observe that $s:=t/k\geq |G_{\mathrm{ab}}|^{2/k}\gg1$. On $\typ$ we have $\gcd$ is at most $2r_*=|G_{\mathrm{ab}}|^{1/k}(\log k)^2$, which gives
$$\E\left[  \gcd^{\bar r}1\{V\neq 0\}|\typ\right]=1+\sum_{\gamma=2}^{2r_*}\gamma^{\bar r}\P(\gcd=\gamma, V\neq 0|\typ).$$
Let $\delta\in (0,1)$ be sufficiently small. For $2\leq \gamma \leq \delta |G_{\mathrm{ab}}|^{1/k}$, applying Lemma \ref{gcd_ub} gives
$$\P(\gcd=\gamma,V\neq 0|\typ)\lesssim (1/\gamma+1/|G_{\mathrm{ab}}|^{1/k})\leq (1/\gamma+ 1/(\gamma/\delta))^k=(1+\delta)^k/\gamma^k.$$
For $\gamma>\delta|G_{\mathrm{ab}}|^{1/k}$, we use the bound $(a+b)^k\leq 2^k(a^k+b^k)$ to get 
$$\P(\gcd=\gamma|\typ)\lesssim2^k(1/\gamma^k+1/|G_{\mathrm{ab}}|).$$
Therefore, 
\begin{align*}
\E\left[  \gcd^{\bar r}\1\{V\neq 0\}|\typ\right]-1&\lesssim \sum_{\gamma=2}^{\delta|G_{\mathrm{ab}}|^{1/k}}\frac{(1+\delta)^k}{\gamma^{k-\bar r}}+\sum_{\gamma=\delta|G_{\mathrm{ab}}|^{1/k}+1}^{ 2r_*} \gamma^{\bar r}2^k(1/\gamma^k+1/|G_{\mathrm{ab}}|)\\
&\lesssim e^{\delta k}2^{\bar r+1-k}+ 2^k (\delta |G_{\mathrm{ab}}|^{1/k})^{\bar r+1-k}+2^k(\log k)^{2(\bar r+1)} |G_{\mathrm{ab}}|^{(\bar r+1-k)/k}\\
&=o(1)
\end{align*}
as $\bar r \asymp 1$ and $k\ll \log|G_{\mathrm{ab}}|$.

\mn
\textbf{Regime: $k\eqsim \lambda \log|G_{\mathrm{ab}}|$.} In this regime $s\geq (1+\ep)t_0/k\geq f(\lambda)$. It follows from Lemma \ref{gcd_ub} that there exists $\tilde \delta_\lambda \in (0,1)$ such that
\begin{align*}
\E\left[  \gcd^{\bar r}\1\{V\neq 0\}|\typ\right]-1&\leq \sum_{\gamma=2}^{2r_*} \gamma^{\bar r} (1-\tilde\delta_\lambda)^k\\
&\lesssim (\log k)^{2(\bar r+1)} (1-\tilde\delta_\lambda)^k=o(1).
\end{align*}

\mn
\textbf{Regime: $k\gg \log|G_{\mathrm{ab}}|$.} In this regime $t_*/k\ll 1$ and thus for $t\geq (1+\ep)t_*$ there are two regimes of $s=t/k$ to be discussed: $s\ll 1$ and $s\gtrsim 1$. When $s\gtrsim 1$, by the same argument as before we can show $\E\left[  \gcd^{\bar r}\1\{V\neq 0\}|\typ\right]-1=o(1)$. 

It remains to treat the case where $s\ll 1$.
When $ \frac{\log \log k}{k}\ll s\ll1$, we apply the bound $\P(\gcd(V)=\gamma, V\neq 0|\typ)\lesssim e^{-t}$  in Lemma \ref{gcd_ub} to show
\begin{align*}
\E\left[  \gcd^{\bar r}\1\{V\neq 0\}|\typ\right]-1&\leq \sum_{\gamma=2}^{2r_*} \gamma^{\bar r} e^{-t}\lesssim (2r_*)^{\bar r+1} e^{-t}\lesssim (\log k)^{2(\bar r+1)}e^{-t}=o(1).
\end{align*}
When $ s\lesssim \frac{\log \log k}{k}$, we apply the  bound $\P(\gcd(V)=\gamma, V\neq 0|\typ)\lesssim\frac{ k(2es)^\gamma}{\gamma^\gamma}$ in Lemma \ref{gcd_ub} to show
\begin{align*}
\E\left[  \gcd^{\bar r}\1\{V\neq 0\}|\typ\right]-1&\lesssim \sum_{\gamma=2}^{2r_*} \gamma^{\bar r} \frac{ k (2es)^\gamma}{\gamma^\gamma}\leq \sum_{\gamma=2}^{\bar r} \gamma^{\bar r-\gamma} k (2es)^2+\sum_{\bar r<\gamma\leq 2r_*} k(2es)^\gamma\\
&\lesssim C_{\bar r} ks^2 +Cks^{\bar r}\lesssim C'_{\bar r} k s^2\lesssim \frac{(\log\log k)^2}{k}=o(1).
\end{align*}

\end{proof}

The proof of Proposition \ref{nonzeroV} is completed given Lemma \ref{gcd_expectation} and Lemma \ref{gcd_analysis}.

\subsection{Preliminary Results on Groups}\label{prelim_groups}

Throughout this paper, the notation $\inprod{ A}$ denotes the subgroup generated by the set of elements $A\subseteq G$. Recall from Definition  \ref{representatives} that $R_\ell$ are the representatives of $Q_{\ell}=G_{\ell}/G_{\ell+1}$.

\begin{lemma}\label{unif_span}
For fixed $h_1,\dots,h_n\in R_1$ and $U_1,\dots,U_n \overset{iid}{\sim} \Unif(R_\ell)$ with $\ell\geq 1$, we have 
$$G_{\ell+2}\sum_{i\in [n]} [h_i,U_i]\sim \Unif\left( \inprod{\{ G_{\ell+2}[h_i,g]: i\in[n],g\in R_\ell\}}\right).$$
\end{lemma}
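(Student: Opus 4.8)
The plan is to push everything forward into the finite abelian group $A := G_{\ell+1}/G_{\ell+2}$ and then reduce to an elementary fact about sums of independent uniform variables on subgroups of a finite abelian group. First I would note that since $h_i \in R_1 \subseteq G$ and $U_i \in R_\ell \subseteq G_\ell$, Proposition \ref{strongly_lower_central} gives $[h_i,U_i] \in [G,G_\ell] \subseteq G_{\ell+1}$, so $\prod_{i\in[n]}[h_i,U_i] \in G_{\ell+1}$ and its image in $A$ is exactly $\sum_{i\in[n]} G_{\ell+2}[h_i,U_i]$, written additively in $A$. The task thus becomes identifying the law of this sum.

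Next, fix $i$ and consider the map $g \mapsto G_{\ell+2}[h_i,g]$ from $G_\ell$ to $A$. By the bilinearity in Proposition \ref{p:bilinearity} (with its index set equal to $\ell$) this is a group homomorphism, and it is trivial on $G_{\ell+1}$ because $[h_i,G_{\ell+1}] \subseteq [G,G_{\ell+1}] \subseteq G_{\ell+2}$; hence it factors through a homomorphism $\bar\phi_i : Q_\ell \to A$ with $\bar\phi_i(G_{\ell+1}g) = G_{\ell+2}[h_i,g]$. Since $R_\ell$ is a transversal of $G_{\ell+1}$ in $G_\ell$, the image of $\bar\phi_i$ is precisely the subgroup $H_i := \{G_{\ell+2}[h_i,g] : g \in R_\ell\}$ of $A$. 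By Lemma \ref{Z_decomp}, $U_i \sim \Unif(R_\ell)$ yields $G_{\ell+1}U_i \sim \Unif(Q_\ell)$, and pushing a uniform random element of a finite abelian group forward through a homomorphism produces a uniform element of the image (all nonempty fibres are cosets of the kernel, hence equinumerous); therefore $G_{\ell+2}[h_i,U_i] = \bar\phi_i(G_{\ell+1}U_i) \sim \Unif(H_i)$. Independence of $U_1,\dots,U_n$ makes $G_{\ell+2}[h_1,U_1],\dots,G_{\ell+2}[h_n,U_n]$ independent.

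It then remains to establish the elementary claim: if $\xi_1,\dots,\xi_n$ are independent with $\xi_i \sim \Unif(H_i)$ for subgroups $H_1,\dots,H_n$ of a finite abelian group $A$, then $\sum_i \xi_i \sim \Unif(H_1+\cdots+H_n)$. I would prove this by induction on $n$, the case $n=1$ being trivial; the inductive step reduces to the two-summand case, which I would handle directly: for $\xi\sim\Unif(H)$, $\eta\sim\Unif(H')$ independent and any $z\in H+H'$, the set $\{x\in H : z-x\in H'\}$ is a coset of $H\cap H'$ (nonempty since $z\in H+H'$, and closed under differences into $H\cap H'$), so
$$\P(\xi+\eta=z) = \sum_{x\in H}\P(\xi=x)\,\P(\eta=z-x) = \frac{|H\cap H'|}{|H|\,|H'|} = \frac{1}{|H+H'|},$$
using $|H+H'| = |H|\,|H'|/|H\cap H'|$, so $\xi+\eta$ is uniform on $H+H'$. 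Finally, as the $H_i$ are subgroups of the abelian group $A$ we have $H_1+\cdots+H_n = \inprod{\bigcup_{i\in[n]}H_i} = \inprod{\{G_{\ell+2}[h_i,g] : i\in[n],\,g\in R_\ell\}}$, which together with the above gives the asserted uniform law. The only mildly delicate points are the well-definedness of $\bar\phi_i$ on $Q_\ell$ and the coset-counting in the two-summand case; both are routine, and I do not expect a substantive obstacle.
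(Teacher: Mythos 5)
Your proof is correct, and it follows a genuinely different route from the one in the paper. The paper argues directly on the hitting probabilities: using Proposition \ref{p:bilinearity} it first shows that every element of $\inprod{\{ G_{\ell+2}[h_i,g]: i\in[n],g\in R_\ell\}}$ can be rewritten as $G_{\ell+2}\sum_{i\in[n]}[h_i,g_i']$ with a \emph{single} $g_i'\in R_\ell$ per index (collapsing an arbitrary word in the generators via bilinearity and the fact that $R_\ell$ is a transversal), and then translates: $\P(G_{\ell+2}\sum_i[h_i,U_i]=G_{\ell+2}x)=\P(G_{\ell+2}\sum_i[h_i,U_i(g_i')^{-1}]=G_{\ell+2})=\P(G_{\ell+2}\sum_i[h_i,U_i]=G_{\ell+2})$, so the law is constant on the subgroup. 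You instead make the algebraic structure explicit: each summand $G_{\ell+2}[h_i,U_i]$ is the pushforward of $\Unif(Q_\ell)$ under the homomorphism $\bar\phi_i:Q_\ell\to Q_{\ell+1}$, hence uniform on the image subgroup $H_i$, and you conclude with the standard fact that a convolution of uniform measures on subgroups of a finite abelian group is uniform on their sum $H_1+\cdots+H_n=\inprod{\cup_i H_i}$. Your route buys modularity and avoids the rewriting step (one commutator per index) entirely, since the identification of the image as a subgroup replaces it; the two ``delicate points'' you flag (well-definedness of $\bar\phi_i$, which follows since $[h_i,G_{\ell+1}]\subseteq G_{\ell+2}$, and the coset count in the two-summand case) are indeed routine. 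The paper's route stays closer to its commutator calculus and reuses the same rewriting idea elsewhere (e.g.\ in Corollary \ref{g_expression}), which is presumably why it is phrased that way; either argument is complete and of comparable length.
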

\begin{proof}
Write $R_\ell=\{g_u: u\in [|R_\ell|]\}$. Any $G_{\ell+2}x\in  \inprod{\{ G_{\ell+2}[h_i,g]: i\in[n],g\in R_\ell\}}$ can be expressed as 
$$G_{\ell+2}x=G_{\ell+2}\sum_{i\in [n], u\in [|R_\ell|]} c_{i,u} [h_i,g_u]=G_{\ell+2}\sum_{i\in [n]} \left[h_i, \sum_{u\in 
[|R_\ell|]}c_{i,u}g_u\right]$$ 
for some integer coefficients $\{c_{i,u}: i\in [n], u\in [|R_\ell|]\}$. Note that since $R_{\ell}$ is a representative set of $Q_\ell$, there exists a $g'_i\in R_\ell$ such that $G_{\ell+1}\sum_{u\in [|R_\ell|]} c_{i,u}g_u=G_{\ell+1} g'_i$, and thus by Proposition \ref{strongly_lower_central},
$$G_{\ell+2}\sum_{i\in [n]} \left[h_i, \sum_{u\in 
[|R_\ell|]}c_{i,u}g_u\right]=G_{\ell+2}[h_i,g'_i].$$
Therefore, for any $x\in   \inprod{\{ G_{\ell+2}[h_i,g]: i\in[n],g\in R_\ell\}}$, there exists $g'_1,\dots,g'_n\in R_\ell$ such that
$$G_{\ell+2}x=G_{\ell+2}\sum_{i \in [n]} [h_i,g'_i].$$
Hence, for any $x\in   \inprod{\{ G_{\ell+2}[h_i,g]: i\in[n],g\in R_\ell\}}$, by Proposition \ref{p:bilinearity}, 
\begin{align*}
\P(G_{\ell+2}\sum_{i\in[n]}[h_i,U_i]=G_{\ell+2}x)&=\P(G_{\ell+2}\sum_{i\in[n]} [h_i,U_i\cdot (g'_i)^{-1}]=G_{\ell+2})=\P(G_{\ell+2}\sum_{i\in [n]} [h_i,U_i]=G_{\ell+2}),
\end{align*}
which proves the uniformity of  $G_{\ell+2}\sum_{i\in [n]} [h_i,U_i]$.
\end{proof}

\begin{lemma}\label{iid_generation}
Let $n,\alpha \in \mathbb{N}$ be  fixed  and $p$ be a prime. Suppose $U_1,U_2,\dots,U_{n+2}$ are i.i.d. uniform random variables over $\ZZ_{p^\alpha}$. We have 
$$\E\left[\left(\frac{|\ZZ_{p^\alpha}|}{|\langle U_1,\dots,U_{n+2}\rangle|}\right)^n\right]\leq \exp\left(\frac{1}{p^2-1}\right).$$
\end{lemma}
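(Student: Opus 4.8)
The plan is to reduce the problem to the elementary-abelian quotient $\ZZ_{p^\alpha}/p\ZZ_{p^\alpha} \cong \ZZ_p$ and then estimate the probability that $n+2$ i.i.d.\ uniform elements of $\ZZ_p$ all vanish. First I would observe that $\langle U_1,\dots,U_{n+2}\rangle = p^{j}\ZZ_{p^\alpha}$ where $j = \min\{ v_p(U_i) : i \in [n+2]\}$ and $v_p$ is the $p$-adic valuation (with $v_p(0) := \alpha$). Hence $|\ZZ_{p^\alpha}|/|\langle U_1,\dots,U_{n+2}\rangle| = p^{j}$, and the quantity to bound is $\E[p^{nj}]$. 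Since the $U_i$ are independent and uniform, $\P(v_p(U_i) \ge m) = p^{-m}$ for $0 \le m \le \alpha$, so $\P(j \ge m) = p^{-(n+2)m}$, i.e. $j$ is essentially a truncated geometric-type variable.

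Next I would write $\E[p^{nj}] = \sum_{m=0}^{\alpha} p^{nm}\,\P(j = m) \le \sum_{m=0}^{\infty} p^{nm}\,\P(j \ge m) = \sum_{m=0}^\infty p^{nm} p^{-(n+2)m} = \sum_{m=0}^\infty p^{-2m} = \frac{1}{1 - p^{-2}} = \frac{p^2}{p^2-1}$. (Using $\P(j=m) \le \P(j \ge m)$ is wasteful but harmless; alternatively compute $\P(j=m) = p^{-(n+2)m} - p^{-(n+2)(m+1)}$ exactly and sum the geometric series, which also gives a bound of the same shape.) Then I would finish by noting $\frac{p^2}{p^2-1} = 1 + \frac{1}{p^2-1} \le \exp\!\left(\frac{1}{p^2-1}\right)$ via the inequality $1 + x \le e^x$, which is exactly the claimed bound.

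The argument is essentially a one-line computation once the valuation reformulation is in place, so there is no serious obstacle; the only point requiring a little care is the truncation at $m = \alpha$ (when some $U_i = 0$ the valuation caps at $\alpha$), but since we are upper-bounding $\E[p^{nj}]$ we may freely extend the sum to $m = \infty$, which only increases it, and the resulting geometric series still converges because $n+2 > n$. I would also double-check the edge interpretation that all $U_i = 0$ contributes the term $p^{n\alpha}$ with probability $p^{-(n+2)\alpha}$, which is dominated by the $m=\alpha$ term of the extended sum, so nothing is lost. This $n+2$ versus $n$ gap (equivalently, the $p^{-2m}$ decay) is precisely what makes the expectation bounded uniformly in $\alpha$ and yields the clean constant $\frac{1}{p^2-1}$.
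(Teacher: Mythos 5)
Your proof is correct and is essentially the paper's own argument: your events $\{j \ge m\}$ are exactly the paper's events $A_m$ ("all $U_i$ divisible by $p^m$"), the bound $\P(j=m)\le\P(j\ge m)=p^{-(n+2)m}$ matches the paper's $\P(A_m\setminus A_{m+1})\le\P(A_m)$, and both conclude via the geometric series $\sum_m p^{-2m}=\frac{1}{1-p^{-2}}\le\exp\bigl(\frac{1}{p^2-1}\bigr)$. No substantive difference.
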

\begin{proof}
Let $A_i=\{ \text{$U_1,\dots,U_{n+2}$ are all divisible by $p^i$}\}$ for $0\leq i\leq \alpha$ and $A_{\alpha+1}=\emptyset$. On the event $A_i\backslash A_{i+1}$ we know $\frac{|\ZZ_{p^\alpha}|}{|\langle U_1,\dots,U_{n+2}\rangle|}=p^i$. Also note that $\P(A_i)=p^{-(n+2)i}$ for $0\leq i\leq \alpha$. It follows that 
\begin{align*}
\E\left[\left(\frac{|\ZZ_{p^\alpha}|}{|\langle U_1,\dots,U_{n+2}\rangle|}\right)^n\right]&=\sum_{i=0}^{\alpha} p^{ni} \P(A_i\backslash A_{i+1})\\
&\leq \sum_{i=0}^{\alpha} p^{ni} \cdot p^{-(n+2)i}\leq \frac{1}{1-p^{-2}}\leq \exp\left(\frac{1}{p^2-1}\right).
\end{align*}
\end{proof}

\begin{lemma}\label{quotient_size}
Let $H$ be a subset of $G$. For any $\ell\in[L-1]$,
$$\frac{|Q_{\ell+1}|}{|\inprod{\{G_{\ell+2}[h,g]: h\in H, g\in R_\ell\}}|}\leq\left(\frac{|G_{\mathrm{ab}}|}{|\inprod{\{G_2h: h\in H\}}|}\right)^{r_{\ell+1}}.$$
\end{lemma}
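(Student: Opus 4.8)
The plan is to exhibit a surjection from a suitable power of $\langle \{G_2 h : h\in H\}\rangle$ onto $Q_{\ell+1}/\langle\{G_{\ell+2}[h,g] : h\in H, g\in R_\ell\}\rangle$, and to count. Write $A := \langle\{G_2 h : h\in H\}\rangle \leq G_{\mathrm{ab}}$ and $B := \langle\{G_{\ell+2}[h,g] : h\in H, g\in R_\ell\}\rangle \leq Q_{\ell+1}$. By Proposition \ref{p:bilinearity} the map $\phi : G \times G_{\ell} \to G_{\ell+1}/G_{\ell+2} = Q_{\ell+1}$, $\phi(x,y) := G_{\ell+2}[x,y]$, is bilinear and, by \eqref{e:bylinearity}, descends to a map on $G_{\mathrm{ab}} \times Q_\ell$ in the first coordinate. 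First I would observe that, since $Q_{\ell+1}$ is generated by $\widehat S_{\ell+1}$ and each generator $G_{\ell+2}[s, x_2, \ldots, x_{\ell+1}]$ can, via the bilinearity in Proposition \ref{p:bilinearity} and the identification used in Corollary \ref{g_expression}, be regarded as $G_{\ell+2}[w, g]$ for some $w \in G_{\mathrm{ab}}$ and $g\in R_\ell$, the subgroup $Q_{\ell+1}$ is generated by $\{G_{\ell+2}[w,g] : w \in G_{\mathrm{ab}}, g\in R_\ell\}$. Moreover $Q_{\ell+1}$ is a quotient of the tensor-type object $G_{\mathrm{ab}} \otimes Q_\ell$ (or at least is a bilinear image of it), and since $Q_\ell$ has rank $r_\ell$ while $Q_{\ell+1}$ has rank $r_{\ell+1}$, one can pick $r_{\ell+1}$ elements $g_1, \ldots, g_{r_{\ell+1}} \in R_\ell$ such that the map $G_{\mathrm{ab}}^{r_{\ell+1}} \to Q_{\ell+1}$, $(w_1, \ldots, w_{r_{\ell+1}}) \mapsto \sum_j G_{\ell+2}[w_j, g_j]$ is surjective. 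Actually the cleanest route is: the map $\Psi : G_{\mathrm{ab}}^{\,m} \to Q_{\ell+1}$ given by $(w_1,\dots,w_m)\mapsto \sum_{j} G_{\ell+2}[w_j,g_j]$ for an appropriate finite list $g_1,\dots,g_m\in R_\ell$ is surjective, and one can take $m \le r_{\ell+1}$ by a rank argument on $Q_{\ell+1}$ as a $\mathbb Z$-module quotient (using that $Q_{\ell+1}$ is abelian of rank $r_{\ell+1}$).

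Granting such a surjection $\Psi : G_{\mathrm{ab}}^{\,r_{\ell+1}} \twoheadrightarrow Q_{\ell+1}$ that is built out of the bracket pairing in the first coordinate, I would then check that $\Psi(A^{r_{\ell+1}}) = B$: indeed $\Psi$ maps $A^{r_{\ell+1}}$ into $B$ because each $G_{\ell+2}[a, g_j]$ with $a\in A$ lies in $B$ (expand $a$ as a product of the generators $G_2 h$ of $A$, use bilinearity, and use $G_{\ell+2}[h,g_j]\in B$ together with $G_{\ell+1}$-closure of $B$ via Proposition \ref{strongly_lower_central}); and conversely $B \subseteq \Psi(A^{r_{\ell+1}})$ since every generator $G_{\ell+2}[h,g]$ of $B$ with $h\in H$, $g\in R_\ell$ is hit — write $G_{\ell+1}g = G_{\ell+1}(\sum_u c_u g_u)$ for integers $c_u$ using that $\{g_j\}$ generate $Q_\ell$, then by bilinearity $G_{\ell+2}[h,g] = \sum_j G_{\ell+2}[c_j' h, g_j]$ for suitable integers $c_j'$, and $c_j' (G_2 h) \in A$. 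Hence $\Psi$ induces a surjection $G_{\mathrm{ab}}^{\,r_{\ell+1}}/A^{r_{\ell+1}} \twoheadrightarrow Q_{\ell+1}/B$, and therefore
$$
\frac{|Q_{\ell+1}|}{|B|} \le \frac{|G_{\mathrm{ab}}|^{r_{\ell+1}}}{|A|^{r_{\ell+1}}} = \left(\frac{|G_{\mathrm{ab}}|}{|A|}\right)^{r_{\ell+1}},
$$
which is exactly the claimed inequality.

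The main obstacle I anticipate is making precise the reduction "$Q_{\ell+1}$ is a bilinear image of $G_{\mathrm{ab}}^{\,r_{\ell+1}}$ via the commutator pairing in the first variable, with the second-variable slots populated by at most $r_{\ell+1}$ fixed elements of $R_\ell$." This needs: (i) the surjectivity of $\widehat S_{\ell+1}$ onto $Q_{\ell+1}$ from Proposition \ref{p:generatingG}, rewriting each element of $S_{\ell+1}$ as $[w, g]$ with $w\in G_{\mathrm{ab}}$, $g \in R_\ell$ via the linearity relation \eqref{e:linearityinfirstcoord} and the identification in \eqref{Gh} of Corollary \ref{g_expression}; and (ii) an argument that although a priori the list of second-coordinate elements $g$ ranges over all of $R_\ell$, one may compress it to $r_{\ell+1}$ of them because $Q_{\ell+1}$ has rank $r_{\ell+1}$ — equivalently, choosing a generating set of $Q_{\ell+1}$ of size $r_{\ell+1}$ from among the $G_{\ell+2}[w,g]$ and then, by bilinearity in $g$, combining the required $g$'s coordinate-wise. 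Once this structural claim is set up carefully, the counting step and the surjectivity check above are routine applications of Propositions \ref{strongly_lower_central} and \ref{p:bilinearity}.
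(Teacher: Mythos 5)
Your overall architecture (a surjection from $G_{\mathrm{ab}}^{\,r_{\ell+1}}$ onto $Q_{\ell+1}$ built from the commutator pairing, mapping $A^{r_{\ell+1}}$ into $B$, then counting) would indeed give the stated bound, but the step you dismiss as ``a rank argument'' is a genuine gap, and it is the heart of the matter. You need at most $r_{\ell+1}$ elements $g_1,\dots,g_{r_{\ell+1}}\in R_\ell$ such that the subgroups $T_{g_j}:=\{G_{\ell+2}[w,g_j]:w\in G_{\mathrm{ab}}\}$ together generate $Q_{\ell+1}$. The naive rank count fails for finite abelian groups: a group of rank $r$ can be generated by proper subgroups no $r$ of which suffice (already $\ZZ_6=\ZZ_2+\ZZ_3$ has rank $1$), because the greedy ``each new subgroup raises the rank'' argument is only valid prime by prime. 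The claim can be repaired — work in each Frattini quotient $Q^{(p)}/pQ^{(p)}$ separately, pick at most $r_{\ell+1}$ classes of $Q_\ell$ per prime, and recombine them into single elements of $Q_\ell$ via CRT coefficients, using bilinearity in the second coordinate — but this is a nontrivial argument that your proposal does not supply. A smaller issue: your asserted equality $\Psi(A^{r_{\ell+1}})=B$ is both unjustified and unnecessary; the inclusion $B\subseteq\Psi(A^{r_{\ell+1}})$ would require the chosen $g_j$ to generate $Q_\ell$, which nothing guarantees, but only $\Psi(A^{r_{\ell+1}})\subseteq B$ is needed for the induced surjection on quotients, and that part of your argument is fine.

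For comparison, the paper avoids all of this with a one-line structural observation: setting $N:=\inprod{\{G_2h:h\in H\}}$ and $\lambda:=|G_{\mathrm{ab}}|/|N|$, Lagrange's theorem in $G_{\mathrm{ab}}/N$ gives $\lambda G_{\mathrm{ab}}\subseteq N$, whence by bilinearity $\lambda Q_{\ell+1}=\inprod{\{G_{\ell+2}[h,g]:G_2h\in\lambda G_{\mathrm{ab}},\,g\in R_\ell\}}\subseteq B$, and then $|Q_{\ell+1}|/|B|\le|Q_{\ell+1}|/|\lambda Q_{\ell+1}|\le\lambda^{r_{\ell+1}}$ by the standard bound $|H|/|\gamma H|\le\gamma^{r(H)}$ for abelian $H$. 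If you want to keep your route, you must either prove the ``$r_{\ell+1}$ slots suffice'' claim along the per-prime-plus-CRT lines above, or switch to the index-$\lambda$ trick, which sidesteps the selection of the $g_j$ entirely.
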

\begin{proof}
For simplicity of notation, write $N:=\inprod{\{G_2h: h\in H\}}$ and then let $\lambda=|G_{\mathrm{ab}}|/|N|$. Since $G_{\mathrm{ab}}$ is abelian, it can be expressed in the form
$$G_{\mathrm{ab}}=\ZZ_{m_1}\oplus\cdots\oplus \ZZ_{m_r}$$
for some $m_1,\dots,m_r\in \mathbb{N}$ where $r$ is the rank of $G$. This decomposition allows us to see that $\lambda G_{\mathrm{ab}} \trianglelefteq N$. As a consequence, 
$$ \inprod{ \{G_{\ell+2}[h,g]: G_2h\in \lambda G_{\mathrm{ab}},g\in R_\ell\}} \trianglelefteq   \inprod{\{G_{\ell+2}[h,g]: h\in H, g\in R_\ell\}}$$

Note that 
$$ \inprod{ \{G_{\ell+2}[h,g]: G_2h\in \lambda G_{\mathrm{ab}},g\in R_\ell\}}=\inprod{\{\lambda G_{\ell+2} [h,g]: h\in G, g\in R_\ell\}}=\lambda Q_{\ell+1}.$$
We then have 
\begin{align*}
\frac{|Q_{\ell+1}|}{|\inprod{\{G_{\ell+2}[h,g]: h\in H, g\in R_\ell\}}|}&\leq \frac{|Q_{\ell+1}|}{\inprod{ \{G_{\ell+2}[h,g]: G_2h\in \lambda G_{\mathrm{ab}},g\in R_\ell\}}}\\
&=\frac{|Q_{\ell+1}|}{|\lambda Q_{\ell+1}|}\leq  \lambda^{r_{\ell+1}}.
\end{align*}

\end{proof}

Let $R$ be a non-trivial commutative ring with identity, and let $M =(m_{ij})_{n\times n}$ be a matrix over $R$. For a  maximal ideal $\mathcal{I}$ of $R$, let $\pi:R \to R/\mathcal{I}$ be the natural homomorphism. Let $\pi^n: R^n \to (R/\mathcal{I})^n$ be defined as $\pi^n(g_1,\dots,g_n)=(\pi(g_1),\dots,\pi(g_n))$ for $g_1,\dots,g_n\in R$. The following result comes from the theory of matrices over a commutative ring and it is stated in \cite[p. 259]{ching1977linear}.  
\begin{prop}\label{iff_condition}
If $M: R^n\to R^n$ is a homomorphism, then $M$ is surjective if and only if for every maximal ideal $\mathcal{I}$ of $R$, the map $\pi_M:(R/\mathcal{I})^n\to (R/\mathcal{I})^n$ is surjective, where $ \pi_M\circ \pi^n=\pi^n \circ M$.

\end{prop}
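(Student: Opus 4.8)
The plan is to reduce both surjectivity statements to a condition on $\det M$ and then invoke the standard dichotomy between units and maximal ideals in a commutative ring. Concretely, the reduction rests on two facts: $M:R^n\to R^n$ is surjective if and only if $\det M$ is a unit of $R$; and, for a fixed maximal ideal $\mathcal I$, writing $F:=R/\mathcal I$ (a field), the map $\pi_M:F^n\to F^n$ is surjective if and only if $\det(\pi_M)\ne 0$ in $F$, which by the identity $\det(\pi_M)=\pi(\det M)$ (the determinant commutes with the ring homomorphism $\pi$, being a universal polynomial in the entries) is equivalent to $\det M\notin\mathcal I$.

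First I would dispatch the forward implication directly, with no determinants needed: if $M$ is surjective and $\bar v\in F^n$, lift $\bar v$ to some $v\in R^n$, choose $w\in R^n$ with $Mw=v$, and observe $\pi_M(\pi^n(w))=\pi^n(Mw)=\pi^n(v)=\bar v$, so $\pi_M$ is surjective for every $\mathcal I$. For the converse, suppose $\pi_M$ is surjective for every maximal ideal $\mathcal I$. By the field computation above this says $\det M\notin\mathcal I$ for every maximal ideal $\mathcal I$, i.e. $\det M$ lies in no maximal ideal of $R$. Since $R$ is a nontrivial commutative ring with $1$, an element lies in no maximal ideal exactly when it is a unit (a non-unit $x$ generates a proper ideal $(x)$, which is contained in some maximal ideal by Zorn's lemma). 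Hence $\det M\in R^\times$, and then the adjugate identity $M\cdot\mathrm{adj}(M)=\det(M)\,I_n=\mathrm{adj}(M)\cdot M$ shows $(\det M)^{-1}\mathrm{adj}(M)$ is a two-sided inverse of $M$; in particular $M$ is surjective, which proves the proposition.

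There is no substantial obstacle here — the statement is classical, and everything reduces to bookkeeping with ring homomorphisms and the adjugate. The only points deserving a line of justification are (i) $\det(\pi_M)=\pi(\det M)$, immediate from the Leibniz expansion of the determinant; (ii) the elementary fact that a square matrix over a field is surjective iff it is invertible iff its determinant is nonzero; and (iii) the unit-versus-maximal-ideal dichotomy used above. For completeness one may also record the one remaining arrow, $M$ surjective $\Rightarrow\det M$ a unit: pick $w_i$ with $Mw_i=e_i$, set $N:=[\,w_1\mid\cdots\mid w_n\,]$, so that $MN=I_n$ and $\det M\cdot\det N=1$; this confirms that all the equivalences invoked are genuine, and no deeper input (such as the Noetherian ``surjective endomorphism of a finitely generated module is injective'' theorem) is needed.
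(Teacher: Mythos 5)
Your proof is correct and complete. Note, however, that the paper does not actually prove this proposition: it simply quotes it from Ching's work on linear maps over commutative rings (the reference given on p.~259 of that paper), so there is no in-paper argument to compare against. Your write-up supplies exactly the standard self-contained justification of that quoted fact: the forward implication by lifting preimages through $\pi^n$, and the converse by the chain ``$\pi_M$ surjective over the field $R/\mathcal{I}$ $\iff$ $\det(\pi_M)=\pi(\det M)\neq 0$ $\iff$ $\det M\notin\mathcal{I}$'', so that surjectivity modulo every maximal ideal forces $\det M$ to avoid all maximal ideals, hence to be a unit (Zorn), whence $M$ is invertible via the adjugate identity; the remaining arrow, that surjectivity of $M$ makes $\det M$ a unit, is correctly handled by assembling preimages of the standard basis into a right inverse $N$ with $MN=I_n$ and taking determinants. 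All three auxiliary facts you flag (determinant commutes with the quotient map, surjective $=$ invertible $=$ nonzero determinant over a field, and the unit-versus-maximal-ideal dichotomy) are used correctly, and you rightly observe that no Noetherian-type input is needed. In effect your argument could serve as a proof of the citation the authors rely on, which is a reasonable thing to record given that the paper leaves it as an external reference.
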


We will be interested in $R=\ZZ_{p^\alpha}$ where $p$ is a prime. The unique maximal ideal of $\ZZ_{p^\alpha}$ is $\mathcal{I}=\inprod{p}$. Then $\ZZ_{p^\alpha}/\mathcal I\cong \ZZ_p$.

\begin{lemma}\label{uniform_chi}
Let $\alpha,n \in\mathbb{N}$ and  $U:=(U_1,\dots,U_{n})$ where $U_i \overset{iid}{\sim} \Unif(\ZZ_{p^{\alpha}})$. Let $M$ denote a $n\times n$ matrix over $\ZZ_{p^{\alpha}}$. If $M: (\ZZ_{p^{\alpha}})^{n}\to  (\ZZ_{p^{\alpha}})^{n}$ is surjective, then $\tilde U:=MU$ has independent entries that are uniform over $\ZZ_{p^{\alpha}}$.
\end{lemma}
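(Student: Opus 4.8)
## Proof proposal for Lemma~\ref{uniform_chi}

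\textbf{Overall strategy.} The statement is a standard fact: a surjective endomorphism of a finite module sends the Haar measure to the Haar measure, and $(\ZZ_{p^\alpha})^n$ is finite so surjectivity is equivalent to bijectivity. The plan is to first reduce ``uniform with independent entries'' to ``uniform on $(\ZZ_{p^\alpha})^n$'', then deduce that $M$ is actually a bijection, and finally observe that a bijection pushes forward the uniform distribution to the uniform distribution.

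\textbf{Step 1: unpack the target distribution.} Since $U=(U_1,\dots,U_n)$ with the $U_i$ i.i.d.\ uniform over $\ZZ_{p^\alpha}$, the vector $U$ is uniformly distributed over the finite set $(\ZZ_{p^\alpha})^n$. Conversely, a $(\ZZ_{p^\alpha})^n$-valued random vector has independent entries each uniform over $\ZZ_{p^\alpha}$ if and only if it is uniform over $(\ZZ_{p^\alpha})^n$. So it suffices to prove $\tilde U = MU \sim \Unif\big((\ZZ_{p^\alpha})^n\big)$.

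\textbf{Step 2: surjective implies bijective.} The set $(\ZZ_{p^\alpha})^n$ is finite, and $M:(\ZZ_{p^\alpha})^n\to(\ZZ_{p^\alpha})^n$ is a group homomorphism (indeed a $\ZZ_{p^\alpha}$-module homomorphism) by hypothesis. A surjective map between finite sets of equal cardinality is a bijection; hence $M$ is a bijection of $(\ZZ_{p^\alpha})^n$ onto itself. (One may also invoke Proposition~\ref{iff_condition} here: $M$ is surjective iff its reduction mod the maximal ideal $\inprod{p}$ is surjective over $\ZZ_p$, i.e.\ iff $\det M$ is a unit mod $p$; this gives an explicit criterion, but for the present lemma the finiteness argument already suffices.)

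\textbf{Step 3: push forward the uniform measure.} For any $v\in(\ZZ_{p^\alpha})^n$, since $M$ is a bijection there is a unique $u$ with $Mu=v$, so
$$
\P(\tilde U = v) = \P(MU = v) = \P(U = M^{-1}v) = \frac{1}{|\ZZ_{p^\alpha}|^n},
$$
using that $U$ is uniform on $(\ZZ_{p^\alpha})^n$ from Step~1. Thus $\tilde U$ is uniform on $(\ZZ_{p^\alpha})^n$, which by Step~1 means $\tilde U=(\tilde U_1,\dots,\tilde U_n)$ has independent entries, each uniform over $\ZZ_{p^\alpha}$. This completes the proof.

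\textbf{Anticipated difficulty.} There is essentially no obstacle here; the only mild point worth stating carefully is the equivalence ``uniform on the product set $\Leftrightarrow$ independent uniform coordinates,'' and the pigeonhole passage from surjective to bijective on a finite module. If one wishes to keep the statement uniform with the rest of the paper (where surjectivity is checked via Proposition~\ref{iff_condition}), it is worth remarking explicitly that over the local ring $\ZZ_{p^\alpha}$ surjectivity of $M$ is equivalent to $\det M \not\equiv 0 \pmod p$, since that is the form in which the hypothesis will be verified in the applications.
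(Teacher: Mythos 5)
Your proof is correct and follows essentially the same route as the paper: both push the uniform distribution of $U$ forward through the map $x\mapsto Mx$ and use surjectivity to conclude each value is hit with probability $p^{-\alpha n}$. The only cosmetic difference is that you invoke the pigeonhole principle to upgrade surjectivity to bijectivity and then use $M^{-1}$, whereas the paper keeps the map as a surjective homomorphism and computes the constant fibre probability via $|\mathrm{Ker}(f)|/|(\ZZ_{p^\alpha})^n|=1/|\mathrm{Im}(f)|$ together with translation invariance of the uniform law.
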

\begin{proof}
Define a surjective group homomorphism $f:  (\ZZ_{p^{\alpha}})^{n} \to(\ZZ_{p^{\alpha}})^{n}$ by $f(\bm x):=M \bm x$ where $\bm x\in (\ZZ_{p^{\alpha}})^{n}$. Since $f$ is surjective, for  any $\bm y\in(\ZZ_{p^{\alpha}})^{n}$ there exists some $\bm y'\in (\ZZ_{p^{\alpha}})^{n} $ such that $f(\bm y')=\bm y$. It follows that for any $\bm y\in(\ZZ_{p^{\alpha}})^{n}$
\begin{align*}
\P(\tilde U=\bm y)&=\P(f(U)=\bm y)=\P(f(U-\bm y')=0)=\P(f(U)=0)=\P(\tilde U=0)\\
&=\frac{ |Ker(f)|}{|(\ZZ_{p^{\alpha}})^{n}|}=\frac{1}{|Im(f)|}=\frac{1}{p^{\alpha n}},
\end{align*}
where the last equality uses the fact that $f$ is surjective.
\end{proof}

\subsection{Proof of Proposition \ref{mix_com}}\label{proof_mix_com}

\subsubsection{Definitions}\label{def_A&K}

To prepare for the proof of Proposition \ref{mix_com}, we first introduce several useful quantities and describe the selection of the good event $\AA$ and index set $\mathcal{K}\subseteq [k]$, the reason for whose definition will become clearer as we proceed with the proof of Proposition \ref{mix_com}.

To simplify notation we define the following matrix.
\begin{definition}\label{mhat}
Recall the definition of $(m_{ba})_{ a,b\in [k], a<b}$ from \eqref{mba}. Define
\beq\label{hatm}
\hat m_{ba}=
\begin{cases}
m_{ba} & \text{ if }a<b\\
-m_{ab} & \text{ if }b<a\\
0 & \text{ if }b=a.
\end{cases}
\eeq
\end{definition}

\begin{definition}\label{chi&psi}
Let $A=(A_{ba})_{a,b\in [k]}$ be  a $k\times k$ matrix with entries in $\ZZ$, and let $\mathcal{K}:=\mathcal{K}(A)\subseteq [k]$ denote a subset of indices which is known when $A$ is given. For $b\in [k]$, we define $\chi_b(A)$, and respectively $\psi_b(A)$, to be an element in $G$ that satisfies
$$G_2\chi_b(A)=G_2\left(\sum_{a\in[k]} A_{ba}Z_{a,1}\right),$$
and respectively
$$
G_2\psi_b(A)=G_2\left(\sum_{a\in[k]:a<b} A_{ba}Z_{a,1}+\sum_{a\in \tildeK^c:a>b} A_{ba} Z_{a,1}\right).
$$
In particular, for $b\in [k]$, define $\chi_b:=\chi_b(\hat m)$ and $\psi_b:=\psi_b(\hat m)$.

\end{definition}

Since by definition
$$G_2 \chi_b=G_2 \left(\sum_{a\in [k]} \hat m_{ba} Z_{a,1}\right),$$
with slight abuse of notation we will write 
\beq\label{chib}
\chi_b=\sum_{a\in [k]} \hat m_{ba} Z_{a,1} \quad\quad  \text{for }b\in [k],
\eeq
so that we can write, again with slight abuse of notation,
$$\chi:=(\chi_1,\dots,\chi_k)^T=\hat m (Z_{1,1},\dots,Z_{k,1})^T,$$ 
which is the product of the matrix $\hat m$ and the vector $ (Z_{1,1},\dots,Z_{k,1})$.

\bigskip
Let 
\beq\label{size_K}
K:=\sum_{\ell=2}^L r_\ell+2.
\eeq
For a $K\times K$ submatrix $M$ of $\hat m$, define the matrix $M_p:= M \mod p$, i.e., $M_p(i,j):=M(i,j)\mod p$ for the $(i,j)$-th entry in $M$. Note that $M_p$ is a matrix over the field $\mathbb{F}_p$ and hence its row rank is defined as the number of linearly independent rows  in the matrix.  


\begin{definition}\label{eventA}

A fixed $k\times k$ matrix $(A_{ba})_{a,b\in [k]}$ is said to be \textit{good} if it satisfies the following two conditions: 
\begin{enumerate}
\item[(i)]  There exists a set  $\tildeK\subseteq[k]$ such that
\beq\label{def_K}
\{G_2\psi_b(A), G_2\chi_b(A): b\in\tildeK\} \text{ are independent from }\{G_2Z_{b,1}: b\in\tildeK\},
\eeq
\item[(ii)] Let $\Gamma:=\{ p: p \text{ is a prime that divides } |G_{\mathrm{ab}}|\}$. For each $p\in \Gamma$ there exists a $K\times K$ submatrix $M$ of $(A_{ba})_{b\in\tildeK, a\in[k]}$ such that $ M_p:= M \mod p$ has rank $K$ over the field $\mathbb{F}_p$, where as above $K=\sum_{\ell=2}^L r_\ell+2$.
\end{enumerate}
Define $\AA:=\{  \text{$\hat m$ is a good matrix}\}$ and let $\KK$ be the corresponding subset of indices satisfying (i).
\end{definition}
Note that by definition both $\AA$ and $\KK$ are measurable with respect to $\widetilde\HH$. 

\subsubsection{Outline of Proof}
To prove Proposition \ref{mix_com}, we derive an upper bound on $\P(X=X' |\AA, V=0,\typ)$ inductively using the following proposition.
\begin{prop}\label{span_size}
Let $\KK\subseteq [k]$ be measurable with respect to $\widetilde \HH$. For $2\leq \ell\leq L-1$, letting 
$$
H_{\KK,\ell+1}:=\inprod{\{ G_{\ell+2}[\chi_b,g]: b\in \mathcal{K},g \in R_\ell\}},
$$
we have
$$
\1\{\EE_{\ell+1},V=0\}\cdot\P(\EE_{\ell+2}|\FF_{\ell-1},\widetilde\HH)\leq \1\{\EE_{\ell+1},V=0\}\cdot|H_{\KK,\ell+1}|^{-1}.
$$
\end{prop}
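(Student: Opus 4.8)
The plan is to condition on the $\sigma$-field $\sigma(\FF_{\ell-1},\widetilde\HH)$ and show that, on the event $\{\EE_{\ell+1},V=0\}$ (which lies in this $\sigma$-field by Lemma \ref{E_measurability}(ii)), the coset $G_{\ell+2}X(X')^{-1}$ equals a fixed factor times $\prod_{c\in\KK}[\chi_c,Z_{c,\ell}]$, and then to control the conditional hitting probability of $\EE_{\ell+2}$ via Lemma \ref{unif_span}. Throughout I will use that, by Lemma \ref{Z_decomp}, the family $\{Z_{a,\ell}:a\in[k]\}$ is i.i.d.\ $\Unif(R_\ell)$ and independent of $\sigma(\FF_{\ell-1},\widetilde\HH)$, that $\KK$ is $\widetilde\HH$-measurable, and that each $\chi_c=\sum_a\hat m_{ca}Z_{a,1}$ is $\sigma(\FF_1,\widetilde\HH)\subseteq\sigma(\FF_{\ell-1},\widetilde\HH)$-measurable (here the hypothesis $\ell\ge 2$ is used).

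First I would put $G_{\ell+2}X(X')^{-1}$, on $\{V=0\}$, into a normal form. Starting from \eqref{mathcalX} with $\ell$ replaced by $\ell+1$ and imposing $V=0$ kills the first two products, leaving $G_{\ell+2}X(X')^{-1}=G_{\ell+2}\bigl(\prod_{a<b}[\prod_{i\le\ell}Z_{a,i},\prod_{j\le\ell}Z_{b,j}]^{m_{ba}}\bigr)\varphi^{(\ell-1)}$, with $\varphi^{(\ell-1)}$ depending only on $\{Z_{a,u}:u\le\ell-1\}$. Expanding each commutator $[\prod_{i\le\ell}Z_{a,i},\prod_{j\le\ell}Z_{b,j}]$ through $[xy,z]=[x,z]^y[y,z]$ and $[x,yz]=[x,z][x,y]^z$ into a product of conjugates of the basic commutators $[Z_{a,i},Z_{b,j}]$, and recalling from Proposition \ref{strongly_lower_central} that $[Z_{a,i},Z_{b,j}]\in G_{i+j}$ while each conjugation raises the level by at least one, one sees that modulo $G_{\ell+2}$ the only surviving contributions that involve a level-$\ell$ variable are the factors $[Z_{a,1},Z_{b,\ell}]^{m_{ba}}$ and $[Z_{a,\ell},Z_{b,1}]^{m_{ba}}$ (with no surviving conjugation corrections), whereas every other surviving factor, every surviving correction, and $\varphi^{(\ell-1)}$ involves only levels $\le\ell-1$ and is $\sigma(\FF_{\ell-1},\widetilde\HH)$-measurable. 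Since the level-$\ell$ factors lie in $G_{\ell+1}$ and $Q_{\ell+1}=G_{\ell+1}/G_{\ell+2}$ is central in $G/G_{\ell+2}$, I may pull them to one side and reorder freely; collecting by the level-$\ell$ index $c$ and using the bilinearity and antisymmetry of Proposition \ref{p:bilinearity}, the fact that $G_{\ell+2}[g,Z_{c,\ell}]$ depends on $g$ only through $G_2g$, and the definition \eqref{chib} of $\chi_c$, the level-$\ell$ part collapses exactly to $\prod_{c\in[k]}[\chi_c,Z_{c,\ell}]$. Hence, on $\{\EE_{\ell+1},V=0\}$,
\[
G_{\ell+2}X(X')^{-1}=G_{\ell+2}\,\Xi\cdot\prod_{c\notin\KK}[\chi_c,Z_{c,\ell}]\cdot\prod_{c\in\KK}[\chi_c,Z_{c,\ell}],
\]
where $\Xi$ is $\sigma(\FF_{\ell-1},\widetilde\HH)$-measurable.

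Next I would condition, on top of $\sigma(\FF_{\ell-1},\widetilde\HH)$, also on $\{Z_{c,\ell}:c\notin\KK\}$. On $\{\EE_{\ell+1},V=0\}$ this makes $\Xi$, $\KK$, $(\chi_c)_{c\in\KK}$ and $\prod_{c\notin\KK}[\chi_c,Z_{c,\ell}]$ all determined, so $\EE_{\ell+2}$ becomes precisely the event that $\prod_{c\in\KK}[\chi_c,Z_{c,\ell}]$ equals one fixed coset of $G_{\ell+2}$, while $\{Z_{c,\ell}:c\in\KK\}$ remain i.i.d.\ $\Unif(R_\ell)$ and independent of everything conditioned on. Replacing each $\chi_c$ by its unique representative in $R_1$ (which changes neither $G_{\ell+2}[\chi_c,g]$ nor $H_{\KK,\ell+1}$), Lemma \ref{unif_span} shows that $\prod_{c\in\KK}[\chi_c,Z_{c,\ell}]$ is, modulo $G_{\ell+2}$, uniform on $H_{\KK,\ell+1}$; hence the twice-conditioned probability of $\EE_{\ell+2}$ is at most $|H_{\KK,\ell+1}|^{-1}$ on $\{\EE_{\ell+1},V=0\}$. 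Averaging out $\{Z_{c,\ell}:c\notin\KK\}$ and multiplying by $\1\{\EE_{\ell+1},V=0\}$ gives the claimed inequality.

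The main obstacle is the normal-form step: one must verify carefully that, after fully expanding the double products, level-$\ell$ variables enter $G_{\ell+2}X(X')^{-1}$ only through the two asserted families of basic commutators — this uses that a basic commutator or conjugation correction containing a level-$\ell$ index together with any other positive level already lies in $G_{\ell+2}$ — and that the centrality of $G_{\ell+1}/G_{\ell+2}$ legitimizes both the reordering and the collection-by-index that produces $\prod_{c}[\chi_c,Z_{c,\ell}]$. Once this normal form is in hand, the remainder is a routine two-stage conditioning plus the uniformity input of Lemma \ref{unif_span}; the measurability bookkeeping (which objects are $\widetilde\HH$-, $\FF_{\ell-1}$-, or $\FF_\ell$-measurable) must be tracked but is straightforward given Lemmas \ref{E_measurability} and \ref{Z_decomp}.
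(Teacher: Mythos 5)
Your proposal is correct and follows essentially the same route as the paper: the normal-form step is the paper's Lemma \ref{simplification_X} (only level-$\ell$ variables survive modulo $G_{\ell+2}$ through the commutators $[Z_{a,1},Z_{b,\ell}]$ and $[Z_{a,\ell},Z_{b,1}]$, collected via $\hat m$ into $[\chi_c,Z_{c,\ell}]$), and the two-stage conditioning on $\{Z_{c,\ell}:c\notin\KK\}$ followed by the uniformity bound from Lemma \ref{unif_span} is exactly the paper's argument with $\GG_{\ell,\KK^c}$. The only cosmetic difference is that you collapse the level-$\ell$ part over all indices $c\in[k]$ before splitting by $\KK$, whereas the paper collapses only the $\KK$-indexed part and leaves the rest as a "known" double sum.
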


Applying Lemma \ref{quotient_size} to $H_{\KK,\ell+1}$ gives
\beq\label{size_of_H}
|H_{\KK,\ell+1}|^{-1}\leq \frac{1}{|Q_{\ell+1}|}\left( \frac{|G_{\mathrm{ab}}|}{|\inprod{ \{G_2\chi_b :b\in \mathcal{K}\}}|}\right)^{r_{\ell+1}}.
\eeq
Our goal is to choose $\KK\subseteq[k]$ properly so that $\inprod{ \{G_2\chi_b :b\in \mathcal{K}\}}$ is sufficiently large compared to $G_{\mathrm{ab}}$. To guarantee such a choice of  $\KK$ exists, we further define a ``good" event $\AA$, see Definition \ref{eventA}, which is measurable with respect to $\widetilde\HH$ and occurs with high probability. 

Note that $\AA, \typ$ and $\{V=0\}$ are measurable with respect to $\widetilde\HH$. By the tower property of conditional expectation and the fact that $\sigma(\FF_1,\widetilde \HH)\subseteq \sigma(\FF_{\ell-1},\widetilde\HH)$ for $2\leq \ell \leq L-1$, Proposition \ref{span_size} leads to
\begin{align*}
 \P(\EE_{\ell+2}, \AA,V=0,\typ|\FF_1,\widetilde\HH)&=\E \left[  \P( \EE_{\ell+2},\AA,V=0,\typ|\FF_{\ell-1},\widetilde\HH) \big| \FF_1,\widetilde\HH\right]\\
\nonumber &\leq  |H_{\KK,\ell+1}|^{-1}\cdot \P(\EE_{\ell+1},\AA,V=0,\typ|\FF_1,\widetilde\HH),
\end{align*}
which implies 
\begin{align*}
\P( \EE_{L+1},\AA,V=0,\typ|\FF_1,\widetilde\HH)&=\left( \prod_{\ell=2}^{L-1}  |H_{\KK,\ell+1}|^{-1}  \right) \cdot \P(\EE_{3},\AA,V=0,\typ|\FF_1,\widetilde\HH).
\end{align*}
Combined with \eqref{size_of_H}, the above yields
\begin{align}\label{keystep_prop}
\nonumber&\P( \EE_{L+1},\AA,V=0,\typ|\FF_1,\widetilde\HH)\\
\leq & \left(\prod_{\ell=2}^{L-1} \frac{1}{ |Q_{\ell+1}|} \right)\cdot \left( \frac{|G_{\mathrm{ab}}|}{|\inprod{ \{G_2\chi_b :b\in \mathcal{K}\}}|}\right)^{\sum_{\ell=2}^{L-1}r_{\ell+1}} \P(\EE_{3},\AA,V=0,\typ|\FF_1,\widetilde\HH).
\end{align}

Upper bounding the expectation of \eqref{keystep_prop} is the key to proving Proposition \ref{mix_com}. The choice of $\AA$ and $\KK$ in Definition \ref{eventA} is made so that the expectation of the right hand side of \eqref{keystep_prop} leads to the desired result.

In Section \ref{first_level_filt} we prove Proposition \ref{span_size}. We complete the proof of Proposition \ref{mix_com} in Section \ref{sub_proof_mix_com}  and finish the proof of a key lemma in Section \ref{proof_size_estimate}.

\subsubsection{Proof of Proposition \ref{span_size}}\label{first_level_filt}

The key to proving Proposition \ref{span_size} is the simplification of $G_{\ell+2}X(X')^{-1}$. The analysis in this section is somewhat similar to that in Section \ref{filtrations}, where we obtained a simplified expression of $G_{\ell+1}X(X')^{-1}$ when $V\neq 0$. However, when $V=0$ the result of simplification is quite different. Instead of a simple quantity of the form $G_{\ell+1}\sum_{a\in[k]} V_aZ_{a,\ell}$ (see Lemma \ref{layers_decomp}), we now have to deal with an expression involving commutators of $\{Z_{a,u}: a\in [k], u\in [L]\}$.

Recall from Definition \ref{filtration} the definition of $\widetilde\HH$, $\{ \FF_\ell :\ell \in [L]\}$ and $\{\EE_{\ell}: \ell \in [L+1]\}$. Define
\beq
 \mathbf{X}^{(\ell+1)}:=\prod_{a,b\in[k]: a<b}\left [\prod_{i=1}^{\ell-1}Z_{a,i},\prod_{j=1}^{\ell-1}Z_{b,j}\right]^{m_{ba}}  \varphi^{(\ell-2)}(\{ Z_{a,u}: a\in[k],u\leq \ell-2\}),
\eeq
which comes from the right hand side of \eqref{mathcalX}.
 
\begin{lemma}\label{simplification_X}
On $\{\EE_{\ell+1},V=0\}$ we have 
$$G_{\ell+2}X(X')^{-1}=G_{\ell+2}\sum_{a,b\in[k]: a<b}m_{ba}([Z_{a,1},Z_{b,\ell}]+[Z_{a,\ell},Z_{b,1}])+G_{\ell+2}(\tilde\varphi \mathbf{X}^{(\ell+1)}),$$
where $G_{\ell+2}(\tilde\varphi \mathbf{X}^{(\ell+1)})\in Q_{\ell+1}$ is measurable with respect to $\sigma(\FF_{\ell-1},\widetilde\HH)$ and  $\tilde \varphi:=\tilde\varphi(\{ Z_{a,u}: a\in[k],u\leq \ell-1\})$ is a polynomial  whose definition will be clarified in the proof.
\end{lemma}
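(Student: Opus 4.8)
The plan is to start from the representation \eqref{mathcalX} of $G_{\ell+1}X(X')^{-1}$, specialized to the event $\{V=0\}$, and then push one further level down the lower central series. Recall that on $\{V=0\}$ equation \eqref{mathcalX} reads
$$
G_{\ell+1}X(X')^{-1}=G_{\ell+1}\Big(\prod_{a=1}^kZ^{V_a}_{a,\ell}\Big)\,f^{(\ell-1)}=G_{\ell+1}f^{(\ell-1)},
$$
but we now want to work modulo $G_{\ell+2}$ rather than $G_{\ell+1}$, so the factor $\prod_{a}Z_{a,\ell}^{V_a}$ vanishes as a consequence of $V=0$ but the \emph{commutators of $Z_{a,\ell}$ with lower-level factors} no longer disappear. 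Concretely, I would write $Z_a=Z_{a,\ell}\cdot Z_{a,<\ell+1}'$ style decompositions and expand $\prod_a Z_a^{V_a}\prod_{a<b}[Z_a,Z_b]^{m_{ba}}\varphi(Z_1,\dots,Z_k)$ modulo $G_{\ell+2}$, using Proposition \ref{strongly_lower_central} (so that any $i$-fold commutator with $i\ge 3$ touching levels $\ge \ell-1$, or any $2$-fold commutator touching level $\ge \ell+1$, lies in $G_{\ell+2}$) together with the bilinearity relations of Proposition \ref{p:bilinearity} to collect terms.

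The key computation is the expansion of each $[Z_a,Z_b]^{m_{ba}}$ modulo $G_{\ell+2}$. Writing $Z_a=\prod_{u=1}^{L}Z_{a,u}$ and using bilinearity of $\phi(g,h)=G_{\ell+2}[g,h]$ on $G\times G_\ell$ (Proposition \ref{p:bilinearity}), the commutator $[Z_a,Z_b]$ modulo $G_{\ell+2}$ becomes the product over pairs $(u,u')$ of $[Z_{a,u},Z_{b,u'}]$; the terms with $u+u'\ge \ell+1$ fall into $G_{\ell+2}$ by Proposition \ref{strongly_lower_central}, and the terms with $u+u'\le \ell-1$ are exactly what gets absorbed into the lower-level polynomial. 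The surviving ``new'' contributions at level $\ell+1$ are precisely those with $u+u'=\ell$ where one index equals $1$ (the other $\ell-1$) together with the cross-terms $u=1,u'=\ell$ and $u=\ell,u'=1$ — wait, more carefully: modulo $G_{\ell+2}$ a commutator $[Z_{a,u},Z_{b,u'}]$ with $u+u'=\ell+1$ survives and sits in $Q_{\ell+1}$. Among those I single out $[Z_{a,1},Z_{b,\ell}]$ and $[Z_{a,\ell},Z_{b,1}]$, since the remaining ones $[Z_{a,u},Z_{b,u'}]$ with $2\le u,u'\le \ell-1$, $u+u'=\ell+1$ only involve $\{Z_{a,u}:u\le \ell-1\}$ and hence can be folded into the residual polynomial $\tilde\varphi$. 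This gives the stated identity: the explicitly written sum $\sum_{a<b}m_{ba}([Z_{a,1},Z_{b,\ell}]+[Z_{a,\ell},Z_{b,1}])$ captures every contribution that actually depends on $\{Z_{a,\ell}:a\in[k]\}$, and everything else — the lower-$u+u'$ commutator pieces, the original $f^{(\ell-1)}$, and $\mathbf X^{(\ell+1)}$ — involves only levels $\le \ell-1$, hence is $\sigma(\FF_{\ell-1},\widetilde\HH)$-measurable.

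The main obstacle, and where care is needed, is the bookkeeping of which multi-fold commutator terms from $\varphi(Z_1,\dots,Z_k)$ and from the re-expansion of $[Z_a,Z_b]^{m_{ba}}$ actually reach level $\ell+1$ while genuinely involving $Z_{a,\ell}$: one must argue that any $i$-fold commutator with $i\ge 3$ that both lies outside $G_{\ell+2}$ and involves some $Z_{a,\ell}$ is forced (by Proposition \ref{strongly_lower_central}, since such a commutator lives in $G_{\ell+1+(\text{sum of the other levels})}$ and each level is $\ge 1$) to have all its other arguments at level $1$ — but then by \eqref{e:bylinearity} / bilinearity its class modulo $G_{\ell+2}$ is linear in the level-$1$ arguments and, after collecting, is subsumed by adjusting $\tilde\varphi$ and the $m_{ba}$-weighted two-fold terms. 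I would make this precise by an induction on the commutator depth, exactly parallel to the reasoning already used to pass from \eqref{rearranged_S} to \eqref{mathcalX}. Once the identity is established, the measurability claim for $G_{\ell+2}(\tilde\varphi\,\mathbf X^{(\ell+1)})$ is immediate since it is built only from $\{Z_{a,u}:a\in[k],u\le\ell-1\}$ and $\widetilde\HH$-measurable data, and it lies in $Q_{\ell+1}=G_{\ell+1}/G_{\ell+2}$ because on $\{\EE_{\ell+1},V=0\}$ we have $X(X')^{-1}\in G_{\ell+1}$ while the explicit commutator sum already lies in $G_{\ell+1}$ modulo $G_{\ell+2}$.
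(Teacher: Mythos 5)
Your proposal is correct and takes essentially the same route as the paper's proof: expand the representation \eqref{mathcalX} one level further, work modulo $G_{\ell+2}$, use Propositions \ref{strongly_lower_central} and \ref{p:bilinearity} to isolate the cross terms $[Z_{a,1},Z_{b,\ell}]$ and $[Z_{a,\ell},Z_{b,1}]$, absorb everything not involving level $\ell$ into a $\sigma(\FF_{\ell-1},\widetilde\HH)$-measurable residual, and pass to additive notation in the abelian quotient $Q_{\ell+1}$. The only point you hedge on — $i$-fold commutators with $i\ge 3$ touching some $Z_{a,\ell}$ — is simpler than your "subsume by adjusting $\tilde\varphi$" phrasing suggests: any such commutator lies in $G_{\ell+(i-1)}\subseteq G_{\ell+2}$ and hence vanishes outright in the quotient, which is exactly how the paper (implicitly, via the definition of $\varphi^{(\ell-1)}$) disposes of them.
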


\begin{proof}

Recall from Lemma \ref{E_measurability} that $\{\EE_{\ell+1},V=0\}$ is measurable with respect to $\sigma(\FF_{\ell-1},\widetilde\HH)$.
On the event $\{\EE_{\ell+1},V=0\}$ we can write 
\begin{align}\label{simplify_1}
G_{\ell+2}X(X')^{-1}&=G_{\ell+2}\prod_{a,b\in[k]: a<b}\left[\prod_{i=1}^{\ell}Z_{a,i},\prod_{j=1}^{\ell}Z_{b,j}\right]^{m_{ba}}  \varphi^{(\ell-1)}(\{ Z_{a,u}: a\in[k],u\leq \ell-1\}),
\end{align}
where $\varphi^{(\ell-1)}$ is defined analogously to $\varphi^{(\ell-2)}$ in \eqref{mathcalX}.
Since all terms in $\varphi^{(\ell-2)}$ are present in $\varphi^{(\ell-1)}$ we can express $
G_{\ell+2}\varphi^{(\ell-1)}:=G_{\ell+2} \varphi^{(\ell-2)}\cdot \tilde \varphi,
$
where $\tilde \varphi:=\tilde\varphi(\{ Z_{a,u}: a\in[k],u\leq \ell-1\})$
 is the polynomial that comes from excluding all the terms in $G_{\ell+2}\varphi^{(\ell-2)}$ from $G_{\ell+2}\varphi^{(\ell-1)}$. 

We can rewrite \eqref{simplify_1} as
\begin{align}\label{simplify_2}
G_{\ell+2}X(X')^{-1}&=G_{\ell+2}\prod_{a,b\in[k]: a<b}([Z_{a,1},Z_{b,\ell}]\cdot [Z_{a,\ell},Z_{b,1}])^{m_{ba}} \cdot \tilde\varphi\cdot \mathbf{X}^{(\ell+1)}.
\end{align}
Note that $G_{\ell+2}\tilde \varphi \in G_{\ell+1}$, as every $i$-fold commutator with $i\geq 3$ in $\tilde \varphi$ must involve a $Z_{a,\ell-1}$ for some $a\in[k]$. It follows from the proof of Lemma \ref{E_measurability} that $\mathbf{X}^{(\ell+1)}\in G_{\ell+1}$ on $\{\EE_{\ell+1},V=0\}$. Furthermore, it is easy to see that both $G_{\ell+2}\tilde \varphi$ and $G_{\ell+2} \mathbf{X}^{(\ell+1)}$ are measurable with  respect to $\sigma(\FF_{\ell-1},\widetilde\HH)$ as they only involve terms in $\{Z_{a,i}: 1\leq i\leq \ell-1\}$.

Since $Q_{\ell+1}=G_{\ell+1}/G_{\ell+2}$ is abelian, we can equivalently write \eqref{simplify_2} in terms of addition and obtain the desired expression.
\end{proof}

\bigskip
\noindent
\textit{Proof of Proposition \ref{span_size}.} For simplicity of notation, let $f\in G_{\ell+1}$ be such that 
$$G_{\ell+2}f:=G_{\ell+2}\sum_{a,b\in[k]: a<b}m_{ba}([Z_{a,1},Z_{b,\ell}]+[Z_{a,\ell},Z_{b,1}]).$$
It follows from Lemma \ref{simplification_X} that $G_{\ell+2}X(X')^{-1}=G_{\ell+2}f+G_{\ell+2} (\tilde\varphi \mathbf{X}^{(\ell+1)})$ on $\{\EE_{\ell+1},V=0\}$ and hence
\begin{align}\label{step1}
\nonumber \1\{\EE_{\ell+1},V=0\} \cdot \P(\EE_{\ell+2}|\FF_{\ell-1},\widetilde\HH)&= \1\{\EE_{\ell+1},V=0\}\cdot \P( G_{\ell+2} f+ G_{\ell+2} (\tilde\varphi \mathbf{X}^{(\ell+1)})=G_{\ell+2} |\FF_{\ell-1},\widetilde\HH)\\
&\leq \max_{g_{\ell}\in G_{\ell+1}}   \1\{\EE_{\ell+1},V=0\} \cdot\P(G_{\ell+2}f=G_{\ell+2}g_{\ell}|\FF_{\ell-1},\widetilde\HH)
\end{align}

Let $\GG_{\ell,\mathcal K^c}$ denote the $\sigma$-field generated by $\{Z_{a,\ell} :a\in [k]\backslash \mathcal{K}\}$. Observe that
\begin{align}\label{f_decomp}
\nonumber G_{\ell+2}f&=G_{\ell+2}\sum_{a,b\in[k]: a<b}m_{ba}([Z_{a,1},Z_{b,\ell}]+[Z_{a,\ell},Z_{b,1}])\\
\nonumber&=G_{\ell+2}\sum_{b\in \mathcal{K}, a\in [k]: a<b} m_{ba}[Z_{a,1},Z_{b,\ell}]+G_{\ell+2}\sum_{a\in \mathcal{K}, b\in[k]: a<b} m_{ba}[Z_{a,\ell},Z_{b,1}]\\
\nonumber &\quad +G_{\ell+2} \sum_{b\in \mathcal{K}^c, a\in [k]: a<b} m_{ba}[Z_{a,1},Z_{b,\ell}]+G_{\ell+2}\sum_{a\in \mathcal{K}^c, b\in[k]: a<b} m_{ba}[Z_{a,\ell},Z_{b,1}],\\
&=:G_{\ell+2}f_{unknown}+G_{\ell+2}f_{known}
\end{align}
where the second-to-last line is known under $\sigma(\GG_{\ell,\mathcal{K}^c},\FF_{\ell-1},\widetilde\HH)$ and thus will be denoted by $G_{\ell+2}f_{known}$.
It remains to consider the third-to-last line, i.e.,
\begin{align}\label{f_unknown}
\nonumber G_{\ell+2}f_{unknown}:=&G_{\ell+2}\sum_{b\in \mathcal{K}, a\in [k]: a<b} m_{ba}[Z_{a,1},Z_{b,\ell}]+G_{\ell+2}\sum_{a\in \mathcal{K}, b\in[k]: a<b} m_{ba}[Z_{a,\ell},Z_{b,1}]\\
\nonumber =&G_{\ell+2} \sum_{b\in \mathcal{K}}\left[\sum_{a\in[k]:a<b} m_{ba}Z_{a,1}-\sum_{a\in [k]: b<a}m_{ab}Z_{a,1}, Z_{b,\ell}\right]\\
=:&G_{\ell+2} \sum_{b\in \mathcal{K}}[\chi_b,Z_{b,\ell}],
\end{align}
where $\chi_b$ is as in Definition \ref{chi&psi}, i.e., $\chi_b$ is such that $G_2\chi_b=G_2 (\sum_{a\in[k]:a<b} m_{ba}Z_{a,1}-\sum_{a\in [k]: b<a}m_{ab}Z_{a,1})$. 

Lemma \ref{unif_span} shows that $G_{\ell+2} \sum_{b\in \mathcal{K}} [\chi_b,Z_{b,\ell}]$ is uniform over $\inprod{\{ G_{\ell+2}[\chi_b,g]: b\in \mathcal{K},g \in R_\ell\}}$, which is measurable with respect to $\sigma(\FF_{\ell-1},\widetilde\HH)$ since $(\chi_b)_{ b\in \mathcal{K}}$ are measurable with respect to $\sigma(\FF_1,\widetilde\HH)\subseteq \sigma(\FF_{\ell-1},\widetilde\HH)$. It turns out that if we further condition on the $\sigma$-field $\GG_{\ell,\mathcal{K}^c}$, one has that for any $g_\ell \in G_{\ell+1}$,
\begin{align}\label{step2}
\nonumber &\P(G_{\ell+2}f=G_{\ell+2}g_\ell|\FF_{\ell-1},\widetilde\HH,\GG_{\ell,\mathcal{K}^c})\\
\nonumber =&\P(G_{\ell+2} \sum_{b\in \mathcal{K}} [\chi_b,Z_{b,\ell}]+G_{\ell+2}f_{known}=G_{\ell+2}g_\ell|\FF_{\ell-1},\widetilde\HH,\GG_{\ell,\mathcal{K}^c})\\
\nonumber\leq &\max_{\tilde g_\ell \in G_{\ell+1}} \P(G_{\ell+2} \sum_{b\in \mathcal{K}} [\chi_b,Z_{b,\ell}]=G_{\ell+2}\tilde g_\ell|\FF_{\ell-1},\widetilde\HH,\GG_{\ell,\mathcal{K}^c})\\
\leq&|\{\inprod{G_{\ell+2}[\chi_b,g]: b\in \mathcal{K},g \in R_\ell\}}|^{-1}=|H_{\KK,\ell+1}|^{-1}.
\end{align}
Therefore, we can bound \eqref{step1} from above by 
\begin{align}\label{step3}
 \P(G_{\ell+2}f=G_{\ell+2}g_\ell|\FF_{\ell-1},\widetilde\HH)&=\E_{\GG_{\ell,\mathcal K^c}}\left[\P(G_{\ell+2}f=G_{\ell+2}g_\ell|\FF_{\ell-1},\widetilde\HH,\GG_{\ell,\mathcal{K}^c})\right]\leq |H_{\KK,\ell+1}|^{-1}.
\end{align}
where $\E_{\GG_{\ell,\mathcal K^c}}[\cdot]$ means we are taking the expectation over $\{Z_{a,\ell} :a\in [k]\backslash \mathcal{K}\}$. Combining \eqref{step1}, \eqref{step2} and \eqref{step3} then yields the conclusion of Proposition \ref{span_size}, i.e., 
$$
\1\{\EE_{\ell+1},V=0\}\cdot \P(\EE_{\ell+2}|\FF_{\ell-1},\widetilde\HH)\leq \1\{\EE_{\ell+1},V=0\}\cdot |H_{\KK,\ell+1}|^{-1}.$$
\qed

\subsubsection{Proof of Proposition \ref{mix_com}}\label{sub_proof_mix_com}
We begin by addressing the last term in \eqref{keystep_prop}. Recall the definition of $(\chi_b)_{b\in[k]}, (\psi_b)_{b\in[k]}$ from Definition \ref{chi&psi}. Recall that $\mathcal{K}$ is measurable with respect to $\widetilde\HH$.
\begin{lemma}\label{intermediate_E3}
Let $\sigma(\GG,\widetilde\HH):=\sigma((G_2\psi_b)_{b\in\tildeK}, (G_2\chi_b)_{b\in\tildeK},\widetilde\HH)$. Then 
$$\P(\EE_{3},\AA,V=0,\typ|\GG,\widetilde\HH)\leq  \frac{\1\{ \AA,V=0,\typ\}}{|Q_2|} \left(\frac{|G_{\mathrm{ab}}|}{|\inprod{\{G_2\psi_b: b\in \tildeK\}}|}\right)^{r_{2}}.$$ 
\end{lemma}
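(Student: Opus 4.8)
\textbf{Proof plan for Lemma \ref{intermediate_E3}.}

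The plan is to carry out one more step of the same conditioning/simplification scheme used in the proof of Proposition \ref{span_size}, but now at the bottom level $\ell=1$, which (as flagged in the outline) behaves slightly differently. First I would recall from Lemma \ref{simplification_X} with $\ell=1$ (reading off the $\ell=1$ case of \eqref{mathcalX}, noting $\varphi^{(-1)}$ is trivial) that on $\{\EE_2, V=0\}$ we have $G_3 X(X')^{-1} = G_3 \sum_{a,b\in[k]:a<b} m_{ba}[Z_{a,1},Z_{b,1}]$, since at level $\ell=1$ the residual polynomial and $\mathbf{X}^{(2)}$ contribute nothing modulo $G_3$. Then I would split the double sum exactly as in \eqref{f_decomp}–\eqref{f_unknown}: group the terms with $b\in\KK$ (and the corresponding $a>b$, $a\in\KK$ terms) into an ``unknown'' part and the rest into a part measurable with respect to a coarser $\sigma$-field. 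The key algebraic point is that the unknown part can be written, using bilinearity (Proposition \ref{p:bilinearity}), as $G_3\sum_{b\in\KK}[\psi_b, Z_{b,1}]$, where $\psi_b$ is exactly the element introduced in Definition \ref{chi&psi}: the definition of $\psi_b$ (collecting $\sum_{a<b}A_{ba}Z_{a,1} + \sum_{a\in\KK^c, a>b}A_{ba}Z_{a,1}$ with $A=\hat m$) is precisely engineered so that the ``known'' residual — the part involving only $Z_{a,1}$ for $a\in\KK^c$ — is $\GG$-measurable, i.e. measurable with respect to $\sigma((G_2\psi_b)_{b\in\KK},(G_2\chi_b)_{b\in\KK},\widetilde\HH)$ together with the $\KK^c$-coordinates.

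Next I would condition on $\sigma(\GG,\widetilde\HH)$ together with the $\sigma$-field $\GG_{1,\KK^c}$ generated by $\{Z_{a,1}: a\in[k]\setminus\KK\}$, in analogy with the role of $\GG_{\ell,\KK^c}$ in the proof of Proposition \ref{span_size}. Under this conditioning, $\{Z_{b,1}: b\in\KK\}$ is still i.i.d. uniform on $R_1$ (here I use part (i) of the definition of the good event $\AA$, i.e. \eqref{def_K}: on $\AA$ the variables $\{G_2\psi_b, G_2\chi_b: b\in\KK\}$ are independent of $\{G_2 Z_{b,1}: b\in\KK\}$, so conditioning on $\GG$ does not disturb the uniformity needed for the $b\in\KK$ fibres), and Lemma \ref{unif_span} applies with $h_i = \psi_b$, $U_i = Z_{b,1}$, $\ell=1$: the unknown part $G_3\sum_{b\in\KK}[\psi_b,Z_{b,1}]$ is uniform over $\inprod{\{G_3[\psi_b,g]: b\in\KK, g\in R_1\}}$. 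Hence the conditional probability that $G_3 X(X')^{-1}$ equals any prescribed coset is at most $|\inprod{\{G_3[\psi_b,g]: b\in\KK, g\in R_1\}}|^{-1}$ on $\{\AA,V=0,\typ\}$, and averaging over $\GG_{1,\KK^c}$ preserves this bound, exactly as in \eqref{step3}.

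Finally I would bound that span size from below using Lemma \ref{quotient_size} with $H = \{\psi_b: b\in\KK\}$ and $\ell=1$ (so $Q_{\ell+1}=Q_2$, $r_{\ell+1}=r_2$):
\[
|\inprod{\{G_3[\psi_b,g]: b\in\KK, g\in R_1\}}|^{-1} \le \frac{1}{|Q_2|}\left(\frac{|G_{\mathrm{ab}}|}{|\inprod{\{G_2\psi_b: b\in\KK\}}|}\right)^{r_2},
\]
which, multiplied by the indicator $\1\{\AA,V=0,\typ\}$, is precisely the claimed bound. I expect the main obstacle to be the bookkeeping around which parts of $f$ are measurable with respect to which $\sigma$-field — in particular verifying carefully that the ``known'' part of the decomposition is $\sigma(\GG,\widetilde\HH,\GG_{1,\KK^c})$-measurable (so that it can be absorbed into the ``worst case'' coset and does not interfere with the uniformity of the unknown part), and confirming that property (i) of $\AA$ is exactly the independence statement needed to invoke Lemma \ref{unif_span} after this conditioning. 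The rest is a direct transcription of the $\ell\ge 2$ argument to the $\ell=1$ base case.
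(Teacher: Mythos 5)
Your proposal is correct and follows essentially the same route as the paper: reduce to $G_3X(X')^{-1}=G_3\sum_{a<b}m_{ba}[Z_{a,1},Z_{b,1}]$, split off $G_3\sum_{b\in\KK}[\psi_b,Z_{b,1}]$ from the $\KK^c$-only remainder, invoke property (i) of $\AA$ so that $(G_2Z_{b,1})_{b\in\KK}$ stays uniform under the conditioning, and finish with Lemma \ref{unif_span} and Lemma \ref{quotient_size}. Your extra conditioning on $\sigma(\{Z_{a,1}:a\in\KK^c\})$ followed by averaging is just a cosmetic variant of the paper's step of noting that the remainder is independent of $(Z_{b,1})_{b\in\KK}$ and taking a maximum over cosets.
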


\begin{proof}
Observe that 
\begin{align}
\nonumber G_3X(X')^{-1}&=G_3 \sum_{a<b}m_{ba}[Z_{a,1},Z_{b,1}]=G_3\sum_{b\in[k]} \sum_{a<b}m_{ba}[Z_{a,1},Z_{b,1}]\\
\nonumber&=G_{3}\sum_{b\in \tildeK, a\in [k]: a<b} m_{ba}[Z_{a,1},Z_{b,1}]+G_{3}\sum_{b\in \tildeK^c, a\in\tildeK: a<b} m_{ba}[Z_{a,1},Z_{b,1}]\\
\nonumber &\quad +G_{3} \sum_{b\in \tildeK^c, a\in \tildeK^c: a<b} m_{ba}[Z_{a,1},Z_{b,1}]\\
\nonumber &=G_{3}\sum_{b\in\tildeK, a\in [k]: a<b} m_{ba}[Z_{a,1},Z_{b,1}]-G_{3}\sum_{b\in \tildeK, a\in\tildeK^c: a>b} m_{ab}[Z_{a,1},Z_{b,1}]\\
\nonumber&\quad +G_{3} \sum_{b\in \tildeK^c, a\in\tildeK^c: a<b} m_{ba}[Z_{a,1},Z_{b,1}]\\
&=:G_{3}\tilde f+G_{3}\tilde f_{c},
\end{align}
where $$G_{3}\tilde f_{c}:=G_{3} \sum_{b\in \tildeK^c, a\in\tildeK^c: a<b} m_{ba}[Z_{a,1},Z_{b,1}],$$
 and
\begin{align*}
G_3 \tilde f&:=G_{3}\sum_{b\in\tildeK, a\in [k]: a<b} m_{ba}[Z_{a,1},Z_{b,1}]-G_{3}\sum_{b\in\tildeK, a\in\tildeK^c: a>b} m_{ab}[Z_{a,1},Z_{b,1}]\\
&=G_3 \sum_{b\in \tildeK} \left[ \sum_{a\in[k]:a<b} m_{ba}Z_{a,1}+\sum_{a\in \tildeK^c:a>b} \hat m_{ba} Z_{a,1}, Z_{b,1}\right]=:G_3\sum_{b\in\tildeK}\left[ \psi_b, Z_{b,1}\right],
\end{align*}
where $\psi_b$ is as in Definition \ref{chi&psi}.

By Definition \ref{eventA}, on $\AA$ there exists  a set $\tildeK \subseteq [k]$ such that conditionally on $\widetilde \HH$, $(G_2\chi_b)_{b\in\tildeK}, (G_2\psi_b)_{b\in\tildeK}$ are independent from $(G_2Z_{b,1})_{b\in\tildeK}$. Hence, conditioning on $\sigma(\GG,\widetilde\HH)$, $(G_2Z_{b,1})_{b\in\tildeK}$ are i.i.d. uniform over $G_{\mathrm{ab}}$. In addition note that  $G_3\tilde f_c$  is  independent from $(Z_{b,1})_{b\in\tildeK}$ since $G_3\tilde f_c$ only involves $(Z_{b,1})_{ b\in\tildeK^c}$. Hence,
\begin{align*}
&\P(G_3X(X')^{-1}=G_3|\GG,\widetilde \HH)\\
=&\P\left( G_3\sum_{b\in\tildeK} [\psi_b,Z_{b,1}]=-G_3\tilde f_c\bigg|\GG,\widetilde \HH\right)\leq \max_{\tilde g \in G_2} \P\left( G_3\sum_{b\in\tildeK} [\psi_b,Z_{b,1}]=G_3\tilde g\bigg|\GG,\widetilde\HH\right)\\
\leq &   |\inprod{\{ G_3[\psi_b,g]: b\in\tildeK, g \in R_1\}}|^{-1}\leq \frac{1}{|Q_2|} \left(\frac{|G_{\mathrm{ab}}|}{|\inprod{\{G_2\psi_b: b\in \tildeK\}}|}\right)^{r_{2}},
\end{align*}
where the last line follows from Lemma \ref{unif_span} and Lemma \ref{size_estimate}.
\end{proof}

The last ingredient needed to  complete the proof of Proposition \ref{mix_com} is the following estimate whose proof will be delayed till Section \ref{proof_size_estimate}.
\begin{lemma}\label{size_estimate}
Let $(\chi_b)_{b\in[k]},(\psi_b)_{b\in [k]}$ be defined as in Definition \ref{chi&psi}.
Let $\AA$ and $\mathcal{K}$ be defined as in Definition \ref{eventA}.  Then
$$\1\{ \AA,V=0,\typ\}\cdot \E\left[\left( \frac{|G_{\mathrm{ab}}|}{|\inprod{ \{G_2\chi_b :b\in \mathcal{K}\}}|}\right)^{K-2}\bigg| \widetilde \HH\right]\leq \1\{ \AA,V=0,\typ\}\cdot  \exp\left(\sum_{i=1}^\infty \frac{r}{i^2}\right)$$
and 
$$\1\{ \AA,V=0,\typ\}\cdot \E\left[\left( \frac{|G_{\mathrm{ab}}|}{|\inprod{ \{G_2\psi_b :b\in \mathcal{K}\}}|}\right)^{K-2}\bigg| \widetilde \HH\right]\leq \1\{ \AA,V=0,\typ\}\cdot  \exp\left(\sum_{i=1}^\infty \frac{r}{i^2}\right).$$
\end{lemma}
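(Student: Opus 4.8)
\textbf{Proof proposal for Lemma \ref{size_estimate}.}

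The plan is to reduce the bound on $\E[(|G_{\mathrm{ab}}|/|\inprod{\{G_2\chi_b:b\in\KK\}}|)^{K-2}\mid\widetilde\HH]$ to a product of "local" estimates, one prime at a time, using the structure theorem for finite abelian groups. First I would write $G_{\mathrm{ab}}\cong\bigoplus_{p\in\Gamma}A_p$ where $A_p$ is the $p$-primary component, and correspondingly decompose each $G_2\chi_b$ into its components $\chi_b^{(p)}\in A_p$. Since the index of a subgroup factors over primary components, $|G_{\mathrm{ab}}|/|\inprod{\{G_2\chi_b:b\in\KK\}}| = \prod_{p\in\Gamma}|A_p|/|\inprod{\{\chi_b^{(p)}:b\in\KK\}}|$, and the factors corresponding to distinct primes are independent (conditionally on $\widetilde\HH$), because on $\AA$ the family $(G_2Z_{b,1})_{b\in\KK}$ is conditionally i.i.d.\ uniform on $G_{\mathrm{ab}}$ — hence the $\chi_b$'s, being fixed integer combinations of independent uniforms, are uniform on the relevant subgroup, and uniform distributions on abelian groups split as products over primary components. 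So it suffices to bound $\E[(|A_p|/|\inprod{\{\chi_b^{(p)}:b\in\KK\}}|)^{K-2}\mid\widetilde\HH]$ for each $p$ and then multiply.

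Next I would exploit condition (ii) in Definition \ref{eventA}: on $\AA$, for each $p\in\Gamma$ there is a $K\times K$ submatrix $M$ of $(\hat m_{ba})_{b\in\KK,a\in[k]}$ with $M_p$ of full rank $K$ over $\mathbb{F}_p$. Restricting the $a$-index to the $K$ columns appearing in $M$, and noting $\chi_b = \sum_a \hat m_{ba}Z_{a,1}$, we see the vector $(\chi_b)_{b\in\KK'}$ (for the $K$ rows of $M$) is obtained by applying $M$ to a vector of $K$ independent uniforms in $A_p$; the remaining columns only enlarge the span, so dropping them gives an upper bound on the index. When $A_p\cong\bigoplus_j\ZZ_{p^{\alpha_j}}$, working coordinate-by-coordinate in the $p^{\alpha}$-torsion and using Proposition \ref{iff_condition} together with the full-rank-mod-$p$ hypothesis, $M$ acts surjectively on $(\ZZ_{p^{\alpha}})^K$, so by Lemma \ref{uniform_chi} the images are $K$ independent uniform elements of the cyclic group $\ZZ_{p^\alpha}$. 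Then $\inprod{\{\chi_b^{(p)}:b\in\KK\}}$ contains the subgroup generated by $K=\bigl(\sum_{\ell=2}^L r_\ell\bigr)+2 \geq r+2$ independent uniform elements of each cyclic factor, and Lemma \ref{iid_generation} (applied with $n = K-2 \geq r$ and $n+2$ generators) gives $\E[(|\ZZ_{p^\alpha}|/|\inprod{U_1,\dots,U_{K}}|)^{K-2}] \leq \exp(1/(p^2-1))$ per cyclic factor. Multiplying over the at most $r$ cyclic factors of $A_p$ (the rank of $G_{\mathrm{ab}}$ is $r$) yields $\exp(r/(p^2-1))$ per prime, and then over $p\in\Gamma$, using $\sum_{p}1/(p^2-1)\leq\sum_{i=2}^\infty 1/(i^2-1) \leq \sum_{i=1}^\infty 1/i^2$ (or a direct comparison), gives the claimed bound $\exp\bigl(\sum_{i=1}^\infty r/i^2\bigr)$. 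The argument for $\psi_b$ is identical since $\psi_b$ is also a fixed integer combination of the independent uniforms $(Z_{a,1})$ with the relevant submatrix again controlled on $\AA$ via condition (i), which guarantees the needed conditional independence of $(G_2\psi_b,G_2\chi_b)_{b\in\KK}$ from $(G_2Z_{b,1})_{b\in\KK}$.

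The main obstacle I anticipate is the bookkeeping around the non-cyclic primary components: one must be careful that "full rank mod $p$" of the submatrix $M$ really does upgrade, via Proposition \ref{iff_condition}, to surjectivity of $M$ as a map on $(\ZZ_{p^{\alpha}})^K$ for each exponent $\alpha$ occurring, and that applying $M$ diagonally across the cyclic factors of $A_p$ preserves independence of the resulting generators within each factor — this is where Lemma \ref{uniform_chi} is invoked, and it requires that the same matrix $M$ works simultaneously for all factors, which is fine since $M_p$ having rank $K$ over $\mathbb{F}_p$ is a single condition. A secondary subtlety is matching the exponent $K-2$ with the number $K$ of available generators so that Lemma \ref{iid_generation} applies with $n=K-2$ and $n+2=K$; this is exactly why $K$ is defined as $\sum_{\ell=2}^L r_\ell + 2$ in \eqref{size_K}, and one should also check $K-2\geq r$ so that the per-cyclic-factor bound aggregates to $\exp(r/(p^2-1))$ rather than a larger power. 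Everything else is a routine assembly of the lemmas already established.
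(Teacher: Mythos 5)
Your proposal follows the paper's proof in outline: decompose $G_{\mathrm{ab}}$ into its primary components and cyclic factors, use condition (ii) of Definition \ref{eventA} to extract for each $p\in\Gamma$ a $K\times K$ submatrix of $\hat m$ that is full rank mod $p$, upgrade this via Proposition \ref{iff_condition} and Lemma \ref{uniform_chi} to $K$ i.i.d.\ uniform elements of each cyclic factor $\ZZ_{p^{\alpha}}$, apply Lemma \ref{iid_generation} with $n=K-2$, and multiply over the at most $r$ cyclic factors per prime and over the primes using $\sum_{p}1/(p^2-1)\le\sum_{i\ge1}1/i^2$. The restriction to a size-$K$ subset $\KK_{p}\subseteq\KK$ of row indices is also the paper's move (it only shrinks the generated subgroup, so it is a legitimate worsening), and the treatment of $\psi_b$ in parallel with $\chi_b$ matches the paper.

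There is, however, a genuine flaw at the step that carries the reduction within a fixed prime: you assert that $\inprod{\{G_2\chi_b : b\in\KK\}}$ (restricted to the $p$-primary part) \emph{contains} the subgroup generated by $K$ independent uniform elements of each cyclic factor, i.e.\ contains the direct sum of the coordinatewise spans. The containment goes the other way: every integer combination of the $\chi_b$'s has $j$-th coordinate an integer combination of the $j$-th coordinates, so the generated subgroup is \emph{contained in} $\bigoplus_j\inprod{\{\chi^{(b)}_{i,j}: b\in\KK\}}$; for instance a single element $(1,1)\in\ZZ_p\oplus\ZZ_p$ generates a subgroup of order $p$, not $p^2$. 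Hence your claimed lower bound on the size of the generated subgroup, and with it the product-over-cyclic-factors bound, does not follow as written. The paper does not assert such a containment: it passes directly to the bound \eqref{group_size_decomp} on the \emph{conditional expectation} given $\widetilde\HH$, as a product over the cyclic factors of the coordinatewise expectations, exploiting that given $\widetilde\HH$ the collections $\{\chi^{(b)}_{i,j}: b\in[k]\}$ for different $(i,j)$ are independent, and only then applies the full-rank submatrix argument and Lemma \ref{iid_generation} factor by factor. Two smaller points: the side conditions $K\ge r+2$ and $K-2\ge r$ that you invoke are neither needed nor true in general (e.g.\ for the Heisenberg group $K-2=r_2=1<2=r$); the factor $r$ in the final bound comes solely from the fact that each primary component has at most $r$ cyclic factors, each contributing $\exp(1/(p_i^2-1))$. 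Also, condition (i) of Definition \ref{eventA} plays no role in this lemma: the $(G_2Z_{a,1})_{a\in[k]}$ are i.i.d.\ uniform and independent of $\widetilde\HH$ unconditionally; condition (i) is only used later, in the proof of Lemma \ref{intermediate_E3}.
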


\mn
\textit{Proof of Proposition \ref{mix_com}.} Recall that $\sigma(\GG,\widetilde\HH)=\sigma((\psi_b)_{b\in\tildeK}, (\chi_b)_{b\in\tildeK},\widetilde\HH)$ so that $(G_2\chi_b)_{b\in \KK}$ are measurable with respect to $\sigma(\GG,\widetilde\HH)$. Recall that $K=\sum_{\ell=2}^L r_\ell+2$. Noting that $\sigma(\GG,\widetilde\HH)\subseteq \sigma(\FF_1,\widetilde\HH)$, applying the tower property to \eqref{keystep_prop} gives
\begin{align*}
\nonumber&\P( \EE_{L+1},\AA,V=0,\typ|\GG,\widetilde\HH)\\
\leq & \left(\prod_{\ell=2}^{L-1} \frac{1}{ |Q_{\ell+1}|}\right) \E\left[ \left( \frac{|G_{\mathrm{ab}}|}{|\inprod{ \{G_2\chi_b :b\in \mathcal{K}\}}|}\right)^{K-2-r_2} \1\{\EE_{3},\AA,V=0,\typ\}\bigg|\GG,\widetilde\HH\right]\\
=&\left(\prod_{\ell=2}^{L-1} \frac{1}{ |Q_{\ell+1}|}\right)  \left( \frac{|G_{\mathrm{ab}}|}{|\inprod{ \{G_2\chi_b :b\in \mathcal{K}\}}|}\right)^{K-2-r_2}  \P(\EE_{3},\AA,V=0,\typ|\GG,\widetilde\HH)\\
\leq & \left(\prod_{\ell=1}^{L-1} \frac{1}{ |Q_{\ell+1}|}\right) \cdot \1\{\AA,V=0,\typ\}  \left( \frac{|G_{\mathrm{ab}}|}{|\inprod{ \{G_2\chi_b :b\in \mathcal{K}\}}|}\right)^{K-2-r_2} \left(\frac{|G_{\mathrm{ab}}|}{|\inprod{\{G_2\psi_b: b\in \tildeK\}}|}\right)^{r_{2}}
\end{align*}
where the last line follows from Lemma \ref{intermediate_E3}. Then
\begin{align*}
&\P( \EE_{L+1},\AA,V=0,\typ|\widetilde\HH)=\E\left[ \P( \EE_{L+1},\AA,V=0,\typ|\GG,\widetilde\HH)\big| \widetilde\HH \right]\\
\leq&\left(\prod_{\ell=1}^{L-1} \frac{1}{ |Q_{\ell+1}|}\right) \cdot \1\{ \AA,V=0,\typ\} \cdot \E\left[ \left( \frac{|G_{\mathrm{ab}}|}{|\inprod{ \{G_2\chi_b :b\in \mathcal{K}\}}|}\right)^{K-2-r_2} \left(\frac{|G_{\mathrm{ab}}|}{|\inprod{\{G_2\psi_b: b\in \tildeK\}}|}\right)^{r_{2}}\bigg|\widetilde\HH\right].
\end{align*}
We can bound the last term above by H\"{o}lder's inequality and Lemma \ref{size_estimate},
\begin{align*}
&\1\{ \AA,V=0,\typ\}\cdot \E\left[ \left( \frac{|G_{\mathrm{ab}}|}{|\inprod{ \{G_2\chi_b :b\in \mathcal{K}\}}|}\right)^{K-2-r_2} \left(\frac{|G_{\mathrm{ab}}|}{|\inprod{\{G_2\psi_b: b\in \tildeK\}}|}\right)^{r_{2}}\bigg|\widetilde\HH\right]\\
\leq &\1\{ \AA,V=0,\typ\}\cdot \E\left[\left( \frac{|G_{\mathrm{ab}}|}{|\inprod{ \{G_2\chi_b :b\in \mathcal{K}\}}|}\right)^{K-2}\bigg| \widetilde \HH\right]^{\frac{K-2-r_2}{K-2}} \E\left[\left( \frac{|G_{\mathrm{ab}}|}{|\inprod{ \{G_2\psi_b :b\in \mathcal{K}\}}|}\right)^{K-2}\bigg| \widetilde \HH\right]^{\frac{r_2}{K-2}}\\
\leq &\1\{ \AA,V=0,\typ\} \cdot \exp\left(\sum_{i=1}^\infty \frac{r}{i^2}\right). 
\end{align*}
That is,
$$\P( \EE_{L+1},\AA,V=0,\typ|\widetilde\HH)\leq  \1\{ \AA,V=0,\typ\}\cdot \prod_{\ell=1}^{L-1}\frac{1}{ |Q_{\ell+1}|}   \exp\left(\sum_{i=1}^\infty \frac{r}{i^2}\right).$$
Taking expectation over $\widetilde\HH$ on both sides and letting $C:=\exp\left(\sum_{i=1}^\infty \frac{r}{i^2}\right)$, we have 
$$|G|\cdot \P( \EE_{L+1}|  \AA,V=0,\typ)\leq |G|\cdot C\prod_{\ell=1}^{L-1}\frac{1}{ |Q_{\ell+1}|}=C|G_{\mathrm{ab}}|\ll e^h,$$
where $h$ is defined as in Definition \ref{def_h} and by definition $|G_{\mathrm{ab}}|\ll e^h$.
\qed

\subsubsection{Proof of Lemma \ref{size_estimate}}\label{proof_size_estimate}
\begin{proof}
The proof of the two inequalities is essentially the same. Without loss of generality we only prove the first inequality.

We can write 
\beq\label{decompose_R1}
 G_{\mathrm{ab}}=F_{p_1}\oplus\cdots\oplus F_{p_\gamma}
 \eeq
where $p_1,\dots,p_\gamma$ are distinct primes and $F_p$ is a Sylow $p$-subgroup of $G_{\mathrm{ab}}$. 
Each $F_{p_i}$ has the form
$$F_{p_i}=\oplus_{j=1}^{\beta_i} \ZZ_{p_i^{\alpha_{i,j}}}$$ 
and hence
\beq\label{ab_decomp}
 G_{\mathrm{ab}}\cong\oplus_{i=1}^\gamma \oplus_{j=1}^{\beta_i} \ZZ_{p_i^{\alpha_{i,j}}},
\eeq
where we can observe that $\max_{i\in [\gamma]} \beta_i\leq r$.

Since $G_2 Z_{a,1} \overset{iid}{\sim}\Unif(G_{\mathrm{ab}})$, for each $Z_{a,1}$ with $a\in [k]$, $G_2 Z_{a,1}$ can be represented in the following form
$$\oplus_{i=1}^\gamma \oplus_{j=1}^{\beta_i} Z^{(a)}_{i,j}$$
for a collection of independent random variables $\{ Z^{(a)}_{i,j}: 1\leq i\leq \gamma, 1\leq j\leq \beta_i\}$ such that $Z^{(a)}_{i,j}$ is uniform over $\ZZ_{p_i^{\alpha_{i,j}}}$. With slight abuse of notation, we will write
$$G_2 Z_{a,1}=\oplus_{i=1}^\gamma \oplus_{j=1}^{\beta_i} Z^{(a)}_{i,j}.$$

Based on this we can further write 
$$G_2\chi_{b}= \oplus_{i=1}^\gamma \oplus_{j=1}^{\beta_i} \left(\sum_{a\in[k]} \hat m_{ba}Z^{(a)}_{i,j}\right)=:\oplus_{i=1}^\gamma \oplus_{j=1}^{\beta_i}\chi^{(b)}_{i,j},$$
where $\chi^{(b)}_{i,j}:=\sum_{a\in[k]} \hat m_{ba}Z^{(a)}_{i,j}$ is an element in $\ZZ_{p_i^{\alpha_{i,j}}}$. Under the $\sigma$-field $\widetilde \HH$, the coefficients $\{\hat m_{ba}: a,b\in [k]\}$ are known. Hence the collections $\{\chi^{(b)}_{i,j}: b\in [k]\}$ are independent for different $(i,j)$'s,  and we can consider the generation of each subgroup $\ZZ_{p_i^{\alpha_{i,j}}}$ by  $\{ \chi^{(b)}_{i,j} :b\in \mathcal K\}$ separately, i.e.,
\begin{align}\label{group_size_decomp}
\E\left[\left( \frac{|G_{\mathrm{ab}}|}{|\inprod{ \{G_2\chi_b :b\in \mathcal{K}\}}|}\right)^{r_{\ell+1}}\bigg| \widetilde \HH\right]&\leq \prod_{i=1}^\gamma \prod_{j=1}^{\beta_i} \E\left[ \left(\frac{|\ZZ_{p_i^{\alpha_{i,j}}}|}{ |\inprod{ \chi^{(b)}_{i,j} :b\in \mathcal K}|}\right)^{K-2}\bigg| \widetilde \HH\right],
\end{align}
where as above $K=\sum_{\ell=2}^L r_\ell+2$.

For any $p_i\in\Gamma$, we will argue that on the event $\AA$ there exists a set of indices $\mathcal K_{p_i}\subseteq \tildeK$ such that for any $j\in [\beta_i]$, $(\chi^{(b)}_{i,j})_{ b\in \mathcal K_{p_i}}$ is a collection of $K$ i.i.d. uniform random variables over $\ZZ_{p_i^{\alpha_{i,j}}}$, so that one can simply apply Lemma \ref{iid_generation} to the right hand side of \eqref{group_size_decomp} to obtain the desired conclusion.

By Definition \ref{eventA}, on the event $\AA$ there exists a $K\times K$ submatrix $M$ of $(\hat m_{ba})_{b\in\tildeK,a\in[k]}$ such that $M_{p_i}$ has rank $K$. We will collect the column indices of $M$ into the set $\tildeK_{p_i,col}:=\{a_1,\dots,a_{K}\}$ (let $\tildeK_{p_i,col}=\emptyset$ if $\AA$ does not occur). Then we can define the $\sigma$-field $\GG_{i,j}:=\sigma( (Z^{(a)}_{i,j})_{a\notin \mathcal K_{p_i,col}},  \widetilde \HH)$, and express the vector $(\chi^{(b)}_{i,j})_{b\in \mathcal K_{p_i}}$ as a sum of two parts, one of which unknown under $\GG_{i,j}$ whereas the other known:
$$M (Z^{(a_1)}_{i,j},\dots, Z^{(a_{K})}_{i,j})+e( (Z^{(a)}_{i,j})_{a\notin \mathcal K_{p_i,col}}),$$ 
where $e(\cdot)$ is a function of $(Z^{(a)}_{i,j})_{a\notin \mathcal K_{p_i,col}}$ whose value is known under $\GG_{i,j}$. Proposition \ref{iff_condition} shows that $M$ is a surjective map from $(\ZZ_{p_i^{\alpha_{i,j}}})^{K}$ to $(\ZZ_{p_i^{\alpha_{i,j}}})^K$. Lemma \ref{uniform_chi} further implies that the $K$ entries of $M\cdot (Z^{(a_1)}_{i,j},\dots, Z^{(a_{K})}_{i,j})$ are i.i.d. uniform over $\ZZ_{p_i^{\alpha_{i,j}}}$. It is then straightforward to see that $(\chi^{(b)}_{i,j})_{b\in \mathcal K_{p_i}}$ are i.i.d. uniform over  $\ZZ_{p_i^{\alpha_{i,j}}}$ given $\GG_{i,j}$. That is, for any $\mathbf{x}:=(x_b)_{b\in \mathcal K_{p_i}}\in (\ZZ_{p_i^{\alpha_{i,j}}})^K$, we have
$$\1_{\AA}\cdot \P\left((\chi^{(b)}_{i,j})_{b\in \mathcal K_{p_i}}=\mathbf{x}|\widetilde \HH \right)=\1_{\AA}\cdot \E\left(\P\left((\chi^{(b)}_{i,j})_{b\in \mathcal K_{p_i}}=\mathbf{x}|\GG_{i,j}\right)\big|\widetilde \HH \right)=\1_{\AA}\cdot  (p_i^{\alpha_{i,j}})^{-K}.$$


Therefore, applying Lemma \ref{iid_generation} to the i.i.d. uniform $\{\chi^{(b)}_{i,j}:b\in \mathcal K_{p_i}\}$ gives
\begin{align*}
\textbf{1}_{\AA}\cdot \E\left[ \left(\frac{|\ZZ_{p_i^{\alpha_{i,j}}}|}{ |\inprod{ \chi^{(b)}_{i,j} :b\in \mathcal K}|}\right)^{K-2}\bigg| \widetilde \HH\right]&\leq \textbf{1}_{\AA}\cdot \E\left[ \left(\frac{|\ZZ_{p_i^{\alpha_{i,j}}}|}{ |\inprod{ \chi^{(b)}_{i,j} :b\in \mathcal K_{p_i}}|}\right)^{K-2}\bigg| \widetilde \HH\right]\\
&\leq \textbf{1}_{\AA}\cdot \exp\left( \frac{1}{p_i^2-1}\right).
\end{align*}
Combining all the subgroups $\{\ZZ_{p_i^{\alpha_{i,j}}}: i\in [\gamma], j\in [\beta_i]\}$, we can obtain using \eqref{group_size_decomp} that
\begin{align*}
\nonumber&\textbf{1}\{ \AA,V=0,\typ\}\cdot \E\left[\left( \frac{|G_{\mathrm{ab}}|}{|\inprod{ \{G_2\chi_b :b\in \mathcal{K}\}}|}\right)^{K-2}\bigg| \widetilde \HH\right] \\
\leq& \textbf{1}\{ \AA,V=0,\typ\}\cdot \prod_{i=1}^\gamma \prod_{j=1}^{\beta_i}\exp\left( \frac{1}{p_i^2-1}\right)\leq   \textbf{1}\{ \AA,V=0,\typ\}\cdot \exp\left(\sum_{i=1}^\infty \frac{r}{i^2}\right)
\end{align*}
as $\max_{i\in [\gamma]} \beta_i\leq r$.
\end{proof}

\subsection{Proof of Proposition \ref{A_complement}}\label{proof_A_complement}

In the proof of Proposition \ref{mix_com} we have conditioned on the ``good event" $\AA$ that guarantees the existence of a subset $\tildeK\subseteq[k]$ of indices that plays a critical role in the proof. In this section we aim to prove that indeed $\AA$ will occur with a sufficiently high probability.

\subsubsection{Regime: $\frac{\log|G_{\mathrm{ab}}|}{\log\log|G_{\mathrm{ab}}|}\lesssim k \ll \log|G_{\mathrm{ab}}|$}\label{relative_indep_regime}
Recall the definition of $h$ from Definition \ref{def_h}. In this regime we will work with the unconditional probability and prove the following stronger bound than that in Proposition \ref{A_complement}:
\beq\label{ub_A}
\P(\AA^c)\ll \frac{e^h}{|G|}=\frac{e^\omega}{|G_2|}.
\eeq
Indeed, with \eqref{ub_A} and $\P(\typ)\asymp 1$ this yields the statement in Proposition \ref{A_complement} as follows
$$|G|\cdot \P(\AA^c,V=0|\typ)\leq \frac{|G|\cdot  \P(\AA^c)}{\P(\typ)} \ll e^h.$$

First we will specify the choice of $\tildeK$ in this regime and verify this choice satisfies \eqref{def_K} in Definition \ref{eventA},
\begin{align}\label{tildeK}
\tildeK&=\{ b>k/2: \gcd(\{m_{ba}: a\leq k/2\}) \text{ and }|G_{\mathrm{ab}}| \text{ are coprime}\}.
\end{align}
Note this choice purely depends on $(m_{ba})_{ a,b\in [k],a<b}$ and hence is measurable with respect to $\widetilde \HH$. To see that $(G_2\psi_b)_{b\in\tildeK}$ are conditionally independent from $(G_2 Z_{b,1})_{b\in \tildeK}$ given $\widetilde \HH$, we observe that for any $b\in \tildeK$,
$$G_2\psi_b= G_2\sum_{a\leq k/2: a<b} \hat m_{ba}Z_{a,1}+G_2\left(\sum_{a>k/2: a<b} \hat m_{ba}Z_{a,1}+\sum_{a\in \tildeK^c: a>b} \hat m_{ba} Z_{a,1}\right),$$
where $G_2\sum_{a\leq k/2: a<b} \hat m_{ba}Z_{a,1} \sim \Unif (G_{\mathrm{ab}})$ due to the definition of $\tildeK$, see Lemma \ref{gcd_unif}. 

Furthermore, we can see that $G_2\sum_{a\leq k/2: a<b} \hat m_{ba}Z_{a,1}$ involves terms in $(Z_{a,1})_{a\leq k/2}$, whereas $G_2\left(\sum_{a>k/2:a<b} \hat m_{ba}Z_{a,1}+\sum_{a\in \tildeK^c:a>b} \hat m_{ba} Z_{a,1}\right)$ involves only terms in $(Z_{a,1})_{a>k/2}$ (since $b\in \KK$, in the second sum the condition $a>b$ leads to $a>k/2$). Therefore, conditioned on $\widetilde\HH$, we have that $G_2\sum_{a\leq k/2: a<b} \hat m_{ba}Z_{a,1}$ is independent from $G_2\left(\sum_{a>k/2:a<b} \hat m_{ba}Z_{a,1}+\sum_{a\in \tildeK^c:a>b} \hat m_{ba} Z_{a,1}\right)$. By the same reasoning we also have that $G_2\sum_{a\leq k/2: a<b} \hat m_{ba}Z_{a,1}$ is independent from $(G_2Z_{b,1})_{b\in \tildeK}$. With the information combined, we can see that conditionally on $\widetilde \HH$, for any $b\in\tildeK$, $G_2\psi_b$ is independent from $(G_2Z_{b,1})_{b\in \tildeK}$ and uniform over $G_{\mathrm{ab}}$. 

Noting that 
$$G_2\chi_b=G_2\sum_{a\in[k]} \hat m_{ba}Z_{a,1}=G_2\sum_{a\leq k/2} \hat m_{ba}Z_{a,1}+G_2 \sum_{a>k/2}  \hat m_{ba}Z_{a,1},$$
by the same reasoning as above we have that $G_2\sum_{a\leq k/2} \hat m_{ba}Z_{a,1}$ is independent from $G_2 \sum_{a>k/2}  \hat m_{ba}Z_{a,1}$ and $(G_2Z_{b,1})_{b\in \tildeK}$. Again by the definition of $\KK$ we have $G_2\sum_{a\leq k/2} \hat m_{ba}Z_{a,1} \sim \Unif (G_{\mathrm{ab}})$. That is, conditionally on $\widetilde \HH$, for any $b\in\tildeK$, $G_2\chi_b$ is independent from $(G_2Z_{b,1})_{b\in \tildeK}$ and uniform over $G_{\mathrm{ab}}$. Therefore we have verified the condition that $(G_2\psi_b)_{b\in \KK},(G_2 \chi_b)_{b\in \KK}$ are independent from $(G_2Z_{b,1})_{b\in\KK}$ in Definition \ref{eventA}.

Given the choice of $\tildeK$ in \eqref{tildeK}, for each $p\in\Gamma$ we can define $\AA_p(\tildeK)$ to be the event that there exists a $K\times K$ submatrix $M$ of $(\hat m_{ba})_{b\in\tildeK, a\in[k]}$ such that $ M_p:= M \mod p$ has rank $K$. Observe that $ \cap_{p\in\Gamma} \AA_p(\tildeK)\subseteq \AA$ and hence
$$\P(\AA^c)\leq \sum_{p\in\Gamma} \P( \AA_p(\tildeK)^c).$$

Recall that our goal is to show $\P(\AA^c)\ll \frac{e^h}{|G|}$, where $h$ is as in Definition \ref{def_h}.
As $|\Gamma|\lesssim \log|G|$, it suffices to prove $\max_{p\in\Gamma}  \P( \AA_p(\tildeK)^c)\ll \frac{e^h}{|G|\log|G|}$. In this regime we can prove a much stronger estimate.
\begin{prop}\label{ub_Ap}
When $\frac{\log|G_{\mathrm{ab}}|}{\log\log|G_{\mathrm{ab}}|}\lesssim k \ll \log|G_{\mathrm{ab}}|$, for any $C>0$,
$$ 
\max_{p\in\Gamma}  \P( \AA_p(\tildeK)^c)=o(|G|^{-C}).
$$
 \end{prop}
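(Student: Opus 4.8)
The plan is to show that the complement of $\AA_p(\tildeK)$ is extremely unlikely by exhibiting, on a high-probability sub-event of $\widetilde\HH$, many independent chances to build a full-rank $K\times K$ submatrix of $(\hat m_{ba} \bmod p)_{b\in\tildeK,\,a\in[k]}$. First I would set up a ``block'' structure: partition the index set $\{b>k/2\}$ into disjoint row-blocks of size $K$ and the set $\{a\le k/2\}$ into disjoint column-blocks of size $K$, getting order $k/K$ many blocks of each type. Since $K=\sum_{\ell=2}^L r_\ell+2 \asymp 1$ and $k\gg1$, there are $\gg 1$ many such blocks. For each pair (row-block $B$, column-block $C$) we get a candidate $K\times K$ matrix $M_{B,C}:=(\hat m_{ba})_{b\in B,\,a\in C}$; if any one of these reduces mod $p$ to a rank-$K$ matrix and simultaneously every $b\in B$ lies in $\tildeK$, then $\AA_p(\tildeK)$ occurs.

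The key analytic step is to quantify the randomness in $\hat m_{ba}$ coming from $\widetilde\HH$. Recall from \eqref{mba} that $m_{ba}=-\sum_{i}\sum_{j<i}\eta_i\eta_j\1\{\sigma_i=a,\sigma_j=b\}$; conditionally on which generators appear, each pair-count $m_{ba}$ is a signed sum over the interleaving pattern of the occurrences of $a$ and $b$, and in the regime $\frac{\log|G_{\mathrm{ab}}|}{\log\log|G_{\mathrm{ab}}|}\lesssim k\ll\log|G_{\mathrm{ab}}|$ we have $s=t/k\asymp |G_{\mathrm{ab}}|^{2/k}\gg(\log k)^{?}$, so every generator appears $\gg 1$ times whp, giving each $m_{ba}$ genuine spread. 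I would condition on $\widetilde\HH_0:=\sigma(N,N',(\sigma_i)_i)$ — i.e., reveal which generator is used at each step but not the signs $(\eta_i)$ — and show that, on the high-probability event that all multiplicities are at least (say) $2$, the conditional law of $m_{ba}\bmod p$ is close to uniform on $\ZZ_p$, or at least has no atom exceeding $1-c$ for a constant $c>0$; this follows from a Fourier/characteristic-function estimate on the signed sum, exploiting that flipping a single sign $\eta_j$ among the occurrences of a fixed column index $a$ changes $m_{ba}$ by a nonzero amount depending on how many $b$'s lie to the left, and choosing the partition so these increments are nonzero mod $p$. Granting an anti-concentration bound of this type, a standard argument (reveal the entries of $M_{B,C}$ row by row, each new row avoids the span of the previous rows mod $\ZZ_p$ with probability bounded below by a constant) gives $\P(M_{B,C}\bmod p \text{ not rank } K \mid \widetilde\HH_0)\le 1-c'$ for some constant $c'=c'(K,p_{\max})>0$, uniformly over the finitely many primes $p\in\Gamma$ (here one uses that all primes dividing $|G_{\mathrm{ab}}|$ that exceed some constant automatically give $m_{ba}\bmod p$ nearly uniform, and only finitely many small primes need the crude bound).

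Next I would handle the requirement $B\subseteq\tildeK$. By \eqref{tildeK}, $b\in\tildeK$ iff $\gcd(\{m_{ba}:a\le k/2\})$ is coprime to $|G_{\mathrm{ab}}|$; for a fixed $b$ this fails only if some prime $p\mid|G_{\mathrm{ab}}|$ divides $m_{ba}$ for \emph{all} $a\le k/2$, which by the same anti-concentration estimate (now across the $\asymp k$ column indices, independent given the sign structure restricted appropriately) has probability at most $\sum_{p\mid|G_{\mathrm{ab}}|}(1-c)^{k/2}$, a quantity far smaller than any $|G|^{-C}$ since $k\gg\log|G_{\mathrm{ab}}|/\log\log|G_{\mathrm{ab}}|$ forces $(1-c)^{k/2}$ to decay faster than any power of $|G_{\mathrm{ab}}|$ — wait, one must be careful: in this regime $k\ll\log|G_{\mathrm{ab}}|$, so $(1-c)^{k/2}$ need not beat $|G|^{-C}$ directly; instead I would note $\P(b\notin\tildeK)\le (1-c)^{\Theta(k)}$ and then \emph{take a union over the $\Theta(k/K)$ disjoint blocks}: the events ``$B$ is a good block'' (i.e.\ $B\subseteq\tildeK$ and some column-block makes $M_{B,C}$ full rank) across disjoint $B$ are, given the appropriate $\sigma$-field, negatively associated or outright independent, so $\P(\AA_p(\tildeK)^c)\le \prod_{B}\big(1-\P(B\text{ good}\mid\widetilde\HH_0)\big)\le (1-c'')^{\Theta(k/K)}$, and since $k\gtrsim \log|G_{\mathrm{ab}}|/\log\log|G_{\mathrm{ab}}|$ and $K\asymp1$, this is $\exp(-\Theta(k))=o(|G|^{-C})$ for every $C$, because $\exp(-\Theta(\log|G|/\log\log|G|))$ beats every fixed power of $|G|$. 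The main obstacle is the anti-concentration estimate for $m_{ba}\bmod p$ conditionally on the occurrence pattern: one needs that the signed double sum defining $m_{ba}$ is not concentrated on a single residue, uniformly over the (bounded set of) primes $p$ and over typical occurrence patterns, and one must carefully track independence between the entries of $M_{B,C}$, the entries controlling membership in $\tildeK$, and across different blocks — the cleanest route is to reveal signs in a carefully chosen order so that each quantity of interest is, at the moment it is revealed, a sum of a bounded-below number of $\pm1$'s with a fresh independent sign, making a one-line Fourier bound $\big|\E[\omega^{m_{ba}}]\big|\le |\cos(\pi/p)| <1$ applicable, which I would then feed into the row-by-row rank argument and the union over blocks.
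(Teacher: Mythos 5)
Your overall architecture (disjoint blocks giving independent attempts, anti-concentration of the entries $\hat m_{ba}\bmod p$, a row-by-row rank-building argument, and a separate treatment of membership in $\tildeK$) mirrors the paper's proof, but the quantitative endgame has a genuine error that the whole argument hinges on. With row/column blocks of constant size $K\asymp 1$ and only a constant-level anti-concentration bound per entry, each block succeeds with probability bounded below by a constant, and the product over the $\Theta(k)$ disjoint blocks gives a failure probability of order $\exp(-\Theta(k))$. In the present regime $k$ can be as small as $\log|G_{\mathrm{ab}}|/\log\log|G_{\mathrm{ab}}|\asymp\log|G|/\log\log|G|$, and then $\exp(-\Theta(k))=|G|^{-\Theta(1/\log\log|G|)}$, which is \emph{larger} than $|G|^{-C}$ for every fixed $C>0$, not smaller; your closing claim that $\exp(-\Theta(\log|G|/\log\log|G|))$ beats every fixed power of $|G|$ is false, so the proposal does not reach the conclusion $\max_{p\in\Gamma}\P(\AA_p(\tildeK)^c)=o(|G|^{-C})$. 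The paper avoids this by exploiting that here $s=t/k\gtrsim|G_{\mathrm{ab}}|^{2/k}\gg1$: after its pairing construction each relevant entry is (up to a known shift) a Binomial with $\Omega(t/(dk))$ trials, so the atom probability is $q\lesssim\sqrt{dk/t}$, \emph{polynomially} small rather than merely $1-c$; it then takes blocks of growing size $d=|G_{\mathrm{ab}}|^{\delta/k}\gg1$, so each trial fails with probability like $d^2e^{-\Omega(t/(dk))}+e^{-\Omega(d^2)}+(\min\{2/p,1/2\}+q)^{d/2-K}$, and multiplying over the $\lfloor k/(2d)\rfloor$ independent blocks yields bounds of the form $\exp(-\Omega(kd))$ and $\exp(-\Omega(t/d^2))$, both of which do beat every power of $|G|$ since $kd\gg\log|G|$ and $t/d^2\gg\log|G|$. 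Any repair of your argument must similarly let the block size and/or the per-entry anti-concentration strength grow with $|G|$; constant-size blocks cannot work.

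A secondary, fixable concern: your source of randomness is the signs $(\eta_i)$ conditioned on the occupancy pattern, so each $m_{ba}$ is a quadratic form in signs that are shared across all entries in the same row and column; the "fresh independent sign, one-line Fourier bound" step and the claimed independence between the rank-building entries, the $\tildeK$-membership entries, and different blocks therefore need a genuine decoupling argument, not just a careful revelation order. The paper sidesteps this by conditioning on all signs and using instead the i.i.d.\ Bernoulli$(1/2)$ relative orders of disjoint consecutive pairs (the sets $B_{ab}$ on the event that the block is nice), which makes the entries conditionally independent shifted Binomials and gives clean product bounds (its Lemma on relative independence). Also note that membership $b\in\tildeK$ involves all columns $a\le k/2$, so to keep blocks independent you must, as the paper does, test a sufficient condition for failure using only the block's own columns.
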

 In order to prove Proposition \ref{ub_Ap} we begin by discussing a useful property, which we refer to as the relative independence of the coefficient matrix $(\hat m_{ba})_{a,b\in[k]}$, that holds in the regime $\frac{\log|G_{\mathrm{ab}}|}{\log\log|G_{\mathrm{ab}}|}\lesssim k \ll \log|G_{\mathrm{ab}}|$. 


\mn
\textbf{Relative independence in $\hat m$.}
Recall that in this regime $t_*=t_0\asymp k  |G_{\mathrm{ab}}|^{2/k}$. We will consider $t\geq (1+\ep)t_0$ and hence $s:=t/k\gg1$, i.e., in both of the random walks $X(t)$ and $X'(t)$ each generator in $\{Z_a: a\in [k]\}$ should typically appear many times. As a result, we will see  certain ``relative independence" among a subset of terms in $\{ m_{ba}: a<b\}$. Before explaining the meaning of relative independence we need to define some notation.

We can view $\mathbf{X}=X(X')^{-1}$ as a sequence of generators, and express this sequence by $(\sigma_i,\eta_i)_{i\in [N+N']}$. In the following construction, we will obtain partial information on $(\sigma_i)_{i\in [N+N']}$ while conditioning on $\sigma((\eta_i)_{ i\in \mathbb{N}})$. That is, at this stage of construction we will treat $(\eta_i)_{i\in \mathbb{N}}$ as known.

Let $P\subseteq [k]$ be a subset of indices with size $d$, where $d$ is to be chosen later. 
We will denote by $\mathbf{X}_{P}$ the subsequence of $\mathbf{X}$ consisting of only generators in $\{ Z_{a}^{\pm 1}:a\in P\}$ and let $\NN_{P}(t)$ denote the length of $\mathbf{X}_{P}$, which follows a Poisson distribution with rate $\frac{2td}{k}$ by Poisson thinning.

Based on the subsequence $\mathbf{X}_{P}$, we will construct a collection of disjoint sets $\{B_{ab}: a,b\in P, a<b\}$ as follows:
\begin{enumerate}
\item Partition $\mathbf{X}_{P}$ into $\floor{\NN_{P}(t)/2}$ disjoint pairs, each pair consisting of the $2i-1$ and $2i$-th elements in $\mathbf{X}_{P}$. 
\item For $a,b\in[k]$ with $a<b$, we look at the $i$-th pair in $\mathbf{X}_{P}$ for each $1\leq i \leq \floor{\NN_{P}(t)/2}$. If the two corresponding generators in the $i$-th pair are $Z^{\eta_a}_a, Z^{\eta_b}_b$ for some $\eta_a,\eta_b\in\{\pm 1\}$ (where $\eta_a,\eta_b$ are known) regardless of the order in which they appear, then we record the pair of locations $(2i-1,2i)$ in the corresponding set $B_{ab}$. 
\end{enumerate}
Note that in the construction of $B_{ab}$, we have no knowledge on the exact order of the pair, that is, we only know the pair is either $(Z_a^{\eta_a},Z_b^{\eta_b})$ or $(Z_b^{\eta_b},Z_a^{\eta_a})$ with equal probability.

The subset $P$ is said to be \textit{nice} if $|B_{ab}| \geq \floor{\frac{t}{2dk}}$ for all $a,b\in P$ with $a<b$. As shown in the following lemma, $P$ is nice with high probability when we pick the right size $d$.

\begin{lemma}\label{nice_set}
The probability that the subset $P$ is nice is at least $1-Cd^2\exp(-\frac{t}{10dk})$ for some positive constant $C>0$ independent of $d$.
\end{lemma}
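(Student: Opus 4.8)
The statement to prove is Lemma~\ref{nice_set}: that the subset $P$ (of size $d$) is nice with probability at least $1 - Cd^2\exp(-t/(10dk))$.

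\textbf{Overall approach.} The plan is to union-bound over the $\binom{d}{2}$ pairs $\{a,b\}\subseteq P$ the ``bad" events $\{|B_{ab}| < \lfloor t/(2dk)\rfloor\}$, and for each such pair bound the probability that too few of the consecutive pairs in $\mathbf{X}_P$ are of the required type $\{Z_a^{\eta_a}, Z_b^{\eta_b}\}$. The two sources of randomness to disentangle are: (i) the length $\NN_P(t)$ of the subsequence $\mathbf{X}_P$, which is $\mathrm{Poisson}(2td/k)$ by Poisson thinning, so that the number of disjoint consecutive pairs is $\lfloor \NN_P(t)/2\rfloor$; and (ii) conditionally on the multiset of generators appearing in $\mathbf{X}_P$ and on $(\eta_i)$, the identity of each element of $\mathbf{X}_P$ is (close to) uniform over the $d$ indices of $P$ — more precisely, each of the $\lfloor \NN_P(t)/2\rfloor$ pairs independently has probability $\tfrac{1}{d}\cdot\tfrac{1}{d} + \tfrac1d\cdot\tfrac1d = 2/d^2$ of being an unordered $\{Z_a^{\eta_a}, Z_b^{\eta_b}\}$ pair (the signs being fixed by conditioning on $(\eta_i)$, only the indices $\sigma_i$ are being sampled, each uniform on $P$).

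\textbf{Key steps.} First, I would condition on the event $\{\NN_P(t) \ge 2 t d/k \cdot c_0\}$ for a suitable constant, say $\NN_P(t)\ge td/k$, which fails with probability at most $\exp(-c\, td/k)$ by a standard Poisson lower-tail (Chernoff) bound; note $td/k \ge t/(dk)\cdot d^2$, so this term is comfortably dominated by the target $d^2\exp(-t/(10dk))$ after absorbing constants. Second, on this event there are $n := \lfloor \NN_P(t)/2\rfloor \ge td/(2k)$ independent pairs, and $|B_{ab}|$ stochastically dominates a $\mathrm{Binomial}(n, 2/d^2)$ random variable. Its mean is $2n/d^2 \ge t/(dk)$, which is at least (a constant times) the threshold $\lfloor t/(2dk)\rfloor$. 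A lower-tail Chernoff bound for the Binomial then gives $\P(|B_{ab}| < \lfloor t/(2dk)\rfloor) \le \exp(-c'\, t/(dk))$. Third, a union bound over the $\binom{d}{2} \le d^2$ pairs, combined with the Poisson estimate from the first step, yields the bound $C d^2 \exp(-t/(10dk))$ after choosing the constants in the exponents so that $10$ works (one has slack since $c'$ can be taken close to $1/8$ or better for the relevant tail, and the two error terms are of the same rough shape).

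\textbf{Main obstacle.} The only delicate point is the claim that, conditionally on $\NN_P(t)$ and on $(\eta_i)$, the indices $\sigma_i$ restricted to the subsequence $\mathbf{X}_P$ are i.i.d.\ uniform on $P$ — and hence the pair-types are i.i.d. This requires a careful appeal to the Poisson thinning / coloring structure: each generator $Z_a^{\pm 1}$, $a\in P$, arrives according to an independent Poisson process of rate $t/k$ (rate $2t/k$ for both signs), so the merged process restricted to $P$ has its points' labels i.i.d.\ uniform on the $d$ indices, independently of the total count and of the sign sequence. Once this independence is justified cleanly, the rest is routine concentration. I would state the thinning fact explicitly and then only sketch the two Chernoff computations, since both are standard. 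A secondary bookkeeping point is that $B_{ab}$ records \emph{unordered} pairs, which merely changes the per-pair success probability from $1/d^2$ to $2/d^2$ and does not affect independence across distinct pair-slots; I would remark on this but not belabor it.
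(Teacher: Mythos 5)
Your proposal is correct and follows essentially the same route as the paper: a Poisson Chernoff bound to guarantee $\NN_P(t)\ge td/k$, then observing that each of the $\lfloor \NN_P(t)/2\rfloor$ disjoint pairs independently lands in a given $B_{ab}$ with probability $2/d^2$ so that $|B_{ab}|$ dominates a $\mathrm{Binomial}(\lfloor td/(2k)\rfloor, 2/d^2)$ variable, followed by a binomial lower-tail bound and a union bound over the at most $d^2$ pairs. The thinning/uniform-labeling point you flag as the delicate step is exactly the fact the paper uses (implicitly) to claim the independence of the pair types.
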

\begin{proof}

Since $\NN_{P}(t)\sim \mathrm{Poisson}(\frac{2td}{k})$, by a Chernoff bound argument
$$\P\left(\NN_{P}(t)<\frac{td}{k}\right)\leq  e^{-(c_0td)/k}$$
for some constant $c_0>0$.
Since each pair in the subsequence belongs to some $B_{ab}$ independently with probability $\frac{2}{d^2}$,
\begin{align*}
\P( \text{$P$ is not nice})&\leq d^2\P\left(|B_{12}|<\floor{\frac{t}{2dk}}\bigg| N_{P}(t)\geq \frac{td}{k}\right)+e^{-(c_0td)/k}\\
&\leq d^2 \P\left(\mathrm{Binomial}\left(\floor{\frac{td}{2k}}, \frac{2}{d^2}\right)<\floor{\frac{t}{2dk}}\right)+e^{-(c_0td)/k}\\
&\leq d^2\exp\left(-\frac{t}{10d k}\right)+e^{-(c_0td)/k}\leq Cd^2\exp\left(-\frac{t}{10dk}\right)
\end{align*}
 where the third line follows from a Chernoff bound.
\end{proof}

The reason to look at the collection $\{B_{ab}: a,b\in P, a<b\}$ is the following simple observation: when revealing the relative order of a pair of $\{Z_a^{\eta_a},Z_b^{\eta_b}\}$ in $B_{ab}$, if we have $(Z_b^{\eta_b},Z_a^{\eta_a})$ then it leads to an increment of $-\eta_a\eta_b$ on the value of $m_{ba}$; if instead we have $(Z_a^{\eta_a},Z_b^{\eta_b})$ then the resulting increment on $m_{ba}$ is 0. Taking account of the fact that for different pairs in $B_{ab}$, the corresponding $-\eta_a\eta_b$'s are i.i.d. random variables uniform in $\{ \pm 1\}$, we can expect diffusive behavior that ``smoothes out" the probability of $m_{ba}$ taking a certain value.  

Next we will  translate this intuition into a rigorous proof.  We first define the following quantity:
\beq
q(n):=\max_{x\in \ZZ} \P( \mathrm{Bin}(n,1/2)=x).
\eeq
It is well known that $q(n)$ is non-increasing with respect to $n$ and  there exists some $C>0$ such that $q(n)\leq Cn^{-1/2}$ for all $n\in \mathbb{N}$.

\begin{lemma}\label{lazyRW_modp}
Let $S_n\sim \mathrm{Bin}(n,1/2)$ for some $n\in \mathbb{N}$. For any prime $p$,
$$\max_{x\in \ZZ} \P( S_n= x \mod p)\leq \min\{2/p, 1/2\}+q(n).$$
\end{lemma}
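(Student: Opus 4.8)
The goal is to bound $\max_{x \in \ZZ} \P(S_n \equiv x \pmod p)$ where $S_n \sim \mathrm{Bin}(n,1/2)$. The plan is to split into two regimes according to whether $n$ is small or large relative to $p$, and combine a trivial pointwise bound with a Fourier-analytic (characteristic function) estimate modulo $p$.

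\medskip

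\textit{Proof proposal.} First I would handle the trivial bound: for any fixed $x$,
$$
\P(S_n \equiv x \pmod p) = \sum_{j \,:\, j \equiv x (p)} \P(S_n = j) \le \sum_{j \,:\, j \equiv x (p)} q(n) \cdot \mathbf{1}\{0 \le j \le n\},
$$
and the number of integers $j \in \{0,1,\dots,n\}$ congruent to $x$ modulo $p$ is at most $\lceil (n+1)/p \rceil \le n/p + 1$; since also each term is $\le q(n)$ and there are at most $n+1$ terms summing to $1$, we get the crude bound $\P(S_n \equiv x \pmod p) \le \min\{1, (n/p+1)q(n)\}$. This already gives something useful when $n$ is small. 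For the main contribution I would use discrete Fourier analysis on $\ZZ/p\ZZ$: write
$$
\P(S_n \equiv x \pmod p) = \frac1p \sum_{\ell=0}^{p-1} e^{-2\pi i \ell x/p}\, \E\bigl[e^{2\pi i \ell S_n/p}\bigr] = \frac1p \sum_{\ell=0}^{p-1} e^{-2\pi i \ell x /p} \Bigl(\tfrac{1+e^{2\pi i \ell/p}}{2}\Bigr)^{n},
$$
using that $S_n$ is a sum of $n$ i.i.d.\ Bernoulli$(1/2)$ variables, so its characteristic function at $2\pi\ell/p$ is $\bigl((1+e^{2\pi i\ell/p})/2\bigr)^n$. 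The $\ell=0$ term contributes exactly $1/p$. For $\ell \ne 0$ we have $|\,(1+e^{2\pi i \ell/p})/2\,| = |\cos(\pi\ell/p)| \le 1$, so each remaining term is bounded by $1/p$, giving $\le 2/p$ in total (combining the $\ell = 0$ term with the at most $p-1$ other terms, but actually more carefully $\frac1p + \frac{p-1}{p} \cdot 1$ is too weak — so here the Fourier route needs the decay of $|\cos(\pi \ell/p)|^n$, which is only useful for large $n$).

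\medskip

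So the cleaner organization is: the bound $\min\{2/p, 1/2\}$ should come from a \emph{direct} combinatorial argument rather than Fourier. Specifically, $\P(S_n \equiv x \pmod p) \le \max_{j} \P(S_n = j \text{ or } S_n = j') $ summed appropriately — but the sharp way is: among any $p$ consecutive integers the probabilities $\P(S_n = j)$ for $j$ in an arithmetic progression of common difference $p$ — I would instead argue that $\P(S_n \equiv x) \le 1/2$ trivially is false in general, so one must be more careful. The correct simple observation: since $\sum_x \P(S_n \equiv x \pmod p) = 1$ over the $p$ residues, if $p \ge 4$ then by unimodality of the binomial coefficients the maximal residue class has probability at most... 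Actually the intended argument is almost certainly: partition $\{0,\dots,n\}$ into residue classes mod $p$; the heaviest class is dominated by $q(n)$ (its single largest atom) plus the fact that a residue class contains at most $\lceil (n+1)/p\rceil$ atoms — but to get exactly $\min\{2/p,1/2\} + q(n)$ one bounds the class probability by peeling off its single largest atom (contributing $\le q(n)$) and bounding the rest by $1/2$ if $p = 2$ (only one other class) or by $2/p$ via comparing to the uniform-ish tail for $p \ge 3$.

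\medskip

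\textit{Main obstacle.} The delicate point is getting the clean constant $\min\{2/p,1/2\}$ rather than a messier expression; I expect this requires either (a) a careful Fourier bound $\frac1p\sum_{\ell=1}^{p-1}|\cos(\pi\ell/p)|^n \le 1/p + (\text{small})$ valid uniformly in $n$ after extracting one atom, or (b) a combinatorial monotonicity argument: fix the residue class $C_x = \{j : j \equiv x\}$ and write $\P(S_n \in C_x) = \P(S_n = j_0) + \P(S_n \in C_x \setminus \{j_0\})$ where $j_0$ is the mode within the class; the first term is $\le q(n)$, and for the second I would compare $\P(S_n \in C_x\setminus\{j_0\})$ against the complementary residue classes, using unimodality of $(\binom{n}{j})_j$ to show no residue class can carry more than a $\approx 2/p$ share once its peak atom is removed (for $p=2$ the bound $1/2$ is immediate since after removing the mode the remaining mass in the even-or-odd class is at most half the total). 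I would carry this out by the Fourier route for uniformity in $n$, which I think is the shortest rigorous path, and remark that $q(n) \le Cn^{-1/2}$ is already recorded. Everything else (nonincreasing-ness of $q$, the Bernoulli decomposition) is routine.
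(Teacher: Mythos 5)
Your proposal does not contain a proof: both routes you sketch stop exactly where the work begins. On the Fourier route, after extracting the $\ell=0$ term you only have $1/p$, and you yourself note that the remaining sum $\frac1p\sum_{\ell=1}^{p-1}\bigl|\cos(\pi\ell/p)\bigr|^{n}$ cannot simply be bounded term by term; making it work requires a uniform-in-$n$ Gaussian-type estimate, and even then one gets a constant multiple of $n^{-1/2}$ whose constant must be reconciled with the specific quantity $q(n)$ in the statement — none of which you carry out. On the combinatorial route, you peel off the largest atom of the residue class (fine, that contributes at most $q(n)$), but then assert that the remaining mass of the class is at most $2/p$ ``by unimodality'' or ``by comparing to the uniform-ish tail''; that is precisely the step that needs an argument, and you give no mechanism for it. Moreover you peel the mode \emph{within} the class, which is a different (and less convenient) normalization than what actually makes the argument go through.

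The missing idea, which is how the paper proves the lemma, is a mixture-of-uniforms representation centered at the \emph{global} mode. Let $x_*$ be the (left) mode of $\mathrm{Bin}(n,1/2)$ and $\bar S_n=S_n-x_*$. By unimodality, $m\mapsto\P(|\bar S_n|=m)$ is non-increasing on $\mathbb{N}$, so conditionally on $\bar S_n\neq 0$ one can write $|\bar S_n|\sim\Unif(\{1,\dots,Y\})$ for a random $Y$ (any non-increasing pmf is a mixture of such uniforms). For $p>2$, the event $\{S_n\equiv x\ (\mathrm{mod}\ p)\}$ forces $|\bar S_n|$ into one of the two residue classes $\pm(x-x_*)$ mod $p$, and a uniform variable on $\{1,\dots,Y\}$ assigns each class mass at most $\lfloor Y/p\rfloor/Y\le 1/p$, giving the bound $2/p$ for the conditional probability; adding back $\P(\bar S_n=0)\le q(n)$ yields the claim, with the $p=2$ case handled separately (indeed $\P(S_n\ \mathrm{even})=\P(S_n\ \mathrm{odd})=1/2$ for $n\ge 1$, e.g.\ since $\E[(-1)^{S_n}]=0$). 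Without this representation (or a completed Fourier computation with explicit constants), your write-up identifies the difficulty but does not resolve it.
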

\begin{proof}
The proof follows from the idea used in Lemma 2.14 of \cite{hermon2102cutoff}. It is easy to see that any distribution on $\mathbb{N}$ whose probability mass function is non-decreasing can be written as a mixture of $\Unif(\{1, ..., Y\})$ distributions, for different $Y\in \mathbb{N}$. 

For an even $n\in\mathbb{N}$ the binomial distribution $\mathrm{Bin}(n,1/2)$ is known to be unimodal, i.e., the mode $x_*:=\mathrm{argmax}_x \P(S_n=x)$ is unique. When $n\in \mathbb{N}$ is odd, the maximum of $\P(S_n=x)$ is achieved at two adjacent values. Without loss of generality, let $x_*:=\min\{\mathrm{argmax}_x \P(S_n=x)\}$.

 Letting $\bar S_n:=S_n-x_*$, it is easy to see the map $m\mapsto \P(|\bar S_n|=m)$  is non-increasing on $\mathbb{N}$ and hence we can write
$$|\bar S_n|\sim \Unif(\{1, ..., Y \}) \quad\text{conditional on} \quad \bar S_n\neq 0,$$
for some random variable  $Y\in \mathbb{N}$ whose law is insignificant to us. As a consequence, when $p>2$ for $x\in \ZZ$,
\begin{align*}
\P(\bar S_n=x \mod p| \bar S_n\neq 0)&\leq \P( |\bar S_n| +x\in  p\ZZ |\bar S_n\neq 0)+\P(|\bar S_n|-x \in p\ZZ | \bar S_n\neq 0)\\
&\leq 2\E( \floor{Y/p}/Y)\leq 2/p.
\end{align*}
That is, for any $x\in\ZZ$, $\P(S_n=x \mod p)=\P(\bar S_n=(x-x_*)\mod p)\leq 2/p$.

When $p=2$, by the same reasoning we have the following bound
$$\max_{x\in \{0,1\}}\P(\bar S_n=x \mod 2| \bar S_n\neq 0)\leq 1/2.$$
Therefore,
\begin{align*}
\max_{x\in \ZZ}\P( S_n= x \mod p)&=\max_{x\in \ZZ}\P( \bar S_n=x \mod p)\\
&\leq \P(\bar S_n=0)+\max_{x\in \ZZ}\P(\bar S_n=x \mod p|\bar S_n\neq 0)\leq q(n)+ \min\{2/p, 1/2\}.
\end{align*}

\end{proof}

Now we are ready to state the meaning of ``relative independence" in the terms of $(m_{ba})_{a,b\in P, a<b}$. The following lemma implies that conditioned on $P$ being nice, for any $(x_{ba})_{a,b\in P: a<b}$, the collection $(\textbf{1}\{m_{ab}=x_{ba}\})_{ a,b\in P, a<b}$ (respectively, $(\textbf{1}\{m_{ab}=x_{ba} \mod p\})_{ a,b\in P, a<b}$) is stochastically dominated by i.i.d. Bernoulli random variables with probability $q(\floor{\frac{t}{2dk}})$ (respectively,  $\min\{2/p, 1/2\}+q(\floor{\frac{t}{2dk}})$).

\begin{lemma}\label{relative_indep}
Let $q:=q(\floor{\frac{t}{2dk}})$. For $A\subseteq \{(a,b): a,b\in P, a<b\}$, let $\FF_{A^c}=\sigma( (m_{ba})_{ (a,b)\notin A, a,b\in P,a<b})$. Then for any $(x_{ba})_{(a,b)\in A}\in \ZZ^{|A|}$  we have that 
\beq\label{indep_m}
 \P( \cap_{(a,b)\in A} \{m_{ba}= x_{ba}\} |\FF_{A^c},P \text{ is nice})\leq q^{|A|},
\eeq
and furthermore, 
\beq
 \P( \cap_{(a,b)\in A} \{m_{ba}= x_{ba} \mod p\} |\FF_{A^c}, P \text{ is nice})\leq (\min\{2/p, 1/2\}+q)^{|A|}.
\eeq
\end{lemma}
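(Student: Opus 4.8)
The plan is to prove Lemma \ref{relative_indep} by conditioning carefully on the randomness that is already revealed and then exploiting the diffusive increments coming from the sets $B_{ab}$. I would set up the argument as a sequential revelation scheme. First, recall that we are working conditionally on $\sigma((\eta_i)_{i\in\mathbb N})$ and on the event $\{P\text{ is nice}\}$. Having fixed the signs, the only remaining randomness that affects $m_{ba}$ for $(a,b)\in A$ but is \emph{not} recorded in $\FF_{A^c}$ is the relative order within the pairs assigned to $B_{ab}$: as observed in the paragraph preceding the lemma, for each pair in $B_{ab}$ the increment to $m_{ba}$ is either $0$ or $-\eta_a\eta_b$, each with probability $1/2$, independently across pairs; consequently, conditionally on everything else, $m_{ba}$ equals a deterministic (i.e.\ $\FF_{A^c}$- and nice-measurable) constant plus $\mp\eta_a\eta_b$ times a $\mathrm{Bin}(|B_{ab}|,1/2)$ variable $S_{ab}$, and since $\eta_a\eta_b=\pm1$ this means $\{m_{ba}=x_{ba}\}$ is the event that $S_{ab}$ equals some fixed integer. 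The key structural point is that the variables $\{S_{ab}: (a,b)\in A\}$ are mutually independent given $\FF_{A^c}$ and nice, because the pairs of locations in distinct $B_{ab}$'s are disjoint and their internal orderings are independent fair coin flips.

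Given that reduction, I would order the pairs $(a,b)\in A$ arbitrarily and reveal the $S_{ab}$'s one at a time, using the tower property. At each step, conditionally on all previously revealed $S$'s (and on $\FF_{A^c}$ and niceness), the next $S_{ab}$ is still $\mathrm{Bin}(|B_{ab}|,1/2)$; since $P$ is nice, $|B_{ab}|\ge\floor{\frac{t}{2dk}}$, and $q(n)$ is non-increasing in $n$, the probability that $S_{ab}$ hits the prescribed value is at most $q(|B_{ab}|)\le q(\floor{\frac{t}{2dk}})=q$. Multiplying these $|A|$ conditional bounds gives \eqref{indep_m}. For the second inequality, the event $\{m_{ba}=x_{ba}\bmod p\}$ becomes $\{S_{ab}\equiv c_{ab}\bmod p\}$ for some fixed residue $c_{ab}$, and Lemma \ref{lazyRW_modp} applied with $n=|B_{ab}|\ge\floor{\frac{t}{2dk}}$ gives the per-step bound $\min\{2/p,1/2\}+q(|B_{ab}|)\le\min\{2/p,1/2\}+q$; multiplying again over $(a,b)\in A$ yields the stated bound.

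The main obstacle — and the place I would be most careful — is justifying the conditional independence and the ``at least $\mathrm{Bin}(\floor{t/2dk},1/2)$'' claim cleanly, i.e.\ setting up the probability space so that revealing $(m_{ba})_{(a,b)\notin A}$ does not secretly constrain the orderings inside the $B_{ab}$'s for $(a,b)\in A$. This is really a statement about the construction in the paragraph before Lemma \ref{nice_set}: the sets $B_{ab}$ are determined by the generator identities $(\sigma_i)$ restricted to the subsequence $\mathbf X_P$ and by the (already conditioned) signs, while the remaining freedom is which of the two admissible orders each such pair takes; one must note that $m_{ba}$ for $(a,b)\notin A$ depends on the orderings of pairs in $B_{a'b'}$ with $(a',b')\ne(a,b)$ (and on pairs not in any $B$), all of which are independent of the orderings inside $\{B_{ab}:(a,b)\in A\}$. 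I would phrase this as: condition on the $\sigma$-field generated by the signs, the multiset structure of the subsequence, the partition into pairs, and the orderings of all pairs except those lying in $\cup_{(a,b)\in A}B_{ab}$; under this conditioning the $|A|$ binomial counts $S_{ab}$ remain independent with the right marginals, and $\FF_{A^c}$ is contained in this $\sigma$-field on the event that $P$ is nice. Once that bookkeeping is in place the rest is the two short computations above plus Lemma \ref{lazyRW_modp}.

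A minor additional point worth stating explicitly: niceness is $\FF_{A^c}$-measurable in the sense needed here (it is measurable with respect to the partition-and-generator-identity data, hence can be conditioned on jointly), so the conditioning in \eqref{indep_m} is legitimate; and the bound is uniform over the target vector $(x_{ba})_{(a,b)\in A}$ precisely because $q(n)$ and Lemma \ref{lazyRW_modp}'s bound are worst-case over the location $x$. This uniformity is exactly what the downstream application (proving $\P(\AA_p(\tildeK)^c)$ is tiny via a union bound over $K\times K$ submatrices and over target rank-deficiency patterns) will need.
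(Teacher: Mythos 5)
Your proposal is correct and follows essentially the same route as the paper's proof: condition on the signs, the pair structure, and everything outside the blocks $\{B_{ab}:(a,b)\in A\}$, write each $m_{ba}$ as a conditionally known constant plus a signed $\mathrm{Bin}(|B_{ab}|,1/2)$ count coming from the internal orderings, use disjointness of the $B_{ab}$'s for conditional independence, and bound each factor by $q$ (resp.\ $\min\{2/p,1/2\}+q$ via Lemma \ref{lazyRW_modp}) uniformly in the target values. The only nitpick is the sentence claiming $m_{ba}$ for $(a,b)\notin A$ depends on orderings in $B_{a'b'}$ for all $(a',b')\ne(a,b)$ — in fact the internal ordering of a pair in $B_{a'b'}$ affects only $m_{b'a'}$, which is exactly why $\FF_{A^c}$ is measurable with respect to your enlarged conditioning $\sigma$-field, as your subsequent setup correctly uses.
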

\begin{proof}
For $a,b\in P$ with $a<b$, we first recall  from \eqref{mba} the definition of $m_{ba}$ and observe that  $m_{ba}$ is in fact determined purely by the subsequence $\mathbf{X}_P$. When referring to $\mathbf{X}_P$ as a subsequence, we essentially use the information given by $(\sigma_{P,i},\eta_{P,i})_{i\in [\NN_P]}$, where $\sigma_{P,i}$ denotes the index of the $i$-th term in $\mathbf{X}_P$ and $\eta_{P,i}$ its sign. Hence we can write
\beq\label{mba_subseq}
m_{ba}=-\sum_{j=1}^{\NN_P}\sum_{i<j} \eta_i \eta_j  \1\{\sigma_{P,j}=a, \sigma_{P,i}=b\} \quad \text{ for }  a,b\in P \text{ with }a<b.
\eeq

With a slight abuse of notation, let $B^c:=\{ i\in [\NN_P]: i \notin \cup_{a,b\in P,a<b} B_{ab}\}$ denote the collection of locations in $\mathbf{X}_P$ that are not recorded in any $B_{ab}$'s. Let $\GG_\eta:=\sigma(\{\eta_{P,i}:i\in\mathbb{N}\})$ (we reveal all the signs $\eta_{P,i}$ regardless of the value of $\NN_P$) and define 
$$\GG:=\sigma( \{B_{ab}: a,b\in P, a<b\}, (\sigma_i)_{i\in B^c}, \GG_\eta).$$

From now on, we will condition on the $\sigma$-field $\GG$. By conditioning on $\GG$, we are treating all the signs as known and revealing the indices of generators that are not in any $B_{ab}$'s. Hence, by \eqref{mba_subseq} $m_{ba}$ can be written as the sum of two parts, one independent from $\GG$ and the other known under $\GG$:
\beq\label{mba_decomp}
m_{ba}=\sum_{i\in\mathbb{N}:(2i-1,2i)\in B_{ab}} -\eta_{2i-1}\eta_{2i}  \1\{\sigma_{P,2i}=a, \sigma_{P,2i-1}=b\} + m^{known}_{ba}
\eeq
where $m_{ba}^{known}$ represents the cumulated increment from the pairs of $\{Z_a^\pm,Z_b^\pm\}$ that are not in $B_{ab}$, which is known under $\GG$. We can further observe that the collection of random variables $( \1\{\sigma_{P,2i}=a, \sigma_{P,2i-1}=b\})_{ (2i-1,2i)\in B_{ab}}$ from the first part of \eqref{mba_decomp} are i.i.d. $\mathrm{Bernoulli}(1/2)$ and independent from $\GG$.

Let $n_{ab}^\pm:=| \{ (2i-1,2i)\in |B_{ab}| : -\eta_{2i-1}\eta_{2i}=\pm 1\}|$.  (Note that $n^+_{ab}+n^-_{ab}=|B_{ab}|$ and $n_{ab}^\pm$ are measurable with respect to $\GG$.) We can then define two independent binomial random variables $Y^+_{ab}\sim \mathrm{Bin}(n_{ab}^+, 1/2)$, $Y^-_{ab}\sim \mathrm{Bin}(n^-_{ab},1/2)$ so that the increment on $m_{ba}$ resulted from revealing the orders of the pairs in $B_{ab}$ is given by $Y^+_{ab}-Y^-_{ab}=Y_{ab}-n^-_{ab}$, where $Y_{ab}:=Y^+_{ab}+(n^-_{ab}-Y^-_{ab}) \sim \mathrm{Bin}(|B_{ab}|,1/2)$. 
It follows from \eqref{mba_decomp} that, for any $x\in \ZZ$,
\begin{align}\label{binomial_approx}
\nonumber \P(m_{ba}=x|\GG,P \text{ is nice} )&\leq \max_{x'\in \ZZ} \P\left (\sum_{(2i-1,2i)\in B_{ab}} -\eta_{2i-1}\eta_{2i}  \1\{\sigma_{P,2i}=a, \sigma_{P,2i-1}=b\}=x' \bigg|\GG,P \text{ is nice}\right)\\
\nonumber &= \max_{x'\in \ZZ}  \P\left (Y^+_{ab}-Y^-_{ab}=x'\bigg|\GG,P \text{ is nice}\right)\\
\nonumber&= \max_{x''\in \ZZ}  \P\left (Y_{ab}=x''\bigg|\GG,P \text{ is nice}\right)\\
&= \max_{x''\in \ZZ}  \P\left( \mathrm{Bin}(|B_{ab}|,1/2)=x''|P \text{ is nice}\right)\leq q.
\end{align}
Hence we have the desired upper bound for a given pair of $a,b\in P, a<b$.
 $$\P(m_{ba}=x|P \text{ is nice} )\leq q.$$

Now note that for different pairs $(a,b)\neq (a',b')$ with $a, a', b,b'\in P$ and $a<b,a'<b'$, by our construction their corresponding $B_{ab}$ and $B_{a'b'}$ are disjoint. Hence $(Y_{ab})_{a,b\in P, a<b}$ is a collection of independent random variables. Carrying out a calculation similar to \eqref{binomial_approx} using $(Y_{ab})_{a,b\in P, a<b}$ leads to  
\begin{align*}
\max_{(x_{ba})_{(a,b)\in A}} \P(  \cap_{(a,b)\in A} \{m_{ba}= x_{ba}\} |\FF_{A^c}, \text{$P$ is nice})&\leq \max_{(y_{ba})_{(a,b)\in A}} \P( \cap_{(a,b)\in A} \{Y_{ab}=y_{ba}\}|\FF_{A^c}, \text{$P$ is nice}) \\
&\leq  q^{|A|}.
\end{align*}
Similarly, by Lemma \ref{lazyRW_modp}
\begin{align*}
&\max_{(x_{ba})_{(a,b)\in A}} \P( \cap_{(a,b)\in A} \{m_{ba}= x_{ba} \mod p\} |\FF_{A^c}, \text{$P$ is nice})\\
\leq &\max_{(y_{ba})_{(a,b)\in A}}  \P( \cap_{(a,b)\in A} \{Y_{ab}=y_{ba} \mod p\} |\FF_{A^c}, \text{$P$ is nice})\\
\leq & \max_{(y_{ba})_{(a,b)\in A}} \prod_{(a,b)\in A} \P( \mathrm{Bin}( \floor{\frac{t}{2dk}},1/2)= y_{ba} \mod p) 
\leq (\min\{2/p, 1/2\}+q)^{|A|}.
\end{align*}

\end{proof}

\noindent
\textit{Proof of Proposition \ref{ub_Ap}.} Recall that $K=\sum_{\ell=2}^L r_\ell+2$. Let $d\geq K$ be an even integer to be chosen later. Partition $\{a\in [k]: a\leq k/2\}$ into subsets $\{ \mathcal J_{1,i}: 1\leq i\leq \floor{ k/(2d)}\}$, each of size $d\geq K$, and omit the rest of the generators. Without loss of generality assume $d$ is even. Similarly we partition $\{b\in [k]: b>k/2\}$ into subsets  $\{ \mathcal J_{2,i}: 1\leq i\leq \floor{ k/(2d)}\}$ of size $d$. Let $P_i=\mathcal{J}_{1,i}\cup \mathcal J_{2,i}$. Since the arrivals of generators whose indices are in disjoint sets $\{P_i: 1\leq i\leq \floor{k/(2d)}\}$ are independent, we can try independently for $ \floor{ k/(2d)}$ times to search for a $K\times K$ submatrix $M$ in $(\hat m_{ba})_{b\in\tildeK\cap \JJ_{2,i}, a\in\JJ_{1,i}}$ such that $M_p$ has rank $K$.

For each $1\leq i \leq \floor{ k/(2d)}$ we perform the following trial. Let $\tildeK$ be as in \eqref{tildeK}. Since we will only be looking at $b\in \JJ_{2,i}\cap\tildeK$, we need to determine if $|\JJ_{2,i}\cap\tildeK|$ is large enough to begin with. 
If $|\JJ_{2,i}\cap\tildeK|\geq d/2$, we will look for  the desired $K\times K$ submatrix $M$ in $(\hat m_{ba})_{b\in \JJ_{2,i}\cap\tildeK, a\in \JJ_{1,i}}$. We will look at a batch of $K$ indices in $\JJ_{2,i}\cap\tildeK$ at a time, which will be denoted by $\{b_1,\dots,b_K\}$. Let $\JJ^{\mathrm{1st}}_{1,i}$ denote the first half of $\JJ_{1,i}$ and $\JJ^{\mathrm{2nd}}_{1,i}$ the second half. Our goal is to search for column indices in $\JJ^{\mathrm{2nd}}_{1,i}$, which will be labelled $a_1,\dots,a_K$, such that the submatrix $M$ induced by the rows $\{b_1,\dots,b_K\}$ and the columns $\{a_1,\dots,a_K\}$ of $\hat m$ satisfies the condition that $M_p$ has rank $K$. 

We will describe the steps of the $i$-th trial now: 
\begin{enumerate}
\item If $|\JJ_{2,i}\cap\tildeK|\geq d/2$ proceed to the search of submatrix $M$. Otherwise declare failure for this trial. 
\item We look for the first $a\in \JJ^{\mathrm{2nd}}_{1,i}$ such that $\hat m_{b_{1},a} \neq 0\mod p$ and set it as $a_{1}$. If there is no such $a$ declare this trial to be a failure.
\item The search will then proceed iteratively: for $u\geq 1$, given the choice of $\{a_{1},\dots,a_{u}\}$, we will look for the first $a\in \JJ^{\mathrm{2nd}}_{1,i}\backslash\{a_1,\dots,a_u\}$ such that the last row $(m_{b_{u+1},a_1},\dots, m_{b_{u+1},a_{u+1}}) \mod p$ is not in the vector space spanned by the previous rows $\{ (m_{b_i, a_1},\dots, m_{b_i, a_{u+1}}) \mod p: 1\leq i\leq u\}$. If there is no $a\in \JJ^{\mathrm{2nd}}_{1,i}\backslash\{a_1,\dots,a_u\}$ that works, then we declare failure for this trial.
\item The trial is a success if we have found $\{a_1,\dots,a_K\}$. 
\end{enumerate}

It remains to estimate the success probability for each trial, i.e., upper bound the failure probability in each step of the trial. 

\mn
\textbf{Step 1.} For any $b\in \JJ_{2,i}$, to upper bound the probability that $b\notin\tildeK$, we will reveal the orders in $\{ B_{ab}: a\in \JJ^{\mathrm{1st}}_{1,i}\}$. (By the relative independence in $\hat m$, we can still control the failure probability of our later search in $\JJ^{\mathrm{2nd}}_{1,i}$ given that $\{ B_{ab}: a\in \JJ^{\mathrm{1st}}_{1,i}\}$ has been revealed.)

Note that
\begin{align*}
\{ b\notin\tildeK\}&\subseteq \{  \gcd(\{m_{ba}: a\in \JJ^{\mathrm{1st}}_{1,i}\}) \text{ is not coprime with }|G_{\mathrm{ab}}|\}\\
&=\cup_{p\in\Gamma} \cap_{a\in  \JJ^{\mathrm{1st}}_{1,i}} \{ m_{ba}=0 \mod p\}.
\end{align*}

For all $a\in [k]$, let $N_a:=N_a(t)$ (and respectively $N'_a:=N'_a(t)$) denote the number of times the generator $Z_a^{\pm 1}$ appears in $X$ (and respectively $X'$). Recall that the arrivals of each generator can be viewed as an independent Poisson process with rate $1/k$. Letting 
$$\mathcal{C}:=\{ N_c\leq 2(t/k), N'_c\leq 2(t/k) \text{ for all }c\in \{b\}\cup \JJ^{\mathrm{1st}}_{1,i}\},$$
we have
$$\P(\mathcal{C}^c)\leq 2(|\JJ^{\mathrm{1st}}_{1,i}|+1)\P( N_a>2(t/k))\leq  2d\exp(-\Omega(t/k)).$$

Further observe that by \eqref{mba_subseq} $|m_{ba}| \leq (N_b+N'_b)(N_a+N'_a)$ and hence $\mathcal{C}\subseteq \cap_{a\in \JJ^{\mathrm{1st}}_{1,i}}\{ |m_{ba}|\leq  16(t/k)^2\}$. This implies that when $p>16(t/k)^2$, for $\mathcal{C}\cap (\cap_{a\in  \JJ^{\mathrm{1st}}_{1,i}} \{ m_{ba}=0 \mod p\})$ to occur the only possible case is when $m_{ba}=0$ for all $a\in \JJ^{\mathrm{1st}}_{1,i}$. That is,
\begin{align*}
\{ b\notin\tildeK\}&\subseteq\{ \{b\notin\tildeK\} \cap \mathcal{C}\} \cup \mathcal{C}^c\\
& \subseteq \left(\cup_{p\in\Gamma: p\leq 16(t/k)^2} \cap_{a\in  \JJ^{\mathrm{1st}}_{1,i}} \{ m_{ba}=0 \mod p\}\cap \mathcal{C}\right) \\
&\quad \cup (\cap_{a\in \JJ^{\mathrm{1st}}_{1,i}} \{m_{ba}=0\}) \cup \mathcal{C}^c
\end{align*}

By Lemma \ref{nice_set} it is easy to see $\P(\text{$P$ is nice})\geq 1/2$ when $t\gg kd\log d$, which will be guaranteed by our choice of $d$. Recall that  $q:=q(\floor{\frac{t}{2dk}})$. It follows from Lemma \ref{relative_indep} that 
\begin{align*}
\P(b\notin\tildeK| \text{$P$ is nice} )&\leq \sum_{ p\in \Gamma: p\leq 16(t/k)^2} \P( \cap_{a\in  \JJ^{\mathrm{1st}}_{1,i}} \{ m_{ba}=0 \mod p\}| \text{$P$ is nice})\\
&\quad+\P( \cap_{a\in  \JJ^{\mathrm{1st}}_{1,i}} \{ m_{ba}=0 \}| \text{$P$ is nice})+\P(\mathcal{C}^c|\text{$P$ is nice})\\
&\leq 16(t/k)^2 \left( \min\{ 2/p,1/2\}+q\right)^{d/2} + q^{d/2}+4d\exp(-\Omega(t/k))\lesssim \exp( -\Omega( d))=:\tilde q
\end{align*}
when $ \log(t/k)\ll d\ll (t/k)$. Conditioned on $P$ being nice, by the relative independence the collection $(\1\{ b\notin \tildeK\})_ {b\in \JJ_{2,i}}$ is dominated by i.i.d. Bernoulli random variables with probability $\tilde q$. Since $\JJ_{2,i}$ has size $d$, 
\beq\label{J2_good}
\P( |\JJ_{2,i}\cap\tildeK|<d/2|\text{$P$ is nice})\leq \P( \mathrm{Binomial}(d, \tilde q)>d/2)\leq 2^d(\tilde q)^{d/2}\leq \exp(-\Omega(d^2)).
\eeq

\mn
\textbf{Step 2 and 3: the search for $a_u$ with $1\leq u \leq K$.}
Since $|\JJ_{2,i}\cap\tildeK|\geq d/2$ we can try at least $\floor{ d/(2K)}$ batches of $K$ indices in $|\JJ_{2,i}\cap\tildeK|\geq d/2$. These trials are not exactly independent, but using the relative independence for $\{\hat m_{ba}: a,b\in P_i, a<b\}$ we can upper bound the probability that all trials are failures. 

We begin by estimating the failure probability for a single trial. Consider the $i$-th trial where we will look for the candidates $\{a_u :1\leq u\leq K\}$ in $ \JJ^{\mathrm{2nd}}_{1,i}$. Write $\{b_1,\dots,b_K\}$ as the corresponding row indices in this trial. The probability that we fail to find a $a_1$ is 
$\P(\hat m_{b_1,a}=0 \mod p \text{ for all }a\in \mathcal J_1|\text{$P$ is nice})$, which, by Lemma \ref{relative_indep}, satisfies
$$\P(\hat m_{b_1,a}=0 \mod p \text{ for all }a\in \mathcal J_1|\text{$P$ is nice})\leq (\min\{2/p, 1/2\}+q)^{\floor {d/2}}.$$

For $u\in [K]$, suppose we have found $\{a_1,\dots,a_u\}$ such that the matrix induced by $\{b_1,\dots,b_u\}$ and $\{a_1,\dots,a_u\}$ has linearly independent rows. If a candidate $a\in \JJ^{\mathrm{2nd}}_{1,i}\backslash\{a_1,\dots,a_u\}$ fails, it means that the new row 
$$(\hat m_{b_{u+1},a_1},\dots,\hat m_{b_{u+1},a_u}, \hat m_{b_{u+1},a})\mod p$$ 
is in the vector space spanned by previous $u$ rows $\{(\hat m_{b_{i},a_1},\dots,\hat m_{b_{i},a_u}, \hat m_{b_{i},a})\mod p: 1\leq i\leq u\}$. Since by assumption the matrix induced by $\{b_1,\dots,b_u\}$ and $\{a_1,\dots,a_u\}$ has independent rows, there exists a unique linear combination $(c_1,\dots,c_u)\in \ZZ_p^u$ such that 
$$(\hat m_{b_{u+1},a_1},\dots,\hat m_{b_{u+1},a_u})= \sum_{i=1}^u c_i (\hat m_{b_i,a_1} ,\dots, \hat m_{b_i,a_u}) \mod p,$$
and thus the last column needs to satisfy
$$\hat m_{b_{u+1},a}=\sum_{i=1}^u c_i \hat m_{b_i,a}\mod p.$$
Therefore, by Lemma \ref{relative_indep} the failure probability for a candidate $a\in\JJ^{\mathrm{2nd}}_{1,i}\backslash\{a_1,\dots,a_u\}$ is at most 
$$\P( \hat m_{b_{u+1},a}=\sum_{i=1}^u c_i \hat m_{b_i,a}\mod p|\text{$P$ is nice})\leq \min\{2/p, 1/2\}+q.$$

The relative independence in $\{\hat m_{ba}: a,b\in P_i, a<b\}$ implies that the probability of failing to find $a_{u+1}$ in the set $\JJ^{\mathrm{2nd}}_{1,i}\backslash\{a_1,\dots,a_u\}$ is at most $(\min\{2/p, 1/2\}+q)^{\floor{d/2}-u}$. Through a simple union bound we see that the batch $\{b_1,\dots,b_K\}$ fails with probability at most 
\beq\label{batch_failure}
\sum_{u=1}^K \P( \text{the search for $a_u$ fails}|\text{$P$ is nice})\leq \sum_{u=1}^K (\min\{2/p, 1/2\}+q)^{\floor{d/2}-r}\leq K(\min\{2/p, 1/2\}+q)^{d/2-K}.
\eeq

Combining all these failure probabilities, i.e., Lemma \ref{nice_set}, \eqref{J2_good}, \eqref{batch_failure}, and using the fact that $\{\hat m_{ba}: a,b\in P_i\}$ are independent for the disjoint index sets $\{P_i: 1\leq i\leq \floor{k/(2d)}\}$,  we have
\begin{align}\label{hatA_complement}
\nonumber\P(\AA_p^c)&\leq \P( \text{the 1st trial fails})^{\floor{k/(2d)}}\\
\nonumber&\leq\left( \P(P_1 \text{ is not nice})+  \P(|\JJ_{2,i}\cap\tildeK|<d/2|\text{$P$ is nice})+\left(K(q+\min\{2/p, 1/2\})^{(d/2)-K}\right)^{\floor{d/(2K)}}\right)^{\floor{k/(2d)}}\\
\nonumber&\leq \left( Cd^2\exp\left(-\Omega\left(\frac{t}{dk}\right)\right)+\exp(-\Omega(d^2))+\left(K(q+\min\{2/p, 1/2\})^{(d/2)-K}\right)^{\floor{d/(2K)}}\right)^{\floor{k/(2d)}}\\
&\leq 3^{k/(2d)} \left(\left(Cd^2\exp\left(-\Omega\left(\frac{t}{dk}\right)\right)\right)^{\floor{k/(2d)}}+ \exp(-\Omega(kd))+ \exp( -\Omega(kd))\right)
\end{align}
by the elementary inequality $(a+b+c)^n\leq 3^n (a^n+b^n+c^n)$ for $n\geq 1$.

Recall that $t\geq (1+\ep)t_*(k,G)\asymp k|G_{\mathrm{ab}}|^{2/k}$ in the currently considered regime. Our choice of $d$ should satisfy $1\ll d\ll k$ and $\frac{t}{dk}\gg 1$ so that $q=o(1)$. It was also required that $\log(t/k)\ll d\ll (t/k)$ right before \eqref{J2_good}. Furthermore, we will choose $d$ satisfying $\frac{t}{dk}\gg \log d$ and $\frac{t}{d^2}\gg \log|G|$ so that the first term in \eqref{hatA_complement} is $o(|G|^{-C})$ for any $C>0$. To control the second and third terms in \eqref{hatA_complement}, the choice of $d$ also needs to satisfy $kd \gg \log |G|$. 

We choose $d=|G_{\mathrm{ab}}|^{\delta/k}$ for some sufficiently small $\delta>0$, which satisfies all the conditions listed above. 
Consequently, for any $C>0$,
$$\P(\AA_p^c)=o(|G|^{-C}),$$
which leads to Proposition \ref{ub_Ap}.
\qed

\subsubsection{Regime: $k\gtrsim\log|G_{\mathrm{ab}}|$}

In this regime $s=t/k \lesssim 1$ and we no longer have the relative independence in $\hat m$, so we will take a different approach.

Recall that $W_a^{\pm }:=\sum_{i=1}^{N(t)} 1\{\sigma_i=a, \eta_i=\pm 1\}$ tracks the number of times $Z^{\pm 1}_a$ appears in $X(t)$. We will only look at these set of generators
$$\JJ=\{ a\in [k]: W_a^+ +W_a^-=1\},$$
that appear exactly once in $X:=X(t)$. In this regime, we will be conditioning on $\{V=0,\typ\}$, see Definition \ref{typical_event} for the definition of $\typ$ in this regime. Denote by $X|_\JJ$ the subsequence of $X$ that contains only generators in $\JJ$. For simplicity of notation, for any $a\in \JJ$ we will assume that $Z_a$ (instead of $Z_a^{-1}$) appears in $X$. Otherwise we can just relabel $Z_a^{-1}$ as a new $Z_a$. We also want to emphasize that only the order in which $(Z_a)_{a\in \JJ}$ appear in $X|_\JJ$ is important in the following argument, and the values of $(Z_a)_{a\in \JJ}$ are not.

By \eqref{mba_subseq} it is easy to see that on the event $\{V=0\}$ we have $m_{ba}\in \{-1,0,1\}$ for $a,b\in \JJ$ since $Z_a,Z_b$ only appear once in both $X$ and $X'$. Basically, if $\{Z_a,Z_b\}$ appears in the same order in $X'$ as they do in $X$ then we have $\hat m_{ba}=0$, otherwise $|\hat m_{ba}|=1$. 

Our goal is to look for an upper triangular submatrix $M$ of $\hat m$ that has full rank $K$. Indeed, since $M$ is upper triangular and all the entries of $M$ are in $ \{-1,0,1\}$, if $M$ has full rank $K$ then for all $p\in \Gamma$, $M_p$ also has full rank over the field $\mathbb{F}_p$. The proof is based on a combinatorial argument where we interpret $(|\hat m_{ba}|)_{a,b\in \JJ}$ in terms of the relative order of $Z_a, Z_b$ in $X|_\JJ$ and $X'|_\JJ$. 

Let $\{c_i: i\in \mathbb{N}\}$ be a set of distinct colors. Let $d$ be a positive integer to be determined later and $n:=\floor{|\JJ|/d}$. For $1\leq i\leq n$, we will color the $((i-1)d+1)$-th to the $id$-th generator that appears in $X|_\JJ$ as color $c_{n-i+1}$. In other words, the first $d$ generators in $X|_\JJ$ have color $c_n$, followed by $d$ generators of color $c_{n-1}$ and so on. Let $\JJ_i$ denote the set of generators in $\JJ$ that are colored $c_i$. Our coloring scheme implies that in the sequence $X|_\JJ$, for any $i>i'$, any generator belonging to $\JJ_i$ is in front of any generator belonging to $\JJ_{i'}$. In order to understand $(|\hat m_{ba}|)_{a,b\in \JJ}$ it remains to determine the relative orders of $\{ (Z_a,Z_b): a,b\in \JJ \text{ and $Z_a,Z_b$ are in different colors}\}$ in $X'|_\JJ$. 

Note that $X'|_\JJ$ has the distribution of a uniform permutation of $(Z_a)_{ a\in \JJ}$, which means we can construct this subsequence by inserting the generators to an existing sequence uniformly at random, one by one. Write $\JJ_i=(Z_{x_{i,1}},\dots,Z_{x_{i,d}})$ for $1\leq i\leq n$. In order to construct $X'|_\JJ$, we first sample a uniform permutation of  $(Z_a)_{ a\in \JJ_1}$ and denote it by $X'|_{\JJ_1}$. Without loss of generality, we can label them as $(Z_{x_{1,1}},\dots,Z_{x_{1,d}})$. Next we will insert the generators from $\JJ_2$ into this sequence, one by one. Once we are done with inserting the generators in $\JJ_2$, we proceed to insert the generators from $\JJ_3,\JJ_4$ and so on. Since all the generators are inserted at random and one by one, the resulting sequence will be a uniform permutation of the generators in $\JJ$.

Given the sequence $X'|_\JJ$, we can define a collection of good events $\{\CC_i: 2\leq i\leq n\}$. As an example, we will first define $\CC_2$. For any $l \in [d]$, if $Z_{x_{2,l}}$ is inserted between $Z_{x_{1,j}}$ and $Z_{x_{1,j+1}}$ for some $0\leq j\leq d$ (let $j=0$ if $Z_{x_{2,l}}$ is inserted in front of $Z_{x_{1,1}}$ whereas let $j=d$ if it is inserted behind $Z_{x_{1,d}}$), then 
$$|\hat m_{x_{1,j'},x_{2,l}}|=
\begin{cases}
1 &\text{ for }0\leq j'\leq j,\\
0 &\text{ for }j<j'\leq d.
\end{cases}
$$ 
We will let $\CC_{2}$ be the event that in the sequence $X'|_\JJ$ there are at least $K$ distinct pairs from the set $\{(Z_{x_{1,j}},Z_{x_{1,j+1}}): 0\leq j\leq d\}$  such that there is at least one generator from $\JJ_{2}$ that is  between them. The reason for this definition is that if $\CC_{2}$ occurs we can collect the first $K$ elements in
$$\{x_{1,j}: 0\leq j\leq d, \text{ there is some $Z_{x_{2,l}}\in \JJ_2$ inserted between $Z_{x_{1,j}}$ and $Z_{x_{1,j+1}}$}\}$$ 
as the row indices of $M$ and the corresponding $x_{2,l}$'s as the column indices. This choice leads to an upper triangular $K\times K$ matrix $M$ where the top right entries are $\{\pm 1\}$. Consequently, the induced matrix $M_p$ is still an upper triangular matrix that has full rank $K$. This argument can be most easily explained through an example.

\mn
\textbf{Example.} For simplicity we assume $X|_\JJ=(Z_1,Z_2,\dots,Z_6)$. Let $d=3$ so that there are 2 colors. In particular, $Z_1,Z_2,Z_3$ are in color 2 while $Z_4,Z_5,Z_6$ are in color 1. Suppose when constructing $X'|_\JJ$ we first sample a random permutation of $Z_4,Z_5,Z_6$ and get $(Z_4,Z_5,Z_6)$, and then inserting $Z_1,Z_2,Z_3$ to the current sequence, obtaining as a result 
$$X'|_\JJ=(Z_4, Z_1, Z_5, Z_2, Z_6,Z_3).$$
It is easy to see that as a consequence of inserting $Z_1$ (of color 2) in between $Z_4$ and $Z_5$ (of color 1) in $X'|_\JJ$, the relative order of  $(Z_1,Z_4)$  in $X'|_\JJ$ is different from that in $X|_\JJ$ and hence $|\hat m_{41}|=1$. Moreover, we can obtain an upper triangular $2\times 2$ submatrix

$$
\begin{pmatrix}
\hat m_{41} & \hat m_{42}\\
\hat m_{51} & \hat m_{52}
\end{pmatrix}
=
\begin{pmatrix}
\pm 1 & \pm 1\\
0 &  \pm 1
\end{pmatrix}.
$$

\bigskip
In general, for $2\leq i\leq n$ we can define $\CC_i$ to be the event that in the sequence $X'|_\JJ$ there are at least $K$ distinct pairs from the set 
$$\{(Z_{x_{i_1},l_1},Z_{x_{i_2},l_2}): i_1,i_2\leq i-1,l_1,l_2\in[d] \text{ and }Z_{x_{i_1},l_1},Z_{x_{i_2},l_2} \text{ are consecutive in }X'|_{\JJ_1\cup\cdots\cup\JJ_{i-1}} \}$$  such that there is at least one generator from $\JJ_{i}$ that is between them. If any of the events $\{\CC_i: 2\leq i\leq n\}$ occurred we would be able to find a $K\times K$ submatrix $M$ satisfying our condition and the set $\tildeK$ (from the definition of $\AA$ in Definition \ref{eventA}) by collecting the corresponding $K$ row indices of $M$. Hence, the corresponding column indices of $M$ are in $\tildeK^c$, a fact we shall now use to verify condition \eqref{def_K} from the definition of $\AA$. One can easily check that for any $b\in\tildeK$,
\begin{align*}
G_2\psi_b&=G_2\sum_{a\in[k]:a<b} m_{ba}Z_{a,1}+\sum_{a\in \tildeK^c:a>b} \hat m_{ba} Z_{a,1}\\
&=G_2 \sum_{a\in \tildeK^c} \hat m_{ba} Z_{a,1}+ G_2\sum_{a\in \tildeK: a<b} \hat m_{ba}Z_{a,1}
\end{align*}
where the first term is uniform in $G_{\mathrm{ab}}$ and independent from $(G_2Z_{b,1})_{ b\in\tildeK}$. Therefore, condition \eqref{def_K} is satisfied for this choice of $\tildeK$ and $\AA_p(\tildeK)$ occurs for all $p\in\Gamma$, i.e., $\AA$ occurs as long as $\cup_{i=2}^n \CC_i$ occurs. Therefore, it remains to upper bound
$$\P(\AA^c|V=0,\typ)\leq \P(\cap_{i=2}^n \CC^c_i).$$

For $1\leq i\leq n-1$, let $\GG_i$ be the $\sigma$-field that encodes relative orders of generators in $\JJ_1\cup\cdots\cup \JJ_i$ in $X'|_\JJ$. Then for $2\leq j\leq n$,
$$ \P(\cap_{i=2}^j \CC^c_i|\GG_{j-1})=\1\{ \cap_{i=2}^{j-1} \CC_i\}\cdot  \P(\CC_j|\GG_{j-1}).$$
The key is to observe that $\P(\CC^c_j|\GG_{j-1})=\P(\CC^c_j)$ is independent from $\GG_{j-1}$. Applying this iteratively gives that 
$$ \P(\cap_{i=2}^n \CC^c_i)=\prod_{i=2}^{n-1} \P(\CC_i^c).$$

We will calculate $\P(\CC_{i+1}^c)$ for $1\leq i\leq n-1$. Looking at the distribution of the subsequence $X'|_{\JJ_1\cup\cdots\cup\JJ_{i+1}}$ is equivalent to looking at the subsequence $X'|_{\JJ_1\cup\cdots\cup \JJ_i}$ and inserting the generators in $\JJ_{i+1}$ randomly and one by one to this subsequence. This perspective allows us to calculate $\P(\CC^c_i)$ via a multi-type urn scheme. 

The subsequence $X'|_{\JJ_1\cup\cdots\cup \JJ_i}$ has length $di$ and we are inserting the generators from $\JJ_{i+1}$. If we view the gap between two generators in the existing sequence $X'|_{\JJ_1\cup\cdots\cup \JJ_i}$ as a distinct type (also taking into account the gap before and behind all generators), then we would have $di+1$ types and there is one ball of each type in the urn. We will be conducting $d$ steps. At each step, we choose a ball randomly from the urn and place the ball together with a new ball of the same type back to the urn. It is not difficult to see this urn scheme is equivalent to inserting elements randomly to a sequence.

Our goal is to understand the probability of the event 
$$\{ \text{there are at most $K-1$ types with at least 2 balls after $d$ balls have been inserted}\},$$
which is the event equivalent to $\CC^c_{i+1}$ in the urn model and hence has the same probability.
This calculation can be simplified by first fixing the $di+1-(K-1)$ types each with at most one ball from all $di+1$ types and then group the rest $K-1$ types together to  form a new type, called type 0. Then we have a new urn model with $di+1-K$ types (coming from the  $di+1-(K-1)$ types and the new type 0) starting with $K-1$ balls of type 0 and one ball of each remaining type. The probability that we are only going to choose balls of type 0 (thus in the original urn model there are at most $K-1$ types that can potentially have at least 2 balls) is 
\begin{align*}
\prod_{j=0}^{d-1}  \frac{K-1+j}{di+1+j}=\frac{ (di)! (K+d-2)!}{(d(i+1))! (K-2)!}.
\end{align*}
Therefore,
$$\P(\CC_{i+1}^c)\leq { di+1 \choose K-1} \frac{ (di)! (K+d-2)!}{(d(i+1))! (K-2)!}\leq (di+1)^{K-1}\cdot \frac{ (di)! (K+d-2)!}{(d(i+1))! (K-2)!}.$$
Finally, we have
$$
\nonumber \P(\cap_{i=2}^{n}\CC_{i}^c)=\prod_{i=1}^{n-1} \P(\CC_{i+1}^c)\leq \left(\prod_{i=1}^{n-1}  (di+1)^{K-1} \right) \cdot \prod_{i=1}^{n-1} \frac{ (di)! (K+d-2)!}{(d(i+1))! (K-2)!},
$$
where the second product is a telescoping product and hence can be simplified to yield
\begin{align}\label{C_ub}
\nonumber \P(\cap_{i=2}^{n}\CC_{i}^c)&\leq \exp( K n\log(nd))\cdot \frac{d!}{(nd)!}\cdot \frac{ ((K+d-2)!)^{n-1}}{((K-2)!)^{n-1}}\\
\nonumber&=  \exp( K n\log(nd))\cdot  \frac{ (d!)^n}{(nd)!}\cdot \left( \frac{ (K+d-2)!}{d! (K-2)!}\right)^{n-1}\\
&\leq  \exp(K n\log(nd))\cdot (K+d-2)^{(K-2)(n-1)}\cdot  \frac{ (d!)^n}{(nd)!}
\end{align}

The key is to understand $\varphi(d):= \frac{ (d!)^n}{(nd)!}$. By stirling's formula we have 
$$\varphi(d) \lesssim  \frac{ (2\pi d)^{n/2-1} }{ n^{1/2}} \cdot \exp(-nd\log n).$$
We collect the first terms as
$$ \exp( K n\log(nd))\cdot (K+d-2)^{(K-2)(n-1)}\cdot \frac{ (2\pi d)^{n/2-1} }{ n^{1/2}}\leq \exp(C_K n\log(nd))$$
for some constant $C_K>0$. Recall that $nd\approx |\JJ|$. We will choose $d\gg1$ which implies that 
\beq
n\log(nd)\ll nd\log n, 
\eeq
and consequently, for arbitrarily small $\delta>0$, when $n$ is sufficiently large we have 
\beq\label{Ci_failure}
\P(\cap_{i=2}^{n}\CC_{i}^c)\leq \exp( -(1-\delta)nd\log n)
\eeq

Recall from \eqref{once_typ} and Definition \ref{typical_event} that when $k\gtrsim\log|G_{\mathrm{ab}}|$,  conditioning on $\typ$ ensures $|\JJ|\geq (1-\ep/2)te^{-t/k}\geq (1-\ep)t$. 

\begin{itemize}
\item In the regime $k\eqsim \lambda \log|G_{\mathrm{ab}}|\asymp \log|G|$, the above implies that $|\JJ|\asymp k$.  Since $nd\approx |\JJ|$ we have
$$\P(\cap_{i=2}^{n}\CC_{i}^c)\leq \exp( -(1-\delta)nd\log n)=o(|G|^{-1})$$ 
as long as $n$ is a sufficiently large constant.

\item For the regime $k\gg \log|G_{\mathrm{ab}}|$, we have $t/k\ll1 $. By typicality, $|\JJ|\geq (1-\ep/2)te^{-t/k}\geq (1-\ep)t$.
Recall that  we write $\rho=\frac{\log k}{\log\log|G_{\mathrm{ab}}|}$. 
\begin{itemize}
\item When $t_*=t_0$ and $t\geq (1+3\ep)t_0$, we can choose $n$ so that $nd\geq (1-\ep/2)|\JJ|\geq (1+\ep)t_0$.  Recall that in this regime $\frac{\rho}{\rho-1}\geq \frac{\log |G|}{\log |G_{\mathrm{ab}}|}$, i.e., $\frac{1}{\rho-1}\geq \frac{\log |G_2|}{\log |G_{\mathrm{ab}}|}$, and $t_0 \eqsim k\cdot \frac{1}{\kappa \log \kappa}$ where $\kappa=k/\log|G_{\mathrm{ab}}|$. 

Letting $1\ll d\ll t_0^{\ep/8}$ and $\delta=\ep/4$, the failure probability given by \eqref{Ci_failure} is at most
\begin{align*}
 \exp( -(1-\delta)nd\log n)&\leq \exp(-(1-\delta)(1+\ep) t_0\log (t_0/d))\leq \exp(-(1+\ep/2)t_0\log t_0)\\
&\leq  \exp\left(-(1+\ep/4)\frac{\log|G_{\mathrm{ab}}|}{\rho-1}\right)\leq |G_2|^{-(1+\ep/4)},
\end{align*}
where in the second to the last inequality we use the fact that 
$$t_0\log t_0 \eqsim \log|G_{\mathrm{ab}}| \cdot \frac{ \log\log|G_{\mathrm{ab}}|-\log\log(k/\log|G_{\mathrm{ab}|})}{\log(k/\log|G_{\mathrm{ab}}|)}=\frac{\log|G_{\mathrm{ab}}|}{\rho-1}(1-o(1)).$$
\item When $t_*=t_1=\log_k|G|$ and $t\geq (1+3\ep)t_1$, we have $nd\geq (1+\ep)t_1$ and similarly the failure probability is at most
$$ \exp( -(1-\delta)nd\log n)\leq \exp(-(1+\ep/2) t_1\log t_1)\leq \exp\left(-(1+\ep/4)\frac{\log|G|}{\rho}\right)=|G|^{-(1+\ep/4)/\rho},$$
where the last inequality holds because $\rho=\frac{\log k}{\log\log|G_{\mathrm{ab}}|} \eqsim \frac{\log k}{ \log\log|G|}$ and $t_1\log t_1=(1-o(1))t_1\log\log|G|$.
\end{itemize}
\end{itemize}

Therefore, we have proved that $\P(\AA^c|V=0,\typ)\ll e^h /|G|$ (see Definition \ref{def_h} for the value of $h$ in each regime)  and thus completed the proof of Proposition \ref{A_complement}.

\bibliographystyle{plain}
\bibliography{RW_nilpotent_group}

\end{document}